 \newtheorem{theorem}{Theorem}[section]
 \newtheorem{corollary}[theorem]{Corollary}
 \newtheorem{lemma}[theorem]{Lemma}
 \newtheorem{proposition}[theorem]{Proposition}
 \newtheorem{definition}[theorem]{Definition}
 \theoremstyle{conjecture}
 \newtheorem{assumption}[theorem]{Assumption}
 \theoremstyle{remark}
 \newtheorem{remark}[theorem]{Remark}
 \newcommand{\eps}{\varepsilon}
\newcommand{\norm}[1]{\left\Vert#1\right\Vert}
\newcommand{\normm}[1]{{\left\vert\kern-0.25ex\left\vert\kern-0.25ex\left\vert #1 
    \right\vert\kern-0.25ex\right\vert\kern-0.25ex\right\vert}}
\newcommand{\abs}[1]{\left\vert#1\right\vert}
\newcommand{\set}[1]{\left\{\,#1\,\right\}}
\newcommand{\inner}[1]{\left(#1\right)}
\newcommand{\comi}[1]{\left<#1\right>}
\newcommand{\com}[1]{\left[#1\right]}
\newcommand{\lab}[1]{\label{#1}}
\begin{document}

\title[Well-posedness in Gevrey space  for the Prandtl equation]{Well-posedness in Gevrey Function space  for the Prandtl equations with non-degenerate critical points}%

\author{ Wei-Xi Li \and Tong Yang}

\date{}

\address{\newline 
 Wei-Xi Li,
School of Mathematics and Statistics, and Computational Science Hubei Key Laboratory,   Wuhan University,  430072 Wuhan, China
  }

\email{
wei-xi.li@whu.edu.cn}

\address{\newline 
 Tong Yang, Department of Mathematics, Jinan University, Guangzhou,   China $\&$	
	\newline\indent	
Department of Mathematics, City University of Hong Kong, Hong Kong
  }

\email{
matyang@cityu.edu.hk}

\begin{abstract}
In the paper, we study the well-posedness of the  Prandtl system 
without monotonicity and analyticity assumption. Precisely, 
for any index $\sigma\in[3/2, 2],$ we obtain the local in time  well-posedness in the space of   Gevrey class  $G^\sigma$ in the tangential variable  and Sobolev class in the normal variable so that the monotonicity condition on the
tangential velocity  is not needed to overcome
the loss of tangential derivative. This answers the open question raised in 
the paper of D. G\'{e}rard-Varet and N. Masmoudi
[{\it   Ann. Sci. \'{E}c. Norm. Sup\'{e}r}. (4) 48 (2015), no. 6, 1273-1325],
  in which the case $\sigma=7/4$ is solved.
\end{abstract}

\keywords{Prandtl boundary layer, non-degenerate critical points, Gevrey class}

\subjclass[2010]{35Q30, 35Q31}

\maketitle

\section{Introduction and  main results}

The Prandtl equations introduced by Prandtl  in 1904  describe 
the behavior of the incompressible flow near a rigid 
wall at high Reynolds number:
\begin{equation}\label{prandtl}
\left\{\begin{array}{l}\partial_t u^P + u^P \partial_x u^P + v^P\partial_yu^P -\partial_y^2u^P + \partial_x p
=0,\quad t>0,\quad x\in\mathbb R,\quad y>0, \\
\partial_xu^P +\partial_yv^P =0, \\
u^P|_{y=0} = v^P|_{y=0} =0 , \quad  \lim_{y\to+\infty} u =U(t,x), \\
u^P|_{t=0} =u^P_0 (x,y)\, ,
\end{array}\right.
\end{equation}
where      $u^P(t,x,y)$ and $v^P(t,x,y)$ represent the tangential and normal
velocities of the boundary layer, with $y$ being the scaled normal variable
to the boundary, while $U(t,x)$ and $p(t,x)$ are the values  on the
boundary of the tangential velocity and pressure of the outflow
satisfying  the Bernoulli law
\[
\partial_t U + U\partial_x U +\partial_x p=0.
\]
We refer to \cite{mamoudi,oleinik-3} for the mathematical derivation 
and background of this fundamental system in the field
of boundary layer.

By using the divergence free condition,
one can  represent
 $v$ in terms of $u$ so that the above system is reduced
 a scalar equation. Moreover,
note that the above $U$ and $p$ are known functions coming from the outflow
so that the Prandtl system is a degenerate parabolic  mix-type equation with
   loss
of derivative in the tangential direction $x$ because of  the term $v\partial_y u$.  In fact,
this is the main difficulty  in the  study of this boundary layer system.  

 Up to now, the well-posedness on 
the Prandtl system is achieved in various function spaces. Precisely,   when  the initial data satisfy the monotonic condition, that is, when the
tangetial velocity is monotonic with respect to $y$, in the classical work
by  Oleinik and her collaborators,  they  obtained the local-in-time
well-posedness by using  Crocco transformation. And this result together with some of her other works  were well presented in
the monograph \cite{oleinik-3}.   Recently,  Alexandre-Wang-Xu-Yang \cite{awxy}  and Masmoudi-Wong \cite{MW} independently obtained   the  well-posedness in the Sobolev space by the virtue of energy method  instead of the Crocco transformation,  where the key observation in their proofs  is the cancellation 
of the loss derivative terms.    On the other hand,
  for the initial data without the monotonicity assumption, it is natural to  perform estimate in the space of analytic functions, and in this context, the  well-posedness results were achieved by Sammartino and Caflisch, after the earlier work of Asano \cite{asa};  cf also \cite{cannone, KV} for the improvement.  The first result that does not require monotonicity and analyticity   was established  by    G\'erard-Varet and Masmoudi \cite{GM} in which 
 they  obtained the well-posedness in the Gevrey space $G^{7/4}$. In
fact, our paper is motivated by
\cite{GM} and we give an affirmative answer to an open question raised in
it. Also  in  the very recent work of Chen-Wang-Zhang \cite{cwz},
   the well-posedness  for the linearized Prandtl equation is studied
in Gevrey space $G^\sigma$  for any index $1\leq \sigma<2.$

 Recall 
that the Gevrey class, denoted by $G^\sigma, \sigma\geq 1$,  is an intermediate 
function space between analytic functions   and $C^\infty$ functions.  Note that  the Gevrey space $G^\sigma,\, \sigma>1$ contains compactly supported functions that are more physical, and this is the  main  difference from analytic  functions.    We also refer to \cite{lwx} for the smoothing effects in Gevrey space under the monotonicity assumption, and  the global weak solutions  by Xin-Zhang \cite{xin-zhang}.  
 On the other hand, without the monotonicity assumption on the
tangential velocity field,  the degeneracy may  cause  strong instability so
that the system is ill-posed  in Sobolev spaces, cf. \cite{e-2, GV-D, LY} and references therein.  

Without the assumption on monotonicity and analyticity, in the
recent interesting paper
 \cite{GM},  the authors established $G^{7/4}$ well-posedness for Prandtl equation with non-degenerate critical points with respect to the normal variable, and they also conjectured the result
should be valid for $G^2.$  In this paper, we will give an affirmative answer
to this conjecture. In fact, we show
 the well-posedness in all Gevrey space $G^\sigma$ with $\sigma\in[3/2, 2]$ and this includes the case studied in \cite{GM}. In addition,
   we  believe  the well-posedness  result can be extended, with
some new technique such as subelliptic estimates,  to $\sigma\in[1,3/2]$.     Finally, as the aforementioned works,  the present paper also
aims at giving insight on  the justification of inviscid limit for the Navier-Stokes equation with physical boundary, for this, we refer to \cite{GMM, GN, Ma} and the references therein for the recent progress. 

To have a clear presentation,  we will construct  a solutions $u^P$ that  is  a small perturbation around a shear flow,  that is, $u^P(t,x,y)=u^s(t, y)+u(t, x,y)$.   For this,  we suppose that the initial data $u^P_0$ in \eqref{prandtl} can be written as 
\begin{eqnarray*}
	u^P_0(x,y)=u_0^s(y)+u_0(x,y),
\end{eqnarray*}
with $u_0^s$ being independent of $x$ variable.   Then we reduce the original Prandtl equation \eqref{prandtl}   to  the following two time evolutional equations,    one of which is the equation for the shear flow $(u^s, 0)$ with $u^s$ solving 
\begin{equation}\label{eqshearflow}
\left\{
	\begin{aligned}
	&\partial_t   u^s-\partial_y^2 u^s=0,\\
	&u^s \big|_{y=0}=0,\quad\lim_{y\rightarrow +\infty} u^s=1,\\
	&u^s\big|_{t=0}=u_{0}^s.
\end{aligned}	
\right.
\end{equation} 
and the another reads
\begin{equation}\label{++repran}
\left\{
	\begin{aligned}
	&\partial_t     u+\inner{u^{s}+   u}\partial_x  u +  v \partial_y \inner{  u^s+u}-\partial_y^2    u=0,\\
	&   u \big|_{y=0}=0,\quad\lim_{y\rightarrow +\infty}    u=0,\\
	&   u\big|_{t=0}=  u_{0},
\end{aligned}	
\right.
\end{equation}
where 
\begin{eqnarray*}
  v=- \int_0^y\partial_x u(x,\tilde y)\,d\tilde y.\end{eqnarray*}
Note the equation \eqref{eqshearflow} is the heat equation and the well-posedness problem is well studied.  In this paper, we assume that the initial datum $u_0^s$ in \eqref{eqshearflow} admits non-degenerate critical points.  Precisely, we impose

\begin{assumption}[Assumption on the initial data $u_0^s$]
\label{maas} There exists a $y_0>0$ such that
 $  u_0^s\in C^6(\mathbb R_+)$  satisfies    the
following properties (see Figure 1):
\begin{enumerate}[(i)]
\item $\frac{du_0^s}{dy}  \inner{y_0}=0$ and  $\frac{d^2u_0^s}{dy^2}  \inner{y_0}\neq 0.$  Moreover, there exist  $0<\delta<y_0/2$ and a constant $c_0$ such that
\begin{eqnarray*}
\forall~y\in\big[ y_0-2\delta,y_0+2\delta\big],\quad \Big|\frac{d^2u_0^s}{dy^2}  \inner{y}\Big|	\geq c_0. 
\end{eqnarray*}
 
\item There exists a constant $0<c_1<1$   such that 
  \begin{eqnarray*}
	\forall~y \in  \big  [0,  y_0-\delta \big  ]\cup  \big [ y_0+ \delta ,~+\infty \big[ ,\quad c_1 \comi y^{-\alpha}\leq \Big|\frac{du_0^s(y)}{dy}\Big|\leq  c_1^{-1} \comi y^{-\alpha} 
\end{eqnarray*}
for some $\alpha>1,$
and that
\begin{eqnarray*}
	\forall~y\geq 0,\quad \abs{\frac{d^j u_0^s(y)}{dy^j}}\leq  c_1^{-1}  \comi y^{-\alpha-1} ~~{\rm for} ~ 2\leq j\leq 6.
\end{eqnarray*}
\item  The compatibility condition holds, that is, $u_0^s\big|_{y=0}=\partial_y^2 u_0^s\big|_{y=0}=0$ and  $
   u_0^s(y) \rightarrow 1
$ as $y\rightarrow +\infty.$
 \end{enumerate}
\end{assumption}

\begin{figure}[H]\label{fig1}
\begin{center}
 \includegraphics{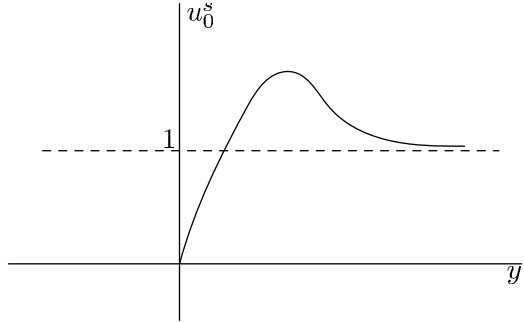}
 \end{center}
\caption{\small {An {example} of $u_0^s$}}
\label{fig region}
\end{figure}

\begin{remark}
\begin{enumerate}[(i)]
\item 
For  brevity of  presentation,  we only consider the case 
when the initial datum admits  one non-degenerate critical  point.  The
 result can be generalized to the case when there are several  non-degenerate critical points 
with slight modification.  
\item  {The initial datum $u_0^s$ here is not monotonic anymore. Note that in the work  \cite{awxy}  the monotonicity condition  is  required to overcome the loss of derivative in the $x$ variable. }  
\end{enumerate}
\end{remark}

\begin{proposition} [Well-posedness for the shear flow] \label{proshf} Let the initial data $u_0^s$ satisfy the conditions  in Assumption \ref{maas}.  Then
there exists a constant   $T_s>0$  such that the heat equation   \eqref{eqshearflow} admits a unique solution $u^s$ in $C\inner{[0,T_s];~C^6(\mathbb R_+)}$.
  In addition,    for any $t\in[0,T_s],$ we have, using the notation $\omega^s=\partial_y u^s,$
 \begin{eqnarray*}
 \forall~y\in\big[y_0-\frac{7}{4}\delta, y_0+\frac{7}{4}\delta\big],\quad \abs{\partial_y\omega^s(t,y)} \geq c_0/2,
 \end{eqnarray*}
 \begin{eqnarray*}
 \forall~y \in  \big  [0,  y_0-\frac{5 }{4}\delta \big  ]\cup  \big [ y_0+ \frac{5 }{4}\delta ,~+\infty \big[ ,\quad 2^{-1}c_1 \comi y^{-\alpha}\leq \abs{\omega^s(t,y)}\leq  2 c_1^{-1} \comi y^{-\alpha},
 \end{eqnarray*}
 and  
 \begin{eqnarray*}
 \forall~y\geq 0,\quad \abs{\partial_y^j \omega^s(t, y)}\leq   2c_1^{-1} \comi y^{-\alpha-1} ~{\rm for}~  1\leq j\leq 5. 
 \end{eqnarray*}
Recall $c_0,c_1, \delta$ are the constants given in  Assumption \ref{maas}.
\end{proposition}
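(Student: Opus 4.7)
The plan is to realize $u^s$ via the half-line heat semigroup, derive the boundary conditions satisfied by each derivative $\partial_y^j u^s$ by differentiating the equation at $y=0$, and then exploit a simple weighted kernel inequality to convert short-time continuity of the semigroup into the sharp pointwise estimates demanded by the proposition. Using the Dirichlet heat kernel
\[
u^s(t,y)=\int_0^{+\infty}\bigl[G_0(t,y-z)-G_0(t,y+z)\bigr]u_0^s(z)\,dz,\qquad G_0(t,\xi)=\frac{1}{\sqrt{4\pi t}}e^{-\xi^2/(4t)},
\]
the compatibility conditions $u_0^s(0)=\partial_y^2 u_0^s(0)=0$ together with the decay bounds of Assumption \ref{maas}(ii) yield a unique solution with the stated regularity $C\inner{[0,T_s];C^6(\mathbb{R}_+)}$. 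Moreover, from $u^s(t,0)=0$ and the PDE one computes $\partial_y^2 u^s(t,0)=\partial_t u^s(t,0)=0$, so $\omega^s$ solves the heat equation on $\mathbb{R}_+$ with homogeneous Neumann boundary data, $\partial_y\omega^s$ with Dirichlet data, and so on, alternating at each differentiation.

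The central analytic ingredient is the following short-time weighted Gaussian estimate: from the elementary inequality $\comi{y}\leq \comi{z}+|y-z|\leq \comi{z}(1+|y-z|)$, one gets $\comi{z}^{-\gamma}\leq \comi{y}^{-\gamma}(1+|y-z|)^\gamma$, whence
\[
\int_{-\infty}^{+\infty}G_0(s,y-z)\comi{z}^{-\gamma}\,dz \leq \comi{y}^{-\gamma}\int_{-\infty}^{+\infty}G_0(s,w)(1+|w|)^\gamma\,dw \leq \bigl(1+C_\gamma\sqrt{s}\bigr)\comi{y}^{-\gamma}.
\]
The same estimate transfers to the reflected kernels $G_0(s,y-z)\pm G_0(s,y+z)$ on $\mathbb{R}_+$ via even/odd extension, so both the Neumann and Dirichlet half-line heat semigroups approximately preserve the pointwise weight $\comi{y}^{-\gamma}$ with multiplier $1+O(\sqrt{s})$.

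Applied directly to the pointwise bound $|\partial_y^j\omega_0^s|\leq c_1^{-1}\comi{y}^{-\alpha-1}$ from Assumption \ref{maas}(ii), this yields for $1\leq j\leq 5$
\[
|\partial_y^j\omega^s(t,y)|\leq c_1^{-1}\bigl(1+C\sqrt{t}\bigr)\comi{y}^{-\alpha-1}\leq 2c_1^{-1}\comi{y}^{-\alpha-1},
\]
provided $T_s$ is small enough. To compare $\omega^s$ with $\omega_0^s$, set $W=\omega^s-\omega_0^s$, which satisfies $(\partial_t-\partial_y^2)W=\partial_y^2\omega_0^s$ with zero initial datum and Neumann boundary condition (using $\partial_y\omega_0^s(0)=\partial_y^2 u_0^s(0)=0$); Duhamel's formula combined with the weighted kernel bound applied to $\partial_y^2\omega_0^s$ gives $|W(t,y)|\leq Ct\,\comi{y}^{-\alpha-1}$. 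Since the set $[0,y_0-\frac{5}{4}\delta]\cup[y_0+\frac{5}{4}\delta,+\infty)$ lies inside the initial good set $[0,y_0-\delta]\cup[y_0+\delta,+\infty)$, and $\comi{y}^{-\alpha-1}\leq \comi{y}^{-\alpha}$ for $y\geq 0$, the triangle inequality yields $2^{-1}c_1\comi{y}^{-\alpha}\leq |\omega^s(t,y)|\leq 2c_1^{-1}\comi{y}^{-\alpha}$ on the required set once $t$ is smaller than a threshold depending only on $c_1$ and $\alpha$.

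The critical-point non-degeneracy is handled by the same mechanism applied to $\Omega=\partial_y\omega^s$, which solves the heat equation with homogeneous Dirichlet boundary data; the uniform estimate $\abs{\Omega(t,y)-\Omega_0(y)}\leq c_1^{-1}t$ (Duhamel with source $\partial_y^4u_0^s$ and the $L^\infty$-contractivity from the maximum principle) then gives $|\partial_y\omega^s(t,y)|\geq c_0-c_1^{-1}t\geq c_0/2$ on $[y_0-\frac{7}{4}\delta,y_0+\frac{7}{4}\delta]\subset[y_0-2\delta,y_0+2\delta]$ once $t\leq c_0c_1/2$. Taking $T_s>0$ below the minimum of the finitely many smallness thresholds arising above concludes the argument. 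The main technical obstacle is the weighted kernel inequality, which is elementary but essential to turn crude $L^\infty$-type continuity into the sharp polynomial-decay-preserving pointwise bounds that the proposition demands, and which are in turn crucial for the Prandtl energy estimates in the subsequent sections.
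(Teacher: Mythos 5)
The paper does not actually prove this proposition; it observes that $u^s$ has an explicit heat-kernel representation and defers to Lemma~2.1 of the second version of \cite{xz} for the details. Your plan --- realize $u^s$ and its $y$-derivatives via the Dirichlet/Neumann half-line semigroups, use the elementary weighted Gaussian bound $\int G_0(s,y-z)\comi{z}^{-\gamma}\,dz\leq(1+C_\gamma\sqrt{s})\comi{y}^{-\gamma}$, and run a Duhamel comparison with the data --- is exactly the intended ``direct estimation,'' and it cleanly delivers the two-sided bound on $\omega^s$, the non-degeneracy of $\partial_y\omega^s$, and the weighted bounds on $\partial_y^j\omega^s$ for $j\leq 3$.

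There is, however, a genuine gap at the top of the range $1\leq j\leq 5$. Your claim that $\partial_y^j\omega^s$ equals the (alternating) Neumann or Dirichlet semigroup applied to $\partial_y^j\omega_0^s$ requires that each integration by parts passing a $z$-derivative across the reflected kernel produce no boundary contribution. For the Dirichlet steps the boundary term cancels by the kernel's odd reflection; for the Neumann steps it is proportional to $G_0(t,y)\,\partial_y^{2k}u_0^s(0)$. At $j=2$ this vanishes by the assumed compatibility $\partial_y^2 u_0^s(0)=0$, but at $j=4$ the term $2G_0(t,y)\,\partial_y^4 u_0^s(0)$ survives (and its $y$-derivative for $j=5$), and Assumption~\ref{maas} does \emph{not} impose $\partial_y^4 u_0^s(0)=0$. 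This term is of size $\sim t^{-1/2}$ at $y\sim\sqrt{t}$, so the uniform bound $|\partial_y^j\omega^s(t,y)|\leq 2c_1^{-1}\comi{y}^{-\alpha-1}$ on $t\in[0,T_s]$ cannot hold for $j=4,5$ as you (and the paper) assert. The missing hypothesis is precisely the higher parabolic compatibility ($\partial_y^4 u_0^s(0)=0$, and $\partial_y^6 u_0^s(0)=0$ for a clean $C\big([0,T_s];C^6\big)$ statement) that makes the odd extension of $u_0^s$ genuinely $C^6$; it appears to be omitted from Assumption~\ref{maas}(iii) itself and is inherited silently by your write-up, but it must be added, or the range of $j$ reduced, before the semigroup representation you rely on for the top two derivatives is legitimate.
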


Observe that the solution to \eqref{eqshearflow} has explicit representation by virtue of heat kernels. Then  the above proposition  follows from
 direct estimation. For  brevity, we omit its proof  and refer to  Lemma 2.1 in the second version of \cite{xz} for detailed discussion.  So it remains to solve \eqref{++repran}, which is the main part of the paper.  And we will solve the equation in the framework of  Gevery space in $x$ and Sobolev space in $y.$  To state the main result,  we first introduce the function spaces to be used.

\begin{definition}[Gevrey space in tangential variable]
\label{defgev} 
Let $\alpha $ be the number  given  in Assumption \ref{maas}, and let $\ell$ be a fixed number
satisfying that
\begin{equation}\label{ellalpha} 
	\ell >3/2, \quad \alpha\leq\ell< \alpha+\frac{1}{2}. 
\end{equation} 
 With each pair $(\rho,\sigma)$, $\rho>0,\sigma\geq 1, $ we associate a Banach space $X_{\rho,\sigma}$, equipped with the norm $\norm{\cdot}_{\rho,\sigma}$ that consists of all the smooth functions $f$ such that  $\norm{f}_{\rho,\sigma}<+\infty,$ where   
\begin{eqnarray}\label{trinorm}
\begin{split}
	\norm{f}_{\rho,\sigma} \stackrel{\rm def}{ =}&\sup_{m\geq 6}   \frac{\rho^{m-5}}{\big[\inner{m-6}!\big]^{\sigma}} \big\|\comi y^{\ell-1}\partial_x^{m} f\big\|_{L^2} +\sup_{m\geq 6}   \frac{\rho^{m-5}}{\big[\inner{m-6}!\big]^{\sigma}}\big\|\comi y^{\ell}\partial_x^{m } (\partial_y f)\big\|_{L^2}  \\
	&+\sup_{0\leq m\leq 5}  \big(\big\|\comi y^{\ell-1}\partial_x^{m} f\big\|_{L^2}+\big\|\comi y^{\ell}\partial_x^{m} (\partial_yf)\big\|_{L^2}\big) \\
	&+\sup_{\stackrel{1\leq j\leq 4}{i+j\geq 6}} \frac{\rho^{i+j-5}}{\big[\inner{i+j-6}!\big]^{\sigma}}\big\|\comi y^{\ell+1} \partial_x^{i} \partial_y^j (\partial_y f)\big\|_{L^2}  +\sup_{\stackrel{1\leq j\leq 4}{i+j\leq 5}}  \big\| \comi y^{\ell+1}\partial_x^{i} \partial_y^j (\partial_y f)\big\|_{L^2}.
\end{split}
\end{eqnarray}
  \end{definition}
 
\begin{remark}
For the classical Gevrey space $G^\sigma=\cup_{L>0}G^\sigma(L)$  in $x$ variable,
$f\in G^\sigma(L)$ if  the following estimates hold:
\begin{eqnarray*}
 \forall~m\geq 0,\quad	 \big\|\comi y^{\ell-1}\partial_x^{m} f\big\|_{L^2}+\big\|\comi y^{\ell }\partial_x^{m} (\partial_y f)\big\|_{L^2}  \leq   L^{m+1} (m!)^\sigma,
 	 \end{eqnarray*}
 	 and 
 	 \begin{eqnarray*}
 \forall~i\geq 0,~\forall~1\leq j\leq 4,	\quad  	 \big\|\comi y^{\ell+1} \partial_x^{i} \partial_y^j (\partial_y f)\big\|_{L^2} \leq   L^{i+j+1} [(i+j)!]^\sigma. 
 	 \end{eqnarray*}
 The  space $X_{\rho,\sigma}$ given in Definition \ref{defgev} is equivalent to the classical Gevrey space $G^\sigma$ in the following sense.  If  $f\in X_{\rho,\sigma}$ for some $\rho>0$ then we can find a constant $C$ such that 
 $\norm{f}_{\rho,\sigma}\leq C. $  
  Thus direct calculation shows  $f\in G^\sigma(L)$ 
if we choose  $$L= {1\over \rho}+\sup_{0\leq m\leq 5}  \big(\big\|\comi y^{\ell-1}\partial_x^{m} f\big\|_{L^2}+\big\|\comi y^{\ell}\partial_x^{m} (\partial_yf)\big\|_{L^2}\big)+\sup_{\stackrel{1\leq j\leq 4}{i+j\leq 5}} \big\| \comi y^{\ell+1}\partial_x^{i} \partial_y^j (\partial_y f)\big\|_{L^2}+C.$$
Conversely, if $f\in G^\sigma(L)$, then  $f\in X_{\rho,\sigma}$,  provided $\rho$ is chosen in such a way that 
\begin{eqnarray*}
\forall~m\geq 6,\quad	L^{m+1} (m!)^\sigma \leq  \rho ^{-(m-5)} \com{(m-6)!}^\sigma.
\end{eqnarray*}	
\end{remark}
 
 In view of  the definition $\norm{\cdot}_{\rho,\sigma},$  we see the the order of $y$ derivatives is at most $5.$   Then, if the  equation \eqref{++repran} is well-posed in $X_{\rho,\sigma}$,  the initial data $u_0$ should satisfy the following compatibility conditions, using the notation $\omega_0=\partial_yu_0,$ 
 
 \begin{equation}\label{comcon}
 \left\{
 \begin{aligned}
 &u_0|_{y=0}=\partial_y\omega_0|_{y=0}=0,\\
 &\partial_y^3\omega_0\big |_{y=0}=  \inner{\omega_0^s+\omega_0}\partial_x\omega_0\big |_{y=0}.	
	\end{aligned}
	\right.
	\end{equation}

 Now we state the main result in this paper as follows.

\begin{theorem}\label{mainthm}
	For $\sigma\in[3/2, 2]$, let the initial datum $u_0$ in \eqref{++repran} belong to $X_{2\rho_0,\sigma}$ for some $\rho_0>0$ and moreover
	\begin{eqnarray*}
		\norm{u_0}_{2\rho_0,\sigma}\leq \eta_0
	\end{eqnarray*} 
	for some $\eta_0>0.$  Suppose      that the   compatibility condition  \eqref{comcon} holds for $u_0$.
	 Then \eqref{++repran} admits a unique solution $u\in L^\infty\big([0,T];~X_{\rho,\sigma}\big)$ for some $T>0$ and some $0<\rho<2\rho_0,$ provided  $\eta_0$   is sufficiently small. 
\end{theorem}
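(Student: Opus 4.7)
The plan is to establish short-time \emph{a priori} energy estimates in a time-shrinking Gevrey norm $\norm{\cdot}_{\rho(t),\sigma}$ with $\dot\rho(t)<0$, using good-unknown cancellations to absorb the loss of one tangential derivative present in \eqref{++repran}. First I would pass to the vorticity formulation by setting $\omega=\partial_y u$: a direct computation using $\partial_y v=-\partial_x u$ gives
\begin{equation*}
\partial_t\omega+(u^s+u)\partial_x\omega+v\,\partial_y(\omega^s+\omega)-\partial_y^2\omega=0,
\end{equation*}
and applying $\partial_x^m$ shows that the only dangerous term is $v_m\,\partial_y\omega^s$, where $v_m=-\int_0^y\partial_x^{m+1}u\,d\tilde y$ loses one $x$-derivative.

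Motivated by Proposition \ref{proshf}, I would split the analysis through a smooth $y$-cut-off into a \emph{far} region where $|\omega^s+\omega|\gtrsim\comi{y}^{-\alpha}$ and a \emph{near} region $|y-y_0|\leq\tfrac{7}{4}\delta$ where $|\partial_y(\omega^s+\omega)|\geq c_0/3$ (both inequalities being enforced after shrinking $\eta_0$). In the far region, for each $m\geq 6$, introduce the good unknown
\begin{equation*}
g_m=\partial_x^m\omega-\frac{\partial_y(\omega^s+\omega)}{\omega^s+\omega}\,\partial_x^m u,
\end{equation*}
which is well defined because the denominator is bounded below, and is designed so that the $v_m\,\partial_y\omega^s$-type contribution cancels in the evolution equation for $g_m$, exactly as in the monotonic setting of \cite{awxy,MW}. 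In the near region the denominator vanishes, but one can instead exploit the non-degeneracy $|\partial_y\omega^s|\geq c_0/2$: differentiating the vorticity equation once more in $y$ and introducing
\begin{equation*}
h_m=\partial_x^m(\partial_y\omega)-\frac{\partial_y^2(\omega^s+\omega)}{\partial_y(\omega^s+\omega)}\,\partial_x^m\omega,
\end{equation*}
yields, after an analogous algebraic cancellation, an equation for $h_m$ free of the worst $x$-derivative loss; $\partial_x^m\omega$ is then recovered from $h_m$ in the near region by integrating once in $y$ and using that $|\partial_y(\omega^s+\omega)|$ is bounded below.

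The third step is the energy estimate. Pairing the equations for $g_m$ and $h_m$ with $\comi{y}^{2(\ell-1)}g_m$ and $\comi{y}^{2\ell}h_m$ respectively and integrating, the parabolic term yields dissipation in $\partial_y g_m$ and $\partial_y h_m$, while the transport and commutator terms, upon summation against the Gevrey weights $\rho(t)^{m-5}/[(m-6)!]^\sigma$ of Definition \ref{defgev}, are controlled by standard Gevrey convolution estimates and contribute at most cubic terms in $\norm{u}_{\rho,\sigma}$. A residual term representing the cost of the single tangential derivative not completely removed by the near-region construction (together with the $x$-derivatives produced when $\partial_x^m$ hits the cut-off function) is then absorbed by choosing $\dot\rho(t)\leq-\lambda<0$ and combining with the parabolic gain in $\partial_y$; this absorption step is where the restriction $\sigma\geq 3/2$ enters, via the interpolation between two consecutive Gevrey levels and the index $\ell$ satisfying \eqref{ellalpha}. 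Local existence then follows from a Picard-type iteration on a short interval $[0,T]$ with $\rho(0)=2\rho_0$, and uniqueness from the same scheme applied to the difference of two solutions.

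The technical heart of the proof, and its main obstacle, is the near-region analysis: one must carefully balance the cut-off commutators, the Hardy-type recovery of $\partial_x^m\omega$ from the once-integrated $h_m$, and the combinatorial cost of the uncancelled tangential derivative so that the time-decay of $\rho(t)$ together with the parabolic gain suffices to close the estimate. Below $\sigma=3/2$ the Gevrey combinatorial cushion becomes too thin for this energy scheme, and one would need the sub-elliptic gain coming from the non-degenerate critical point, which the introduction flags as a separate future project.
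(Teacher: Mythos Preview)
Your proposal captures the basic architecture (far/near partition, the good unknown $f_m$ in the monotone region, the second good unknown $h_m$ in the critical region, time-decreasing Gevrey radius), but it has a genuine gap in the near-region step that prevents the argument from closing for $\sigma>7/4$.

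The claim that the equation for $h_m$ is ``free of the worst $x$-derivative loss'' is not correct. The cancellation built into $h_m$ kills the term $(\partial_x^m v)(\partial_y^2\omega^s+\partial_y^2\omega)$, but the equation for $\partial_y\omega$ carries a genuine source
\[
-(\omega^s+\omega)\partial_x\omega+(\partial_y\omega^s+\partial_y\omega)\partial_xu,
\]
and after applying $\partial_x^m$ this produces, in the equation for $h_m$, the term
\[
-\chi_2\,\partial_x^{m}\Big[(\omega^s+\omega)\partial_x\omega-(\partial_y\omega^s+\partial_y\omega)\partial_xu\Big]
\;=\;-\chi_2\,g_{m+1},
\]
which still sits at order $m+1$ in $x$. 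If you try to absorb this purely by shrinking $\rho(t)$ and invoking the parabolic $\partial_y$-gain, you are back to the counting of G\'erard-Varet--Masmoudi and the scheme does not reach $\sigma=2$; the $\partial_y$-dissipation gives no direct leverage on a pure $x$-derivative loss.

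The missing idea is a \emph{third} auxiliary unknown: set $g_1=(\omega^s+\omega)\partial_x\omega-(\partial_y\omega^s+\partial_y\omega)\partial_xu$ and observe, by one more round of cancellation between the $u$- and $\omega$-equations, that $g_1$ itself satisfies a transport--diffusion equation with right-hand side at the \emph{same} tangential order,
\[
\big(\partial_t+(u^s+u)\partial_x+v\partial_y-\partial_y^2\big)g_1
=2(\partial_y^2\omega^s+\partial_y^2\omega)\partial_x\omega-2(\partial_y\omega^s+\partial_y\omega)\partial_x\partial_y\omega.
\]
Hence $g_m=\partial_x^{m-1}g_1$ obeys a closed energy estimate with no derivative loss, and one can build the extra factor $m$ into the Gevrey norm (i.e.\ control $m\norm{g_m}_{L^2}$ at level $m$). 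It is precisely this gain $\norm{g_{m+1}}_{L^2}\lesssim m^{-1}\cdot(\text{level }m{+}1)$, fed back into the $h_m$-equation, that lets the scheme close up to $\sigma=2$. Your write-up should also replace the vague ``integrate $h_m$ once in $y$'' by the actual mechanism used near the critical point: test the $\chi_2\partial_x^m\omega$-equation against $-\chi_2\partial_x^m\omega/(\partial_y\omega^s+\partial_y\omega)$ and control the cross term $(\chi_2\partial_x^m v,\chi_2\partial_x^m\omega)_{L^2}$ via the representation of $\partial_x^m u$ in terms of $\tilde g_m$ (Lemma~6 of \cite{GM}); this is where the bound on $g_m$ re-enters.
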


\begin{remark}
(i) For  clear presentation, we consider the solution
as a perturbation around the shear flow. In fact,  the
 method can be applied to the general periodic case   {
studied in \cite{GM},  and we will  clarify further in Section \ref{sectiongeneral} why our result holds for the general  case without requiring the small perturbation around a shear flow.}  
\newline
(ii) As pointed  out in \cite{GM}, it is natural, inspired by \cite{GV-D},  to ask whether the $\sigma=2$ is the critical Gevrey index for the well-posedness for Prandtl equation.

\end{remark}

\noindent {\bf The methodologies}. 
At the end of the introduction, we will present the main methodologies
used in the proof.  

(i)  After applying $\partial_x^m$ to the equation \eqref{++repran} for the velocity, the main difficulty arises from the term 
\begin{eqnarray*}
(\partial_x^m v) \inner{\omega^s+\omega},
\end{eqnarray*}
which results in the lost of derivative in $x$ variable. Under Oleinik's   monotonicity assumption on the tangential velocity field, this can be  overcome  by using the cancellation  introduced by AWXY \cite{awxy} and Masmoudi-Wong \cite{MW}. In fact, this cancellation method  works at least in the domain where $u^s+u$ admits monotonicity.   Precisely, we  apply $\partial_x^m$ to the equation  
 for the vorticity $\omega=\partial_y u$
\begin{eqnarray*} 	
 \partial_t\omega + (u^s+u) \partial_x\omega  + v\partial_y(\omega^s+\omega) - \partial_y^2\omega=0,
\end{eqnarray*}
in which  the most difficult term  is $(\partial_x^m v) \inner{\partial_y\omega^s+\partial_y\omega}$.  To capture the
 cancellation, one can work on the  function, introduced in \cite{GM},
\begin{eqnarray*}
		f_m=\chi_1\partial_x^m \omega-\chi_1\frac{  \partial_y\omega^s+\partial_y \omega }{\omega^s+\omega }\partial_x^m u,\quad m\geq 1,
\end{eqnarray*}
where $\chi_1$ is a smooth function supported in the monotonic region.  

(ii) As for the domain near the critical points, we do not have the monotonicity anymore. One of the new  observations in this paper
 is that we can also apply the cancellation  to the  equation for the 
vorticity  and the equation for $\partial_y \omega$ 
\begin{eqnarray*}
	 \partial_t \inner{\partial_y \omega}+ u \partial _x\inner{\partial_y \omega}  + v\partial _y \inner{\partial_y \omega} - \partial_{y}^2\inner{\partial_y \omega}=- (\omega^s+\omega) \partial_x\omega+\inner{\partial_y\omega^s+\partial_y\omega}\partial_x u,
\end{eqnarray*}    
by using another auxilliary function
\begin{eqnarray*}
	h_m=	\chi_2 \partial_x^m\partial_y \omega-\chi_2\frac{  \partial_y^2\omega^s+\partial_y^2 \omega }{\partial_y\omega^s+\partial_y\omega }\partial_x^m \omega,
\end{eqnarray*}
where   $\chi_2$ is a cut-off function compactly supported in the region admitting the non-degenerate critical points. However, even
with this, we also have the loss of $x$ derivative because 
\begin{eqnarray*}
	g_{m+1}\stackrel{\rm def}{=} \partial_x^{m}\big[(\omega^s+\omega) \partial_x\omega-\inner{\partial_y\omega^s+\partial_y\omega}\partial_x u\big]
\end{eqnarray*}
appears in the equation for $h_m.$  Nonetheless, we can use again the cancellation method to the equations for the  velocity and the vorticity, to obtain a equation for $g_{m+1}.$ Precisely, we apply $\partial_x$ to the equations for velocity $u$ and for vorticity $\omega=\partial_yu$, and then multiply respectively the obtained equations by the factors $\partial_y\omega^s+\partial_y\omega$ and $ \omega^s+\omega$ respectively, and finally subtract one from another. We then
 obtain the equation for $g_1\stackrel{\rm def}{ =}(\omega^s+\omega) \partial_x\omega-\inner{\partial_y\omega^s+\partial_y\omega}\partial_x u$ as follows.
\begin{eqnarray*}
		 \Big(\partial_t    + \inner{u^{s}+u} \partial_x +v\partial_y  -\partial_y^2    \Big)
		 g_1=   2\inner{\partial_y^2\omega^s+\partial_y^2\omega} \partial_x \omega  -2\inner{\partial_y\omega^s+\partial_y\omega} \partial_x \partial_y\omega.
\end{eqnarray*} 
Note that the order of $x$ derivative for terms on right hand side is  equal to $1$ that is the same as in the representation of $g_1.$ The above equation  allows us to perform estimation on $g_{m+1}=\partial_x^m g_1$ in Gevrey norm by standard energy method.     

(iii) From the above procedure, we have the upper bound, by energy method, for  the auxilliary functions $f_m$ and $h_m.$ It remains to control the original $\partial_x^mu$ and $\partial_x^m\omega$ as well as the mixed derivatives, in terms of the auxilliary functions. This is clear when there is no cut-off functions $\chi_i$ involved, by virtue of the Hardy-type inequality (see \cite{MW} for instance under the   monotonicity assumption).    In  case considered
in this paper, we first follow the cancellation idea used in \cite{GM}, by taking $L^2$  inner product with   $\frac{\chi_2 \partial_x^m\omega}{\partial_y\omega^s+\partial_y\omega}$  on both sides of the equation for $\partial_x^m\omega$,  to obtain the estimate on $\chi_2\partial_x^m\omega$. Then by using the representation of $h_m$, we can  derive similar estimate on $\chi_2\partial_x^m\partial_y\omega$ from 
those on $h_m.$  Roughly speaking, this implies  $\chi_2\partial_x^m\partial_y\omega$ behaves similarly as the terms with $m$ order derivatives involved, rather than the $m+1$ order of mixed derivatives in Definition \ref{defgev}.     And this 
is the advantage of the new auxilliary function $h_m$ introduced in this
paper and this enables us to extend the well-posedness of the Prandtl
system from the Gevrey index $\sigma=7/4$ obtained
in \cite{GM} to $\sigma\in[3/2, 2].$

The rest of the paper is organized as follows.  Section  \ref{sec2}-\ref{sec5}  are devoted to the proof of  the uniform estimate  in Gevrey norm  for the approximate solutions to a regularized Prandtl equation.   In Section \ref{sec6},  we will give the proof of existence of the regularized Prandtl equation and in Section \ref{sec7} we will prove the main result of this paper.  {We explain in Section \ref{sectiongeneral} 
why the main result in this paper holds for the general  initial data rather than the small perturbations around a shear flow.}  The proofs
of some technical lemmas will be given in the Appendix.

\section{Regularized Prandtl equation and uniform estimates in Gevrey norm}\label{sec2}

In this section as well as Sections \ref{sec3}-\ref{sec6}, we will  study the initial-boundary problem  for the
following regularized Prandtl type equation of \eqref{++repran}
by  recalling $u^s$ given in Proposition \ref{proshf},       
\begin{equation}\label{regpran}
\left\{
	\begin{aligned}
	&\partial_t     u_\eps+\inner{u^{s}+   u_\eps}\partial_x  u_\eps +  v_\eps \partial_y \inner{  u^s+u_\eps}-\partial_y^2    u_\eps-\eps\partial_x^2 u_\eps=0,\\
	&   u_\eps \big|_{y=0}=0,\quad\lim_{y\rightarrow +\infty}    u_\eps=0,\\
	&   u_\eps\big|_{t=0}=  u_{0},
\end{aligned}	
\right.
\end{equation}
where 
$\eps>0$ is an arbitrarily  small number and $
  v_\eps=- \int_0^y\partial_x u_\eps(x,\tilde y)\,d\tilde y.$    We remark  the  regularized equation above shares the same compatibility condition \eqref{comcon} as the original one \eqref{++repran}. 
  
  The existence  of 
solutions to  \eqref{regpran} will be
 given in Section \ref{sec6},  where the life span $T^{*}_\eps$  may depend on the $\eps.$  Thus in order to obtain the solution to the original equation by letting $\eps\rightarrow 0,$ we  need  an uniform estimate,
for example,   in the Gevrey norm for $u_\eps,$  that will be stated in this section with the proof given in   Sections \ref{sec3}-\ref{sec5}.    To simplify the notations, we will use  the notations  $\omega_\eps=\partial_y u_\eps$  and  $\omega^s =\partial_y u^s$ from now on.
  
Throughout the paper, we will work on those solutions $u_\eps$  that   the properties listed in Proposition \ref{proshf} for $u^s$ can be
 preserved by $u^s+u_\eps.$ Precisely, we suppose that the solution  $u_\eps\in L^\infty\inner{[0, T];~X_{\rho_0,\sigma}}$   to \eqref{regpran}  has the following properties.  For any      $t\in[0, T]$  and  any $x\in\mathbb R,$   we have
 \begin{eqnarray}\label{condi}
\left\{
\begin{aligned}
  &\abs{\partial_y\omega^s(t,y)+\partial_y\omega_\eps(t,x,y)}\geq \frac{c_0}{4}, ~\,{\rm if}~~\,y\in\big[y_0-\frac{7}{4}\delta, y_0+\frac{7}{4}\delta\big], \\
  &4^{-1}c_1 \comi y^{-\alpha}\leq \abs{  \omega^s(t,y)+\omega_\eps(t,x,y)} \leq  4 c_1^{-1} \comi y^{-\alpha},~~\,{\rm if}~~\,y\in\big[0,  y_0-\frac{5 }{4}\delta \big  ]\cup  \big [ y_0+ \frac{5 }{4}\delta ,~+\infty \big[ ,\\
  & \abs{\partial_y\omega^s(t,y)+\partial_y\omega_\eps(t,x,y)}\leq 4c_1^{-1} \comi y^{-\alpha-1} ~~\, \textrm {for  } ~y\geq 0, \\
 &\sum_{1\leq j\leq 2}\Big(\norm{\comi y^{\ell-1}\partial_x^j u_\eps}_{L^\infty}  +\norm{\partial_x^{j-1} v_\eps}_{L^\infty}+  \norm{\comi y^{\ell}\partial_x^j \omega_\eps}_{L^\infty}\Big)+ \sum_{1\leq i, j \leq 2}\norm{\comi y^{\ell+1}\partial_x^i\partial_y^j\omega_\eps}_{L^\infty}\leq 1, 
 \end{aligned}
 \right.
\end{eqnarray}
where  $c_0, c_1$ and $\delta$ are the constants  given in  Assumption \ref{maas}.  
 
  According to the properties  \eqref{condi} above, we can divide the normal direction $y\geq 0$ into two parts, one is near the critical points of  $u^s+u_\eps$, and another one is away from the critical points where  $u^s+u_\eps$ admits
the monotonicity condition. That is, 
we can find two non-negative $C^\infty$ smooth functions $\chi_1$ and $\chi_2$
 depending only on $y$ such that
 \begin{equation}\label{chi1}
 0\leq \chi_1\leq1,\quad	\chi_1\equiv 1 ~\,\textrm{on}\, ~ \big]-\infty,~y_0-\frac{3}{2}\delta\big ] \cup \big[y_0+\frac{3}{2}\delta, ~+\infty\big[\, , \quad \chi_1\equiv 0 ~\,\textrm{on}\, ~ \big[y_0-\frac{5}{4}\delta, y_0+\frac{5}{4}\delta\big],
\end{equation}
and  
 \begin{equation}\label{cutofffu}
   0\leq \chi_2\leq1,\quad	\chi_2\equiv 1 ~\,\textrm{on}\, ~\big[y_0-\frac{3}{2}\delta, y_0+\frac{3}{2}\delta\big],\quad {\rm supp}\, \chi_2\subset\big[y_0-\frac{7}{4}\delta, y_0+\frac{7}{4}\delta\big].
\end{equation}
From the properties listed in \eqref{condi}, it follows that
  $\abs{\omega^s+\omega}>0$ on   supp\,$\chi_1$, and  $\abs{\partial_y\omega^s+\partial_y\omega}>0$ on   supp\,$\chi_2.$  Moreover,
\begin{equation}\label{suppch2}
	\chi_1'=\chi_1'\chi_2,\quad \chi_2'=\chi_2'\chi_1, ~~{\rm and } ~~\inner{1-\chi_2}=\inner{1-\chi_2}\chi_1,
\end{equation}
because $\chi_2\equiv 1 $ on  supp\,$\chi_1',$   $\chi_1\equiv 1 $ on  supp\,$\chi_2',$  and $\chi_1\equiv 1$  on  supp\,$(1-\chi_2).$ Here and  throughout the paper, $f'$ and $f''$ stand for the first and the
second order derivatives of  $f$.

\begin{definition}
Let    $\chi_1$ and $\chi_2$ given above and let  $u_\eps$  satisfy the properties  \eqref{condi}.    For $m\geq 1,$  we define  three  auxilliary functions $f_{m,\eps}, h_{m,\eps}$ and
$g_{m,\eps}$  according to the cancellation property:
 \begin{equation}\label{fungeps}
	f_{m,\eps}=\chi_1\partial_x^m \omega_\eps-\chi_1\frac{  \partial_y\omega^s+\partial_y \omega_\eps }{\omega^s+\omega_\eps }\partial_x^m u_\eps=\chi_1\inner{\omega^s+\omega_\eps}  \partial_y\inner{\frac{\partial_x^mu_\eps}{ \omega^s+\omega_\eps }},
\end{equation}
 \begin{equation}\label{funhm}
h_{m,\eps}=	\chi_2 \partial_x^m\partial_y \omega_\eps-\chi_2\frac{  \partial_y^2\omega^s+\partial_y^2 \omega_\eps }{\partial_y\omega^s+\partial_y\omega_\eps }\partial_x^m \omega_\eps,
\end{equation}
and
\begin{equation} 
\label{tildegm}
 g_{m,\eps}=
  \partial_x^{m-1}\Big(\inner { \omega^s+\omega_\eps}\partial_x \omega_\eps-\inner { \partial_y\omega^s+\partial_y\omega_\eps}\partial_x u_\eps\Big).
  \end{equation}
\end{definition}

\begin{definition}  
\label{gevspace}	
Let  $X_{\rho,\sigma}$ be given in Definition \ref{defgev}, equipped with the norm $\norm{\cdot}_{\rho,\sigma}$	 defined by \eqref{trinorm}.   Let    $\chi_1,\chi_2$ be given by \eqref{chi1}-\eqref{cutofffu},   and let  $u_\eps$ satisfy the properties  listed in \eqref{condi}.      We will use the notation  $\abs{\cdot}_{\rho,\sigma}$ which is given by   
\begin{eqnarray*}
	\abs{u_\eps}_{\rho,\sigma}&=&\norm{u_\eps}_{\rho,\sigma}+\sup_{1\leq m\leq 5}     \Big(m\norm{  g_{m,\eps}}_{L^2}+\big\|\comi y^{\ell} f_{m,\eps}\big\|_{L^2}+\norm{h_{m,\eps}}_{L^2}+\norm{\chi_2\partial_y\partial_x^m\omega_\eps}_{L^2}\Big)\\
	&&+ \sup_{m\geq 6}   \frac{\rho^{m-5}}{\big[\inner{m-6}!\big]^{\sigma}} \Big(m\norm{  g_{m,\eps}}_{L^2}+\big\|\comi y^{\ell} f_{m,\eps}\big\|_{L^2}+\norm{h_{m,\eps}}_{L^2}+\norm{\chi_2\partial_y\partial_x^m\omega_\eps}_{L^2}\Big).
\end{eqnarray*}
 Similarly we can define $\abs{u_0}_{\rho,\sigma}.$
\end{definition}

\begin{remark}
\begin{enumerate}[(i)]
\item Observe there is an extra factor $m$ in front of  the term $\norm{  g_{m,\eps}}_{L^2}$ in the definition of the norm $\abs{\cdot}_{\rho,\sigma}.$
\item
Direct calculation shows that 
\begin{equation} \label{eqnordif}
\norm{u_\eps}_{\rho,\sigma}\leq \abs{u_\eps}_{\rho,\sigma}\leq C_{\rho,\rho^*}\big( \norm{u_\eps}_{\rho*,\sigma}+\norm{u_\eps}_{\rho*,\sigma}^2\big)
\end{equation}
 for any   $\rho<\rho^*,$ with $C_{\rho, \rho^*}$ being
a constant depending only on the difference $\rho^*-\rho$.  
\end{enumerate}
\end{remark}

\begin{theorem}[uniform estimates in Gevrey space]\label{uniestgev}
Let $3/2\leq\sigma\leq 2.$  Let  $u_\eps\in L^\infty\inner{[0, T];~X_{\rho_0,\sigma}}$ be a solution to \eqref{regpran}    such that  the properties listed in  \eqref{condi} hold. 
 Then  there exists   a constant   $C_*>1,$  independent of $\eps$ and the solution $u_\eps,$    such that    the estimate 
 \begin{equation}\label{weesun}
	\abs{u_\eps (t)}_{\rho,\sigma}^2\leq C_* \abs{u_0}_{\rho, \sigma}^2+C_* \int_{0}^t \inner{\abs{u_\eps(s)}_{\rho,\sigma}^2+\abs{u_\eps(s)}_{\rho,\sigma}^4} \,ds+C_* \int_{0}^t\frac{ \abs{u_\eps(s)}_{\tilde\rho,\sigma}^2}{\tilde \rho-\rho}\,ds 
\end{equation}	 
 holds for any pair $(\rho,\tilde\rho)$ with   $0<\rho<\tilde \rho<\rho_0$, and for any $t\in[0,\tilde T]$, where $[0,\tilde T]$ is the maximal interval of existence for $\abs{u_\eps}_{\tilde \rho,\sigma}<+\infty.$  
\end{theorem}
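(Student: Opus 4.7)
\medskip
\textbf{Plan of proof.} The strategy is to derive a closed system of weighted $L^2$ energy estimates for the four families of functions appearing in $\abs{\cdot}_{\rho,\sigma}$: the low-order derivatives of $u_\eps$ that already show up in $\norm{\cdot}_{\rho,\sigma}$, the auxiliary functions $f_{m,\eps}$ and $h_{m,\eps}$, the sources $g_{m,\eps}$, and the mixed derivative $\chi_2\partial_y\partial_x^m\omega_\eps$. Each family will be handled by a parabolic energy argument based on its own evolution equation, and the outputs will be coupled and summed against the Gevrey weights $\rho^{m-5}/\com{(m-6)!}^\sigma$. The first step is to write down the PDEs satisfied by $f_{m,\eps},h_{m,\eps}$ and $g_{m,\eps}$ by differentiating \eqref{regpran}, its vorticity form, and the equation for $\partial_y\omega_\eps$ quoted in the methodology. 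By the cancellation identities stated in the introduction, the would-be loss of one $x$-derivative from $(\partial_x^m v_\eps)(\omega^s+\omega_\eps)$, respectively $(\partial_x^m v_\eps)(\partial_y\omega^s+\partial_y\omega_\eps)$, is absorbed into the advection operator acting on $f_{m,\eps}$ and $h_{m,\eps}$. For $f_{m,\eps}$ the remainder produces only commutator terms at or below order $m$, for $h_{m,\eps}$ it produces a residual $\chi_2 g_{m+1,\eps}/(\partial_y\omega^s+\partial_y\omega_\eps)$, and for $g_{m,\eps}=\partial_x^{m-1}g_{1,\eps}$ the explicit identity already stated in the methodology reveals a parabolic convection--diffusion equation whose right hand side involves only one tangential derivative of $\omega_\eps$. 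Boundary contributions at $y=0$ vanish because $\chi_2$ is supported away from the wall and because the compatibility condition \eqref{comcon} is preserved by the regularized flow.

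Once the three equations are available, the second step is to carry out standard parabolic energy estimates with weight $\comi y^{2\ell}$ for $f_{m,\eps}$ and unweighted for $h_{m,\eps},g_{m,\eps}$, using \eqref{condi} to bound the coefficients $(\partial_y\omega^s+\partial_y\omega_\eps)/(\omega^s+\omega_\eps)$ and $(\partial_y^2\omega^s+\partial_y^2\omega_\eps)/(\partial_y\omega^s+\partial_y\omega_\eps)$ on the supports of $\chi_1,\chi_2$. One then multiplies each estimate by the square of the Gevrey coefficient and sums over $m\geq 6$. Three types of nonlinear terms must be controlled: the tangential convection $(u^s+u_\eps)\partial_x$, which after integration by parts and a Leibniz expansion produces a convolution sum bounded by $\abs{u_\eps}_{\rho,\sigma}^3$; the normal convection $v_\eps\partial_y$, which is handled through the pointwise bound $\abs{\partial_x^j v_\eps}\leq\norm{\comi y^{\ell-1}\partial_x^{j+1}u_\eps}_{L^2}$ available thanks to $\ell>3/2$; and the coupling term $\chi_2 g_{m+1,\eps}$ in the $h_{m,\eps}$ equation, which is precisely why the norm $\abs{\cdot}_{\rho,\sigma}$ carries an extra factor $m$ in front of $\norm{g_{m,\eps}}_{L^2}$: the shift $m\mapsto m+1$ multiplies the Gevrey weight by $\rho^{-1}(m-5)^\sigma$, and the $m$-prefactor in the $g$-norm together with the restriction $\sigma\le 2$ lets the loss be absorbed into the higher-regularity norm $\abs{u_\eps}_{\tilde\rho,\sigma}^2/(\tilde\rho-\rho)$ via the usual discrete Cauchy estimate.

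Once upper bounds for $f_{m,\eps},h_{m,\eps},g_{m,\eps}$ have been obtained, the third step is the recovery of the ingredients of $\norm{u_\eps}_{\rho,\sigma}$ and of $\chi_2\partial_y\partial_x^m\omega_\eps$. In the monotonic region, the identity \eqref{fungeps} together with a weighted Hardy inequality (applied to $\partial_x^m u_\eps/(\omega^s+\omega_\eps)$, recalling that $\omega^s+\omega_\eps\sim\comi y^{-\alpha}$ and $\ell<\alpha+1/2$) recovers $\norm{\comi y^{\ell-1}\chi_1\partial_x^m u_\eps}_{L^2}$ and $\norm{\comi y^{\ell}\chi_1\partial_x^m\omega_\eps}_{L^2}$ from $\norm{\comi y^\ell f_{m,\eps}}_{L^2}$. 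In the non-monotonic region, following the cancellation argument of \cite{GM}, one tests the equation for $\partial_x^m\omega_\eps$ against $\chi_2^2\partial_x^m\omega_\eps/(\partial_y\omega^s+\partial_y\omega_\eps)$; the term $(\partial_x^m v_\eps)(\partial_y\omega^s+\partial_y\omega_\eps)$ disappears upon integration in $y$, and the remaining bad terms are controlled either by $h_{m,\eps}$ via \eqref{funhm} or by $g_{m,\eps}$ (after one tangential integration by parts). This yields the estimate on $\chi_2\partial_x^m\omega_\eps$, and then $\chi_2\partial_y\partial_x^m\omega_\eps$ is recovered from $h_{m,\eps}$ using the algebraic identity implicit in \eqref{funhm}. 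The mixed derivatives $\partial_x^i\partial_y^j u_\eps$ with $1\leq j\leq 4$ appearing in $\norm{\cdot}_{\rho,\sigma}$ are recovered iteratively from the PDE itself, expressing $\partial_y^2\omega_\eps$ as $\partial_t\omega_\eps+(u^s+u_\eps)\partial_x\omega_\eps+v_\eps(\partial_y\omega^s+\partial_y\omega_\eps)$ and bootstrapping. Combining these four blocks of estimates and using \eqref{eqnordif} leads to the closed differential inequality \eqref{weesun}.

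The principal obstacles will be, first, to verify that the equation for $h_{m,\eps}$ really produces the coefficient on $g_{m+1,\eps}$ dictated by the $m$-weighting in the definition of $\abs{\cdot}_{\rho,\sigma}$, so that the hierarchy $(f_m,h_m,g_m)$ genuinely closes at $\sigma\ge 3/2$ without any additional hypothesis; and second, the combinatorial bookkeeping of Leibniz expansions in the Gevrey norm, where several natural placements of $\partial_x$ derivatives carry mutually incompatible weights and require the discrete convolution lemma that only holds for $\sigma\ge 3/2$. The condition $\sigma\le 2$ will enter through the balance in the $v_\eps\partial_y$ estimate and through the recovery of $\chi_2\partial_y\partial_x^m\omega_\eps$ from $h_{m,\eps}$, matching exactly the range announced in the statement.
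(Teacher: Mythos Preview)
Your overall architecture matches the paper's: derive equations for $f_m,h_m,g_m$, run weighted energy estimates, then recover $\norm{u}_{\rho,\sigma}$ and $\chi_2\partial_y\partial_x^m\omega$. But there is a genuine gap in your treatment of $\chi_2\partial_x^m\omega$.

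When you test the $\partial_x^m\omega$ equation against $\chi_2^2\partial_x^m\omega/(\partial_y\omega^s+\partial_y\omega)$, the factor $(\partial_y\omega^s+\partial_y\omega)$ in $(\partial_x^m v)(\partial_y\omega^s+\partial_y\omega)$ cancels with the denominator, leaving $(\chi_2\partial_x^m v,\chi_2\partial_x^m\omega)_{L^2}$. Integrating this by parts in $y$ produces $-2(\chi_2'\partial_x^m v,\chi_2\partial_x^m u)_{L^2}$, which does \emph{not} disappear: $\partial_x^m v$ still carries $m{+}1$ tangential derivatives. No further tangential integration by parts reduces this to something controlled by $h_m$ or $g_m$ in the naive way you suggest. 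The paper (following \cite{GM}) handles this term by a representation formula: on $\{0\le y\le y_0+2\delta\}$ one writes $\partial_x^m u=(\omega^s+\omega)\int_0^y \tilde g_m/(\omega^s+\omega)^2\,d\tilde y$ (with a modification above the critical curve), which yields the pointwise-to-$L^2$ estimate
\[
\bigl|(\chi_2'\partial_x^m v,\chi_2\partial_x^m u)_{L^2}\bigr|\le C\norm{\tilde g_m}_{L^2}\norm{\partial_x^{m+1}\omega}_{L^2}+C\norm{\partial_x^m\omega}_{L^2}^2.
\]
Only then does the extra factor $m$ in front of $\norm{g_m}_{L^2}$ pay off: $\norm{g_m}_{L^2}\norm{\partial_x^{m+1}\omega}_{L^2}$ carries weight $m^{\sigma-1}(\rho/\tilde\rho)^{m-5}/\tilde\rho$, which for $\sigma\le 2$ is absorbed into $\abs{u}_{\tilde\rho,\sigma}^2/(\tilde\rho-\rho)$, and the residual $\norm{g_m-\tilde g_m}_{L^2}$ is controlled separately using the $\partial_y f_{m-1}$ dissipation from the $f$-estimate (this is where $\sigma\ge 3/2$ enters, through $m^{2\sigma-1}\ge m^2$). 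Without this representation lemma your scheme does not close.

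A second, smaller issue: recovering the mixed derivatives $\partial_x^i\partial_y^j\omega$ for $1\le j\le 4$ by writing $\partial_y^2\omega=\partial_t\omega+\cdots$ introduces $\partial_t\omega$, which your norm does not control. The paper instead runs a direct weighted energy estimate on $\comi y^{\ell+1}\partial_x^i\partial_y^j\omega$ from the differentiated vorticity equation; the only subtlety is the boundary term at $y=0$, handled via the explicit trace formulas $\partial_y^3\omega|_{y=0}=(\omega^s+\omega)\partial_x\omega|_{y=0}$ and the analogous identity for $\partial_y^5\omega|_{y=0}$.
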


  The above theorem is the key part of the paper, and  its proof 
follows from the discussion in  Sections \ref{sec3} to \ref{sec5}.


    \section{Proof of Theorem \ref{uniestgev}:   uniform estimate on
 $ g_m$}\label{sec3}

  This section along with Sections \ref{sec4}-\ref{sec5} are devoted to proving Theorem \ref{uniestgev}, the uniform estimates for the approximate solutions $u^\eps.$   
 To simplify the notations, we will remove in the following discussion the subscript $\eps$ in $u_\eps, \omega_\eps $ if no confusion occurs.  Similarly, we write $f_m, h_m$ and $g_m$ for the auxilliary functions $f_{m,\eps}, h_{m,\eps}$ and $g_{m,\eps}$ defined in \eqref{fungeps}-\eqref{tildegm}.     Moreover, we will use the capital letter $C$ to denote some generic  constants, which may vary from line to line that depend  only on the constants $c_j, \delta,\rho_0$ and $\alpha$  in Assumption \ref{maas} as well as on the Sobolev embedding constants,    but are independent of $\eps$ and the order $m$ of derivatives.

 We begin with a uniform estimate on $g_m=g_{m,\eps}$ with $g_{m,\eps}$ defined by \eqref{tildegm}, that is, 
 \begin{equation}\label{gm}
 	g_{m}=
  \partial_x^{m-1}\Big(\inner { \omega^s+\omega}\partial_x \omega-\inner { \partial_y\omega^s+\partial_y\omega}\partial_x u\Big), \quad m\geq 1.
 \end{equation}
 The main result in this section can be stated as follows. 

\begin{proposition}\label{prpenmon}
Let $m\geq 6$   and let $u\in L^\infty\inner{[0, T];~X_{\rho_0,\sigma}}$  be a solution to \eqref{regpran} under the assumptions in Theorem \ref{uniestgev}. Then for any pair $\inner{\rho,\tilde\rho}$ with $0<\rho<\tilde\rho\leq \rho_0$ and  for any small $t\in[0,T],$ we have
\begin{equation*}
     \begin{aligned}
      &m^2\norm{g_m(t)}_{L^2} ^2  
 	 \leq   \frac{C\big[(m-6)!\big]^{2\sigma}}{\rho^{2(m-5)}}\abs{u_0}_{\rho,\sigma}^2+  C      \int_0^t   \inner{\eps \norm{\partial_x^{m +1 } u}_{L^2}^2+    \eps  \norm{\partial_x^{m+1 } \omega}_{L^2}^2}   \,ds \\
 	 & \qquad  +  C   m^2  \int_0^t \norm{  \partial_y f_{m-1}}_{L^2}^2 ds+\frac{C \big[\inner{m-6}!\big]^{2\sigma}}{\rho^{2(m-5)}}  \inner{  
  \int_0^t\inner{\abs{u(s)}_{\rho,\sigma}^2 + \abs{u(s)}_{\rho,\sigma}^4}\,ds    +  
  \int_0^t   \frac{\abs{u(s)}_{\tilde \rho,\sigma}^2}{\tilde \rho-\rho}\,ds}.
  \end{aligned}
\end{equation*}
\end{proposition}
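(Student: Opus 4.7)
The plan is to derive a transport--diffusion equation for $g_m$ and perform a weighted $L^2$ energy estimate at the appropriate level. Starting from the identity
\[
\inner{\partial_t + (u^s+u)\partial_x + v\partial_y - \partial_y^2}g_1 = 2(\partial_y^2\omega^s+\partial_y^2\omega)\partial_x\omega - 2(\partial_y\omega^s+\partial_y\omega)\partial_x\partial_y\omega
\]
recalled in the introduction, I first add to the left the regularization $-\eps\partial_x^2 g_1$ and to the right a matching $\eps$-correction $\eps P_1$ built from $(\omega^s+\omega)\partial_x^3\omega$ and $(\partial_y\omega^s+\partial_y\omega)\partial_x^3u$. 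Applying $\partial_x^{m-1}$ yields
\[
\inner{\partial_t + (u^s+u)\partial_x + v\partial_y - \partial_y^2 - \eps\partial_x^2}g_m = \partial_x^{m-1}R_1 - Q_m + \eps\partial_x^{m-1} P_1,
\]
where $Q_m = [\partial_x^{m-1},u]\partial_x g_1 + [\partial_x^{m-1},v]\partial_y g_1$ is the transport commutator. Taking the $L^2$ inner product with $m^2 g_m$, the transport term vanishes by $\partial_x u+\partial_y v=0$ and the boundary conditions, leaving the good parabolic dissipation $m^2\norm{\partial_y g_m}_{L^2}^2+m^2\eps\norm{\partial_x g_m}_{L^2}^2$ on the left, available to absorb borderline contributions.

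Next I bound the right-hand side term by term. Writing $\partial_x^{m-1}R_1$ via Leibniz produces two types of products. The low-high endpoint yields $(\partial_y^2\omega^s+\partial_y^2\omega)\partial_x^m\omega$ and $(\partial_y\omega^s+\partial_y\omega)\partial_x^m\partial_y\omega$, in which the $\chi_1$-localization expresses the top-order factor through $f_m$ (hence through $\abs{u}_{\rho,\sigma}$), and the complement through the $\chi_2$-weighted mixed derivative also built into the norm. The high-low endpoint $(\partial_x^{m-1}\partial_y\omega)\partial_x\partial_y\omega$, where $\partial_x\partial_y\omega$ is $L^\infty$-controlled by \eqref{condi}, requires expressing $\chi_1\partial_x^{m-1}\partial_y\omega$ through $f_{m-1}$ via the pointwise identity
\[
\chi_1\partial_x^{m-1}\partial_y\omega = \partial_y f_{m-1} - \chi_1'\partial_x^{m-1}\omega + \partial_y\inner{\chi_1\frac{\partial_y\omega^s+\partial_y\omega}{\omega^s+\omega}}\partial_x^{m-1}u + \chi_1\frac{\partial_y\omega^s+\partial_y\omega}{\omega^s+\omega}\partial_x^{m-1}\omega;
\]
this is the origin of the $m^2\int_0^t\norm{\partial_y f_{m-1}}_{L^2}^2\,ds$ term in the statement. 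Intermediate Leibniz contributions together with the commutator $[\partial_x^{m-1},u]\partial_x g_1$ are closed by the standard Gevrey bilinear estimate grounded in
\[
\binom{m-1}{k}[(k-6)!]^\sigma[(m-1-k-6)!]^\sigma \leq C\frac{[(m-6)!]^\sigma}{\min(k,m-k)^\sigma},
\]
the extremes being handled by the $L^\infty$ bounds in \eqref{condi}; these contributions all fall inside the $\abs{u}_{\rho,\sigma}^2+\abs{u}_{\rho,\sigma}^4$ block. Finally $\eps\partial_x^{m-1}P_1$ is absorbed by Young's inequality into the $\eps\norm{\partial_x^{m+1}u}_{L^2}^2$ and $\eps\norm{\partial_x^{m+1}\omega}_{L^2}^2$ terms.

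The delicate piece, and the main obstacle, is the commutator $[\partial_x^{m-1},v]\partial_y g_1$: it raises the $y$-order of $g_1$ with no dissipation to spare. The resolution is a Cauchy--Kowalewski-type argument: represent $v=-\int_0^y\partial_x u\,d\tilde y$ and integrate by parts in $y$ to shift $\partial_y$ onto $g_m$ (using $\partial_y v=-\partial_x u$), then isolate the dangerous top-order piece and estimate it in the stronger norm $\abs{u}_{\tilde\rho,\sigma}$. The exchange of radii $\rho\mapsto\tilde\rho$ costs exactly a $1/(\tilde\rho-\rho)$ factor through the elementary inequality $(\rho/\tilde\rho)^{m-5}(m-5)\leq C/(\tilde\rho-\rho)$, producing the final $C\int_0^t\abs{u(s)}_{\tilde\rho,\sigma}^2/(\tilde\rho-\rho)\,ds$ term. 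Integrating in time, and bounding the initial datum through the $m\norm{g_m(0)}_{L^2}$ piece built into $\abs{u_0}_{\rho,\sigma}$, then assembles the announced inequality.
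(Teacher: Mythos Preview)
Your overall architecture---derive the evolution equation for $g_m$, multiply by $m^2 g_m$, and control the Leibniz expansion---matches the paper's proof. Several details, however, are misattributed, and one step as you describe it would not close.

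The genuine gap is your treatment of the low--high endpoint $(\partial_y\omega^s+\partial_y\omega)\,\partial_x^m\partial_y\omega$. You propose to ``express the top-order factor through $f_m$'' on the $\chi_1$ region. But $f_m$ is built from $\partial_x^m\omega$, not $\partial_x^m\partial_y\omega$; recovering the latter would require $\partial_y f_m$, which is \emph{not} part of $\abs{u}_{\rho,\sigma}$ and is only available in time integral via the separate energy estimate for $f_m$. Even after integrating in time, you would be stuck with $m^2\int_0^t\norm{\partial_y f_m}_{L^2}^2\,ds$, carrying an unabsorbed factor $m^2$. The paper's actual mechanism here is different and more subtle: on the region $\{1-\chi_2>0\}$ one first integrates by parts in $y$ to move $\partial_y$ onto $g_m$ (producing a term absorbed by the dissipation $m^2\norm{\partial_y g_m}_{L^2}^2$), and then writes $\partial_x^m\omega = \tilde g_m/(\omega^s+\omega) + a\,\partial_x^m u$ on that region. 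The crucial gain is that $m\norm{g_m}_{L^2}$---with the factor $m$, not $m^2$---is already built into $\abs{u}_{\rho,\sigma}$, so the contribution $m^2\norm{\tilde g_m}_{L^2}^2$ splits as $m^2\norm{g_m}_{L^2}^2$ (which is exactly the norm squared) plus $m^2\norm{g_m-\tilde g_m}_{L^2}^2$. The difference $g_m-\tilde g_m$ is a sum of pure products (Lemma~\ref{lemdiffe} in the paper), and bounding it is precisely where the $\norm{\partial_y f_{m-1}}_{L^2}$ term enters, through the pointwise identity you wrote for $\chi_1\partial_x^{m-1}\partial_y\omega$ (the statement \eqref{fm1}). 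This self-consistent use of $g_m$ inside its own estimate is the heart of the argument and is missing from your sketch.

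Two smaller points. First, you call the commutator $[\partial_x^{m-1},v]\partial_y g_1$ ``the main obstacle''; in fact the paper handles it by a routine integration by parts (Lemma corresponding to $\mathcal P_2$), and the $(\tilde\rho-\rho)^{-1}$ loss appears in essentially every term once one passes from $\rho$ to $\tilde\rho$ to eat a stray factor of $m$. Second, the $\eps$-correction $P_1$ to the $g_1$ equation is $2(\partial_x\partial_y\omega)\partial_x^2 u - 2(\partial_x\omega)\partial_x^2\omega$, not the expression you wrote; this is harmless for the estimate but worth getting right.
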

 
 \subsection{Preliminaries}
 
 Before proving   Proposition \ref{prpenmon}, we first list some inequalities used throughout the paper. 
 
    \begin{lemma} [Some inequalities]\label{lemequa}
\begin{enumerate}[(i)]
 \item  Given any non-negative integers $p$ and $q$, we have 
\begin{eqnarray*}
p!q!\leq (p+q)!.
\end{eqnarray*}	
\item We have $\abs{\cdot}_{\rho,\sigma}\leq \abs{\cdot}_{\tilde\rho,\sigma}$ for $\rho\leq\tilde\rho.$
\item For any integer $k\geq 1$ and for any  pair $(\rho,\tilde \rho)$ with $0<\rho<\tilde\rho\leq 1,$ we have 
\begin{equation}
\label{factor}
k\inner{\frac{\rho}{\tilde\rho}}^k\leq 	\frac{1}{\tilde\rho} k\inner{\frac{\rho}{\tilde\rho}}^k\leq\frac{1}{\tilde\rho-\rho}.
\end{equation}
\item  Let $\chi_2$ be given in \eqref{cutofffu} and  let $\sigma\geq 1.$  Then for any $0<r\leq 1$,      we have
\begin{equation}\label{etan}
 \norm{\comi y^{\ell-1}\partial_x^m u}_{L^2}+\norm{\comi y^{\ell}\partial_x^m \omega}_{L^2} \leq 
	\left\{
	\begin{aligned}
	& \frac{   \big[\inner{m-6}!\big]^{ \sigma}}{r^{ (m-5)}}\abs{u}_{r,\sigma},\quad {\rm if}~m\geq 6,\\
	&\abs{u}_{r,\sigma},\quad {\rm if}~0\leq m\leq 5,
	\end{aligned}
	\right.
\end{equation} 
and
\begin{equation}
\label{chi2est} 
 m\norm{  g_{m }}_{L^2}+\norm{\comi y^{\ell} f_{m }}_{L^2}+\norm{h_{m }}_{L^2}+\norm{\chi_2\partial_y\partial_x^m\omega }_{L^2}\leq \left\{
	\begin{aligned}
	& \frac{    \big[\inner{m-6}!\big]^{ \sigma}}{r^{ (m-5)}}\abs{u}_{r,\sigma},~{\rm if}~m\geq 6,\\
	& \abs{u}_{r,\sigma},\quad {\rm if}~1\leq m\leq 5,
	\end{aligned}
	\right.
\end{equation}
and 
\begin{equation}\label{emix}
	\norm{\comi y^{\ell+1}\partial_x^i\partial_y^j \omega}_{L^2}\leq 
	\left\{
	\begin{aligned}
	& \frac{   \big[\inner{i+j-6}!\big]^{ \sigma}}{r^{ (i+j-5)}}\abs{u}_{r,\sigma},\quad {\rm if}~i+j\geq 6 ~{\rm and}~ 1\leq j\leq 4,\\
	& \abs{u}_{r,\sigma},\quad {\rm if}~0\leq i+j\leq 5~{\rm and}~1\leq j\leq 4.
	\end{aligned}
	\right.
\end{equation}
\item Let $\sigma\geq 1$ and let $m\geq 7.$  Then  for any $0<r\leq 1$ we have
\begin{equation}
\label{fm1}
\norm{\partial_x^{m-1} \partial_y \omega}_{L^2} \leq \norm{  \partial_y f_{m-1}}_{L^2} +     \frac{C\big[\inner{m-7}!\big]^{\sigma}}{r^{m-6} } \abs{u}_{r,\sigma}
  \leq \norm{  \partial_y f_{m-1}}_{L^2} +  C m^{-\sigma} \frac{\big[\inner{m-6}!\big]^{\sigma}}{r^{m-5} } \abs{u}_{r,\sigma}.
\end{equation}
 \end{enumerate}

 \end{lemma}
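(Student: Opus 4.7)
The plan is to handle the five assertions in order: parts (i)--(iii) are elementary, part (iv) is a direct unpacking of the two norms, and part (v), which is the only substantive step, will use the cut-off geometry together with the bounds in \eqref{condi}.

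For (i), the binomial coefficient $\binom{p+q}{p}$ is a positive integer, so $p!q!\leq (p+q)!$. For (ii), the suprema defining $\|\cdot\|_{\rho,\sigma}$ and $|\cdot|_{\rho,\sigma}$ depend on $\rho$ only through the nonnegative-exponent factor $\rho^{m-5}$ (resp.\ $\rho^{i+j-5}$), while the low-order blocks carry no $\rho$-dependence; hence both norms are monotone in $\rho$. For (iii), the first inequality is $1\leq 1/\tilde\rho$ since $\tilde\rho\leq 1$. Setting $x=\rho/\tilde\rho\in(0,1)$, the second reduces to $k(1-x)x^k\leq 1$; a quick calculus check shows that $x^k(1-x)$ attains its maximum at $x=k/(k+1)$ with value $\leq 1/(k+1)$, yielding $k(1-x)x^k\leq k/(k+1)<1$.

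Part (iv) is immediate from the definitions: each left-hand side in \eqref{etan}, \eqref{chi2est}, \eqref{emix} appears verbatim inside one of the suprema in \eqref{trinorm} or Definition \ref{gevspace}. For $m\geq 6$ (respectively $i+j\geq 6$) I select the corresponding summand at index $m$ (respectively $(i,j)$) and multiply through by the prefactor $[(m-6)!]^\sigma/r^{m-5}$. For the low-order ranges, the claimed bound is exactly the remaining block in the norm.

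The main obstacle is part (v). The plan is to decompose
\[
\partial_x^{m-1}\partial_y\omega \;=\; \chi_1\partial_x^{m-1}\partial_y\omega \;+\; (1-\chi_1)\partial_x^{m-1}\partial_y\omega
\]
and estimate each piece. Since $\mathrm{supp}(1-\chi_1)$ sits inside $(y_0-\tfrac{3}{2}\delta,y_0+\tfrac{3}{2}\delta)$, where $\chi_2\equiv 1$, one has $(1-\chi_1)=(1-\chi_1)\chi_2$, parallel to the identity $(1-\chi_2)=(1-\chi_2)\chi_1$ recorded in \eqref{suppch2}. Hence the second piece is controlled by $\|\chi_2\partial_y\partial_x^{m-1}\omega\|_{L^2}$, and part (iv) at index $m-1\geq 6$ bounds this by $[(m-7)!]^\sigma\,r^{-(m-6)}\,\abs{u}_{r,\sigma}$. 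For the first piece, differentiating the defining identity
\[
f_{m-1}=\chi_1\partial_x^{m-1}\omega-\chi_1\frac{\partial_y\omega^s+\partial_y\omega}{\omega^s+\omega}\partial_x^{m-1}u
\]
in $y$ and solving for $\chi_1\partial_x^{m-1}\partial_y\omega$ produces $\partial_yf_{m-1}$ plus terms with smooth coefficients acting on $\partial_x^{m-1}u$ or $\partial_x^{m-1}\omega$. On $\mathrm{supp}\,\chi_1$, the lower bound $|\omega^s+\omega|\geq c\comi{y}^{-\alpha}$ together with the decay $|\partial_y^{j}(\omega^s+\omega)|\leq C\comi{y}^{-\alpha-1}$ from \eqref{condi} (for $j=1,2$) implies that all such coefficients are $L^\infty$-bounded with harmless weight behavior, hence they can be absorbed into $\|\comi{y}^{\ell-1}\partial_x^{m-1}u\|_{L^2}+\|\comi{y}^{\ell}\partial_x^{m-1}\omega\|_{L^2}$ (using $\ell>1$), to which part (iv) again applies with index $m-1\geq 6$. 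Adding both pieces gives the first inequality. For the second, I will rewrite $[(m-7)!]^\sigma\,r^{-(m-6)}=(m-6)^{-\sigma}[(m-6)!]^\sigma\,r^{-(m-6)}$ and use $r\leq 1$ together with $m/(m-6)\leq 7$ for $m\geq 7$ to produce the factor $m^{-\sigma}[(m-6)!]^\sigma\,r^{-(m-5)}$ up to an absolute constant. The weighted bookkeeping in the decomposition above is the only nontrivial point; the analytic content is essentially already encoded in \eqref{condi} and \eqref{suppch2}.
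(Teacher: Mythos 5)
Your proposal is correct and follows essentially the same route as the paper, so I will only flag the two places where the arguments differ in implementation. In part (iii), where you reduce the claim to $k x^k(1-x)\leq 1$ and maximize $x^k(1-x)$ by calculus, the paper instead observes $\frac{1}{1-x}=\sum_{j\geq 0}x^j\geq (k+1)x^k\geq kx^k$ (each of the first $k+1$ terms is $\geq x^k$); both are one-line arguments, and yours is equally valid. In part (v), you split with the partition $\chi_1+(1-\chi_1)$ and push $\chi_2$ onto the $(1-\chi_1)$-piece, while the paper splits with $\chi_2+(1-\chi_2)$ and pushes $\chi_1$ onto the $(1-\chi_2)$-piece via $\chi_1\equiv 1$ on $\mathrm{supp}(1-\chi_2)$; by the support relations \eqref{suppch2} these are equivalent manoeuvres, and the subsequent steps — differentiating $f_{m-1}$ in $y$, absorbing the bounded coefficients $\partial_y\bigl(\chi_1\tfrac{\partial_y\omega^s+\partial_y\omega}{\omega^s+\omega}\bigr)$ and $\chi_1\tfrac{\partial_y\omega^s+\partial_y\omega}{\omega^s+\omega}$ into the weighted $L^2$ norms of $\partial_x^{m-1}u$ and $\partial_x^{m-1}\omega$, invoking \eqref{etan} and \eqref{chi2est} at index $m-1$, and finally converting $[(m-7)!]^\sigma r^{-(m-6)}$ to $Cm^{-\sigma}[(m-6)!]^\sigma r^{-(m-5)}$ using $r\leq 1$ and the boundedness of $m/(m-6)$ for $m\geq 7$ — match the paper's computation line for line.
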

 
 \begin{proof}
 The first  statement $(i)$ is clear.  The second and the fourth statements $(ii)$ and $(iv)$  follow directly from the definition of $\abs{\cdot}_{\rho,\sigma}$ (See Definition \ref{gevspace}).
 As for  $(iii)$,  we have for any $k\geq1$ and    any  pair $(\rho,\tilde \rho)$ with $0<\rho<\tilde\rho\leq 1,$ 
 \begin{eqnarray*}
 \frac{1}{1-\frac{\rho}{\tilde\rho}}=\sum_{j=0}^\infty\inner{\frac{\rho}{\tilde\rho}}^j \geq  k \inner{\frac{\rho}{\tilde\rho}}^k,
 \end{eqnarray*}
 from which the desired inequalities follow. 
 
Now we prove $(v)$. In view of \eqref{suppch2} we see $\chi_1\equiv 1$ on supp\,$1-\chi_2$ so that
   \begin{eqnarray*}
 \norm{\partial_x^{m-1} \partial_y \omega}_{L^2}  &\leq&   \norm{\inner{1-\chi_2} \partial_y \inner{\chi_1\partial_x^{m-1}  \omega}}_{L^2}+  \norm{\chi_2\partial_y \partial_x^{m-1}   \omega}_{L^2}\\ 
   &\leq&  \norm{ \inner{1-\chi_2}  \partial_y f_{m-1}}_{L^2}+   \norm{  \inner{1-\chi_2} \partial_y \big(\chi_1\frac{\partial_y\omega^s+\partial_y\omega}{\omega^s+\omega}\partial_x^{m-1}u\big)}_{L^2}+  \norm{\chi_2\partial_y \partial_x^{m-1}   \omega}_{L^2}\\ 
   &\leq&  \norm{   \partial_y f_{m-1}}_{L^2}+ C  \norm{    \partial_x^{m-1}u}_{L^2} + C\norm{    \partial_x^{m-1}\omega}_{L^2}+  \norm{\chi_2\partial_y \partial_x^{m-1}   \omega}_{L^2}.
   \end{eqnarray*}
In the above,    the second inequality uses \eqref{fungeps}, the definition of $f_m$.  This along with    \eqref{etan} and \eqref{chi2est}  yield  
      \begin{eqnarray*}
  \norm{\partial_x^{m-1} \partial_y \omega}_{L^2} 
  \leq \norm{  \partial_y f_{m-1}}_{L^2} +     \frac{C\big[\inner{m-7}!\big]^{\sigma}}{r^{m-6} } \abs{u}_{r,\sigma}  \leq  \norm{  \partial_y f_{m-1}}_{L^2} +  C m^{-\sigma} \frac{\big[\inner{m-6}!\big]^{\sigma}}{r^{m-5} } \abs{u}_{r,\sigma}. \end{eqnarray*}
     The proof is then completed. 
 \end{proof}

Let $g_m$ be given in  \eqref{gm}, and we define its key
component $\tilde g_m$ by  setting 
  \begin{equation}\label{funofg}
  \tilde g_m = \inner { \omega^s+\omega} \partial_x^m \omega-\inner{   \partial_y \omega^s+ \partial_y \omega } \partial_x^m u,\quad m\geq 1.\end{equation}
The next lemma is concerned with  the difference $ g_m-  \tilde g_m$.
 
 \begin{lemma}\label{lemdiffe}
   Let $m\geq 6 $ and let  $1\leq \sigma\leq 2.$  We have
  	\begin{eqnarray*}
  		 \norm{ g_m-\tilde g_m}_{L^2}\leq  C\norm{  \partial_y f_{m-1}}_{L^2}+  C  m^{1-\sigma} \frac{\big[\inner{m-6}!\big]^{\sigma}}{\tilde \rho^{m-5} } \abs{u}_{\tilde \rho,\sigma}+C m^{2-2\sigma} \frac{\big[\inner{m-6}!\big]^{\sigma}}{\rho^{m-5} } \abs{u}_{\rho,\sigma} ^2.
  	\end{eqnarray*}
 \end{lemma}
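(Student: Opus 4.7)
The plan is to expand $g_m$ via the Leibniz rule, exploit that $\omega^s$ and $\partial_y\omega^s$ are annihilated by $\partial_x$, and isolate the two top-order terms which together reconstitute $\tilde g_m$. This yields
\begin{eqnarray*}
g_m-\tilde g_m=\sum_{k=1}^{m-1}\binom{m-1}{k}(\partial_x^k\omega)(\partial_x^{m-k}\omega)-\sum_{k=1}^{m-1}\binom{m-1}{k}(\partial_x^k\partial_y\omega)(\partial_x^{m-k}u).
\end{eqnarray*}
Each summand will be bounded in $L^2$ by pairing one factor in $L^\infty$ with the other in $L^2$; the weights $\comi{y}^\ell$ and $\comi{y}^{\ell+1}$ from Definition \ref{defgev} are all $\geq 1$ and may be inserted into the $L^2$ factor for free.

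The index $k=m-1$ in the second sum, giving $(\partial_x^{m-1}\partial_y\omega)(\partial_x u)$, is the only one that escapes the reach of \eqref{etan}--\eqref{emix} since it combines a first-order $y$-derivative with a near-maximal $x$-derivative. For this term I would put $\partial_x u$ in $L^\infty$ using $\norm{\partial_x u}_{L^\infty}\leq 1$ from \eqref{condi} and invoke \eqref{fm1} to control $\norm{\partial_x^{m-1}\partial_y\omega}_{L^2}$; the first summand of \eqref{fm1} yields the $C\norm{\partial_y f_{m-1}}_{L^2}$ contribution, and the second summand feeds into the $m^{1-\sigma}$ error.

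For the remaining indices I would split by $\min(k,m-k)$. When the low-order side carries only one or two $x$-derivatives, that factor is controlled in $L^\infty$ by the universal constant $1$ from \eqref{condi} (so this side does not contribute any $\abs{u}$ factor); the high-order partner is estimated by \eqref{etan} or \eqref{emix}, and the binomials $\binom{m-1}{k}\sim m^k$ together with the factorial ratios $[(m-6-k)!]^\sigma/[(m-6)!]^\sigma\sim m^{-k\sigma}$ (for $k=1,2$) produce the announced $m^{1-\sigma}$ prefactor, linear in $\abs{u}_{\tilde\rho,\sigma}$ (the $k=2$ contribution of smaller order $m^{2-2\sigma}$ is absorbed into $m^{1-\sigma}$ since $\sigma\geq 1$). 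For the interior indices $3\leq k\leq m-3$ (with $k=m-1$ already handled) neither side is directly bounded by \eqref{condi}; I would upgrade the lower-order factor to $L^\infty$ by a Sobolev-type inequality such as $\norm{f}_{L^\infty}\lesssim\norm{f}_{L^2}^{1/2}\norm{\partial_x\partial_y f}_{L^2}^{1/2}$, so that each factor now costs one power of $\abs{u}_{\rho,\sigma}$, and then invoke a Gevrey convolution estimate of the schematic form $\sum_{k=3}^{m-3}\binom{m-1}{k}[(k-6)!]^\sigma[(m-k-6)!]^\sigma\leq C\,m^{2-2\sigma}[(m-6)!]^\sigma$ to collapse the factorials back to $[(m-6)!]^\sigma/\rho^{m-5}$ with residual prefactor $m^{2-2\sigma}$.

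The main obstacle is this interior combinatorics: one needs to verify that, after the Sobolev upgrade, the combined binomial and factorial contributions along the interior range indeed telescope to $m^{2-2\sigma}[(m-6)!]^\sigma/\rho^{m-5}$ with a $k$-summable residual. For $\sigma\geq 1$ this follows from the rapid decay of $[(k-6)!]^\sigma/k!$ in $k$, which localizes the dominant contributions near $k=6$ and $k=m-6$; these endpoints give exactly the advertised $m^{2-2\sigma}$ power, and the remaining interior sum is controlled by a geometric tail bound.
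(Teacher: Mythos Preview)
Your approach is essentially identical to the paper's: the same Leibniz expansion, the same isolation of the $k=m-1$ term via \eqref{fm1}, the same use of the $L^\infty$ bounds in \eqref{condi} for the endpoint indices, and the same Sobolev-plus-factorial-convolution argument for the interior range (the paper splits at $[m/2]$ rather than by $\min(k,m-k)$, but this is cosmetic). One small correction: the interpolation inequality you quote, $\norm{f}_{L^\infty}\lesssim\norm{f}_{L^2}^{1/2}\norm{\partial_x\partial_y f}_{L^2}^{1/2}$, is not valid as stated---you need the additive form of Lemma~\ref{sobine} involving all four of $f,\partial_x f,\partial_y f,\partial_x\partial_y f$, and your schematic convolution should treat the indices $k\leq 5$ and $m-k\leq 5$ separately since the norm carries no factorial weight there.
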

 
 \begin{proof}
 First, direct calculation shows 
  \begin{equation}\label{differ}
  	 g_m-\tilde g_m=\sum_{j=1}^{m-1}{{m-1}\choose j} \inner{\partial_x^j \omega}\partial_x^{m-j}\omega-\sum_{j=1}^{m-1}{{m-1}\choose j} \inner{\partial_x^j \partial_y \omega}\partial_x^{m-j}u.
  \end{equation}	
Thus
\begin{eqnarray*}
  \norm{ g_m-\tilde g_m}_{L^2} 
   	  \leq    \sum_{j=1}^{m-1}\frac{(m-1)!}{j!(m-1-j)!} \norm{\inner{ \partial_x^j \omega}\partial_x^{m-j}\omega}_{L^2} +   \sum_{j=1}^{m-1}\frac{(m-1)!}{j!(m-1-j)!} \norm{\inner{\partial_x^j \partial_y \omega}\partial_x^{m-j}u}_{L^2}.
  \end{eqnarray*}
  We first handle the second term on the right side of the
above estimate, and write
  \begin{eqnarray*}
  	 \sum_{j=1}^{m-1}\frac{(m-1)!}{j!(m-1-j)!} \norm{\inner{\partial_x^j \partial_y \omega}\partial_x^{m-j}u}_{L^2}\leq R_1+R_2,
  \end{eqnarray*}
  where 
  \begin{eqnarray*}
  	R_1=\sum_{j=1}^{[m/2]}\frac{(m-1)!}{j!(m-1-j)!}  \norm{\partial_x^j \partial_y \omega}_{L^\infty}\norm{\partial_x^{m-j}u}_{L^2}
  \end{eqnarray*}
  with $[m/2]$ standing for the largest integer less than or equal to $m/2,$ 
  and 
   \begin{eqnarray*}
  	R_2=\sum_{j=[m/2]+1}^{m-1}\frac{(m-1)!}{j!(m-1-j)!}  \norm{\partial_x^j \partial_y \omega}_{L^2}\norm{\partial_x^{m-j}u}_{L^\infty}.
  \end{eqnarray*}
To estimate $R_2$, we use \eqref{etan} and   \eqref{emix}  along with the Sobolev inequality (see Lemma  \ref{sobine} in Appendix),  to compute 
\begin{eqnarray*}
	R_2 
	& \leq& \norm{\partial_x^{m-1} \partial_y \omega}_{L^2}\norm{\partial_x u}_{L^\infty}+ m\norm{\partial_x^{m-2} \partial_y \omega}_{L^2}\norm{\partial_x^2 u}_{L^\infty}+\sum_{j=m-4}^{m-3}\frac{(m-1)!}{j!(m-1-j)!}  \frac{\big[(j-5)!\big]^\sigma}{\rho^{j-4}} \abs{u}_{\rho,\sigma} ^2\\
	&&+\sum_{j=[m/2]+1}^{m-5}\frac{(m-1)!}{j!(m-1-j)!}  \frac{\big[(j-5)!\big]^\sigma}{\rho^{j-4}}\frac{\big[\inner{m-j-5}!\big]^\sigma}{ \rho^{m-j-4}} \abs{u}_{\rho,\sigma} ^2.
	\end{eqnarray*}
Moreover, by \eqref{emix} and the last inequality in \eqref{condi}, we have
\begin{eqnarray*}
	m\norm{\partial_x^{m-2} \partial_y \omega}_{L^2}\norm{\partial_x^2 u}_{L^\infty}\leq C m^{1-\sigma} \frac{\big[\inner{m-6}!\big]^{\sigma}}{\tilde\rho^{m-5} }\abs{u}_{\tilde\rho,\sigma}.
\end{eqnarray*}
Direct computation gives
\begin{eqnarray*}
	\sum_{j=m-4}^{m-3}\frac{(m-1)!}{j!(m-1-j)!}  \frac{\big[(j-5)!\big]^\sigma}{\rho^{j-4}} \abs{u}_{\rho,\sigma} ^2\leq C m^{2-2\sigma} \frac{\big[\inner{m-6}!\big]^{\sigma}}{\rho^{m-5} } \abs{u}_{\rho,\sigma} ^2,
\end{eqnarray*}
and, using  the statement $(i)$ in Lemma \ref{lemequa},
	\begin{eqnarray*}
	&&\sum_{j=[m/2]+1}^{m-5}\frac{(m-1)!}{j!(m-1-j)!}  \frac{\big[(j-5)!\big]^\sigma}{\rho^{j-4}}\frac{\big[\inner{m-j-5}!\big]^\sigma}{ \rho^{m-j-4}} \abs{u}_{\rho,\sigma} ^2 \\
	& \leq &    C \frac{1}{\rho^{m-5}}\abs{u}_{\rho,\sigma} ^2 \sum_{j=[m/2]+1}^{m-5}\frac{(m-1)! \big[(j-5)!\big]^{\sigma-1}\big[\inner{m-j-5}!\big]^{\sigma-1}}{j^5 (m-j)^4}     \\
	& \leq &    C \frac{1}{\rho^{m-5}}\abs{u}_{\rho,\sigma} ^2 \sum_{j=[m/2]+1}^{m-3}\frac{(m-6)! \big[(m-10)!\big]^{\sigma-1}m^5}{j^5 (m-j)^4}     \\
	&\leq&  C m^{4-4\sigma} \frac{\big[\inner{m-6}!\big]^{\sigma}}{\rho^{m-5} } \abs{u}_{\rho,\sigma} ^2.
\end{eqnarray*}
Finally, for any small $\kappa>0$, we use \eqref{fm1} and the last inequality in \eqref{condi} to obtain 
 \begin{eqnarray*}
  \norm{\partial_x^{m-1} \partial_y \omega}_{L^2}\norm{\partial_x u}_{L^\infty}  \leq    C \norm{  \partial_y f_{m-1}}_{L^2}+  C   m^{ -\sigma} \frac{\big[\inner{m-6}!\big]^{\sigma}}{\tilde \rho^{m-5} } \abs{u}_{\tilde \rho,\sigma}.  \end{eqnarray*}
  Combining the inequalities above we conclude 
   \begin{eqnarray*}
   	R_2\leq  C\norm{  \partial_y f_{m-1}}_{L^2}+  C m^{1-\sigma} \frac{\big[\inner{m-6}!\big]^{\sigma}}{\tilde \rho^{m-5} } \abs{u}_{\tilde \rho,\sigma}+C m^{2-2\sigma} \frac{\big[\inner{m-6}!\big]^{\sigma}}{\rho^{m-5} } \abs{u}_{\rho,\sigma} ^2.
   \end{eqnarray*}
   The estimation on $R_1$ is similar as above  with  simpler  so that
we omit it for brevity. Then we have
   \begin{eqnarray*}
   	 R_1\leq C m^{1-\sigma} \frac{\big[\inner{m-6}!\big]^{\sigma}}{\tilde \rho^{m-5} } \abs{u}_{\tilde \rho,\sigma}+C m^{2-2\sigma} \frac{\big[\inner{m-6}!\big]^{\sigma}}{\rho^{m-5} } \abs{u}_{\rho,\sigma} ^2.
   \end{eqnarray*}
  Thus the desired estimate follows and  the proof of Lemma \ref{lemdiffe} is completed. 
 \end{proof}

\subsection{Proof of Proposition \ref{prpenmon}}

The rest of this section is devoted to proving   Proposition \ref{prpenmon}  by  energy method, and the proof  is  inspired by the arguments used in \cite{GM}.  To do so,  we    first write  the equation for $ g_m$ as follows with
its   derivation given in  the Appendix (see Lemma \ref{lemgm} in the Appendix).
\begin{eqnarray*}
	&&\Big(\partial_t    + \inner{u^{s}+u} \partial_x +v\partial_y  -\partial_y^2   -\eps\partial_x^2 \Big)  g_m\\
	&=&-  \sum_{j=1}^{m-1}{{m-1}\choose j} \inner{\partial_x^j u}   g_{m-j+1} -  \sum_{j=1}^{m-1}{{m-1}\choose j} \inner{\partial_x^j v}\partial_y  g_{m-j} \\
		&&+2  \sum_{j=0}^{m-1}{{m-1}\choose j} \inner{\partial_x^j \partial_y^2\omega^s+\partial_x^j \partial_y^2\omega}\partial_x^{m-j} \omega+2  \eps \sum_{j=0}^{m-1}{{m-1}\choose j} \inner{\partial_x^{j+1} \partial_y\omega}\partial_x^{m-j+1} u\\
	&&-2  \sum_{j=0}^{m-1}{{m-1}\choose j} \inner{\partial_x^j \partial_y\omega^s+\partial_x^j \partial_y\omega}\partial_x^{m-j} \partial_y\omega-2  \eps \sum_{j=0}^{m-1}{{m-1}\choose j} \inner{\partial_x^{j+1} \omega}\partial_x^{m-j+1} \omega,
	 \end{eqnarray*}
Moreover, observe that $\partial_y\omega^s|_{y=0}=\partial_y\omega|_{y=0}=0$ and then
\begin{eqnarray*}
	\partial_y g_m|_{y=0}=0.
\end{eqnarray*} 
Thus multiplying both sides by $m^2g_m$ and then taking integration over $\mathbb R_+^2$,  we have
\begin{equation}
\label{entildgm}
 \frac{1}{2}m^2\norm{   g_m(t)}_{L^2}^2+m^2\int_0^t\norm{\partial_y g_m(s)}_{L^2}^2\,ds+\eps m^2 \int_0^t  \norm{ \partial_x   g_m}_{L^2}^2	\,ds  
	\leq  \frac{1}{2}m^2\norm{   g_m(0)}_{L^2}^2+\sum_{1\leq i\leq 6} \mathcal P_i
\end{equation} 
with 
\begin{eqnarray*}
\mathcal P_1  &=&-\int_0^t \bigg( \sum_{j=1}^{m-1}{{m-1}\choose j} \inner{\partial_x^j u} g_{m-j+1} ,~ m^2   g_m\bigg)_{L^2}	\,ds,\\
\mathcal P_2  &=&-\int_0^t \bigg(  \sum_{j=1}^{m-1}{{m-1}\choose j} \inner{\partial_x^j v}\partial_y  g_{m-j}  ,~ m^2   g_m\bigg)_{L^2}	\,ds,\\
 \mathcal P_3  &=&\int_0^t \bigg(2 \sum_{j=0}^{m-1}{{m-1}\choose j} \inner{\partial_x^j \partial_y^2\omega^s+\partial_x^j \partial_y^2\omega}\partial_x^{m-j} \omega,~ m^2 g_m\bigg)_{L^2}	\,ds,\\
\mathcal P_4  &=&-\int_0^t \bigg(2\sum_{j=0}^{m-1}{{m-1}\choose j} \inner{\partial_x^j \partial_y\omega^s+\partial_x^j \partial_y\omega}\partial_x^{m-j} \partial_y\omega,~ m^2  g_m\bigg)_{L^2}	\,ds,\\
\mathcal P_5  &=&\int_0^t \bigg(2 \eps \sum_{j=0}^{m-1}{{m-1}\choose j} \inner{\partial_x^{j+1} \partial_y\omega}\partial_x^{m-j+1} u,~ m^2   g_m\bigg)_{L^2}	\,ds,
\\
\mathcal P_6&=&-\int_0^t \bigg(2 \eps \sum_{j=0}^{m-1}{{m-1}\choose j} \inner{\partial_x^{j+1} \omega}\partial_x^{m-j+1} \omega,~ m^2 g_m\bigg)_{L^2}	\,ds.
\end{eqnarray*}
In the following lemmas, we will  estimate  $\mathcal P_i, 1\leq i\leq 6$  
respectively.

\begin{lemma}[Estimate on $\mathcal P_3$]\label{lemp3}
	 	Let $3/2\leq \sigma\leq 2.$ Then for any small $\kappa>0$ and  for any pair $(\rho,\tilde\rho)$ with $0<\rho<\tilde\rho,$  we have
	 	\begin{eqnarray*}
	 		\mathcal P_3&  \leq&   \kappa m^2 \int_0^t \norm{\partial_y g_m}_{L^2}^2 ds+ C\kappa^{-1} m^2 \int_0^t \norm{  \partial_y f_{m-1}}_{L^2}^2 ds    \\
		&&+\frac{C \kappa^{-1}\big[\inner{m-6}!\big]^{2\sigma}}{\rho^{2(m-5)}}    
 \inner{ \int_0^t\inner{\abs{u(s)}_{\rho,\sigma}^2 + \abs{u(s)}_{\rho,\sigma}^4}\,ds  +\int_0^t   \frac{\abs{u(s)}_{\tilde \rho,\sigma}^2}{\tilde \rho(s)-\rho}\,ds}.
	 	\end{eqnarray*}
	 \end{lemma}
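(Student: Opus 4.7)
The plan is to estimate $\mathcal P_3$ by decomposing the convolution sum along the index $j$, separating the shear-flow part (nonzero only at $j=0$ since $\omega^s$ is independent of $x$) from the perturbation part, and splitting the perturbation sum into the three regimes $j=0$, $1\leq j\leq [m/2]$, and $[m/2]+1\leq j\leq m-1$. For $j\geq 1$ in the perturbation, I apply H\"older's inequality, placing $L^\infty$ on the factor with fewer $x$-derivatives (via Sobolev embedding in $x$ combined with \eqref{condi} and the mixed-derivative bound \eqref{emix}) and weighted $L^2$ on the other factor (via \eqref{etan}), then close using the $\norm{g_m}_{L^2}$ bound from $\abs{\cdot}_{\rho,\sigma}$. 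The combinatorial factor $\binom{m-1}{j}$ combines with the factorials via Lemma~\ref{lemequa}(i), and the resulting sum $\sum_j 1/(j^a(m-j)^b)$ converges uniformly in $m$ for $\sigma\geq 3/2$, exactly as in the estimation of Lemma~\ref{lemdiffe}. After Young's inequality, this portion fits into the $\abs{u(s)}_{\rho,\sigma}^2+\abs{u(s)}_{\rho,\sigma}^4$ bucket of the target bound.

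For the $j=0$ shear-flow piece, Proposition~\ref{proshf} gives $|\partial_y^2\omega^s|\leq C\comi{y}^{-\alpha-1}$, so Cauchy--Schwarz reduces the contribution to $m^2\int_0^t\norm{\comi{y}^\ell\partial_x^m\omega}_{L^2}\norm{g_m}_{L^2}\,ds$. The extra factor of $m$ (left over after invoking $m\norm{g_m}_{L^2}$ from the definition of $\abs{\cdot}_{\rho,\sigma}$) is absorbed by bounding $\partial_x^m\omega$ at the larger radius $\tilde\rho$ and using Lemma~\ref{lemequa}(iii) to convert $m(\rho/\tilde\rho)^{m-5}$ into $C/(\tilde\rho-\rho)$; a final Young's inequality then produces the $\abs{u(s)}_{\tilde\rho,\sigma}^2/(\tilde\rho-\rho)$ contribution.

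The delicate case is the $j=0$ perturbation piece $2m^2\int\partial_y^2\omega\cdot\partial_x^m\omega\cdot g_m$, for which direct H\"older loses an inabsorbable factor of $m$. I integrate by parts in $y$, using $\partial_y\omega|_{y=0}=0$ (preserved by the evolution of \eqref{regpran} from the compatibility condition \eqref{comcon}), to write
\begin{equation*}
\int_0^\infty\partial_y^2\omega\,\partial_x^m\omega\,g_m\,dy = -\int_0^\infty\partial_y\omega\,\partial_x^m\partial_y\omega\,g_m\,dy - \int_0^\infty\partial_y\omega\,\partial_x^m\omega\,\partial_y g_m\,dy.
\end{equation*}
The second integral is absorbed into $\kappa m^2\int\norm{\partial_y g_m}_{L^2}^2$ plus a Gevrey tail via Cauchy--Schwarz, an $L^\infty$ bound on $\partial_y\omega$, and the $\tilde\rho$-shift on $\partial_x^m\omega$. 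For the first integral I use the cutoff decomposition $\partial_x^m\partial_y\omega = \chi_2\partial_x^m\partial_y\omega + (1-\chi_2)\chi_1\partial_x^m\partial_y\omega$ (noting $\chi_1\equiv 1$ on $\mathrm{supp}(1-\chi_2)$ by \eqref{suppch2}): the $\chi_2$ piece is controlled directly by $\norm{\chi_2\partial_x^m\partial_y\omega}_{L^2}$ from the definition of $\abs{\cdot}_{\rho,\sigma}$, while for the $(1-\chi_2)\chi_1$ piece, I invoke \eqref{fm1} to bound $\partial_x^{m-1}\partial_y\omega$ in terms of $\norm{\partial_y f_{m-1}}_{L^2}$ plus an $m^{-\sigma}$-suppressed Gevrey remainder (handling the extra $x$-derivative by a further integration by parts placing $\partial_x$ onto $\partial_y\omega$, which is $L^\infty$-controlled via \eqref{condi}). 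Young's inequality $ab\leq \kappa a^2+\kappa^{-1}b^2$ with the $m$-prefactor then peels off the $C\kappa^{-1}m^2\int\norm{\partial_y f_{m-1}}_{L^2}^2$ contribution.

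The hardest step is precisely this $j=0$ perturbation bookkeeping: the successive integrations by parts, the $\chi_1$--$\chi_2$ cutoff decomposition, the substitution \eqref{fm1}, and the $\tilde\rho$-shift must be orchestrated so that every residual falls into one of the three admissible buckets $\kappa m^2\int\norm{\partial_y g_m}^2$, $C\kappa^{-1}m^2\int\norm{\partial_y f_{m-1}}^2$, and the Gevrey-norm terms, with no surviving factor of $m$ or $(\tilde\rho-\rho)^{-2}$.
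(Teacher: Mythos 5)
Your overall decomposition of the sum by the regimes $j=0$, $1\leq j\leq[m/2]$, $[m/2]+1\leq j\leq m-1$ is close to the paper's split into $J_1,\ldots,J_4$, and your recognition that the $\chi_1$--$\chi_2$ cutoff, \eqref{fm1}, and the weight $m\norm{g_m}_{L^2}$ in $\abs{\cdot}_{\rho,\sigma}$ are the right tools is also correct in spirit. However, you have inverted which regime is the delicate one, and this produces a genuine gap.

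\textbf{The $j=0$ term does not lose an inabsorbable $m$.} Writing the $j=0$ contribution as $2m\int\norm{\partial_y^2\omega^s+\partial_y^2\omega}_{L^\infty}\norm{\partial_x^m\omega}_{L^2}\,(m\norm{g_m}_{L^2})$, the factor $m\norm{g_m}_{L^2}$ is precisely the object controlled by $\abs{u}_{\tilde\rho,\sigma}[(m-6)!]^\sigma/\tilde\rho^{m-5}$ in Definition~\ref{gevspace}, and $\norm{\partial_x^m\omega}_{L^2}$ carries the same weight. The single residual factor $m$ is absorbed by $m(\rho/\tilde\rho)^{2(m-5)}\leq C/(\tilde\rho-\rho)$, i.e.\ \eqref{factor}. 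This is exactly how the paper handles $J_1$ (and $J_2$), with no integration by parts. Your premise that ``direct H\"older loses an inabsorbable factor of $m$'' for this piece is therefore false.

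\textbf{It is the high-$j$ regime that actually needs integration by parts, and you skip it.} For $[m/2]+1\leq j\leq m-1$ you propose to keep $\partial_x^j\partial_y^2\omega$ in $L^2$, but this fails near $j=m-1$: by \eqref{emix}, $\norm{\comi{y}^{\ell+1}\partial_x^{m-1}\partial_y^2\omega}_{L^2}\leq [(m-5)!]^\sigma\,\tilde\rho^{-(m-4)}\abs{u}_{\tilde\rho,\sigma}$, which carries an extra factor $(m-5)^\sigma$ compared to the target weight $[(m-6)!]^\sigma\rho^{-(m-5)}$. Combined with the unavoidable $m$ from $m^2\norm{g_m}_{L^2}$ and after invoking \eqref{factor} once, one is left with $(m-5)^\sigma/(\tilde\rho-\rho)$, which is unbounded in $m$. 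The paper resolves this by integrating $\partial_y$ off $\partial_x^j\partial_y^2\omega$ in $J_4$; this both produces the admissible $\kappa m^2\int\norm{\partial_y g_m}^2$ term and lowers the weight to $[(j-5)!]^\sigma\rho^{-(j-4)}$, which for $j=m-1$ is exactly $[(m-6)!]^\sigma\rho^{-(m-5)}$. The term $J_{4,1}$, the $j=m-1$ contribution after this integration by parts and Young, is precisely $C\kappa^{-1}m^2\int\norm{\partial_x^{m-1}\partial_y\omega}^2\norm{\partial_x\omega}_{L^\infty}^2$, which is then controlled via \eqref{fm1}; the factor $m^{-\sigma}$ in \eqref{fm1} is what lets $m^2$ be absorbed. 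That is where $\norm{\partial_y f_{m-1}}$ actually comes from.

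\textbf{Your proposed $j=0$ integration by parts does not close either.} After moving $\partial_y$ off $\partial_y^2\omega$ you produce two problematic objects. The piece with $\partial_y g_m$ gives, after Young, $\kappa^{-1}m^2\int\norm{\partial_x^m\omega}^2$; unlike $m^2\norm{g_m}^2$, the quantity $m^2\norm{\partial_x^m\omega}^2$ has no built-in $m$-gain from $\abs{\cdot}_{\rho,\sigma}$, so one power of $m$ survives after \eqref{factor}. The piece with $\partial_x^m\partial_y\omega$ carries $m$, not $m-1$, derivatives in $x$, so \eqref{fm1} is not directly applicable; your additional integration by parts in $x$ transfers $\partial_x$ onto $g_m$, i.e.\ onto $g_{m+1}$, whose weight $[(m-5)!]^\sigma\tilde\rho^{-(m-4)}/(m+1)$ again introduces $(m-5)^\sigma/m$, and tracking through Young one ends up with $m^{2\sigma-1}/(\tilde\rho-\rho)$, unbounded for $\sigma\geq 3/2$. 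So not only is the $j=0$ detour unnecessary, it actively fails. The correction is to leave $j=0$ as a direct H\"older estimate, and instead integrate by parts in $y$ on the high-$j$ part and extract $\norm{\partial_y f_{m-1}}$ from the $j=m-1$ term via \eqref{fm1}.
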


	 \begin{proof}
	We write  
	\begin{eqnarray*}
		\mathcal P_3= 2\int_0^t \bigg( \sum_{j=0}^{m-1}{{m-1}\choose j} \inner{\partial_x^j \partial_y^2\omega^s+\partial_x^j \partial_y^2\omega}\partial_x^{m-j} \omega,~  m^2g_m\bigg)_{L^2}\,dt\leq J_1+J_2+J_3+J_4
		\end{eqnarray*}
		with
		\begin{eqnarray*}
		J_1&=& 2m  \int_0^t \norm{\partial_y^2\omega^s+\partial_y^2\omega}_{L ^\infty}\norm{\partial_x^{m} \omega}_{L^2}\inner{m \norm{    g_m}_{L^2}}\,dt,\\
		J_2&=& 2m(m-1)  \int_0^t \norm{ \partial_x\partial_y^2\omega}_{L ^\infty}\norm{\partial_x^{m-1} \omega}_{L^2}\inner{m \norm{    g_m}_{L^2}}\,dt,\\
		J_3&=& 2m \sum_{j=2}^{[m/2]}\frac{(m-1)!}{j!(m-1-j)!}\int_0^t   \norm{\partial_x^j \partial_y^2\omega}_{L^\infty
		}\norm{\partial_x^{m-j} \omega}_{L^2}\inner{m \norm{    g_m}_{L^2}}\,dt,\\
		J_4&=& 2\sum_{j=[m/2]+1}^{m-1}{{m-1}\choose j}\int_0^t \bigg(  \inner{ \partial_x^j \partial_y^2\omega}\partial_x^{m-j} \omega,~  m^2g_m\bigg)_{L^2}.
	\end{eqnarray*}
	
	\underline{\it Estimate on $J_1$ and $J_2$:} 
	 For $J_1$, we use the third estimate in \eqref{condi} as well as  \eqref{chi2est}  to obtain 
	\begin{eqnarray*}
		 J_1\leq     \frac{C \big[\inner{m-6}!\big]^{2\sigma}}{\rho^{2(m-5)}} \int_0^t    \abs{u(s)}_{  \tilde\rho,\sigma} ^2 \frac{m\rho^{2(m-5)}}{\tilde \rho^{2(m-5)}}  \,ds \leq    \frac{C\big[\inner{m-6}!\big]^{2\sigma}}{\rho^{2(m-5)}}    
\int_0^t   \frac{\abs{u(s)}_{\tilde \rho,\sigma}^2}{\tilde \rho(s)-\rho}\,ds,
	\end{eqnarray*}
where	the last inequality follows from  \eqref{factor}  in Lemma \ref{lemequa}. Similarly,
	\begin{eqnarray*}
		 J_2\leq     \frac{C \big[\inner{m-6}!\big]^{2\sigma}}{\rho^{2(m-5)}} \int_0^t    \abs{u(s)}_{  \tilde\rho,\sigma}^2  \frac{m^{2-\sigma}\rho^{2(m-5)}}{\tilde \rho^{2(m-5)}}  \,ds \leq    \frac{C\big[\inner{m-6}!\big]^{2\sigma}}{\rho^{2(m-5)}}    
\int_0^t   \frac{\abs{u(s)}_{\tilde \rho,\sigma}^2}{\tilde \rho(s)-\rho}\,ds,
	\end{eqnarray*}  
where	the last inequality follows from the fact that $\sigma\geq 3/2\geq 1.$

	\underline{\it Estimate on $J_3$:}   
  By using the statement  $ (iv)$ in  Lemma   \ref{lemequa} as well as the Sobolev inequality (see Lemma \ref{sobine} in the Appendix),  we have
\begin{eqnarray*}
	J_{3}&\leq & C  m \sum_{j=2}^{[m/2]}\frac{(m-1)!}{j!(m-1-j)!} \frac{\big[(j-2)!\big]^\sigma}{\rho^{j-1}}\frac{\big[\inner{m-j-6}!\big]^\sigma}{\rho^{m-j-5}} 
\frac{\big[\inner{m-6}!\big]^\sigma}{\rho^{m-5}}  \int_0^t \abs{u(s)}_{\rho,\sigma}^3  \,ds\\
	&\leq & C m \frac{\big[\inner{m-6}!\big]^{\sigma+1}}{\rho^{2(m-5)}}   \sum_{j=2}^{[m/2]}\frac{m^5\big[(j-2)!\big]^{\sigma-1} \big[\inner{m-j-6}!\big]^{\sigma-1}}{j^2(m-j )^5}     
\int_0^t \abs{u(s)}_{\rho,\sigma}^3  \,ds\\
	&\leq & C m \big[(m-8)!\big]^{\sigma-1}    \frac{\big[\inner{m-6}!\big]^{\sigma+1}}{\rho^{2(m-5)}}   \int_0^t \abs{u(s)}_{\rho,\sigma}^3  \,ds \sum_{j=2}^{[m/2]}\frac{ 1}{j^2 }     
\qquad\qquad ( \textrm {using  (i) in Lemma }~\ref{lemequa})\\
	&\leq &  \frac{C\big[\inner{m-6}!\big]^{2\sigma}}{\rho^{2(m-5)}}    
\int_0^t \abs{u(s)}_{\rho,\sigma}^3  \,ds,  
\end{eqnarray*}
where in the last inequality, we have used $ m^{-2(\sigma-1)}\leq m^{-1}$ because $\sigma\geq3/2.$ 

	\underline{\it Estimate on $J_4$:} By integration by parts, for any small $\kappa>0,$ we have
	\begin{eqnarray*}
		J_4&=&-2\sum_{j=[m/2]+1}^{m-1}{{m-1}\choose j}\int_0^t \bigg(  \inner{ \partial_x^j \partial_y\omega}\partial_x^{m-j} \omega,~  m^2\partial_y g_m\bigg)_{L^2}\\
		&&-2\sum_{j=[m/2]+1}^{m-1}{{m-1}\choose j}\int_0^t \bigg(  \inner{ \partial_x^j \partial_y\omega}\partial_x^{m-j} \partial_y\omega,~  m^2 g_m\bigg)_{L^2}\\
		&\leq & \kappa m^2\int_0^t \norm{\partial_y g_m}_{L^2}^2\,ds+    C\kappa^{-1} m^2 
\int_0^t   \norm{ \partial_x^{m-1}\partial_y\omega }_{L^2}^2 \norm{\partial_x \omega}_{L^\infty}^2\,ds\\
		&&+  C\kappa^{-1} m^2 
\int_0^t \bigg[\sum_{j=[m/2]+1}^{m-2}{{m-1}\choose j} \norm{ \partial_x^j \partial_y\omega }_{L^2} \norm{\partial_x^{m-j} \omega}_{L^\infty}\bigg]^2\,ds\\
		&&+2 m\sum_{j=[m/2]+1}^{m-1}{{m-1}\choose j}\int_0^t \norm{ \partial_x^j \partial_y\omega }_{L^2} \norm{\partial_x^{m-j}\partial_y\omega}_{L^\infty}\inner{m \norm{ g_m}_{L^2}}\,ds\\
		&\stackrel{\rm def}{=} & \kappa m^2\int_0^t \norm{\partial_y g_m}_{L^2}^2\,ds+  J_{4,1}+J_{4,2}+J_{4,3}.
	\end{eqnarray*}
Now we use the statements  $(iii)$  and $(iv)$ in Lemma \ref{lemequa} to get by  repeating the arguments  used for the terms $J_1$-$J_3$,  
\begin{equation*}
 J_{4,3}	\leq    \frac{C\big[\inner{m-6}!\big]^{2\sigma}}{\rho^{2(m-5)}}    
\inner{\int_0^t \abs{u(s)}_{\rho,\sigma}^3  \,ds+\int_0^t   \frac{\abs{u(s)}_{\tilde \rho,\sigma}^2}{\tilde \rho(s)-\rho}\,ds},
\end{equation*}
and
\begin{eqnarray*}
J_{4,2} 	 
&\leq&    \frac{C\kappa^{-1}\big[\inner{m-6}!\big]^{2\sigma}}{\rho^{2(m-5)}}    
\inner{\int_0^t \abs{u(s)}_{\rho,\sigma}^4  \,ds+\int_0^t   \frac{\abs{u(s)}_{\tilde \rho,\sigma}^2}{\tilde \rho(s)-\rho}\,ds}.
\end{eqnarray*}
It remains to treat $J_{4,1}$. To do so, we use \eqref{fm1} and the last inequality in \eqref{condi} to obtain by using $\sigma\geq3/2> 1,$
\begin{eqnarray*}
J_{4,1} 
         \leq
 C \kappa^{-1} m^2 \int_0^t \norm{  \partial_y f_{m-1}}_{L^2}^2 ds 
    +    \frac{C \kappa^{-1} \big[\inner{m-6}!\big]^{2\sigma}}{  \rho^{2(m-5)} } \int_0^t    \abs{u}_{ \rho,\sigma}^2   \,ds.
\end{eqnarray*}
This along with the estimates on $J_{4,2}$ and $J_{4,3}$ given
above imply that for any small $\kappa>0,$ 
\begin{eqnarray*}
		J_{4}&\leq &   \kappa m^2 \int_0^t \norm{\partial_y g_m}_{L^2}^2 ds+ C\kappa^{-1} m^2 \int_0^t \norm{  \partial_y f_{m-1}}_{L^2}^2 ds    \\
		&&+\frac{C \kappa^{-1}\big[\inner{m-6}!\big]^{2\sigma}}{\rho^{2(m-5)}}    
 \inner{ \int_0^t\inner{\abs{u(s)}_{\rho,\sigma}^2 + \abs{u(s)}_{\rho,\sigma}^4}\,ds  +\int_0^t   \frac{\abs{u(s)}_{\tilde \rho,\sigma}^2}{\tilde \rho(s)-\rho}\,ds}.
\end{eqnarray*}
Then  combining the estimates on the terms $J_1$-$J_4$, the desired estimate follows.  Thus the proof of the lemma is completed. 
 \end{proof}
 
 \begin{lemma}[Estimate on $\mathcal P_4$]\label{lemp5}
Let $3/2\leq \sigma\leq 2$. Then for any small $\kappa>0,$ and  for any pair $(\rho,\tilde\rho)$ with $0<\rho<\tilde\rho,$  we have
\begin{eqnarray*}
	\mathcal P_4&\leq&\kappa m^2\int_0^t   \norm{\partial_y g_m}_{L^2} ^2\,ds +C \kappa^{-1}m^2\int_0^t   \norm{\partial_y   f_{m-1} }_{L^2} ^2\,ds\\
	&&+   \frac{C\kappa^{-1}\big[ \inner{m-6}!\big]^{2\sigma} }{\rho^{2(m-5)}} \inner{\int_0^t\inner{\abs{u(s)}_{\rho,\sigma}^2 + \abs{u(s)}_{\rho,\sigma}^4}\,ds +\int_0^t  \frac{\abs{u(s)}_{\tilde\rho,\sigma}^2  }{\tilde \rho-\rho}\,ds}.
\end{eqnarray*}
\end{lemma}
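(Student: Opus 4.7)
My plan is to reduce the estimate on $\mathcal P_4$ to the already-proven estimate on $\mathcal P_3$ via an integration by parts in $y$, mirroring the structure used in Lemma~\ref{lemp3}. The key preparatory fact is that $\partial_x^j(\partial_y\omega^s+\partial_y\omega)|_{y=0}=0$ for every $j\geq 0$: for the shear component this holds automatically when $j\geq 1$ (as $\omega^s$ is independent of $x$) and, for $j=0$, follows from $\partial_y^2 u_0^s|_{y=0}=0$ in Assumption~\ref{maas}(iii) propagated by the heat equation \eqref{eqshearflow}; for the perturbation part, I differentiate \eqref{regpran} in $x$ and evaluate at $y=0$, where every term acquires a factor among $u_\eps$, $v_\eps$, $u^s$, all of which vanish at the boundary by \eqref{comcon} and the no-slip condition.

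Equipped with this vanishing, I integrate by parts in $y$ to transfer $\partial_y$ off the factor $\partial_x^{m-j}\partial_y\omega$ in each summand of $\mathcal P_4$. Both boundary terms vanish (at $y=0$ by the identity above, at $y=\infty$ by decay), which yields the identity
\begin{align*}
\mathcal P_4=\mathcal P_3+\mathcal Q,\qquad \mathcal Q=2m^2\int_0^t\biggl(\sum_{j=0}^{m-1}\binom{m-1}{j}(\partial_x^j\partial_y\omega^s+\partial_x^j\partial_y\omega)\partial_x^{m-j}\omega,\;\partial_y g_m\biggr)_{L^2}\,ds.
\end{align*}
The $\mathcal P_3$-piece is controlled directly by Lemma~\ref{lemp3}, which already supplies the full target form of the bound claimed for $\mathcal P_4$, so it remains only to treat $\mathcal Q$.

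For $\mathcal Q$ I apply Young's inequality with parameter $\kappa$, absorbing the $\partial_y g_m$-factor into $\kappa m^2\int\|\partial_y g_m\|_{L^2}^2\,ds$ and leaving a residual of the form $C\kappa^{-1}\int m^2\|G\|_{L^2}^2\,ds$, where $G$ denotes the inner sum. I then estimate $\|G\|_{L^2}$ by the same four-range case-split over $j$ that was used in the analysis of $\mathcal P_3$: $j=0$, $j=1$, $2\leq j\leq [m/2]$, and $[m/2]+1\leq j\leq m-1$. The low-$j$ terms are handled by the $L^\infty$ bounds in \eqref{condi} combined with the $L^2$ bound \eqref{etan} on $\partial_x^{m-j}\omega$; the moderate-$j$ terms use the Sobolev inequality (Lemma~\ref{sobine}) together with the factorial collapse from $J_3$ in the proof of Lemma~\ref{lemp3}; the endpoint case $j=m-1$ contains $\partial_x^{m-1}\partial_y\omega$, which I rewrite via \eqref{fm1} as $\|\partial_y f_{m-1}\|_{L^2}$ plus a negligible $m^{-\sigma}$-weighted correction, producing the $C\kappa^{-1}m^2\int\|\partial_y f_{m-1}\|_{L^2}^2\,ds$ contribution through a second Young's inequality.

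The main technical hurdle is the factorial and geometric bookkeeping: each product term carries a binomial coefficient and Gevrey factorials which must be collapsed to $[(m-6)!]^{2\sigma}$ multiplied by controllable powers of $m$ against $(\rho/\tilde\rho)^{m-5}$. The assumption $\sigma\geq 3/2$ is used precisely as in Lemma~\ref{lemp3} to ensure that the residual powers of $m$ (of the form $m^{2-2\sigma}$ or $m^{4-4\sigma}$) are absorbed, so that \eqref{factor} collapses the geometric factor down to $1/(\tilde\rho-\rho)$ rather than a worse power. Combining this bound on $\mathcal Q$ with the bound on $\mathcal P_3$ inherited from Lemma~\ref{lemp3} then yields the desired estimate on $\mathcal P_4$.
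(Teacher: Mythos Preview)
Your reduction $\mathcal P_4=\mathcal P_3+\mathcal Q$ via a global integration by parts in $y$ is algebraically correct, and the boundary vanishing you invoke is exactly the one the paper uses (indeed $\partial_y\omega^s|_{y=0}=\partial_y\omega|_{y=0}=0$). The gap is in how you estimate the $j=0$ summand of $\mathcal Q$. After your Young inequality, that summand contributes
\[
C\kappa^{-1}m^2\int_0^t\norm{(\partial_y\omega^s+\partial_y\omega)\,\partial_x^m\omega}_{L^2}^2\,ds
\;\leq\; C\kappa^{-1}m^2\int_0^t\norm{\partial_x^m\omega}_{L^2}^2\,ds.
\]
Bounding $\norm{\partial_x^m\omega}_{L^2}$ via \eqref{etan} at radius $\tilde\rho$ produces the factor $m^2(\rho/\tilde\rho)^{2(m-5)}$, and \eqref{factor} only yields $m^2(\rho/\tilde\rho)^{2(m-5)}\leq C/(\tilde\rho-\rho)^2$, not $C/(\tilde\rho-\rho)$. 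Unlike $m^2\norm{g_m}_{L^2}^2$, which is harmless because the norm $\abs{\cdot}_{\rho,\sigma}$ carries an extra factor $m$ in front of $\norm{g_m}_{L^2}$, the quantity $m^2\norm{\partial_x^m\omega}_{L^2}^2$ has no such compensation. So your bound on $\mathcal Q$ does not fit the target and the Cauchy--Kovalevskaya scheme in Section~\ref{sec7} would not close.

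The paper avoids precisely this loss by \emph{not} integrating by parts globally. It first splits $\mathcal P_4$ by the cutoffs $\chi_2$ and $1-\chi_2$. On $\mathrm{supp}\,\chi_2$ the term $\chi_2\partial_x^{m-j}\partial_y\omega$ is already part of the norm (see \eqref{chi2est}), so one keeps the product with $m\norm{g_m}_{L^2}$ and gains only a single extra $m$, which \eqref{factor} absorbs into $1/(\tilde\rho-\rho)$. On $\mathrm{supp}\,(1-\chi_2)$ the paper does integrate by parts, but then exploits the monotonicity there to write $\partial_x^m\omega=\tilde g_m/(\omega^s+\omega)+a\,\partial_x^m u$; the $\tilde g_m$ piece yields $m^2\norm{\tilde g_m}_{L^2}^2$, controlled via Lemma~\ref{lemdiffe} and the built-in $m$-factor on $\norm{g_m}_{L^2}$, while the $\partial_x^m u$ piece is integrated by parts once more to restore pairing with $g_m$ (not $\partial_y g_m$). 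This is the missing structural idea in your argument.
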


\begin{proof}
Let $\chi_2$ be the function given in \eqref{cutofffu}.  We can 
decompose $\mathcal P_4$ by
\begin{eqnarray*}
	\mathcal P_4  &=&-\int_0^t \bigg(2  \sum_{j=0}^{m-1}{{m-1}\choose j} \inner{\partial_x^j \partial_y\omega^s+\partial_x^j \partial_y\omega}\partial_x^{m-j} \partial_y\omega,~ m^2 g_m\bigg)_{L^2}	\,ds,\\
	 &=&-2\sum_{j=0}^{m-1}{{m-1}\choose j} \int_0^t \bigg(    \chi_2 \inner{\partial_x^j \partial_y\omega^s+\partial_x^j \partial_y\omega}\partial_x^{m-j} \partial_y\omega,~  m^2    g_m\bigg)_{L^2}	\,ds\\
	 	 & &-2 \sum_{j=0}^{m-1}{{m-1}\choose j} \int_0^t \bigg(   \inner{1-\chi_2}\inner{\partial_x^j \partial_y\omega^s+\partial_x^j \partial_y\omega}\partial_x^{m-j} \partial_y\omega,~m^2   g_m\bigg)_{L^2}	\,ds\\
	&\stackrel{\rm def}{=} &S_1+S_2,
\end{eqnarray*}
\underline{\it Estimate on $S_1$:}     
Note that $S_1\leq S_{1,1}+S_{1,2}$ with
\begin{eqnarray*}
	S_{1,1}&=& 2m \sum_{j=0}^{[m/2]}\frac{ (m-1)!}{j!(m-1 -j)!}  \int_0^t \norm{ \partial_x^j \partial_y\omega^s+\partial_x^j \partial_y\omega}_{L ^\infty} \norm{\chi_2\partial_x^{m-j} \partial_y\omega} _{L^2}\Big(m\norm{    g_m}_{L^2}\Big)	\,ds,\\
	S_{1,2}&=& 2m\sum_{[m/2]+1}^{m-1}\frac{ (m-1)!}{j!(m-1 -j)!}  \int_0^t \norm{\chi_2\partial_x^j \partial_y\omega}_{L ^2} \norm{\partial_x^{m-j} \partial_y\omega} _{L^\infty}\Big(m\norm{  g_m}_{L^2}\Big)	\,ds.
\end{eqnarray*}
Moreover, we   use \eqref{chi2est}  and \eqref{emix}  in  Lemma \ref{lemequa} and the Sobolev inequality (see Lemma \ref{sobine} in the Appendix), by following the arguments used for the terms $J_1$-$J_3$ in Lemma \ref{lemp3}, to obtain 
\begin{eqnarray*}
 S_{1,1}  & \leq&  Cm\sum_{j=3}^{[m/2]}\frac{ (m-1)!}{j!(m-1 -j)!}   \frac{\big[(j-3)!\big]^\sigma}{\rho^{j-2}}\frac{\big[\inner{m-j-6}!\big]^\sigma}{\rho^{m-j-5}} \frac{\big[\inner{m-6}!\big]^\sigma}{ \rho^{m-5}}  
 \int_0^t  \abs{u(s)}_{\rho,\sigma}^3 \,ds \\
&+&  Cm\sum_{0\leq j\leq 2}\frac{ (m-1)!}{j!(m-1 -j)!}  
  \int_0^t  \frac{\big[\inner{m-j-6}!\big]^\sigma}{\tilde\rho^{m-j-5}}\frac{\big[\inner{m-6}!\big]^\sigma}{\tilde\rho^{ m-5 }}\norm{  \partial_x^j\partial_y\omega^s+  \partial_x^j\partial_y\omega}_{L ^\infty}  \abs{u(s)}_{\tilde\rho,\sigma}^2ds\\
&\leq&  \frac{C\big[\inner{m-6}!\big]^{2\sigma}}{\rho^{2(m-5)}}   \inner{ 
  \int_0^t  \abs{u(s)}_{ \rho,\sigma}^3 \,ds+    
  \int_0^t  \frac{ \abs{u(s)}_{\tilde\rho,\sigma}^2}{ \tilde\rho-\rho } \,ds}.
  \end{eqnarray*}
Similar estimate holds for  $S_{1,2}$. Thus, we conclude that
\begin{equation}
\label{esons1}
S_1	\leq \frac{C\big[\inner{m-6}!\big]^{2\sigma}}{\rho^{2(m-5)}}    \inner{ \int_0^t   \abs{u(s)}_{ \rho,\sigma}^4 \,ds+
  \int_0^t  \frac{ \abs{u(s)}_{\tilde\rho,\sigma}^3}{\tilde\rho-\rho} \,ds}.
\end{equation}
\underline{\it Estimate on $S_2$:}  Write $S_{2}=S_{2,1}+S_{2,2}$ with
\begin{eqnarray*}
		S_{2,1}&= &-2m^2\sum_{j=0}^{[m/2]}{{m-1}\choose j} \int_0^t \bigg(  \inner{1-\chi_2}\inner{\partial_x^j \partial_y\omega^s+\partial_x^j \partial_y\omega}\partial_x^{m-j} \partial_y\omega,~  g_m\bigg)_{L^2}	\,ds,\\
			S_{2,2}&= &-2m^2\sum_{j=[m/2]+1}^{m-1}{{m-1}\choose j} \int_0^t \bigg(  \inner{1-\chi_2}\inner{\partial_x^j \partial_y\omega^s+\partial_x^j \partial_y\omega}\partial_x^{m-j} \partial_y\omega,~  g_m\bigg)_{L^2}	\,ds.
\end{eqnarray*}
 Following the arguments for $J_1$-$J_3$ in Lemma \ref{lemp3}, we have 
 \begin{equation}
 \label{s21+}	
 	S_{2,2}\leq   \frac{C \big[ \inner{m-6}!\big]^{2\sigma} }{\rho^{2(m-5)}} \inner{\int_0^t \abs{u(s)}_{\rho,\sigma}^3\,ds+\int_0^t  \frac{\abs{u(s)}_{\tilde\rho,\sigma}^2  }{\tilde \rho-\rho}\,ds}.
 \end{equation}
As for $S_{2,1}$, integration by parts yields
\begin{equation}\label{upfos222}
\begin{aligned}
	S_{2,1}
	=& 2m^2 \int_0^t \bigg(  \inner{1-\chi_2}\inner{ \partial_y\omega^s+\partial_y\omega}\partial_x^{m }  \omega,~ \partial_y  g_m\bigg)_{L^2}	\,ds\\
	&+2m^2\sum_{j=1}^{[m/2]}{{m-1}\choose j} \int_0^t \bigg(  \inner{1-\chi_2}\inner{\partial_x^j \partial_y\omega^s+\partial_x^j \partial_y\omega}\partial_x^{m-j} \omega,~   \partial_y g_m\bigg)_{L^2}	\,ds\\
	&+2m^2\sum_{j=0}^{[m/2]}{{m-1}\choose j} \int_0^t \bigg( \Big[\partial_y\Big( \inner{1-\chi_2}\inner{\partial_x^j \partial_y\omega^s+\partial_x^j \partial_y\omega}\Big)\Big]\partial_x^{m-j }  \omega,~   g_m\bigg)_{L^2}	\,ds,
	\end{aligned}
	\end{equation}
	where the last two terms on the right side of \eqref{upfos222} are 
bounded above by 
	\begin{eqnarray*}
		 \kappa m^2 \int_0^t \norm{\partial_y g_m}_{L^2}^2 ds+\frac{C \kappa^{-1}\big[\inner{m-6}!\big]^{2\sigma}}{\rho^{2(m-5)}}    
 \inner{ \int_0^t\inner{\abs{u(s)}_{\rho,\sigma}^3 + \abs{u(s)}_{\rho,\sigma}^4}\,ds  +\int_0^t   \frac{\abs{u(s)}_{\tilde \rho,\sigma}^2}{\tilde \rho -\rho}\,ds},
	\end{eqnarray*}
	for any small $\kappa>0.$ This can derived from a similar calculation as in Lemma \ref{lemp3}, observing $3/2\leq \sigma\leq 2.$  It remains to treat the first term on the right side of \eqref{upfos222}, for this,  we claim that
	\begin{equation}
	\label{cla}
	\begin{aligned}
	&2m^2 \int_0^t \bigg(  \inner{1-\chi_2}\inner{ \partial_y\omega^s+\partial_y\omega}\partial_x^{m }  \omega,~ \partial_y  g_m\bigg)_{L^2}	\,ds\\
	\leq&  
	\kappa m^2 \int_0^t \norm{\partial_y g_m}_{L^2}^2 ds+C\kappa^{-1} m^2\int_0^t   \norm{\partial_y   f_{m-1} }_{L^2} ^2\,ds\\
	&+\frac{C \kappa^{-1}\big[\inner{m-6}!\big]^{2\sigma}}{\rho^{2(m-5)}}    
 \inner{ \int_0^t\inner{\abs{u(s)}_{\rho,\sigma}^2 + \abs{u(s)}_{\rho,\sigma}^4}\,ds  +\int_0^t   \frac{\abs{u(s)}_{\tilde \rho,\sigma}^2}{\tilde \rho -\rho}\,ds}.
 \end{aligned}
	\end{equation}
 To confirm this, we use   the fact that $\abs{\omega^s+\omega}\geq c_1/4$ on supp\,$(1-\chi_2)$ 
 to   write,  in view of \eqref{funofg},
\begin{eqnarray*}
	\partial_x^m \omega= \frac{\tilde g_m}{\omega^s+\omega } + \frac{\partial_y\omega^s+\partial_y\omega}{\omega^s+\omega} \partial_x^mu\quad  \textrm{for} ~y\in{\rm supp}\,(1-\chi_2).
\end{eqnarray*} 
 As a result,  for any $\kappa>0,$ we use \eqref{condi} to have
\begin{eqnarray*}
&&2m^2\int_0^t\Big(  \inner{1-\chi_2} \inner{\partial_y\omega^s+ \partial_y\omega}\partial_x^{m}  \omega,~    \partial_y g_m\Big)_{L^2} dt\\
&=& 2m^2\int_0^t\Big(  \inner{1-\chi_2} \inner{\partial_y\omega^s+ \partial_y\omega} \frac{\tilde g_m}{  \omega^s+\omega } ,~  \partial_y g_m\Big)_{L^2} dt \\
&&+ 2m^2\int_0^t\Big(  \inner{1-\chi_2} \inner{\partial_y\omega^s+ \partial_y\omega}  \frac{\partial_y\omega^s+\partial_y\omega}{\omega^s+\omega} \partial_x^mu ,~  \partial_y g_m\Big)_{L^2} dt\\
\\
&\leq & \kappa m^2\int_0^t   \norm{\partial_y   g_m }_{L^2} ^2\,ds  +C\kappa^{-1}m^2 \int_0^t   \norm{ \inner{ g_m- \tilde g_m}}_{L^2} ^2\,ds+C\kappa^{-1}m^2 \int_0^t   \norm{g_m}_{L^2} ^2\,ds\\
&&-2m^2\int_0^t\Big(\inner{\partial_x^mu}\partial_y\Big[  \inner{1-\chi_2} \inner{\partial_y\omega^s+ \partial_y\omega}  \frac{\partial_y\omega^s+\partial_y\omega}{\omega^s+\omega} \Big] ,~     g_m \Big)_{L^2} dt
\\&&-2m^2\int_0^t\Big(  \inner{1-\chi_2} \inner{\partial_y\omega^s+ \partial_y\omega}  \frac{\partial_y\omega^s+\partial_y\omega}{\omega^s+\omega} \partial_x^m\omega ,~    g_m\Big)_{L^2} dt,
\end{eqnarray*}
where the  the last three terms on the right of the above inequality are
 bounded  above by 
\begin{eqnarray*}
	  \frac{C \kappa^{-1}\big[\inner{m-6}!\big]^{2\sigma}}{\rho^{2(m-5)}}    
 \inner{ \int_0^t \abs{u(s)}_{\rho,\sigma}^2  \,ds  +\int_0^t   \frac{\abs{u(s)}_{\tilde \rho,\sigma}^2}{\tilde \rho -\rho}\,ds}
\end{eqnarray*} 
by using the statements $(iii)$ and $(iv)$ in Lemma \ref{lemequa}.    Furthermore, by Lemma \ref{lemdiffe} we have
\begin{equation*}
\begin{aligned}
	&m^2 \int_0^t   \norm{ \inner{ g_m- \tilde g_m}}_{L^2} ^2  \leq   C m^2\int_0^t   \norm{\partial_y   f_{m-1} }_{L^2} ^2\,ds+   \frac{C\big[ \inner{m-6}!\big]^{2\sigma} }{\rho^{2(m-5)}} \int_0^t \abs{u(s)}_{\tilde\rho,\sigma}^2  \frac{m^{2+2-2\sigma}\rho^{2(m-5)}}{\tilde \rho^{2(m-5)}}\,ds\\
	&\qquad\qquad\qquad\qquad\qquad\qquad\qquad+    \frac{C m^{2+4-4\sigma}\big[ \inner{m-6}!\big]^{2\sigma} }{\rho^{2(m-5)}} \int_0^t \abs{u(s)}_{\rho,\sigma}^4\,ds\\
	 &\leq  C m^2\int_0^t   \norm{\partial_y   f_{m-1} }_{L^2} ^2\,ds+   \frac{C\big[ \inner{m-6}!\big]^{2\sigma} }{\rho^{2(m-5)}} \inner{\int_0^t \abs{u(s)}_{\rho,\sigma}^4\,ds+\int_0^t  \frac{\abs{u(s)}_{\tilde\rho,\sigma}^2  }{\tilde \rho-\rho}\,ds}, 
	\end{aligned}    
\end{equation*}
where for the last inequality we  again use 
the fact that $\sigma\geq 3/2$ and \eqref{factor}.   Combining these 
estimates  gives \eqref{cla}. Consequently,  in view of \eqref{upfos222} we  conclude 
\begin{eqnarray*}
	S_{2,1}&\leq& \kappa m^2\int_0^t   \norm{\partial_y g_m}_{L^2} ^2\,ds +C \kappa^{-1}m^2\int_0^t   \norm{\partial_y   f_{m-1} }_{L^2} ^2\,ds\\
	&&+   \frac{C\kappa^{-1}\big[ \inner{m-6}!\big]^{2\sigma} }{\rho^{2(m-5)}} \inner{\int_0^t\inner{\abs{u(s)}_{\rho,\sigma}^2 + \abs{u(s)}_{\rho,\sigma}^4}\,ds +\int_0^t  \frac{\abs{u(s)}_{\tilde\rho,\sigma}^2  }{\tilde \rho-\rho}\,ds},
\end{eqnarray*}
which along with \eqref{s21+} yields 
\begin{eqnarray*}
	S_2&\leq&\kappa m^2\int_0^t   \norm{\partial_y g_m}_{L^2} ^2\,ds +C \kappa^{-1}m^2\int_0^t   \norm{\partial_y   f_{m-1} }_{L^2} ^2\,ds\\
	&&+   \frac{C\kappa^{-1}\big[ \inner{m-6}!\big]^{2\sigma} }{\rho^{2(m-5)}} \inner{\int_0^t\inner{\abs{u(s)}_{\rho,\sigma}^2 + \abs{u(s)}_{\rho,\sigma}^4}\,ds +\int_0^t  \frac{\abs{u(s)}_{\tilde\rho,\sigma}^2  }{\tilde \rho-\rho}\,ds}.
\end{eqnarray*} 
Combining the estimates on $S_1$ and $S_2$ yields the desired estimate,
and this  completes the proof of the lemma.   
	 \end{proof}

 	 \begin{lemma}[Estimate on $\mathcal P_1$ and $\mathcal P_2$]
	 	Let $3/2\leq \sigma\leq 2.$ Then for any small  $\kappa>0,$ and  for any pair $(\rho,\tilde\rho)$ with $0<\rho<\tilde\rho,$  we have
	 	\begin{eqnarray*}
	 		\mathcal P_1+ \mathcal P_2\leq       \kappa   m^2 \int_0^t  \norm{ \partial_y  g_m}_{L^2}^2	ds     + \frac{C\kappa^{-1}\big[ \inner{m-6}!\big]^{2\sigma} }{\rho^{2(m-5)}} \inner{\int_0^t\inner{ \abs{u(s)}_{\rho,\sigma}^3+\abs{u(s)}_{\rho,\sigma}^4  }ds +\int_0^t  \frac{\abs{u(s)}_{\tilde\rho,\sigma}^2  }{\tilde \rho-\rho}ds}.
	 	\end{eqnarray*}
	 \end{lemma}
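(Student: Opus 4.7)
\emph{Proof proposal.} The plan is to treat both $\mathcal{P}_1$ and $\mathcal{P}_2$ by the same Leibniz/convolution splitting used for $\mathcal{P}_3$ and $\mathcal{P}_4$ in Lemmas~\ref{lemp3} and~\ref{lemp5}, with one integration by parts in $y$ added for $\mathcal{P}_2$ to absorb the extra $y$-derivative. I write $m^2=m\cdot m$ and attach one copy of $m$ to $\|g_m\|_{L^2}$ so that \eqref{chi2est} applies directly, and I split each sum $\sum_{j=1}^{m-1}\binom{m-1}{j}$ at $j=[m/2]$. On the low-$j$ side I place $\partial_x^j u$ (respectively $\partial_x^j v$) in $L^\infty$ via Lemma~\ref{sobine}; on the high-$j$ side I place $g_{m-j+1}$ (respectively $g_{m-j}$) in $L^\infty$. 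The extremal indices (those near $0$ or near $m$) generate the tangential-loss contribution $\abs{u}_{\tilde\rho,\sigma}^2/(\tilde\rho-\rho)$ via \eqref{factor}, while the interior range yields the cubic and quartic Gevrey contributions after applying Lemma~\ref{lemequa}(i) to the binomial and factorial factors.

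For $\mathcal{P}_1$ this is essentially a cubic version of the $J_3$, $J_4$ bookkeeping in Lemma~\ref{lemp3}: three factors of the forms $\partial_x^j u$, $g_{m-j+1}$, $m\,g_m$ are distributed between $L^\infty$ and $L^2$, and summability of the interior range $\sum_j j^{-p}(m-j)^{-q}$ is secured by $\sigma\ge 3/2$. For $\mathcal{P}_2$, since $\partial_y g_{m-j}$ carries one more $y$-derivative than $\abs{\cdot}_{\rho,\sigma}$ directly controls on $g_{m-j}$, I integrate by parts in $y$: the boundary term at $y=0$ vanishes because $v|_{y=0}=0$, and using $\partial_y v=-\partial_x u$ one obtains two pieces. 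The first pairs $(\partial_x^{j+1}u)\,g_{m-j}$ against $m^2 g_m$; after relabeling $j\to j-1$ this has exactly the structure of $\mathcal{P}_1$ and is estimated identically. The second pairs $(\partial_x^j v)\,g_{m-j}$ against $m^2\partial_y g_m$; Cauchy--Schwarz yields $\kappa m^2\int_0^t\|\partial_y g_m\|_{L^2}^2\,ds$, which is absorbed into the dissipation on the left-hand side of \eqref{entildgm}, plus a term of the form $C\kappa^{-1}m^2\int_0^t\|(\partial_x^j v)\,g_{m-j}\|_{L^2}^2\,ds$. Because $v=-\int_0^y\partial_x u\,d\tilde y$, a $\comi y$-weighted bound on $\partial_x^j v$ in terms of $\partial_x^{j+1}u$, already built into the norm \eqref{trinorm}, lets the same low/high-$j$ split used for $\mathcal{P}_1$ apply to the remaining piece.

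The main obstacle is the bookkeeping after the integration by parts for $\mathcal{P}_2$: one has to check that every surviving term either matches the $\mathcal{P}_1$-structure, producing the $\abs{u}_{\rho,\sigma}^3$, $\abs{u}_{\rho,\sigma}^4$, and $\abs{u}_{\tilde\rho,\sigma}^2/(\tilde\rho-\rho)$ contributions, or is a pure dissipation piece absorbed into $\kappa m^2\int_0^t\|\partial_y g_m\|_{L^2}^2\,ds$. The closure of the interior combinatorial sums rests critically on $\sigma\ge 3/2$, which supplies the polynomial gain in $m$ needed to beat the prefactor coming from the $m^2$ outside the integral, exactly as in the $J_3$ estimate of Lemma~\ref{lemp3}. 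Once these steps are assembled and $\kappa$ is chosen independent of $m$, summing over the split gives the stated bound.
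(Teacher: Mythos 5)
Your argument follows the paper's proof essentially step by step: $\mathcal P_1$ is treated by the same Leibniz split at $[m/2]$, Sobolev embedding, and $\sigma\ge 3/2$-combinatorics as $J_3$, $J_4$ in Lemma~\ref{lemp3}, while for $\mathcal P_2$ you integrate by parts in $y$, split the result via $\partial_y v=-\partial_x u$ into a piece absorbed by the $\partial_y g_m$ dissipation and a piece with the $\mathcal P_1$ structure, then conclude as before. The one cosmetic difference is that the paper first peels off the $j=m-1$ summand as $-\int(\partial_x^{m-1}v)\partial_y g_1\cdot m^2g_m$ and integrates by parts only for $j\le m-2$, whereas you integrate by parts across the whole range; your version then produces the extremal term $(\partial_x^m u)\,g_1$ (at $j'=m$ after relabeling), which sits just outside the $\mathcal P_1$ index range but is handled by the same $\tilde\rho$-loss mechanism via \eqref{factor}, so this is not a gap. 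Both routes lead to the stated bound.
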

	 
	 \begin{proof}
	 Following the argument in Lemma \ref{lemp3}, we can obtain that
	 \begin{eqnarray*}
	 	\mathcal P_1  &=&-\int_0^t \bigg( \sum_{j=1}^{m-1}{{m-1}\choose j} \inner{\partial_x^j u} g_{m-j+1} ,~ m^2   g_m\bigg)_{L^2}	\,ds\\
	 	&\leq & \frac{C\big[ \inner{m-6}!\big]^{2\sigma} }{\rho^{2(m-5)}} \inner{\int_0^t \abs{u(s)}_{\rho,\sigma}^3  \,ds +\int_0^t  \frac{\abs{u(s)}_{\tilde\rho,\sigma}^2  }{\tilde \rho-\rho}\,ds}.
	 \end{eqnarray*}
	It remains to treat $\mathcal P_2$.  Firstly,
 integration by parts gives,    	 
	\begin{eqnarray*}
	 		\mathcal P_2  &=&-\int_0^t \bigg( \sum_{j=1}^{m-1}{{m-1}\choose j} \inner{\partial_x^j v}\partial_y g_{m-j}  ,~ m^2    g_m\bigg)_{L^2}	\,ds\\
	 		&=&-\int_0^t \bigg(   \inner{\partial_x^{m-1} v}\partial_y g_{1}  ,~ m^2  g_m\bigg)_{L^2}	\,ds + m^2\int_0^t \bigg( \sum_{j=1}^{m-2}{{m-1}\choose j} \inner{\partial_x^j v} g_{m-j}  ,~  \partial_y     g_m \bigg)_{L^2}	\,ds \\
	 		&&-m^2\int_0^t \bigg( \sum_{j=1}^{m-2}{{m-1}\choose j} \inner{\partial_x^{j+1} u} g_{m-j}  ,~   g_m\bigg)_{L^2}	\,ds.\end{eqnarray*}
	 Moreover, by observing $m^{4-2\sigma}\leq m$ due to $\sigma\geq 3/2$,  for any small  $\kappa>0$ 	 we have 	
	 		\begin{eqnarray*}
	 		&&m^2\int_0^t \bigg( \sum_{j=1}^{m-2}{{m-1}\choose j} \inner{\partial_x^j v} g_{m-j}  ,~  \partial_y     g_m \bigg)_{L^2}	\,ds\\
	 		&\leq & \kappa   m^2 \int_0^t  \norm{\partial_y   g_m }_{L^2}^2	\,ds+ \kappa^{-1} m^2 \int_0^t \bigg[ \sum_{j=1}^{m-2}\frac{\inner{m-1}!}{ j!(m-j)!}  \norm{(\partial_x^{j} v)  g_{m-j} }_{L^2}  \bigg]^2	\,ds\\
	 		&\leq & \kappa   m^2 \int_0^t  \norm{\partial_y   g_m }_{L^2}^2	\,ds+ \frac{C\kappa^{-1}\big[ \inner{m-6}!\big]^{2\sigma} }{\rho^{2(m-5)}} \inner{\int_0^t \abs{u(s)}_{\rho,\sigma}^4  \,ds +\int_0^t  \frac{\abs{u(s)}_{\tilde\rho,\sigma}^2  }{\tilde \rho-\rho}\,ds}, 	\end{eqnarray*}
where for 	the last inequality we have
 again  used the argument   for Lemma \ref{lemp3}.   	
Similarly,  we have
		\begin{eqnarray*}
	 & &-\int_0^t \bigg(   \inner{\partial_x^{m-1} v}\partial_y g_{1}  ,~ m^2  g_m\bigg)_{L^2}	\,ds-m^2\int_0^t \bigg( \sum_{j=1}^{m-2}{{m-1}\choose j} \inner{\partial_x^{j+1} u} g_{m-j}  ,~   g_m\bigg)_{L^2}	\,ds\\
	 		&\leq &		\frac{C  \big[\inner{m-6}!\big]^{2\sigma}}{\rho^{2(m-5)}}     \int_0^t \inner{\abs{u(s)}_{\rho,\sigma}^3  \,ds+   \int_0^t   \frac{\abs{u(s)}_{  \tilde\rho,\sigma} ^2}{\tilde\rho-\rho} \,ds}.
	 	\end{eqnarray*}
Thus we obtain  
	 	\begin{eqnarray*}
	 		\mathcal P_2  \leq    \kappa   m^2 \int_0^t  \norm{ \partial_y  g_m}_{L^2}^2	\,ds    + \frac{C\kappa^{-1}\big[ \inner{m-6}!\big]^{2\sigma} }{\rho^{2(m-5)}} \inner{\int_0^t\inner{ \abs{u(s)}_{\rho,\sigma}^3+\abs{u(s)}_{\rho,\sigma}^4  }\,ds +\int_0^t  \frac{\abs{u(s)}_{\tilde\rho,\sigma}^2  }{\tilde \rho-\rho}\,ds}.
	 	\end{eqnarray*}	  
This along with the upper bound for $\mathcal P_1$ completes the proof of the lemma.
	 \end{proof}

 \begin{lemma}[Estimate on $\mathcal P_5$ and $\mathcal P_6$]\label{lemp6p5}
	 	Let $3/2\leq \sigma\leq 2.$ Then for any pair $(\rho,\tilde\rho)$ with $0<\rho<\tilde\rho$ and for any small $\kappa>0$,  we have
	 	\begin{eqnarray*}
	 	 \mathcal P_5+ \mathcal P_6 &\leq&    \kappa \eps m^2 \int_0^t \inner{ \norm{ \partial_x   g_m}_{L^2}^2+\norm{ \partial_y   g_m}_{L^2}^2}	\,ds+ \kappa^{-1} \eps    \int_0^t      \inner{\norm{\partial_x^{m+1} u}_{L^2} ^2+\norm{\partial_x^{m+1} \omega}_{L^2} ^2}\,ds\\
	 	 &&+  \frac{C\kappa^{-1}  \big[\inner{m-6}!\big]^{2\sigma}}{\rho^{2(m-5)}}    \inner{ \int_0^t \inner{\abs{u(s)}_{\rho,\sigma}^2+\abs{u(s)}_{  \rho,\sigma}^4 }  \,ds+   \int_0^t   \frac{\abs{u(s)}_{  \tilde\rho,\sigma} ^2}{\tilde\rho-\rho} \,ds}.
	 	\end{eqnarray*}
	  
	 \end{lemma}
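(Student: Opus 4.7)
The terms $\mathcal{P}_5$ and $\mathcal{P}_6$ arise as commutator contributions from the viscous regularization $-\eps\partial_x^2$ in the derivation of the $g_m$-equation. My plan is to treat them jointly, exploiting the Dirichlet condition $u|_{y=0}=0$ via integration by parts in $y$ on $\mathcal{P}_5$, and then applying weighted Cauchy--Schwarz inequalities to the highest-derivative ($j=0$) terms. Writing $\partial_x^{j+1}\partial_y\omega=\partial_y(\partial_x^{j+1}\omega)$, using $\partial_y u=\omega$, and noting that the boundary terms at $y=0$ vanish by $u|_{y=0}=0$ while the contributions at $y=+\infty$ vanish by decay, I obtain
\begin{equation*}
\mathcal{P}_5=-2\eps m^2\!\sum_{j=0}^{m-1}\!\binom{m-1}{j}\!\int_0^t\!\!\int(\partial_x^{j+1}\omega)(\partial_x^{m-j+1}\omega)g_m\,ds-2\eps m^2\!\sum_{j=0}^{m-1}\!\binom{m-1}{j}\!\int_0^t\!\!\int(\partial_x^{j+1}\omega)(\partial_x^{m-j+1}u)\partial_y g_m\,ds.
\end{equation*}
The first sum has exactly the form of $\mathcal{P}_6$, so combining gives $\mathcal{P}_5+\mathcal{P}_6$ as a single $\omega\omega g_m$-type sum (with coefficient $-4\eps m^2\binom{m-1}{j}$) plus an $\omega u\,\partial_y g_m$-type sum.

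Next, I isolate the $j=0$ term from each combined sum, which carries the worst derivative $\partial_x^{m+1}$. For the $\omega u\,\partial_y g_m$ sum at $j=0$, using $\norm{\partial_x\omega}_{L^\infty}\le C$ from \eqref{condi} and an asymmetric weighted Cauchy--Schwarz inequality that places the $m^2$ load on the absorbable $\norm{\partial_y g_m}_{L^2}^2$ side, I obtain
\begin{equation*}
\bigl|2\eps m^2\!\int(\partial_x\omega)(\partial_x^{m+1}u)\partial_y g_m\bigr|\le \kappa\eps m^2\norm{\partial_y g_m}_{L^2}^2+C\kappa^{-1}\eps\norm{\partial_x^{m+1}u}_{L^2}^2,
\end{equation*}
noting that $m^2\norm{\partial_y g_m}^2$ is precisely the dissipation present on the LHS of \eqref{entildgm}. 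For the $\omega\omega g_m$ sum at $j=0$, one integration by parts in $x$ rewrites $(\partial_x\omega)(\partial_x^{m+1}\omega)g_m$ as $-(\partial_x^2\omega)(\partial_x^m\omega)g_m-(\partial_x\omega)(\partial_x^m\omega)\partial_x g_m$; the first piece feeds the Gevrey remainder via $\norm{\partial_x^2\omega}_{L^\infty}\le C$ and \eqref{etan}--\eqref{chi2est}, while the second is split analogously into $\kappa\eps m^2\norm{\partial_x g_m}^2+C\kappa^{-1}\eps\norm{\partial_x^{m+1}\omega}^2$, again with the absorbable piece matching the $\eps m^2\norm{\partial_x g_m}^2$ dissipation on the LHS of \eqref{entildgm}.

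For the $j\ge 1$ contributions in both combined sums, I follow the convolution-type decomposition of Lemmas~\ref{lemp3} and \ref{lemp5}: split the index range into $j\in[1,[m/2]]$ (placing the low-derivative factor in $L^\infty$ via the Sobolev Lemma~\ref{sobine}) and $j\in[[m/2]+1,m-1]$ (placing the high-derivative factor in $L^\infty$), then apply the Gevrey bounds~\eqref{etan}, \eqref{chi2est}, \eqref{emix}. The condition $\sigma\ge 3/2$ absorbs any surplus power of $m$ into the factorial $[(m-6)!]^{2\sigma}$, and the shift identity~\eqref{factor}, $m(\rho/\tilde\rho)^{m-5}\le 1/(\tilde\rho-\rho)$, converts any remaining linear factor of $m$ into $1/(\tilde\rho-\rho)$ at the cost of passing from $\abs{u}_{\rho,\sigma}$ to $\abs{u}_{\tilde\rho,\sigma}$. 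This produces exactly the Gevrey remainder in the stated bound.

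The main obstacle is the asymmetric weighting of Cauchy--Schwarz on the $j=0$ pieces: the lemma requires $\norm{\partial_x^{m+1}u}^2$ and $\norm{\partial_x^{m+1}\omega}^2$ to appear with coefficient $\kappa^{-1}\eps$ and \emph{no} spurious $m^2$, since these pieces are designed to be reabsorbed later by the viscous gains $\eps\norm{\partial_x^{m+1}u}^2$ and $\eps\norm{\partial_x^{m+1}\omega}^2$ on the LHS of the $u$- and $\omega$-energy identities, which themselves carry no $m^2$ factor. Achieving this forces the full $m^2$ weight to be placed on the $\norm{\partial_y g_m}^2$ and $\norm{\partial_x g_m}^2$ sides of the AB-inequality, matching the $m^2\norm{\partial_y g_m}^2$ and $\eps m^2\norm{\partial_x g_m}^2$ dissipations of~\eqref{entildgm}; this careful bookkeeping, combined with the $\sigma\ge 3/2$ and shift-trick machinery for the remainder, is what closes the estimate.
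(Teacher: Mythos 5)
Your reduction of $\mathcal P_5$ by integration by parts in $y$ is fine, and your treatment of the $j\geq 1$ terms by the low/high split plus the Gevrey machinery is plausible. The gap is in the $j=0$ term of the $\omega\,u\,\partial_y g_m$ sum, where you claim an ``asymmetric Cauchy--Schwarz'' of the form
\begin{equation*}
2\eps m^2\Bigl|\int(\partial_x\omega)(\partial_x^{m+1}u)\,\partial_y g_m\Bigr|\;\leq\;\kappa\eps m^2\norm{\partial_y g_m}_{L^2}^2+C\kappa^{-1}\eps\norm{\partial_x^{m+1}u}_{L^2}^2.
\end{equation*}
This inequality is false. After applying $\norm{\partial_x\omega}_{L^\infty}\leq C$ and writing $A=\norm{\partial_x^{m+1}u}_{L^2}$, $B=\norm{\partial_y g_m}_{L^2}$, you would need $2m^2AB\leq\kappa m^2B^2+\kappa^{-1}C^2A^2$ for all $A,B>0$. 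Completing the square in $B$ shows the left-hand side can reach $\kappa^{-1}C^2m^2A^2$, which exceeds the right-hand side by a factor of $m^2$ as soon as $m\geq 2$. Young's inequality is scale-invariant and cannot delete the prefactor $m^2$: if you put the full weight $m^2$ on the $\norm{\partial_y g_m}^2$ side you still get $\kappa^{-1}\eps m^2\norm{\partial_x^{m+1}u}^2$, while if you tilt $\kappa$ to strip $m^2$ from the $\norm{\partial_x^{m+1}u}^2$ side you inflate the $\norm{\partial_y g_m}^2$ side to $m^4$, which the $m^2$-dissipation in \eqref{entildgm} cannot absorb. The same objection applies to the claim for the $(\partial_x\omega)(\partial_x^m\omega)\partial_x g_m$ piece yielding $C\kappa^{-1}\eps\norm{\partial_x^{m+1}\omega}_{L^2}^2$ with no spurious $m$-power.

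The paper sidesteps this by \emph{not} integrating by parts in $y$ on the low-$j$ range. Instead it integrates by parts in $x$ off the high factor $\partial_x^{m-j+1}u$, which caps the derivative count on $u$ at $\partial_x^m u$ and produces the terms $K_1$, $K_2$; for the high-$j$ range it integrates by parts in $y$ to get $K_3$, $K_4$, whose extreme term carries $\partial_x^m\omega$ (not $\partial_x^{m+1}\omega$). Cauchy--Schwarz then gives at worst $\kappa^{-1}\eps m^2\norm{\partial_x^m u}^2$ (resp.\ $\norm{\partial_x^m\omega}^2$), and the crucial additional step — which your proposal omits — is the interpolation $\norm{\partial_x^m\cdot}\leq\norm{\partial_x^{m+1}\cdot}^{1/2}\norm{\partial_x^{m-1}\cdot}^{1/2}$, splitting $m^2\norm{\partial_x^m\cdot}^2\lesssim m^{-2}\norm{\partial_x^{m+1}\cdot}^2+m^{6}\norm{\partial_x^{m-1}\cdot}^2$. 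The first piece yields $\kappa^{-1}\eps\norm{\partial_x^{m+1}\cdot}^2$ with no $m$-power, and the second is controlled by the Gevrey norm via \eqref{etan}, \eqref{factor}, the lowered factorial $(m-7)!$, and $\sigma\geq 3/2$. Without this split-by-ranges and interpolation, the $j=0$ estimate does not close, so the proposal as written has a genuine gap.
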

	 
	 \begin{proof}
	 We only need to handle $\mathcal P_5,$ because the 
estimation on  $\mathcal P_6$ is similar so that we   omit it for brevity.  
	 Integrating by parts yields, for any $\kappa>0,$ 
	 \begin{eqnarray*}
	 	\mathcal P_5  &=&\int_0^t \bigg(2  \eps \sum_{j=0}^{m-1}{{m-1}\choose j} \inner{\partial_x^{j+1} \partial_y\omega}\partial_x^{m-j+1} u,~ m^2    g_m\bigg)_{L^2}	\,ds\\
	 	 &=&-2\eps m^2    \int_0^t \bigg(   \sum_{j=0}^{[m/2]}\frac{\inner{m-1}!}{ j!(m-1-j)!}   \inner{\partial_x^{j+1} \partial_y\omega}\partial_x^{m-j} u,~   \partial_x   g_m\bigg)_{L^2}	\,ds\\
	 	&&-2\eps m^2  \int_0^t \bigg(     \sum_{j=0}^{[m/2]}\frac{\inner{m-1}!}{ j!(m-1-j)!}   \inner{\partial_x^{j+2} \partial_y\omega}\partial_x^{m-j} u,~        g_m\bigg)_{L^2}	\,ds\\
	 	&&-2  \eps  m^2 \int_0^t \bigg( \sum_{j=[m/2]+1}^{m-1}\frac{\inner{m-1}!}{ j!(m-1-j)!} \inner{\partial_x^{j+1} \omega}\partial_x^{m-j+1} u,~  \partial_y g_m\bigg)_{L^2}	\,ds\\
	 	&&-2  \eps  m^2 \int_0^t \bigg( \sum_{j=[m/2]+1}^{m-1}\frac{\inner{m-1}!}{ j!(m-1-j)!}\inner{\partial_x^{j+1} \omega}\partial_x^{m-j+1} \omega,~    g_m\bigg)_{L^2}	\,ds\\
	 	 &\stackrel{\rm def}{=} & K_1+K_2+K_3+K_4.
	 	 \end{eqnarray*}
Moreover,	 	 following the arguments used in Lemma \ref{lemp3} for $J_3$, we see the    
	 \begin{eqnarray*}
	 	K_2+K_4\leq \frac{C \big[\inner{m-6}!\big]^{2\sigma}}{\rho^{2(m-5)}} \inner{ \int_0^t \abs{u(s)}_{  \rho,\sigma}^3   \,dt+   \int_0^t   \frac{\abs{u(s)}_{  \tilde\rho,\sigma} ^2}{\tilde\rho-\rho} \,dt}.
	 \end{eqnarray*}
	 As for $K_3$ we have, for any $\kappa>0,$
	 	 \begin{eqnarray*}
	 K_3 
	 	&\leq& 2  \kappa \eps m^2 \int_0^t  \norm{ \partial_y  g_m}_{L^2}^2	\,ds +\kappa^{-1}\eps m^2 \int_0^t  \norm{\partial_x^2u}_{L^\infty}^2\norm{\partial_x^m \omega}_{L^2} ^2	\,ds \\
	 	&&+\kappa^{-1}\eps m^2 \int_0^t \bigg[ \sum_{j=[m/2]+1}^{m-2}\frac{\inner{m-1}!}{ j!(m-1-j)!}  \norm{\partial_x^{m-j+1} u}_{L^\infty}  \norm{\partial_x^{j+1}  \omega}_{L^2} \bigg]^2	\,ds,
	 \end{eqnarray*}
	 where the last term was bounded  above by 
	 \begin{eqnarray*}
	 \frac{C\kappa^{-1} \big[\inner{m-6}!\big]^{2\sigma}}{\rho^{2(m-5)}} \inner{ \int_0^t \abs{u(s)}_{  \rho,\sigma}^4   \,dt+   \int_0^t   \frac{\abs{u(s)}_{  \tilde\rho,\sigma} ^2}{\tilde\rho-\rho} \,dt},	
	 \end{eqnarray*}
	 which can be derived similarly as the terms $J_1$-$J_4$ in Lemma \ref{lemp3}.  On the other hand,  for the second  term above, we use the  interpolation inequality to obtain, observing  the fact that $3/2\leq \sigma\leq 2$ as well as the last inequality in \eqref{condi},  
	 \begin{eqnarray*}
	 \kappa^{-1}\eps m^2 \int_0^t  \norm{\partial_x^2u}_{L^\infty}^2\norm{\partial_x^m \omega}_{L^2} ^2	\,ds
	  &\leq&  \kappa^{-1}  \eps      m^2  \int_0^t       \inner{ m^{-2}    \norm{\partial_x^{m+1} \omega}_{L^2} ^2 + m^2  \norm{\partial_x^{m-1} \omega}_{L^2} ^2	} ds\\ 
	 &\leq& \kappa^{-1} \eps   \int_0^t      \norm{\partial_x^{m+1} \omega}_{L^2} ^2\,ds+  \frac{C\kappa^{-1}  \big[\inner{m-6}!\big]^{2\sigma}}{\rho^{2(m-5)}}   \int_0^t   \frac{\abs{u(s)}_{  \tilde\rho,\sigma} ^2}{\tilde\rho-\rho} ds.
	\end{eqnarray*}
	Thus, combining the estimates above we obtain the upper bound for $K_3,$ that is, 
	\begin{eqnarray*}
		K_3&\leq&   2 \kappa \eps m^2 \int_0^t  \norm{ \partial_y  g_m}_{L^2}^2	\,ds +\kappa^{-1} \eps    \int_0^t      \norm{\partial_x^{m+1} \omega}_{L^2} ^2\,ds\\
		&&+  \frac{C\kappa^{-1}  \big[\inner{m-6}!\big]^{2\sigma}}{\rho^{2(m-5)}}    \inner{ \int_0^t \inner{\abs{u(s)}_{\rho,\sigma}^3+\abs{u(s)}_{  \rho,\sigma}^4 }  \,ds+   \int_0^t   \frac{\abs{u(s)}_{  \tilde\rho,\sigma} ^2}{\tilde\rho-\rho} \,ds}.
	\end{eqnarray*}
The estimation on $K_1$ is  similar, and we have 
\begin{eqnarray*}
	K_1& \leq&   \kappa \eps m^2 \int_0^t  \norm{ \partial_x  g_m}_{L^2}^2	\,ds +\kappa^{-1} \eps    \int_0^t      \norm{\partial_x^{m+1} u}_{L^2} ^2\,ds\\
		&&+  \frac{C\kappa^{-1}  \big[\inner{m-6}!\big]^{2\sigma}}{\rho^{2(m-5)}}    \inner{ \int_0^t \inner{\abs{u(s)}_{\rho,\sigma}^3+\abs{u(s)}_{  \rho,\sigma}^4 }  \,ds+   \int_0^t   \frac{\abs{u(s)}_{  \tilde\rho,\sigma} ^2}{\tilde\rho-\rho} \,ds}.
\end{eqnarray*}
Then the upper bound for $\mathcal P_5$ follows.
Similar argument works for $\mathcal P_6.$   Then the proof is then completed. 
	 \end{proof}

\begin{proof}
	[Completion of the proof of Proposition \ref{prpenmon}]  Combining  \eqref{entildgm} and the estimates in  lemmas \ref{lemp3}-\ref{lemp6p5}, we have, for any $\kappa>0,$
	\begin{eqnarray*}
	&& 	 \frac{1}{2}m^2\norm{   g_m(t)}_{L^2}^2+m^2\int_0^t\norm{\partial_y g_m(s)}_{L^2}^2\,ds+\eps m^2 \int_0^t  \norm{ \partial_x   g_m}_{L^2}^2	\,ds  \\
& 	\leq&    \frac{1}{2}m^2\norm{   g_m(0)}_{L^2}^2+4\kappa m^2\int_0^t\norm{\partial_y g_m(s)}_{L^2}^2\,ds +\kappa \eps m^2 \int_0^t  \norm{ \partial_x   g_m}_{L^2}^2	\,ds\\
&&+ \kappa ^{-1}\eps    \int_0^t      \inner{\norm{\partial_x^{m+1} u}_{L^2} ^2+\norm{\partial_x^{m+1} \omega}_{L^2} ^2}\,ds +C \kappa^{-1}m^2  \int_0^t \norm{  \partial_y f_{m-1}}_{L^2}^2 ds\\
&&+\frac{C\kappa^{-1}\big[\inner{m-6}!\big]^{2\sigma}}{\rho^{2(m-5)}}    
 \inner{ \int_0^t\inner{\abs{u(s)}_{\rho,\sigma}^2 + \abs{u(s)}_{\rho,\sigma}^4}ds  +\int_0^t   \frac{\abs{u(s)}_{\tilde \rho,\sigma}^2}{\tilde \rho-\rho}ds}.
 \end{eqnarray*}
	The second and third  terms on the right sides can be absorbed    provided  
  $\kappa\leq 1/4.$   Moreover,  recalling the definition of $\abs{\cdot}_{\rho,\sigma}$ (see Definition \ref{gevspace}), we have
  \begin{eqnarray*}
  m^2\norm{   g_m(0)}_{L^2}^2\leq 	 \frac{C\big[(m-6)!\big]^{2\sigma}}{\rho^{2(m-5)}}\abs{u_0}_{\rho,\sigma}^2.
  \end{eqnarray*}
    Thus   the desired estimate in  Proposition \ref{prpenmon} follows and 
 the proof    is   completed. 
\end{proof}

 \section{Proof of Theorem \ref{uniestgev}:  uniform estimates  away from the critical point}\label{sec4}
 
 In this section, we will perform estimates in the domain where $u^s+u$ admits monotonicity, and derive uniform upper bound for  $f_m$  appearing the definition of $\abs{\cdot}$ (see Definition \ref{gevspace}).  Recall $f_m$ is defined in \eqref{fungeps}, that is, 
 \begin{equation}\label{funoffs}
	f_m=\chi_1\partial_x^m \omega-\chi_1\frac{  \partial_y\omega^s+\partial_y \omega }{\omega^s+\omega }\partial_x^m u=\chi_1\inner{\omega^s+\omega}  \partial_y\inner{\frac{\partial_x^mu}{ \omega^s+\omega }},\quad m\geq 1,
\end{equation}
with $\chi_1$   given in \eqref{chi1}.  Moreover,  we denote   $\tilde f_m$ 
the main component of $f_m$ by 
\begin{equation}\label{+fungeps}
	\tilde f_m=\chi_1'\partial_x^m \omega-\chi_1'\frac{  \partial_y\omega^s+\partial_y \omega }{\omega^s+\omega }\partial_x^m u=\chi_1'\inner{\omega^s+\omega}  \partial_y\inner{\frac{\partial_x^mu}{ \omega^s+\omega }},\quad m\geq 1.
\end{equation}

 The main result in this section is the following proposition.
 
\begin{proposition} 
	\label{prpaway}
	Let $m\geq 6$   and  $u\in L^\infty\inner{[0, T];~X_{\rho_0,\sigma}}$  be a solution to \eqref{regpran} under the assumptions in Theorem \ref{uniestgev}. Then we have, for any $t\in[0,T],$ and   for any pair $\inner{\rho,\tilde\rho}$ with $0<\rho<\tilde\rho\leq \rho_0,$
\begin{eqnarray*}
  &&\norm{ \comi y^{\ell } f_m(t)}_{L^2 }^2+\norm{ \tilde f_m(t)}_{L^2 }^2+  \int_0^t  \norm{\comi y^{\ell} \partial_y f_m}_{L^2}^2 \,ds +\eps\int_0^t \inner{ \norm{\comi y^{\ell} \partial_x f_m}_{L^2}^2 +\norm{\partial_x \tilde f_m}_{L^2}^2}\,ds\\
   &\leq &  \frac{C\big[(m-6)!\big]^{2\sigma}}{\rho^{2(m-5)}}\abs{u_0}_{\rho,\sigma}^2+
 	 \frac{C\big[(m-6)!\big]^{2\sigma}}{\rho^{2(m-5)}} \inner{  \int_0^t   \inner{ \abs{u(s)}_{ \rho,\sigma}^2+\abs{u(s)}_{ \rho,\sigma}^4 }   \,ds+    \int_0^t  \frac{  \abs{u(s)}_{ \tilde\rho,\sigma}^2}{\tilde\rho-\rho}\,ds}. 
\end{eqnarray*}
\end{proposition}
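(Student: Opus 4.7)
The plan is to derive a transport-diffusion equation for $f_m$ that exploits the cancellation built into its definition, and then to run a weighted $L^2$ energy estimate with weight $\comi{y}^{2\ell}$, closing all commutators via the Gevrey-weighted Sobolev inequalities of Lemma \ref{lemequa} and the $g_m$-bound of Proposition \ref{prpenmon}.

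First, I would apply $\partial_x^m$ separately to the vorticity equation and to the $u$-equation. The two principal loss-of-derivative terms $(\partial_x^m v)\partial_y(\omega^s+\omega)$ and $(\partial_x^m v)(\omega^s+\omega)$ cancel exactly when the $u$-equation, multiplied by $\frac{\partial_y\omega^s+\partial_y\omega}{\omega^s+\omega}$, is subtracted from the vorticity equation; this is precisely why $f_m$ is constructed in this form. Multiplying through by $\chi_1$ then generates further commutator contributions involving $\chi_1'$ and $\chi_1''$ which, by \eqref{suppch2}, are supported where $\chi_2\equiv 1$. The outcome is an identity of the schematic form
\begin{equation*}
\Big(\partial_t + (u^s+u)\partial_x + v\partial_y - \partial_y^2 - \eps\partial_x^2\Big) f_m = \mathcal R_m,
\end{equation*}
where $\mathcal R_m$ collects Leibniz commutators, terms where $\partial_t$, $\partial_y$, $\partial_y^2$, $\partial_x^2$ hit the coefficient $\chi_1\frac{\partial_y\omega^s+\partial_y\omega}{\omega^s+\omega}$, together with the $\chi_1'$, $\chi_1''$ pieces. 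Several of these remainders are truncated copies of $g_j$ or $h_j$, already controlled by Proposition \ref{prpenmon} and by the $h_m$- and $\chi_2\partial_y\partial_x^m\omega$-parts of $\abs{u}_{\rho,\sigma}$.

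Second, I would test the identity against $\comi{y}^{2\ell} f_m$ and integrate over $\mathbb R_+^2$. The $-\partial_y^2$ term yields the dissipation $\norm{\comi{y}^\ell \partial_y f_m}_{L^2}^2$ up to a lower-order weight-correction absorbed by Cauchy--Schwarz; the $-\eps\partial_x^2$ term yields $\eps\norm{\comi{y}^\ell \partial_x f_m}_{L^2}^2$; the transport part is standard by integration by parts using \eqref{condi}. The boundary contribution at $y=0$ reduces to a controllable quantity thanks to $\partial_y\omega^s|_{y=0}=0$ (Assumption \ref{maas}(iii)), the equation-derived identity $\partial_y\omega|_{y=0}=0$, and $\partial_x^m u|_{y=0}=0$. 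Each summand in $\mathcal R_m$ is then bounded by splitting Leibniz sums at $j=[m/2]$, applying Sobolev embedding (Lemma \ref{sobine}) together with the Gevrey-weighted inequalities \eqref{etan}--\eqref{fm1}, recombining factorial products via Lemma \ref{lemequa}(i), and using $3/2\leq\sigma\leq 2$ to make the tails summable; extremal terms are either absorbed into the dissipation with a small constant $\kappa$ or traded for $\abs{u}_{\tilde\rho,\sigma}^2/(\tilde\rho-\rho)$ through \eqref{factor}. The companion estimate for $\tilde f_m$ is obtained by rerunning the same argument with $\chi_1$ replaced by $\chi_1'$: since $\mathrm{supp}\,\chi_1'$ is compact, the weight $\comi{y}^\ell$ is no longer needed and the same cancellation still applies.

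The main obstacle will be the terms in $\mathcal R_m$ where a $\partial_y$ falls on $\frac{\partial_y\omega^s+\partial_y\omega}{\omega^s+\omega}$ multiplying $\partial_x^m u$ or $\partial_y\partial_x^m u$: schematically these produce $g_j$-type structures $(\omega^s+\omega)\partial_x\omega - (\partial_y\omega^s+\partial_y\omega)\partial_x u$ (differentiated $m-1$ or $m-2$ times). Because the bound for $g_m$ in Proposition \ref{prpenmon} carries the term $Cm^2\int_0^t\norm{\partial_y f_{m-1}}_{L^2}^2\,ds$ on its right-hand side, while the present estimate delivers exactly $\int_0^t\norm{\comi{y}^\ell \partial_y f_m}_{L^2}^2\,ds$ as dissipation, the two estimates are coupled and must be summed with matching weights in the Section \ref{sec5} bootstrap. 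Tracking this coupling, and simultaneously matching the $\chi_1'$ and $\chi_1''$ commutators against $\tilde f_m$, $h_{m-1}$, and $\chi_2\partial_x^m\partial_y\omega$ inside $\abs{\cdot}_{\rho,\sigma}$, forms the technical core of the proof.
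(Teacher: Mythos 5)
Your overall strategy — the cancellation-driven derivation of the transport-diffusion equation for $f_m$, the weighted $L^2$ test against $\comi{y}^{2\ell}f_m$, the Leibniz split at $j=[m/2]$ with Sobolev and Gevrey inequalities, and the observation that $\tilde f_m$ is handled by swapping $\chi_1\to\chi_1'$ — matches the paper's proof (see the equation for $f_m$ in \eqref{eqgmeps} with right side \eqref{jm1}--\eqref{jm5}, and Lemmas \ref{lemlow}--\ref{lemuppb}).

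However, there is a concrete gap in your description of the remainder $\mathcal R_m$. You claim that ``several of these remainders are truncated copies of $g_j$ or $h_j$, already controlled by Proposition \ref{prpenmon}.'' This misidentifies what actually appears. The paper's explicit $F_{m,\eps}$ consists only of Leibniz commutators \eqref{jm1}--\eqref{jm2}, cut-off-derivative pieces \eqref{jm3}, and coefficient-derivative pieces \eqref{jm4}--\eqref{jm5}. No $g_j$- or $h_j$-type structures arise. The one potentially dangerous term, $\chi_1'\partial_x^m\partial_y\omega$, is tamed not through Proposition \ref{prpenmon} but through the support identity $\chi_1'=\chi_1'\chi_2$ (from \eqref{suppch2}) and the bound \eqref{chi2est} on $\chi_2\partial_y\partial_x^m\omega$, which is already a piece of the norm $\abs{\cdot}_{\rho,\sigma}$. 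Invoking Proposition \ref{prpenmon} here would be both unnecessary and risky: that proposition carries $Cm^2\int_0^t\norm{\partial_y f_{m-1}}_{L^2}^2\,ds$ on its right-hand side, which cannot be absorbed by the dissipation $\int_0^t\norm{\comi{y}^\ell\partial_y f_m}_{L^2}^2\,ds$ produced here (wrong index), so the estimate would not close at level $m$. You also have the coupling direction inverted: the paper makes Proposition \ref{prpaway} entirely self-contained, and it is Proposition \ref{prpenmon} that only becomes useful \emph{after} Proposition \ref{prpaway} supplies control of $\norm{\partial_y f_{m-1}}_{L^2}$ (cf.\ Corollary \ref{cordiff}, where one applies Proposition \ref{prpaway} at index $m-1$ and uses $m^2\leq m^{2\sigma-1}$ for $\sigma\geq 3/2$). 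So the argument should be rearranged to bound $F_{m,\eps}$ purely via Lemma \ref{lemequa} and the conditions \eqref{condi}, without any appeal to the $g_m$-estimate.
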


Before presenting the proof   of the above proposition, we give an immediate corollary. 

\begin{corollary}\label{cordiff}
  Let $m\geq 6$   and  $u\in L^\infty\inner{[0, T];~X_{\rho_0,\sigma}}$  be a solution to \eqref{regpran} under the assumptions in Theorem \ref{uniestgev}. Then we have 
		\begin{eqnarray*}
     m^2\norm{g_m(t)}_{L^2} ^2  
 	 &\leq &  \frac{C\big[(m-6)!\big]^{2\sigma}}{\rho^{2(m-5)}}\abs{u_0}_{\rho,\sigma}^2+  C       \int_0^t   \inner{\eps\norm{\partial_x^{m +1 } u}_{L^2}^2+   \eps  \norm{\partial_x^{m+1 } \omega}_{L^2}^2}   \,ds \\
 	 && +  \frac{C \big[\inner{m-6}!\big]^{2\sigma}}{\rho^{2(m-5)}}    
\inner{  \int_0^t\inner{\abs{u(s)}_{\rho,\sigma}^2 + \abs{u(s)}_{\rho,\sigma}^4}\,ds    +   
  \int_0^t   \frac{\abs{u(s)}_{\tilde \rho,\sigma}^2}{\tilde \rho -\rho}\,ds},
\end{eqnarray*}
and
	\begin{eqnarray*}
  	 &&m^{2\sigma-1}	 \int_0^t \norm{ g_m(s)-\tilde g_m(s)}_{L^2}^2\,ds
  	 \\
  	 &\leq &  \frac{C\big[(m-6)!\big]^{2\sigma}}{\rho^{2(m-5)}}\abs{u_0}_{\rho,\sigma}^2+
 	 \frac{C\big[(m-6)!\big]^{2\sigma}}{\rho^{2(m-5)}} \inner{  \int_0^t   \inner{ \abs{u(s)}_{ \rho,\sigma}^2+\abs{u(s)}_{ \rho,\sigma}^4 }   \,ds+    \int_0^t  \frac{  \abs{u(s)}_{ \tilde\rho,\sigma}^2}{\tilde\rho-\rho}\,ds}.
  	\end{eqnarray*}
	 
\end{corollary}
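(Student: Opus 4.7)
Both inequalities in the corollary are consequences of Proposition \ref{prpenmon}, Proposition \ref{prpaway} and Lemma \ref{lemdiffe}, the point being that the weights $m^{2}$ and $m^{2\sigma-1}$ are calibrated exactly so that the hypothesis $\sigma\in[3/2,2]$ lets one absorb every factor of $m$ against the factorial gap $\big[(m-7)!\big]^{2\sigma}/\big[(m-6)!\big]^{2\sigma}=(m-6)^{-2\sigma}$ or the $\rho$-gap controlled by \eqref{factor}.

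For the first inequality, I start directly from the bound in Proposition \ref{prpenmon}. Every term on its right-hand side is already in the desired form except $Cm^{2}\int_{0}^{t}\norm{\partial_y f_{m-1}}_{L^{2}}^{2}\,ds$, and since $\comi y^{\ell}\geq 1$, this is bounded by $Cm^{2}\int_{0}^{t}\norm{\comi y^{\ell}\partial_y f_{m-1}}_{L^{2}}^{2}\,ds$. Applying Proposition \ref{prpaway} with $m-1$ in place of $m$ (for $m=6$, the term $\norm{\partial_y f_{5}}_{L^{2}}$ is instead bounded directly through the $1\leq m\leq 5$ part of Definition \ref{gevspace} and \eqref{etan}--\eqref{chi2est}) produces a prefactor
\[
\frac{Cm^{2}\,[(m-7)!]^{2\sigma}}{\rho^{2(m-6)}} \;=\; \frac{Cm^{2}\rho^{2}}{(m-6)^{2\sigma}}\cdot\frac{[(m-6)!]^{2\sigma}}{\rho^{2(m-5)}}.
\]
Since $2\sigma\geq 3$, the quantity $m^{2}/(m-6)^{2\sigma}$ is uniformly bounded in $m\geq 7$, and $\rho^{2}\leq\rho_{0}^{2}\leq 1$ is harmless. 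The residual contributions $\int_{0}^{t}(\abs{u}_{\rho,\sigma}^{2}+\abs{u}_{\rho,\sigma}^{4})\,ds$ and $\int_{0}^{t}\abs{u}_{\tilde\rho,\sigma}^{2}/(\tilde\rho-\rho)\,ds$ from Proposition \ref{prpaway} merge with those already present in Proposition \ref{prpenmon}, and this yields the first bound.

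For the second inequality, I start from Lemma \ref{lemdiffe}, square it, and integrate in $t$ after multiplying by $m^{2\sigma-1}$, obtaining three contributions. The first, $Cm^{2\sigma-1}\int_{0}^{t}\norm{\partial_y f_{m-1}}_{L^{2}}^{2}\,ds$, is treated exactly as in the previous paragraph via Proposition \ref{prpaway} at index $m-1$; now the relevant ratio is $m^{2\sigma-1}\rho^{2}/(m-6)^{2\sigma}$, which behaves like $1/m$ as $m\to\infty$ and is therefore uniformly bounded. The second contribution is
\[
Cm^{2\sigma-1}\cdot m^{2-2\sigma}\,\frac{[(m-6)!]^{2\sigma}}{\tilde\rho^{2(m-5)}}\int_{0}^{t}\abs{u(s)}_{\tilde\rho,\sigma}^{2}\,ds \;=\; Cm\,\frac{[(m-6)!]^{2\sigma}}{\rho^{2(m-5)}}\Big(\frac{\rho}{\tilde\rho}\Big)^{2(m-5)}\int_{0}^{t}\abs{u(s)}_{\tilde\rho,\sigma}^{2}\,ds,
\]
which by $(\rho/\tilde\rho)^{2(m-5)}\leq(\rho/\tilde\rho)^{m-5}$ (since $\rho<\tilde\rho\leq 1$) and \eqref{factor} is absorbed into $C[(m-6)!]^{2\sigma}\rho^{-2(m-5)}\int_{0}^{t}\abs{u}_{\tilde\rho,\sigma}^{2}/(\tilde\rho-\rho)\,ds$. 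The third contribution has weight $m^{2\sigma-1}\cdot m^{4-4\sigma}=m^{3-2\sigma}\leq 1$ precisely because $\sigma\geq 3/2$, so it contributes $C[(m-6)!]^{2\sigma}\rho^{-2(m-5)}\int_{0}^{t}\abs{u}_{\rho,\sigma}^{4}\,ds$.

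The main obstacle is arithmetic rather than analytic: the exponent $m^{2\sigma-1}$ in front of the $g_m-\tilde g_m$ integral has to be chosen small enough that the $|u|_{\rho,\sigma}^{4}$ term (which scales like $m^{2\sigma-1+4-4\sigma}=m^{3-2\sigma}$) remains controllable, and simultaneously large enough so that after invoking Proposition \ref{prpaway} on $\partial_y f_{m-1}$ the leftover $m^{2\sigma-1}/(m-6)^{2\sigma}$ stays bounded. Both conditions become tight exactly at $\sigma=3/2$, which explains why the lower endpoint of the Gevrey range appears precisely here.
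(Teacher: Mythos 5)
Your proof takes essentially the same route as the paper: both inequalities come from applying Proposition \ref{prpaway} at index $m-1$ to absorb the $\norm{\partial_y f_{m-1}}$ term, combining with Proposition \ref{prpenmon} (first inequality) or Lemma \ref{lemdiffe} (second inequality), and invoking $\sigma\geq 3/2$ to control the powers of $m$. Your account is slightly more explicit than the paper's — the paper compresses the factorial bookkeeping into the single chain $m^2\int\norm{\partial_y f_{m-1}}^2\leq m^{2\sigma-1}\int\norm{\partial_y f_{m-1}}^2\leq\ldots$ — but the underlying argument, the use of \eqref{factor} for the $\tilde\rho$-term, and the observation that $m^{3-2\sigma}\leq 1$ for the $\abs{u}^4$ contribution are identical.
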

\begin{proof}[Proof of the corollary]	
The first inequality follows from Proposition \ref{prpenmon} and Proposition \ref{prpaway}, and the second one holds because Lemma \ref{lemdiffe} and Proposition \ref{prpaway} as well as the fact that $\sigma\geq 3/2,$  since applying Proposition \ref{prpaway} for $m-1$ gives
\begin{eqnarray*}
	&& m^2  \int_0^t \norm{  \partial_y f_{m-1}}_{L^2}^2 ds\leq  m^{2\sigma-1}  \int_0^t \norm{  \partial_y f_{m-1}}_{L^2}^2 ds\\
	 &\leq & \frac{C\big[(m-6)!\big]^{2\sigma}}{\rho^{2(m-5)}}\abs{u_0}_{\rho,\sigma}^2+
 	 \frac{C\big[(m-6)!\big]^{2\sigma}}{\rho^{2(m-5)}} \inner{  \int_0^t   \inner{ \abs{u(s)}_{ \rho,\sigma}^2+\abs{u(s)}_{ \rho,\sigma}^4 }   \,ds+    \int_0^t  \frac{  \abs{u(s)}_{ \tilde\rho,\sigma}^2}{\tilde\rho-\rho}\,ds}.
\end{eqnarray*} 
The proof is completed.
\end{proof}

 The rest of this section is devote to proving    Proposition \ref{prpaway}
by the following lemmas  and the main tool  used  here is the cancellation property observed in \cite{MW}.

\begin{lemma}
The functions $f_m$ and $\tilde f_m$  defined in \eqref{funoffs}  and \eqref{+fungeps} satisfy the following equations:
\begin{equation}\label{eqgmeps}
 	\Big(\partial_t    +\inner{u^{s}+u}\partial_x+v\partial_y  -\partial_y^2  -\eps\partial_x^2 \Big)  f_m  =F_{m,\eps},
\end{equation}
and 
\begin{equation}\label{eqgmeps+++}
 	\Big(\partial_t    +\inner{u^{s}+u}\partial_x+v\partial_y  -\partial_y^2  -\eps\partial_x^2 \Big)  \tilde f_m  = \tilde F_{m,\eps},
\end{equation}
where 
\begin{eqnarray}
	F_{m,\eps}&=&  -\chi_1\sum_{k=1}^m {m\choose k}\inner{\partial_x^k u} \partial_x^{m-k+1}\omega-\chi_1 \sum_{k=1}^{m-1} {m\choose k}\inner{\partial_x^kv}  \partial_x^{m-k}\partial_y \omega \label{jm1}\\
&&+\chi_1 a \sum_{k=1}^m {m\choose k}\inner{\partial_x^k u} \partial_x^{m-k+1}u+\chi_1 a\sum_{k=1}^{m-1} {m\choose k}\inner{\partial_x^kv}   \partial_x^{m-k} \omega \label{jm2}\\
&&+\chi_1'v\partial_x^m\omega- 2\chi_1'\partial_x^m\partial_y\omega -\chi_1''\partial_x^m\omega-a \inner{\chi_1'v\partial_x^mu- 2\chi_1'\partial_x^m\omega -\chi_1''\partial_x^mu}  \label{jm3}\\
&&+\Big[ \partial_x\omega-\inner{\partial_x u}a    
-2a  \partial_ya  -2\eps\frac{\partial_x \omega}{\omega^s+\omega}\partial_x a \Big]\chi_1 \partial_x^m u  \label{jm4}\\
&&+ 2 \chi_1 \inner{\partial_ya }    \partial_x^m \omega+ 2 \chi_1' \inner{\partial_ya }    \partial_x^m u +2\eps \chi_1 \inner{\partial_xa }  \partial_x^{m+1} u  \label{jm5}
\end{eqnarray}
with
\begin{eqnarray*}
	a = \frac{ \partial_y\omega^s+ \partial_y \omega }{ \omega^s+\omega }, 
\end{eqnarray*} 
and the representation of $\tilde F_{m,\eps}$ is quite similar to $F_{m,\eps}$,  with the functions $\chi_1, \chi_1'$ and $\chi_1''$ in \eqref{jm1}-\eqref{jm5} replaced by $\chi_1', \chi_1''$ and $\chi_1'''$ respectively.  Moreover,
\begin{equation}\label{boundaycond}
	\partial_y f_m\big|_{y=0}=0.
\end{equation}
\end{lemma}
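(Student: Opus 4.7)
The plan is to derive both \eqref{eqgmeps} and \eqref{eqgmeps+++} by direct computation exploiting the cancellation built into the definition $f_m=\chi_1(\partial_x^m\omega-a\,\partial_x^m u)$ with $a=(\partial_y\omega^s+\partial_y\omega)/(\omega^s+\omega)$; the corresponding formula for $\tilde f_m$ then follows by replacing $\chi_1,\chi_1',\chi_1''$ by $\chi_1',\chi_1'',\chi_1'''$ throughout. Write $\mathcal L=\partial_t+(u^s+u)\partial_x+v\partial_y-\partial_y^2-\eps\partial_x^2$, $B=\omega^s+\omega$, and $A=\partial_y\omega^s+\partial_y\omega$, so that $a=A/B$. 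First I would apply $\partial_x^m$ to \eqref{regpran} and to its $\partial_y$-differentiated version, i.e.\ the equation for $\omega$; the two terms of order $m+1$ in $\partial_x$ are $(\partial_x^m v)\,B$ and $(\partial_x^m v)\,A$ respectively, and the remaining Leibniz tails produce exactly the sums appearing in \eqref{jm1}--\eqref{jm2}.

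Next, I would compute $\mathcal L f_m$ using the two commutator identities
\begin{equation*}
\mathcal L(\chi_1\phi)=\chi_1\mathcal L\phi+v\chi_1'\phi-2\chi_1'\partial_y\phi-\chi_1''\phi,\qquad \mathcal L(a\psi)=a\mathcal L\psi+(\mathcal L a)\psi-2(\partial_y a)\partial_y\psi-2\eps(\partial_x a)\partial_x\psi,
\end{equation*}
valid because $\chi_1$ depends only on $y$ and because expanding the second-order part of $\mathcal L$ by Leibniz leaves exactly those cross-terms. Applied with $\phi=\partial_x^m\omega$, $\psi=\partial_x^m u$ and subtracted, the crucial cancellation is that the two loss-of-derivative pieces $\chi_1(\partial_x^m v)A$ and $\chi_1 a(\partial_x^m v)B$ annihilate each other because $aB=A$ -- this is precisely the motivation for the choice of $f_m$. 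What remains of the ``aligned'' part reproduces \eqref{jm1}--\eqref{jm2}; the $\chi_1'$- and $\chi_1''$-pieces, including those picked up when $[\mathcal L,\chi_1]$ acts on $a\partial_x^m u$ and a factor of $a$ is pulled out, rearrange to \eqref{jm3}; while the $(\partial_y a)$, $(\partial_x a)$ contributions from $[\mathcal L,a]\partial_x^m u$, together with the extra $2\chi_1'(\partial_y a)\partial_x^m u$ from $[\mathcal L,\chi_1](a\partial_x^m u)$, collect into \eqref{jm5}.

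The main technical obstacle is then identifying the coefficient $-\chi_1(\mathcal L a)\partial_x^m u$ that feeds into \eqref{jm4}, i.e.\ computing $\mathcal L a$ in closed form (a direct expansion of $\partial_x^2 a$ and $\partial_y^2 a$ is unwieldy). I would handle this indirectly from $B a=A$, which yields the product identity
\begin{equation*}
B\,\mathcal L a+a\,\mathcal L B-2(\partial_y B)(\partial_y a)-2\eps(\partial_x B)(\partial_x a)=\mathcal L A.
\end{equation*}
The heat equation for $u^s$ gives $\mathcal L\omega^s=v\partial_y\omega^s$ and $\mathcal L(\partial_y\omega^s)=v\partial_y^2\omega^s$; the equation for $\omega$ gives $\mathcal L\omega=-v\partial_y\omega^s$; and differentiating the $\omega$-equation once more in $y$ and simplifying via $\partial_y v=-\partial_x u$ gives $\mathcal L(\partial_y\omega)=-B\partial_x\omega+A\partial_x u-v\partial_y^2\omega^s$. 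Summing, one finds $\mathcal L B=0$ and $\mathcal L A=-B\partial_x\omega+A\partial_x u$; inserting these together with $\partial_y B=A$ and $\partial_x B=\partial_x\omega$ and solving for $\mathcal L a$ yields
\begin{equation*}
\mathcal L a=-\partial_x\omega+a\partial_x u+2a\partial_y a+2\eps\frac{\partial_x\omega}{\omega^s+\omega}\partial_x a,
\end{equation*}
which multiplied by $-\chi_1\partial_x^m u$ is exactly the bracket in \eqref{jm4}.

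For the boundary condition \eqref{boundaycond}, I would use $u|_{y=0}=v|_{y=0}=0$ together with $u^s|_{y=0}=0$: evaluating the heat equation at $y=0$ gives $\partial_y^2 u^s|_{y=0}=0$, and evaluating \eqref{regpran} at $y=0$ forces $\partial_y^2 u|_{y=0}=-\eps\partial_x^2 u|_{y=0}=0$, so $A|_{y=0}=0$ and hence $a|_{y=0}=0$. Since $\chi_1\equiv 1$ near $y=0$ (so in particular $\chi_1'(0)=0$) and all tangential derivatives of $u$ vanish on the boundary, a direct expansion of $\partial_y f_m|_{y=0}$ reduces to $\chi_1\partial_x^m\partial_y\omega|_{y=0}$; applying $\partial_x^m$ to the trace of \eqref{regpran} gives $\partial_x^m\partial_y^2 u|_{y=0}=-\eps\partial_x^{m+2}u|_{y=0}=0$, completing the proof.
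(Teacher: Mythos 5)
Your proposal is correct and follows essentially the same route as the paper's proof: apply $\partial_x^m$ to the equations for $u$ and $\omega$, multiply by $\chi_1$, pick up the commutator terms, and subtract $a$ times the $u$-equation from the $\omega$-equation so that the loss-of-derivative pieces $\chi_1(\partial_x^m v)A$ and $\chi_1 a(\partial_x^m v)B$ cancel because $aB=A$. One small but genuine improvement in your write-up is the derivation of the closed form for $\mathcal{L}a$: the paper simply asserts the identity (``we notice that\ldots''), whereas you obtain it cleanly from the parabolic product rule
\[
\mathcal L(fg)=f\,\mathcal L g+g\,\mathcal L f-2(\partial_y f)(\partial_y g)-2\eps(\partial_x f)(\partial_x g)
\]
applied to $Ba=A$, together with the elementary facts $\mathcal L B=0$, $\mathcal L A=-B\partial_x\omega+A\partial_x u$, $\partial_y B=A$ and $\partial_x B=\partial_x\omega$; this is a transparent way to organize what would otherwise be a tedious direct expansion of $\partial_y^2 a$ and $\partial_x^2 a$, and is a useful supplement to the paper's sketch. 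Your argument for the boundary condition is also fine: $\chi_1'(0)=0$, $\partial_x^m u|_{y=0}=0$, and $a|_{y=0}=0$ (since $A|_{y=0}=\partial_y^2u^s|_{y=0}+\partial_y^2u|_{y=0}=0$, the first from the heat equation and the Dirichlet condition, the second from tracing the regularized Prandtl equation on $\{y=0\}$) together kill every term of $\partial_y f_m$ except $\chi_1\partial_x^m\partial_y\omega|_{y=0}$, which itself vanishes by differentiating the identity $\partial_y\omega|_{y=0}=0$ in $x$. The tilde-version is, as you say, the same computation with $\chi_1$ shifted to $\chi_1'$.
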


\begin{proof}
This proof is based on direct calculation that will
be sketched in   the Appendix (see Lemma \ref{applem}).	
\end{proof}

In the next two lemmas, we will derive the energy estimates on $f_m$ and $\tilde f_m,$ starting from the equations \eqref{eqgmeps} and \eqref{eqgmeps+++}. 

\begin{lemma}
	\label{lemlow} 
  	We have 	\begin{eqnarray*}
	  \frac{1}{2}\frac{d}{dt} \norm{\comi y^{\ell}f_m }_{L^2}^2+  \frac{1}{2} \norm{\comi y^{\ell}\partial_yf_m}_{L^2}^2 +\eps \norm{\comi y^{\ell}\partial_xf_m}_{L^2}^2  
	  \leq     \Big(\comi y^\ell F_{m,\ell}, \comi y^\ell f_m\Big)_{L^2}   + \frac{C\big[\inner{m-6}!\big]^{2\sigma}}{\rho^{2(m-5)}}    
 \abs{u}_{\rho,\sigma}^2.   
\end{eqnarray*}
The above estimate also holds  when $F_{m,\eps}$ and $f_m$ are replaced respectively by $\tilde F_{m,\eps}$ and $\tilde f_m.$ 
	\end{lemma}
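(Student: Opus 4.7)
The plan is to run a weighted $L^2$ energy estimate on the equation \eqref{eqgmeps} for $f_m$. Concretely, I would take the $L^2(\mathbb R^2_+)$ inner product of both sides of \eqref{eqgmeps} with $\comi y^{2\ell} f_m$, use the identity $\partial_y \comi y^{2\ell} = 2\ell y\comi y^{2\ell-2}$ to track the weight derivatives, and rely on the boundary condition \eqref{boundaycond} together with the divergence-free relation $\partial_x u+\partial_y v=0$.

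First the time derivative gives the usual $\tfrac12\tfrac{d}{dt}\norm{\comi y^\ell f_m}_{L^2}^2$. For the transport terms, integration by parts in $x$ produces $-\tfrac12\int (\partial_x u)\comi y^{2\ell} f_m^2$, and integration by parts in $y$ — using $v|_{y=0}=0$ — produces $-\tfrac12\int(\partial_y v)\comi y^{2\ell} f_m^2 - \ell\int y v\comi y^{2\ell-2}f_m^2$; the first two combine to zero by incompressibility and only the weight-induced term $-\ell\int y v\comi y^{2\ell-2}f_m^2$ survives. For the dissipation, integration by parts in $y$ together with $\partial_y f_m|_{y=0}=0$ yields $\norm{\comi y^\ell\partial_y f_m}_{L^2}^2 + 2\ell\int y\comi y^{2\ell-2} f_m\partial_y f_m$, and the $\eps\partial_x^2$ term contributes $\eps\norm{\comi y^\ell\partial_x f_m}_{L^2}^2$.

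The two error pieces to control are then
\[
2\ell\int y\comi y^{2\ell-2} f_m\partial_y f_m \quad\text{and}\quad \ell\int y v\comi y^{2\ell-2} f_m^2.
\]
For the first, Cauchy--Schwarz absorbs $\tfrac12\norm{\comi y^\ell\partial_y f_m}_{L^2}^2$ (explaining the factor $\tfrac12$ in the lemma) and leaves a term bounded by $C\norm{\comi y^{\ell-1}f_m}_{L^2}^2$. For the second, use $y\comi y^{2\ell-2}\le\comi y^{2\ell-1}$ and the bound $\norm{v}_{L^\infty}\le 1$ available from the fourth line of \eqref{condi}; Cauchy--Schwarz then gives $\le \norm{\comi y^\ell f_m}_{L^2}\norm{\comi y^{\ell-1}f_m}_{L^2}$. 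Both remainders are finally bounded via the definition of $\abs{u}_{\rho,\sigma}$, which controls $\norm{\comi y^\ell f_m}_{L^2}$ and hence $\norm{\comi y^{\ell-1} f_m}_{L^2}$ by $C[(m-6)!]^{\sigma}\rho^{-(m-5)}\abs{u}_{\rho,\sigma}$, producing the claimed remainder $C[(m-6)!]^{2\sigma}\rho^{-2(m-5)}\abs{u}_{\rho,\sigma}^2$.

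The case of $\tilde f_m$ is strictly easier: $\tilde f_m$ is supported in $\{y_0-\tfrac{3}{2}\delta\le y\le y_0-\tfrac{5}{4}\delta\}\cup\{y_0+\tfrac{5}{4}\delta\le y\le y_0+\tfrac{3}{2}\delta\}\subset\mathrm{supp}\,\chi_1'$, so the weight $\comi y^{2\ell}$ is bounded above and below by constants, $\tilde f_m$ vanishes together with its $y$-derivative at $y=0$ trivially, and the weight-derivative errors disappear. Running the same energy argument on \eqref{eqgmeps+++} against $\tilde f_m$ (no weight needed) yields the identical bound. The only real bookkeeping obstacle is the first error term above: one must check that the Cauchy--Schwarz split leaves enough of $\norm{\comi y^\ell\partial_y f_m}_{L^2}^2$ on the left to match the $\tfrac12$ in the statement, and that $\norm{\comi y^{\ell-1} f_m}_{L^2}$ is genuinely controlled by $\abs{u}_{\rho,\sigma}$ through the definition \eqref{trinorm} — both of which are straightforward.
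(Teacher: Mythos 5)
Your argument is correct and is essentially the paper's own proof: test \eqref{eqgmeps} against $\comi y^{2\ell}f_m$, use $\partial_xu+\partial_yv=0$ and $v|_{y=0}=0$ to reduce the transport contribution to $-\int v(\partial_y\comi y^\ell)\comi y^\ell f_m^2$, use $\partial_y f_m|_{y=0}=0$ for the dissipation, and absorb the weight-commutator term $\int(\partial_y\comi y^{2\ell})(\partial_y f_m)f_m$ by Cauchy--Schwarz into $\tfrac12\norm{\comi y^\ell\partial_y f_m}_{L^2}^2$, bounding the remainders by $\norm{v}_{L^\infty}\le 1$ from \eqref{condi} and $\norm{\comi y^\ell f_m}_{L^2}\le\big[(m-6)!\big]^\sigma\rho^{-(m-5)}\abs{u}_{\rho,\sigma}$ from \eqref{chi2est}. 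Your additional remark that the $\tilde f_m$ case can drop the weight (since $\tilde f_m$ is supported on $\mathrm{supp}\,\chi_1'$) is also consistent with how the paper states and uses the $\tilde f_m$ estimate in Proposition \ref{prpaway}.
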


\begin{proof}
We multiply both sides of \eqref{eqgmeps}  by  $\comi y^{2\ell}f_m$ and then take integration over $\mathbb R_+^2;$. Integrating by parts with
 the boundary condition \eqref{boundaycond} gives
\begin{eqnarray*}
	\Big(\comi y^\ell F_{m,\ell}, ~\comi y^\ell f_m\Big)_{L^2} &=&  \int_{\mathbb R_+^2} \Big(\inner{\partial_t+\inner{u^{s}+u}\partial_x+ v \partial_y -\partial_y^2-\eps\partial_x^2    } f_m\Big)\comi y^{2\ell}f_m\, dxdy \\
	& =&\frac{1}{2}\frac{d}{dt}\norm{\comi y^{\ell}f_m}_{L^2}^2+\norm{\comi y^{\ell}\partial_yf_m}_{L^2}^2+\eps\norm{\comi y^{\ell}\partial_xf_m}_{L^2}^2\\
	&&+\int_{\mathbb R_+^2}    \inner{  \partial_y\comi y^{2\ell}} \inner{\partial_yf_m} f_m\, dxdy-\int_{\mathbb R_+^2} v  \inner{  \partial_y\comi y^{\ell}}  \comi y^{\ell}    f_m^2\, dxdy.
	\end{eqnarray*}
Moreover, as for the last two terms on the right side, using the last inequality in \eqref{condi} as well as \eqref{chi2est}, we have
	\begin{eqnarray*}
	&& \Big|\int_{\mathbb R_+^2}    \inner{  \partial_y\comi y^{2\ell}} \inner{\partial_yf_m} f_m\, dxdy\Big|+\Big|\int_{\mathbb R_+^2} v  \inner{  \partial_y\comi y^{\ell}}  \comi y^{\ell}    f_m^2\, dxdy\Big| \\
	 & \leq &  \frac{1}{2}\norm{\comi y^{\ell}\partial_yf_m}_{L^2}^2 +   C\norm{\comi y^{\ell} f_m}^2   \leq   \frac{1}{2}\norm{\comi y^{\ell}\partial_yf_m}_{L^2}^2+\frac{C\big[\inner{m-6}!\big]^{2\sigma}}{\rho^{2(m-5)}} \abs{u}_{\rho,\sigma}^2.
\end{eqnarray*}
 Combining the above equalities gives the desired
estimate and then  completes the     proof of the lemma.
\end{proof}

\begin{lemma}\label{lemuppb}
Let $3/2\leq \sigma\leq 2.$ We have, for any $\kappa>0,$ and for any pair $(\rho,\tilde\rho)$ with $0<\rho<\tilde\rho,$
		\begin{eqnarray*}
		  \Big(\comi y^\ell F_{m,\ell}, ~\comi y^\ell f_m\Big)_{L^2} 
		\leq \kappa \eps \norm{\comi y^{\ell}\partial_x f_m}_{L^2}^2 +   \frac{C\kappa^{-1}\big[(m-6)!\big]^{2\sigma}}{\rho^{2(m-5)}}  \inner{ \abs{u}_{ \rho,\sigma}^2
		+\abs{u}_{ \rho,\sigma}^4+   \frac{ \abs{u}_{\tilde\rho,\sigma}^2}{\tilde \rho-\rho}}.
	\end{eqnarray*}
The above estimate also holds  with $F_{m,\eps}$ and $f_m$ replaced by $\tilde F_{m,\eps}$ and $\tilde f_m$ respectively.
\end{lemma}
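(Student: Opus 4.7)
The plan is to split $F_{m,\eps}$ into the five groups of terms \eqref{jm1}--\eqref{jm5}, pair each one with $\comi y^{2\ell} f_m$, and estimate the resulting integrals individually. In every case, the bound must come out with the weight $\rho^{-2(m-5)}[(m-6)!]^{2\sigma}$, so the combinatorial and convolution analysis is the same as the one performed in the proof of Proposition~\ref{prpenmon} (e.g.\ in Lemma~\ref{lemp3}); I will only sketch the mechanism and point out where the hypothesis $\sigma\geq 3/2$ enters.

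First, for the convolution sums in \eqref{jm1} and \eqref{jm2}, I would split the Leibniz index at $j=[m/2]$. In the low-frequency range $j\leq [m/2]$ the factors $\partial_x^ju$, $\partial_x^kv$ are put in $L^\infty$ using the Sobolev inequality, and the high-order factor is placed in $L^2$, weighted by $\comi y^{\ell-1}$ or $\comi y^\ell$; the resulting combinatorial sums are handled as in $J_3$ of Lemma~\ref{lemp3}, and the factor $m^{2-2\sigma}$ that appears collapses to at most $m^{-1}$ since $\sigma\geq 3/2$. In the high-frequency range $j>[m/2]$, the roles of $L^2$ and $L^\infty$ are reversed and the same bound emerges. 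For the top-order boundary term ($j=m$ in \eqref{jm1} or the corresponding extreme in \eqref{jm2}) one uses the estimate on $\norm{\comi y^\ell \partial_x u}_{L^\infty}$ in \eqref{condi} together with \eqref{etan}, which produces the $\frac{1}{\tilde\rho-\rho}$ gain via \eqref{factor}.

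The terms in \eqref{jm3} contain factors $\chi_1'$, $\chi_1''$ supported where $\chi_2\equiv 1$, so the raw norms of $\partial_x^m\omega$, $\partial_x^m\partial_y\omega$, $\partial_x^m u$ can all be dominated through \eqref{chi2est} and \eqref{etan} by $[(m-6)!]^\sigma\rho^{-(m-5)}\abs{u}_{\rho,\sigma}$; pairing with $\norm{\comi y^\ell f_m}_{L^2}$ and using Cauchy--Schwarz gives a clean contribution of the required form. The terms in \eqref{jm4}, apart from the $\eps$ factor at the end, are similar since $a$, $\partial_y a$, $\partial_x a$ are controlled on $\mathrm{supp}\,\chi_1$ by \eqref{condi}; the non-$\eps$ piece of \eqref{jm5} follows in the same way.

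The main obstacle, and the only place where $\kappa\eps\norm{\comi y^\ell\partial_x f_m}_{L^2}^2$ actually appears on the right, is the last summand of \eqref{jm5}, namely $2\eps\chi_1(\partial_x a)\partial_x^{m+1}u$. A direct bound loses an $x$-derivative, so I would integrate by parts in $x$:
\begin{equation*}
2\eps\int_{\mathbb R_+^2}\chi_1(\partial_x a)(\partial_x^{m+1}u)\comi y^{2\ell}f_m\,dxdy=-2\eps\int_{\mathbb R_+^2}\partial_x\bigl[\chi_1(\partial_x a)\comi y^{2\ell}f_m\bigr](\partial_x^{m}u)\,dxdy,
\end{equation*}
and then distribute the $\partial_x$. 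The term with $\partial_x f_m$ is absorbed by Cauchy--Schwarz as $\kappa\eps\norm{\comi y^\ell\partial_x f_m}_{L^2}^2+C\kappa^{-1}\eps\norm{\chi_1(\partial_x a)\comi y^\ell\partial_x^m u}_{L^2}^2$, and since $\eps\leq 1$ and $\partial_x a$ is bounded on $\mathrm{supp}\,\chi_1$ by \eqref{condi}, the residual is bounded by \eqref{etan}. The remaining term, where $\partial_x$ falls on $\chi_1(\partial_x a)$, is a lower-order commutator treated as in \eqref{jm4}. Collecting all contributions yields the stated bound; the argument for $\tilde F_{m,\eps}$ paired with $\tilde f_m$ is identical, substituting $\chi_1',\chi_1'',\chi_1'''$ for $\chi_1,\chi_1',\chi_1''$, and noting that $\mathrm{supp}\,\chi_1'\subset\mathrm{supp}\,\chi_2$ again allows us to use \eqref{chi2est}.
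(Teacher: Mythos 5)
Your proof follows essentially the same route as the paper's: estimate each of the groups \eqref{jm1}--\eqref{jm5} paired against $\comi y^{2\ell} f_m$, splitting the convolution sums at $j=[m/2]$ with the $L^\infty$/$L^2$ pairing in the style of $J_1$--$J_4$ in Lemma \ref{lemp3}, then handle the problematic summand $2\eps\chi_1(\partial_x a)\partial_x^{m+1}u$ in \eqref{jm5} by integrating by parts in $x$, which is indeed the mechanism that produces the $\kappa\eps\norm{\comi y^\ell\partial_x f_m}_{L^2}^2$ term. One point worth making precise: when you bound the residual $\eps\norm{\chi_1(\partial_x a)\comi y^\ell\partial_x^m u}_{L^2}^2$ it is not enough that $\partial_x a$ be merely bounded on $\mathrm{supp}\,\chi_1$ --- you need the decay $\abs{\partial_x a}\lesssim\comi y^{-1}$ there (which does follow from \eqref{condi} together with $\abs{\omega^s+\omega}\gtrsim\comi y^{-\alpha}$ on $\mathrm{supp}\,\chi_1$) because the norm in Definition \ref{defgev} controls $\norm{\comi y^{\ell-1}\partial_x^m u}_{L^2}$, not $\norm{\comi y^{\ell}\partial_x^m u}_{L^2}$.
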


 \begin{proof}  
We only need prove the first statement.  To do so,  we estimate
 term by term in the representation of $F_{m,\eps}$.
 
 {\it \underline{Estimate on  the terms in \eqref{jm1}-\eqref{jm3}} }:  We 
apply similar arguments as  for $J_1$-$J_3$ used in Lemma \ref{lemp3} to obtain that 
    \begin{equation*}
    \begin{aligned}
 	&\Big(\comi y^{\ell} \chi_1 \sum_{k=1}^{m-1} {m\choose k}\inner{\partial_x^kv}   \partial_x^{m-k}\partial_y \omega ,~\comi y^{\ell} f_m \Big)_{L^2}+
 	\Big(\comi y^{\ell}\chi_1\sum_{k=1}^m {m\choose k}\inner{\partial_x^k u}  \partial_x^{m-k+1}\omega, ~\comi y^{\ell} f_m \Big)_{L^2}\\
 	&\leq     \frac{C\big[(m-6)!\big]^{2\sigma}}{\rho^{2(m-5)}}  \inner{ \abs{u}_{ \rho,\sigma}^3 +   \frac{ \abs{u}_{\tilde\rho,\sigma}^2}{\tilde \rho-\rho}}.
 	\end{aligned}
 \end{equation*}
 This gives the upper bound for the terms in \eqref{jm1}.   Similarly, observe  
  $\abs{a(t,x,y)}\leq C\comi y^{-1}$ and thus 
 \begin{eqnarray*}
 	&&\Big(\comi y^{\ell}  \chi_1 a \sum_{k=1}^m {m\choose k}\inner{\partial_x^k u} \partial_x^{m-k+1}u+\comi y^{\ell} \chi_1 a \sum_{k=1}^{m-1} {m\choose k}\inner{\partial_x^kv}   \partial_x^{m-k} \omega,~\comi y^{\ell} f_m \Big)_{L^2}\\
 	&\leq &     \frac{C\big[(m-6)!\big]^{2\sigma}}{\rho^{2(m-5)}}  \inner{ \abs{u}_{ \rho,\sigma}^3 +   \frac{ \abs{u}_{\tilde\rho,\sigma}^2}{\tilde \rho-\rho}}.
 \end{eqnarray*}
 This gives the estimates on the terms in \eqref{jm2}. 
Furthermore,  observing $\chi_1'\partial_x^m\partial_y\omega=\chi_1'\chi_2\partial_x^m\partial_y\omega$ due to \eqref{suppch2} and thus using \eqref{chi2est}, we obtain
\begin{eqnarray*}
&&\bigg( \comi y^{ \ell}\Big[\chi_1'v\partial_x^m\omega- 2\chi_1'\partial_x^m\partial_y\omega -\chi_1''\partial_x^m\omega-a\inner{\chi_1'v\partial_x^mu- 2\chi_1'\partial_x^m\omega -\chi_1''\partial_x^mu} \Big],~ \comi y^{ \ell} f_m \bigg)_{L^2}    \\
	&\leq &      \frac{C\big[(m-6)!\big]^{2\sigma}}{\rho^{2(m-5)}}\abs{u}_{ \rho,\sigma}^2.
	\end{eqnarray*}
This gives the upper bound for the terms in  \eqref{jm3}.   

{\it \underline{Estimate on  the terms in \eqref{jm4}-\eqref{jm5}}: }   As for the term in \eqref{jm4},  we can verify that,   for any $y \in  {\rm supp}\,\chi_1,$
 \begin{eqnarray*}
 	\abs{\frac{\partial_x \omega}{\omega^s+\omega} (t,x,y)}\leq C\comi y^{\alpha}\abs{\partial_x \omega(t,x,y)}\leq C\comi y^{\ell}\abs{\partial_x \omega(t,x,y)}\leq C
 \end{eqnarray*}
 due to the fact that $\alpha\leq \ell$ and the last inequality in \eqref{condi}. Similarly  using \eqref{condi} gives, for any $y \in  {\rm supp}\,\chi_1,$ 
 \begin{eqnarray*}
 	 \abs{  a (t,x,y)}+ \abs{\partial_x  a (t,x,y)}+ \abs{\partial_x^2  a (t,x,y)} \leq C\comi y^{-1},
 	  \end{eqnarray*}
 	  and
 	  \begin{eqnarray*}
 	  	  \abs{\partial_y  a (t,x,y)}\leq C\comi y^{-1}\inner{1+\abs{u}_{\rho,\sigma}}.
 	  \end{eqnarray*}
 	  Hence,  we have
 	  \begin{eqnarray*}
	 \Big( \comi y^{ \ell} \big[ \partial_x\omega-\inner{\partial_x u}a    
  -2\eps\frac{\partial_x \omega}{\omega^s+\omega}\partial_x a\big]\chi_1   \partial_x^m u,~ \comi y^{ \ell} f_m\Big)_{L^2} 
	\leq  C \frac{\big[(m-6)!\big]^{2\sigma}}{\rho^{2(m-5)}}\abs{u}_{ \rho,\sigma}^2,
\end{eqnarray*}
and, for any $\kappa>0,$
 \begin{eqnarray*}
  && \Big(\comi y^{\ell}\Big[2\chi_1 \inner{\partial_ya }  \partial_x^m \omega +2 \chi_1' \inner{\partial_ya}    \partial_x^m u +2\eps \chi_1 \inner{\partial_xa}  \partial_x^{m+1} u\Big],~\comi y^{\ell} f_m \Big)_{L^2} \\
   &\leq&\kappa \eps \norm{\comi y^{\ell}\partial_x f_m}_{L^2}^2+ \frac{C\kappa^{-1}\big[(m-6)!\big]^{2\sigma}}{\rho^{2(m-5)}}\inner{\abs{u}_{ \rho,\sigma}^2+\abs{u}_{ \rho,\sigma}^3}.
\end{eqnarray*}
This gives the upper bound for  the terms in \eqref{jm4}-\eqref{jm5}.  The proof of Lemma \ref{lemuppb} is completed.    
  \end{proof}

\begin{proof}
	[Completion of the proof of Proposition \ref{prpaway}]  Combining  Lemma \ref{lemlow}  and Lemma \ref{lemuppb},  we have for any  $\kappa>0,$  	\begin{eqnarray*}
		&& \frac{1}{2}\frac{d}{dt} \norm{\comi y^{\ell}f_m}_{L^2}^2 +\frac{1}{2} \norm{\comi y^{\ell}\partial_y f_m}_{L^2}^2 +\eps \norm{\comi y^{\ell}\partial_x  f_m}_{L^2}^2  \\
		&\leq &     \kappa \eps \norm{\comi y^{\ell}\partial_x f_m}_{L^2}^2 +   \frac{C\kappa^{-1}\big[(m-6)!\big]^{2\sigma}}{\rho^{2(m-5)}}  \inner{ \abs{u}_{ \rho,\sigma}^2
		+\abs{u}_{ \rho,\sigma}^4+   \frac{ \abs{u}_{\tilde\rho,\sigma}^2}{\tilde \rho-\rho}}.
	\end{eqnarray*}
	Letting $\kappa$ be small sufficiently and then  taking integration over $[0,t]$ yields  the estimate on $f_m$ as stated  in    Proposition \ref{prpaway} because
	\begin{eqnarray*}
	 \norm{ \comi y^{\ell } f_m(0)}_{L^2 }^2\leq \frac{C\big[(m-6)!\big]^{2\sigma}}{\rho^{2(m-5)}}\abs{u_0}_{\rho,\sigma}^2.
	\end{eqnarray*}
 The estimation on $\tilde f_m$ is the  same as that of $f_m$.  The proof is thus then completed. 
\end{proof}

 \section{Proof of Theorem \ref{uniestgev}:  uniform estimates near the critical point}
 
Here we will perform the estimation, by virtue of the cut-off function $\chi_2$  introduced in\eqref{cutofffu},  in the domain that contains the non-degenerate critical point.   Precisely, in this part
 we will work on the terms $h_m$ and $\chi_2\partial_y\partial_x^m \omega$,  recalling 
\begin{equation}\label{repreofhm}
h_m=	\chi_2 \partial_x^m\partial_y \omega-\chi_2\frac{  \partial_y^2\omega^s+\partial_y^2 \omega }{\partial_y\omega^s+\partial_y\omega }\partial_x^m \omega,\quad m\geq 1.
\end{equation}
The main result can be stated as follows.

\begin{proposition}
\label{hm}	
Let $m\geq 6$ and let $u\in L^\infty\inner{[0, T];~X_{\rho_0,\sigma}}$  be the solution to \eqref{regpran} under the assumptions in Theorem \ref{uniestgev}.  Then we have, for any $t\in[0,T],$ and   for any pair $\inner{\rho,\tilde\rho}$ with $0<\rho<\tilde\rho<\rho_0,$
\begin{eqnarray*}
  && \norm{h_m}_{L^2}^2 +\norm{\chi_2\partial_y\partial_x^m \omega}_{L^2}^2\\
   &\leq &  \frac{C\big[(m-6)!\big]^{2\sigma}}{\rho^{2(m-5)}}\abs{u_0}_{\rho,\sigma}^2+
 	 \frac{C\big[(m-6)!\big]^{2\sigma}}{\rho^{2(m-5)}} \inner{  \int_0^t   \inner{ \abs{u(s)}_{ \rho,\sigma}^2+\abs{u(s)}_{ \rho,\sigma}^4 }   \,ds+    \int_0^t  \frac{  \abs{u(s)}_{ \tilde\rho,\sigma}^2}{\tilde\rho-\rho}\,ds}.  
\end{eqnarray*}

\end{proposition}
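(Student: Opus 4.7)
My plan is to mimic Section~\ref{sec4} (the $f_m$ analysis) but applied to the equation for $\partial_y\omega$ rather than the equation for $\omega$. Starting from the $\partial_y\omega$-equation recalled in methodology~(ii) of the introduction (augmented with the $\eps\partial_x^2$-regularization), I apply $\partial_x^m$ and obtain an equation whose top-order-in-$x$ source is $-g_{m+1}$: the loss of one $x$-derivative coming from the RHS $-g_1$ of the $\partial_y\omega$-equation. Separately, applying $\partial_x^m$ to the vorticity equation produces the usual $(\partial_x^m v)(\partial_y\omega^s+\partial_y\omega)$ loss. Forming $\chi_2\times(\textrm{first equation})+(-\chi_2 a_2)\times(\textrm{second equation})$, where $a_2=(\partial_y^2\omega^s+\partial_y^2\omega)/(\partial_y\omega^s+\partial_y\omega)$ is well-defined on $\mathrm{supp}\,\chi_2$ by the first line of \eqref{condi}, the identity $a_2(\partial_y\omega^s+\partial_y\omega)=\partial_y^2\omega^s+\partial_y^2\omega$ kills the $(\partial_x^m v)$-pieces exactly. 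The result is an equation
\[
(\partial_t+(u^s+u)\partial_x+v\partial_y-\partial_y^2-\eps\partial_x^2)h_m=H_{m,\eps},
\]
whose source $H_{m,\eps}$ consists of: (i) the standard $[\partial_x^m,(u^s+u)\partial_x+v\partial_y]$-commutators acting on $\omega$ and $\partial_y\omega$; (ii) $\chi_2'$- and $\chi_2''$-localization contributions produced by differentiating the cut-off; (iii) lower-order pieces involving $\partial_t a_2$, $(u^s+u)\partial_x a_2$, $v\partial_y a_2$, $\partial_y^2 a_2$, $\eps\partial_x^2 a_2$ multiplied by $\chi_2\partial_x^m\omega$; and (iv) the residual loss-of-derivative term of the form $\chi_2 g_{m+1}$ that survives the cancellation.

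The next step is the $L^2$ energy estimate for $h_m$. The boundary condition $h_m|_{y=0}=0$ is automatic since $\mathrm{supp}\,\chi_2\subset[y_0-\tfrac74\delta,y_0+\tfrac74\delta]$ with $y_0>2\delta>0$. Pairing the equation with $h_m$ produces $\tfrac12\frac{d}{dt}\norm{h_m}_{L^2}^2+\norm{\partial_y h_m}_{L^2}^2+\eps\norm{\partial_x h_m}_{L^2}^2$ plus harmless transport remainders handled by \eqref{condi}. Each contribution $(H_{m,\eps},h_m)_{L^2}$ is bounded term by term along the lines of Lemmas~\ref{lemlow}--\ref{lemuppb}: the bilinear and trilinear commutators use the Sobolev inequality together with \eqref{etan}, \eqref{chi2est}, \eqref{emix}; the $\eps$-pieces are absorbed against $\eps\norm{\partial_x h_m}_{L^2}^2$; the $\chi_2'$- and $\chi_2''$-contributions are local in $y$ and folded into $|u|_{\rho,\sigma}$ via the $L^\infty$-bounds on $\partial_y^j(\omega^s+\omega)$. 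The novel step is the treatment of the residual $\chi_2 g_{m+1}$-piece: its $L^2$-norm is furnished by Proposition~\ref{prpenmon} (equivalently, the first inequality of Corollary~\ref{cordiff}) applied at the shifted index $m+1$, and the extra factor $m$ placed in front of $\norm{g_m}_{L^2}$ in Definition~\ref{gevspace}, combined with $\sigma\ge3/2$, is exactly what is needed to re-absorb the resulting expression into $|u|_{\rho,\sigma}$ at index $m$.

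To convert the bound on $h_m$ into a bound on $\chi_2\partial_y\partial_x^m\omega$ I use the identity \eqref{repreofhm},
\[
\chi_2\partial_y\partial_x^m\omega=h_m+\chi_2\,a_2\,\partial_x^m\omega,
\]
with $|a_2|$ bounded on $\mathrm{supp}\,\chi_2$, thereby reducing the problem to controlling $\norm{\chi_2\partial_x^m\omega}_{L^2}$. Following methodology~(iii) of the introduction, I test the $\partial_x^m\omega$-equation against the weight $\chi_2\partial_x^m\omega/(\partial_y\omega^s+\partial_y\omega)$, smooth on $\mathrm{supp}\,\chi_2$. The weight is designed so that, after pairing, the loss piece $(\partial_x^m v)(\partial_y\omega^s+\partial_y\omega)$ reduces to $\chi_2(\partial_x^m v)(\partial_x^m\omega)$, which is no longer top-order after integration by parts in $x$ using $\partial_y v=-\partial_x u$. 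The remaining pairings are lower-order commutators of the type already estimated in Section~\ref{sec4} and produce the required bound on $\norm{\chi_2\partial_x^m\omega}_{L^2}$.

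The main obstacle is precisely the residual $\chi_2 g_{m+1}$-piece on the right-hand side of the $h_m$-equation: the cancellation built into $h_m$ removes the $(\partial_x^m v)$-pieces but not $\tilde g_{m+1}$, so one genuinely needs the independent control on $g_{m+1}$ from Section~\ref{sec3}. Reconciling the Gevrey weights at the shifted index $m+1$ (from Proposition~\ref{prpenmon}) with the target weights at index $m$ is where the extra factor $m$ in front of $\norm{g_m}_{L^2}$ in Definition~\ref{gevspace} and the constraint $\sigma\in[3/2,2]$ enter decisively.
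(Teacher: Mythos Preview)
Your overall architecture matches the paper's: derive the $h_m$-equation via the $a_2$-cancellation, run an $L^2$ energy estimate, then recover $\chi_2\partial_y\partial_x^m\omega$ from $h_m+\chi_2 a_2\partial_x^m\omega$ and a separate estimate on $\norm{\chi_2\partial_x^m\omega}_{L^2}$ (Proposition~\ref{prpnear}). Two places where your description diverges from the paper deserve comment.

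\emph{The $\chi_2 g_{m+1}$ term.} The paper does \emph{not} invoke Proposition~\ref{prpenmon} or Corollary~\ref{cordiff} here. In Lemma~\ref{lemfinesre} it simply uses the definition of $\abs{\cdot}_{\tilde\rho,\sigma}$ via \eqref{chi2est}: $(m{+}1)\norm{g_{m+1}}_{L^2}\le[(m{-}5)!]^\sigma\tilde\rho^{-(m-4)}\abs{u}_{\tilde\rho,\sigma}$ and $\norm{h_m}_{L^2}\le[(m{-}6)!]^\sigma\rho^{-(m-5)}\abs{u}_{\rho,\sigma}$, then applies \eqref{factor}. The constraint that enters is $\sigma\le 2$, not $\sigma\ge 3/2$ as you write. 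Your route through Proposition~\ref{prpenmon} at index $m{+}1$ would introduce $\eps\int\norm{\partial_x^{m+2}u}_{L^2}^2$ terms and a prefactor $[(m{-}5)!]^{2\sigma}/\rho^{2(m-4)}$; reconciling this with the target $[(m{-}6)!]^{2\sigma}/\rho^{2(m-5)}$ costs an unbounded factor $m^{2\sigma-2}/\rho^{2}$, so that path does not close as stated.

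\emph{The $\chi_2\partial_x^m\omega$ estimate.} Your claim that $(\chi_2\partial_x^m v,\chi_2\partial_x^m\omega)_{L^2}$ becomes ``no longer top-order after integration by parts in $x$'' is too quick. Integration by parts in $y$ (using $\partial_y v=-\partial_x u$) reduces it to $(\chi_2'\partial_x^m v,\chi_2\partial_x^m u)_{L^2}$, which still carries $m{+}1$ tangential derivatives through $\partial_x^m v$. The paper closes this via the nontrivial representation of $\partial_x^m u$ in terms of $\hat g_m$ due to G\'erard-Varet--Masmoudi, yielding the key bound \eqref{eskes}:
\[
\abs{(\chi_2'\partial_x^m v,\chi_2\partial_x^m u)_{L^2}}\le C\norm{\tilde g_m}_{L^2}\norm{\partial_x^{m+1}\omega}_{L^2}+C\norm{\partial_x^m\omega}_{L^2}^2,
\]
and only then does the $g_m$-machinery of Section~\ref{sec3} (via Corollary~\ref{cordiff}, where $\sigma\ge 3/2$ is genuinely used) finish the job. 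You should flag this step explicitly; it is the substantive input that prevents the $\chi_2\partial_x^m\omega$ estimate from being a routine commutator computation.
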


We will prove the above proposition through  the following subsections. As a preliminary 
we first estimate $\chi_2 \partial_x^m\omega$  in Subsection \ref{subsec51}.  The estimation on $h_m$ and  $\chi_2 \partial_y\partial_x^m\omega$  is given in Subsection   \ref{sec52}.

\subsection{Uniform upper bound for $\chi_2 \partial_x^m\omega$}\label{subsec51}
Here we estimate $\chi_2 \partial_x^m\omega,$   following the same cancellation method used in \cite{GM}. The main result can be stated as follows.

\begin{proposition}
	\label{prpnear}
Let $\chi_2$ be the cut-off function given in \eqref{cutofffu}, and let $u\in L^\infty\inner{[0, T];~X_{\rho_0,\sigma}}$  be the solution to \eqref{regpran} under the assumptions in Theorem \ref{uniestgev}. We have, for any    $t\in[0,T],$ and for any   $(\rho,\tilde\rho) $ with $0<\rho<\tilde\rho,$
	\begin{eqnarray*}
 &&\norm{ \chi_2 \partial_x^m   \omega(t)}_{L^2}^2+ \int_0^t \norm{\chi_2 \partial_y \partial_x^m   \omega}_{L^2}^2+\eps \int_0^t \norm{\chi_2   \partial_x^{m+1}\omega}_{L^2}^2 \\
	&\leq &\frac{C\big[(m-6)!\big]^{2\sigma}}{\rho^{2(m-5)}}\abs{u_0}_{\rho,\sigma}^2+
 	 \frac{C\big[(m-6)!\big]^{2\sigma}}{\rho^{2(m-5)}} \inner{  \int_0^t   \inner{ \abs{u(s)}_{ \rho,\sigma}^2+\abs{u(s)}_{ \rho,\sigma}^4 }   \,ds+    \int_0^t  \frac{  \abs{u(s)}_{ \tilde\rho,\sigma}^2}{\tilde\rho-\rho}\,ds}. 
\end{eqnarray*} \end{proposition}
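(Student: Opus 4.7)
The approach is a weighted $L^2$ energy estimate for $\partial_x^m\omega$ with the weight $\chi_2^2/a$, where $a:=\partial_y\omega^s+\partial_y\omega$ is bounded below on $\mathrm{supp}\,\chi_2$ by \eqref{condi}. Applying $\partial_x^m$ to the vorticity form of \eqref{regpran},
$$\partial_t\omega+(u^s+u)\partial_x\omega+v\,\partial_y(\omega^s+\omega)-\partial_y^2\omega-\eps\partial_x^2\omega=0,$$
and isolating the sole term of maximal order in $x$, one obtains
$$\bigl(\partial_t+(u^s+u)\partial_x+v\partial_y-\partial_y^2-\eps\partial_x^2\bigr)\partial_x^m\omega+(\partial_x^m v)\,a=-R_m,$$
where $R_m$ collects the commutators $\sum_{k=1}^m\binom{m}{k}(\partial_x^k u)(\partial_x^{m-k+1}\omega)$ and $\sum_{k=1}^{m-1}\binom{m}{k}(\partial_x^k v)(\partial_x^{m-k}\partial_y\omega)$, all of the form already handled in Lemmas \ref{lemp3}--\ref{lemp6p5}.

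Take the $L^2$ inner product with $\chi_2^2\partial_x^m\omega/a$. All boundary terms vanish since $\chi_2$ is zero in a neighborhood of $y=0$ and at $+\infty$. After integration by parts, the dissipation pieces produce $\frac{1}{2}\frac{d}{dt}\|\chi_2\partial_x^m\omega/\sqrt{a}\|_{L^2}^2$ together with $\|\chi_2\partial_y\partial_x^m\omega/\sqrt{a}\|_{L^2}^2+\eps\|\chi_2\partial_x^{m+1}\omega/\sqrt{a}\|_{L^2}^2$, plus lower-order errors in which derivatives fall on $\chi_2^2/a$ (including $\partial_t(1/a)$, bounded by invoking the vorticity equation for $\partial_t\omega$ and the low-order $y$-derivative bounds in \eqref{condi}). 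The \emph{critical cancellation} occurs on the loss-of-derivative term: the weight $1/a$ cancels the factor $a$, leaving $\int\chi_2^2(\partial_x^m v)(\partial_x^m\omega)\,dxdy$. Writing $\partial_x^m\omega=\partial_y\partial_x^m u$ and integrating by parts in $y$, and using $\partial_y\partial_x^m v=-\partial_x^{m+1}u$ together with $\int_{\mathbb R}\partial_x[(\partial_x^m u)^2]\,dx=0$, this reduces to the boundary-type integral $-2\int\chi_2\chi_2'(\partial_x^m v)(\partial_x^m u)\,dxdy$. Since $\mathrm{supp}\,\chi_2'\subset\mathrm{supp}\,\chi_1$ by \eqref{suppch2}, on this region $\chi_1=1$, so $\partial_x^m u$ is controlled via Proposition \ref{prpaway} (combining $f_m$ with a Hardy-type bound) and $\partial_x^m v$ via Cauchy-Schwarz against $\partial_x^{m+1}u$, whose contribution is absorbed using the $\eps$-dissipation and \eqref{factor}.

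The commutator remainder $R_m$ is estimated exactly as in Lemmas \ref{lemp3}--\ref{lemp6p5}: split each binomial sum at $k\approx m/2$, apply Sobolev embedding to the $L^\infty$ factor, invoke Lemma \ref{lemequa} to convert the weighted $L^2$ norms into $\abs{u}_{\rho,\sigma}$, and use \eqref{factor} to produce the $1/(\tilde\rho-\rho)$ factor from the single highest-index term. Any leftover unweighted factor $\partial_x^m\omega$ in the errors is split as $\chi_2^2\partial_x^m\omega+(1-\chi_2^2)\partial_x^m\omega$, where the second piece is supported where $\chi_1=1$ and therefore reduces, via $\chi_1\partial_x^m\omega=f_m+\chi_1 a\,\partial_x^m u$, to quantities controlled by Proposition \ref{prpaway}. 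Integrating in time and using the initial bound $\|\chi_2\partial_x^m\omega(0)\|_{L^2}\le C[(m-6)!]^\sigma\rho^{-(m-5)}\abs{u_0}_{\rho,\sigma}$ yields the stated inequality. The principal obstacle is the cancellation step and, more specifically, the careful handling of the residual integral $\int\chi_2\chi_2'(\partial_x^m v)(\partial_x^m u)$, where the factor of $\partial_x^m v$ forces one extra $x$-derivative of $u$ that can only be paid for by the $\eps$-dissipation combined with \eqref{factor}; once this is organized, the remainder is a standard energy estimate in the spirit of Section \ref{sec3}.
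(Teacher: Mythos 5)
Your setup matches the paper: take the $L^2$ inner product of the $\partial_x^m$-vorticity equation with $\chi_2^2\partial_x^m\omega/(\partial_y\omega^s+\partial_y\omega)$, let the weight cancel the factor $\partial_y\omega^s+\partial_y\omega$ in the loss-of-derivative term, integrate by parts, and reduce the critical contribution to $-2\int\chi_2\chi_2'(\partial_x^m v)(\partial_x^m u)\,dx\,dy$ (this is exactly \eqref{lines} and the first identity in the proof of Lemma~\ref{lem09}). The lower-order commutators and the terms where derivatives fall on the weight are handled the same way as in the paper.

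The gap is in your treatment of $\int\chi_2\chi_2'(\partial_x^m v)(\partial_x^m u)$. You propose Cauchy--Schwarz with $\partial_x^m v$ bounded through $\partial_x^{m+1}u$ and the extra $x$-derivative ``paid for by the $\eps$-dissipation combined with \eqref{factor}.'' Neither mechanism closes the estimate. First, the $\eps$-dissipation produced by this energy identity controls $\eps\norm{\chi_2\partial_x^{m+1}\omega}_{L^2}^2$, not $\eps\norm{\partial_x^{m+1}u}_{L^2}^2$; and $\partial_x^m v(\cdot,y)=-\int_0^y\partial_x^{m+1}u\,d\tilde y$ integrates from $y=0$ up to $\mathrm{supp}\,\chi_2'$, i.e.\ through the region where $\chi_2\equiv 0$, so the $\chi_2$-weighted dissipation simply cannot see it. Second, if one just applies \eqref{etan} to both factors (with $\tilde\rho$ on the $(m+1)$-derivative one and $\rho$ on the other), the product of Gevrey weights gives a factor $(m-5)^\sigma\rho^{m-5}/\tilde\rho^{m-4}$, whereas \eqref{factor} only gives $(m-5)\rho^{m-5}/\tilde\rho^{m-4}\le 1/(\tilde\rho-\rho)$; you are short by $(m-5)^{\sigma-1}$, which is unbounded for every $\sigma>1$. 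This is precisely the problematic regime.

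The paper resolves this with a genuinely different ingredient. Instead of estimating $\partial_x^m u$ and $\partial_x^m v$ separately, Lemma~\ref{lem09} invokes the representation of $\partial_x^m u$ in terms of $\hat g_m$ (the level set of $\omega^s+\omega$; \cite[Lemmas~3,~6]{GM}), which yields the structurally superior bound \eqref{eskes}:
\begin{equation*}
\abs{\inner{\chi_2'\partial_x^m v,\,\chi_2\partial_x^m u}_{L^2}}\le C\norm{\tilde g_m}_{L^2}\norm{\partial_x^{m+1}\omega}_{L^2}+C\norm{\partial_x^m\omega}_{L^2}.
\end{equation*}
The point is that the factor $\tilde g_m$ carries an extra $1/m$, because $m\norm{g_m}_{L^2}$ (not $\norm{g_m}_{L^2}$) enters $\abs{\cdot}_{\rho,\sigma}$; this is the remark made right after Definition~\ref{gevspace} and it is what makes Corollary~\ref{cordiff} usable here. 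That single power of $m$ exactly compensates the extra $x$-derivative, reducing the excess to $m^{\sigma-1}(\rho/\tilde\rho)^{m-5}/\tilde\rho$, which \eqref{factor} controls whenever $\sigma\le 2$. Without this structural cancellation your approach cannot reach the stated estimate.
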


 The proof follows from the same strategy  
as in \cite{GM}.    The key part is to estimate the term
\begin{eqnarray*}
	\abs{\inner{\chi_2'\partial_x^m v, ~\chi_2 \partial_x^m u}_{L^2}}.
\end{eqnarray*}   
Before presenting the proof of  Proposition \ref{prpnear},  we first recall the upper bound for the term above, established  in \cite{GM}  by virtue of a crucial representations  of $\partial_x^m u$  in terms of  $\hat g_m$  (see \cite[Lemma 3]{GM}), with $\hat g_m$ defined by 
\begin{equation}\label{+funof+g}
  \hat g_m =\Big(\psi\inner { \omega^s+\omega}+ 1-\psi  \Big)\inner{\partial_x^m \omega-\frac{   \partial_y \omega^s+ \partial_y \omega }{  \omega^s+\omega }\partial_x^m u}=\Big(\psi+\frac{ 1-\psi }{  \omega^s+\omega} \Big)\inner{  \omega^s+\omega}^2\partial_y\Big(\frac{\partial_x^mu}{\omega^s+\omega}\Big),  \end{equation}
where $m\geq 1$ and $\psi(y)\in C_0^\infty(\mathbb R)$ is a given function such that $\psi\equiv 1$ in $[0, y_0+2\delta].$   Precisely,   by implicit function 
theorem,  if the level set $ \big\{(x,y);~\omega^s+\omega=0\big\}$ of  $\omega^s+\omega$ is non-empty and it is  a curve in $\mathbb R_+^2$  denoted  by $y=\gamma(x).$  Then $\partial_x^m u$ can be represented as
  \begin{equation*}
	\partial_x^m u(t,x,y)=
	 \inner{\omega^s(t,x,y)+\omega(t,x,y)} \int_0^y \Big(\psi+\frac{ 1-\psi }{  \omega^s+\omega} \Big)^{-1}\frac{\hat g_m}{\inner{\omega^s+   \omega}^2 }\,dy,  \end{equation*}
for $y<\gamma(x),$  and for $y>\gamma(x)$ 
\begin{equation*}
	\partial_x^m u(t,x,y)=
		 \inner{\omega^s(t,x,y)+\omega(t,x,y)}\bigg[\int_{y_0+2\delta}^y \Big(\psi+\frac{ 1-\psi }{  \omega^s+\omega} \Big)^{-1}\frac{\hat g_m}{\inner{\omega^s+   \omega}^2 }\,dy+\beta(t,x)\bigg] 
\end{equation*}
with  $\beta(t,x)= \partial_x^m u(t,x,y_0+ 2\delta)/\big(\omega^s(t,y_0+2\delta)+\omega(t,x,y_0+2\delta)\big).$  By virtue of the above representations we can derive that, cf. \cite[Lemma 6]{GM},
\begin{equation*}
 \abs{\inner{\chi_2'\partial_x^m v, ~\chi_2 \partial_x^m u}_{L^2}} \leq C\norm{ \hat g_m}_{L^2\inner{\mathbb R_x\times \big\{0\leq y\leq y_0+2\delta \big\}}}\norm{\partial_x^{m+1}\omega}_{L^2}+ C  \norm{\partial_x^{m}\omega}_{L^2},
 \end{equation*}
 and thus 
  \begin{equation}\label{eskes}
 \abs{\inner{\chi_2'\partial_x^m v, ~\chi_2 \partial_x^m u}_{L^2}} \leq C\norm{ \tilde g_m}_{L^2}\norm{\partial_x^{m+1}\omega}_{L^2}+ C  \norm{\partial_x^{m}\omega}_{L^2},
 \end{equation}
 since $\hat g_m=\tilde g_m$ for $y\in[0, y_0+2\delta].$

The rest is for the proof of  Proposition \ref{prpnear}.  We first have 
the equation for $\chi_2\partial_x^m\omega$:
\begin{equation*} 
\begin{aligned}
 & \Big(\partial_t    +\inner{u^{s}+u}\partial_x +v\partial_y  -\partial_y^2 -\eps\partial_x^2 \Big) \chi_2 \partial_x^m   \omega \\
 =& - \chi_2 \sum_{k=1}^m {m\choose k}\inner{\partial_x^k u} \partial_x^{m-k+1} \omega   - \chi_2  \inner{\partial_y\omega^{s}+\partial_y\omega}\partial_x^m v  - \chi_2\sum_{k=1}^{m-1} {m\choose k}\inner{\partial_x^kv} \partial_x^{m-k}\partial_y \omega \\
 	&+  \chi_2 'v\partial_x^m \omega-  \chi_2'' \partial_x^m \omega- 2\chi_2' \partial_x^m \partial_y \omega .	
   \end{aligned}
\end{equation*}
This can be derived directly  from the equation of the vorticity $\omega.$  In view of \eqref{condi}, we see $\abs{ \partial_y\omega^{s}+\partial_y\omega}\geq c_0/4$ on  supp\,$\chi_2$, and  without loss of generality, we can assume $\abs{ \partial_y\omega^{s}+\partial_y\omega}=-\inner{ \partial_y\omega^{s}+\partial_y\omega}$  on  supp\,$\chi_2$.  This  enables us to take $L^2$ inner product on both sides of  the above equation with the function 
\begin{eqnarray*}
	-\frac{\chi_2\partial_x^m\omega}{\partial_y\omega^{s}+\partial_y\omega}.
\end{eqnarray*}
This gives
 \begin{equation}\label{lines}
\begin{aligned}
&	\inner{\Big(\partial_t    +\inner{u^{s}+u}\partial_x +v\partial_y  -\partial_y^2 -\eps\partial_x^2 \Big) \chi_2 \partial_x^m   \omega,~-\frac{\chi_2\partial_x^m\omega}{\partial_y\omega^{s}+\partial_y\omega}}_{L^2} \\
	=& \inner{\chi_2  \partial_x^m v,~ \chi_2\partial_x^m\omega}_{L^2}-\inner{ \chi_2 \sum_{k=1}^m {m\choose k}\inner{\partial_x^k u} \partial_x^{m-k+1} \omega,~-\frac{\chi_2\partial_x^m\omega}{\partial_y\omega^{s}+\partial_y\omega}}_{L^2}\\
	&-\inner{\chi_2\sum_{k=1}^{m-1} {m\choose k}\inner{\partial_x^kv} \partial_x^{m-k}\partial_y \omega,~-\frac{\chi_2\partial_x^m\omega}{\partial_y\omega^{s}+\partial_y\omega}}_{L^2} \\
 	&+ \inner{ \chi_2 'v\partial_x^m \omega-  \chi_2'' \partial_x^m \omega- 2\chi_2' \partial_x^m \partial_y \omega,~-\frac{\chi_2\partial_x^m\omega}{\partial_y\omega^{s}+\partial_y\omega}}_{L^2}.
\end{aligned}
\end{equation}
As for the last three terms on the right side of the above equation,  we follow the argument used in Lemma \ref{lemuppb} to get 
\begin{equation}\label{lases}
\begin{aligned}
&	 \bigg|   \int_0^t\inner{ \chi_2 \sum_{k=1}^m {m\choose k}\inner{\partial_x^k u} \partial_x^{m-k+1} \omega,~-\frac{\chi_2\partial_x^m\omega}{\partial_y\omega^{s}+\partial_y\omega}}_{L^2}\,ds\bigg|  \\
&\quad+ \bigg|  \int_0^t\inner{\chi_2\sum_{k=1}^{m-1} {m\choose k}\inner{\partial_x^kv} \partial_x^{m-k}\partial_y \omega,~-\frac{\chi_2\partial_x^m\omega}{\partial_y\omega^{s}+\partial_y\omega}}_{L^2}\,ds\bigg| \\
 	&\qquad+   	\bigg| \int_0^t\inner{ \chi_2 'v\partial_x^m \omega-  \chi_2'' \partial_x^m \omega- 2\chi_2' \partial_x^m \partial_y \omega,~-\frac{\chi_2\partial_x^m\omega}{\partial_y\omega^{s}+\partial_y\omega}}_{L^2}\,ds\bigg|\\
 	\leq &\,  \frac{C\big[(m-6)!\big]^{2\sigma}}{\rho^{2(m-5)}}\inner{  \int_0^t   \abs{u(s)}_{\rho,\sigma}^3\,ds+\int_0^t  \frac{ \abs{u(s)}_{\tilde\rho,\sigma}^2}{\tilde\rho-\rho}\,ds}.
\end{aligned}
\end{equation}

In the following  two lemmas, we will  estimate  the    term on the left hand
side of    \eqref{lines} and the first term  on the right side respectively.

\begin{lemma} 
	\label{lemlowbound} 
	We have
	\begin{eqnarray*}
		&&\norm{ \chi_2 \partial_x^m   \omega(t)}_{L^2}^2+ \int_0^t \norm{\chi_2 \partial_y \partial_x^m   \omega}_{L^2}^2+\eps \int_0^t \norm{\chi_2   \partial_x^{m+1}\omega}_{L^2}^2\\
		&\leq &  \frac{C\big[(m-6)!\big]^{2\sigma}}{\rho^{2(m-5)}}   \abs{ u_0}_{\rho,\sigma}^2 +\frac{C\big[(m-6)!\big]^{2\sigma}}{\rho^{2(m-5)}}  \int_0^t   \abs{u(s)}_{\rho,\sigma}^2\,ds \\
		&&+C \bigg|\int_0^t \inner{\Big(\partial_t    +\inner{u^{s}+u}\partial_x +v\partial_y  -\partial_y^2 -\eps\partial_x^2 \Big) \chi_2 \partial_x^m   \omega,~-\frac{\chi_2\partial_x^m\omega}{\partial_y\omega^{s}+\partial_y\omega}}_{L^2}\,ds\bigg|.
	\end{eqnarray*}
\end{lemma}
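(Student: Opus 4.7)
The plan is a weighted energy estimate on the equation for $\Phi := \chi_2\partial_x^m\omega$ displayed just above the lemma, using the multiplier $-\chi_2\partial_x^m\omega/(\partial_y\omega^s+\partial_y\omega)=\Phi/W$ with $W := -(\partial_y\omega^s+\partial_y\omega)$; after the sign convention in the paper, $W$ is positive on $\mathrm{supp}\,\chi_2$ and satisfies $c_0/4\le W\le 4c_1^{-1}$ there by the third line of \eqref{condi} combined with Proposition \ref{proshf}. Thanks to this two-sided bound, the weighted norm $\int\Phi^2/W\,dxdy$ is equivalent to $\|\Phi\|_{L^2}^2$, and the same equivalence holds for the weighted $L^2$ norms of $\partial_y\Phi$ and $\partial_x\Phi$ which appear after integration by parts. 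Note also that $\mathrm{supp}\,\chi_2\subset[y_0-\tfrac{7}{4}\delta,y_0+\tfrac{7}{4}\delta]$ sits strictly in the interior of $\{y>0\}$, so $\Phi$ and its derivatives vanish near $y=0$ and at infinity, and no boundary term arises from any integration by parts in $y$.

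Next I would expand each piece of $\bigl(\mathcal{L}\Phi,\Phi/W\bigr)_{L^2}$ with $\mathcal{L}=\partial_t+(u^s+u)\partial_x+v\partial_y-\partial_y^2-\eps\partial_x^2$. The $\partial_t$ piece gives $\tfrac12\frac{d}{dt}\int\Phi^2/W+\tfrac12\int\Phi^2\,\partial_tW/W^2$. The transport terms, using $\partial_x(u^s+u)+\partial_yv=0$, produce $-\tfrac12\int\Phi^2\bigl[(u^s+u)\partial_x(1/W)+v\partial_y(1/W)\bigr]$. The $-\partial_y^2$ piece integrates by parts to $\int(\partial_y\Phi)^2/W+\int\Phi(\partial_y\Phi)\,\partial_y(1/W)$, and $-\eps\partial_x^2$ yields $\eps\int(\partial_x\Phi)^2/W+\eps\int\Phi(\partial_x\Phi)\,\partial_x(1/W)$. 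Integrating over $[0,t]$ and invoking the norm equivalence above, the leading contributions reproduce the three quantities on the left-hand side of the lemma, modulo the initial value $\tfrac12\int\Phi(0)^2/W$ and the cross and commutator remainders just listed.

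The initial value is bounded by $C\rho^{-2(m-5)}[(m-6)!]^{2\sigma}|u_0|_{\rho,\sigma}^2$ via \eqref{etan}. For the remainders, $\partial_xW$, $\partial_yW=-(\partial_y^2\omega^s+\partial_y^2\omega)$ and the transport coefficients $u^s+u$ and $v$ are all uniformly $L^\infty$-bounded on $\mathrm{supp}\,\chi_2$ by \eqref{condi} together with Proposition \ref{proshf}. The cross terms containing $\partial_y(1/W)$ and $\partial_x(1/W)$ are therefore split via Young's inequality: a small fraction of $\int(\partial_y\Phi)^2/W$ and of $\eps\int(\partial_x\Phi)^2/W$ is absorbed on the left, and what remains is dominated by $\int_0^t\|\Phi(s)\|_{L^2}^2\,ds$, which in turn is bounded by $C\rho^{-2(m-5)}[(m-6)!]^{2\sigma}\int_0^t|u(s)|_{\rho,\sigma}^2\,ds$ through \eqref{etan} applied to $\partial_x^m\omega$. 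The zeroth-order remainder $\int\Phi^2\,\partial_tW/W^2$ is controlled in the same way once $\partial_tW$ has been shown to be bounded in $L^\infty$ on $\mathrm{supp}\,\chi_2$.

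The main obstacle is exactly this last point, since $\partial_tW=-\partial_t\partial_y\omega$ is not directly part of \eqref{condi}. The plan is to substitute the vorticity equation $\partial_t\omega=\partial_y^2\omega+\eps\partial_x^2\omega-(u^s+u)\partial_x\omega-v(\partial_y\omega^s+\partial_y\omega)$ and differentiate in $y$, which expresses $\partial_t\partial_y\omega$ as a sum of purely spatial derivatives. Each resulting term is bounded in $L^\infty$ on $\mathrm{supp}\,\chi_2$ either by \eqref{condi} and Proposition \ref{proshf}, or by Sobolev embedding applied to the $y$-derivatives of $\omega$ captured in the $X_{\rho_0,\sigma}$ norm of $u_\eps$. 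Once this bound is in hand, all remainders fit into the right-hand side of the claimed inequality and the proof is complete.
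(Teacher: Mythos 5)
Your proposal follows essentially the same path as the paper's proof: a weighted energy estimate with the multiplier $-\chi_2\partial_x^m\omega/(\partial_y\omega^s+\partial_y\omega)$, integration by parts over the interior support of $\chi_2$ with no boundary terms, and disposal of the $\partial_tW$ commutator by substituting the $\partial_y$-differentiated vorticity equation. The paper packages the computation slightly differently (it integrates the $\partial_y$ and $\partial_x$ cross terms by parts a second time, obtaining $\partial_y^2(1/W)$ and $\partial_x^2(1/W)$, and then groups all commutator contributions so that the vorticity-equation identity for $\partial_y\omega$ appears in one block), but the mathematical content is the same and both land on the stated bound.
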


\begin{proof} Direct computation shows
\begin{eqnarray*}
&&\inner{ \partial_t \chi_2 \partial_x^m   \omega,~-\frac{\chi_2\partial_x^m\omega}{\partial_y\omega^{s}+\partial_y\omega}}_{L^2}\\
&=&\frac{1}{2}\frac{d}{dt}\norm{ \inner{-\partial_y\omega^{s}-\partial_y\omega}^{-1/2} \chi_2 \partial_x^m   \omega}_{L^2}^2-{1\over 2}\inner{  \chi_2 \partial_x^m   \omega,~\frac{\partial_t\partial_y\omega^s+\partial_t\partial_y\omega}{\inner{\partial_y\omega^{s}+\partial_y\omega}^2}\chi_2\partial_x^m\omega}_{L^2}.
\end{eqnarray*}
Then integration by parts  gives 
\begin{eqnarray*}
&&\inner{\Big( \inner{u^{s}+u}\partial_x +v\partial_y    \Big) \chi_2 \partial_x^m   \omega,~-\frac{\chi_2\partial_x^m\omega}{\partial_y\omega^{s}+\partial_y\omega}}_{L^2}
\\
&=&-\frac{1}{2}\inner{  \chi_2 \partial_x^m   \omega,~\frac{\Big( \inner{u^{s}+u}\partial_x +v\partial_y    \Big) \partial_y\omega+v \partial_y^2\omega^s }{\inner{\partial_y\omega^{s}+\partial_y\omega}^2}\chi_2\partial_x^m\omega}_{L^2},
\end{eqnarray*}
\begin{eqnarray*}
&&\inner{ -\partial_y^2 \chi_2 \partial_x^m   \omega,~-\frac{\chi_2\partial_x^m\omega}{\partial_y\omega^{s}+\partial_y\omega}}_{L^2}
\\
&=&\norm{ \inner{-\partial_y\omega^{s}-\partial_y\omega}^{-1/2} \partial_y\inner{\chi_2 \partial_x^m   \omega}}_{L^2}^2 -\frac{1}{2}\inner{  \chi_2 \partial_x^m   \omega,~\frac{\partial_y^3\omega^s+\partial_y^3\omega }{\inner{\partial_y\omega^{s}+\partial_y\omega}^2}\chi_2\partial_x^m\omega}_{L^2}\\
&&+\inner{  \chi_2 \partial_x^m   \omega,~\frac{ \inner{\partial_y^2\omega^s+\partial_y^2\omega}^2 }{\inner{\partial_y\omega^{s}+\partial_y\omega}^3}\chi_2\partial_x^m\omega}_{L^2},
\end{eqnarray*}
and 
\begin{eqnarray*}
&&\inner{ -\eps\partial_x^2 \chi_2 \partial_x^m   \omega,~-\frac{\chi_2\partial_x^m\omega}{\partial_y\omega^{s}+\partial_y\omega}}_{L^2}
\\
&=&\eps \norm{ \inner{-\partial_x^2u^{s}-\partial_y\omega}^{-1/2}   \chi_2 \partial_x^{m+1}   \omega}_{L^2}^2 -\frac{1}{2}\inner{  \chi_2 \partial_x^m   \omega,~\frac{\eps\partial_x^2\partial_y \omega }{\inner{\partial_y\omega^{s}+\partial_y\omega}^2}\chi_2\partial_x^m\omega}_{L^2}\\
&&+\inner{  \chi_2 \partial_x^m   \omega,~\frac{\eps \inner{\partial_x\partial_y\omega}^2 }{\inner{\partial_y\omega^{s}+\partial_y\omega}^3}\chi_2\partial_x^m\omega}_{L^2}.
\end{eqnarray*}
 Moreover,  it follows from the equation of the vorticity that 
\begin{eqnarray*}
&&	\partial_t   \partial_y\omega+\Big(\inner{u^{s}+u}\partial_x+v\partial_y \Big) \partial_y \omega+v \partial_y^2 \omega^s-\partial_y^3 \omega-\eps\partial_x^2 \partial_y\omega\\
&=&-\inner{\omega^s+\omega}\partial_x\omega+\inner{\partial_y\omega^s+\partial_y\omega} \partial_xu.
\end{eqnarray*}
Hence, combining these estimates gives
	\begin{eqnarray*}
			&& \inner{\Big(\partial_t    +\inner{u^{s}+u}\partial_x +v\partial_y  -\partial_y^2 -\eps\partial_x^2 \Big) \chi_2 \partial_x^m   \omega,~-\frac{\chi_2\partial_x^m\omega}{\partial_y\omega^{s}+\partial_y\omega}}_{L^2}\\
			&=&\inner{\Big(\partial_t    +\inner{u^{s}+u}\partial_x +v\partial_y  -\partial_y^2 -\eps\partial_x^2 \Big) \chi_2 \partial_x^m   \omega,~-\frac{\chi_2\partial_x^m\omega}{\partial_y\omega^{s}+\partial_y\omega}}_{L^2}\\
			&=& \frac{1}{2}\frac{d}{dt}\norm{ \inner{-\partial_y\omega^{s}-\partial_y\omega}^{-1/2} \chi_2 \partial_x^m   \omega}_{L^2}^2+\norm{ \inner{-\partial_y\omega^{s}-\partial_y\omega}^{-1/2} \partial_y(\chi_2 \partial_x^m   \omega)}_{L^2}^2\\
			&&+\eps \norm{ \inner{-\partial_x^2u^{s}-\partial_y\omega}^{-1/2}    \chi_2 \partial_x^{m+1}   \omega}_{L^2}^2  -\inner{  \chi_2 \partial_x^m   \omega,~\frac{\partial_y^3\omega^s+\partial_y^3\omega+\eps\partial_x^2\partial_y \omega }{\inner{\partial_y\omega^{s}+\partial_y\omega}^2}\chi_2\partial_x^m\omega}_{L^2}\\
			&&-\frac{1}{2}\inner{  \chi_2 \partial_x^m   \omega,~\frac{\inner{\partial_xu}\inner{\partial_y\omega^s+\partial_y\omega}-\inner{\omega^s+\omega}\partial_x\omega}{\inner{\partial_y\omega^{s}+\partial_y\omega}^2}\chi_2\partial_x^m\omega}_{L^2}\\
			&&+\inner{  \chi_2 \partial_x^m   \omega,~\frac{ \inner{\partial_y^2\omega^s+\partial_y^2\omega}^2 }{\inner{\partial_y\omega^{s}+\partial_y\omega}^3}\chi_2\partial_x^m\omega}_{L^2}+\inner{  \chi_2 \partial_x^m   \omega,~\frac{\eps \inner{\partial_x\partial_y\omega}^2 }{\inner{\partial_y\omega^{s}+\partial_y\omega}^3}\chi_2\partial_x^m\omega}_{L^2},
			\end{eqnarray*}
			with the modulus of the last four terms on the right side  bounded  above by 
			\begin{eqnarray*}
				\frac{C\big[(m-6)!\big]^{2\sigma}}{\rho^{2(m-5)}} \abs{u}_{\rho,\sigma}^2 
			\end{eqnarray*}
			due to the inequalities in \eqref{condi}.  
			Thus by integrating both sides over $[0,t],$	 we have 
					\begin{eqnarray*}
			&&  \norm{  \inner{-\partial_y\omega^{s}-\partial_y\omega}^{-1/2}  \chi_2 \partial_x^m   \omega (t)}_{L^2}^2+ \int_0^t  \norm{ \inner{-\partial_y\omega^{s}-\partial_y\omega}^{-1/2} \chi_2 \partial_y\partial_x^m   \omega }_{L^2}^2 \, ds\\
			&&\qquad+\int_0^t\eps \norm{ \inner{-\partial_x^2u^{s}-\partial_y\omega}^{-1/2}    \chi_2 \partial_x^{m+1}   \omega}_{L^2}^2\,ds\\
			&\leq &\norm{ \inner{-\partial_y\omega^{s}(0)-\partial_y\omega(0)}^{-1/2} \chi_2 \partial_x^m   \omega (0)}_{L^2}^2+ \frac{C\big[(m-6)!\big]^{2\sigma}}{\rho^{2(m-5)}}\int_0^t  \abs{u(s)}_{\rho,\sigma}^2 \, ds\\
			&&+ 2 \bigg|\int_0^t \inner{\Big(\partial_t    +\inner{u^{s}+u}\partial_x +v\partial_y  -\partial_y^2 -\eps\partial_x^2 \Big) \chi_2 \partial_x^m   \omega,~-\frac{\chi_2\partial_x^m\omega}{\partial_y\omega^{s}+\partial_y\omega}}_{L^2}\,ds\bigg|.
			\end{eqnarray*}	
Observe that $ \inner{-\partial_y\omega^{s}-\partial_y\omega}^{-1/2} \geq \sqrt{c_0}/2$ on supp\, $\chi_2$ and that the first term on the right side is  bounded
above by  
\begin{eqnarray*}
	  \frac{C\big[(m-6)!\big]^{2\sigma}}{\rho^{2(m-5)}}   \abs{ u_0}_{\rho,\sigma}^2.
\end{eqnarray*}
 Then  the estimate in Lemma 	 \ref{lemlowbound} follows.   The proof is completed.
\end{proof}

\begin{lemma}\label{lem09}
	We have
	\begin{eqnarray*}
 &&\abs{\int_0^t \inner{\chi_2  \partial_x^m v,~ \chi_2\partial_x^m\omega}_{L^2}\,ds}
	 \\
	 &\leq&   \frac{C\big[(m-6)!\big]^{2\sigma}}{\rho^{2(m-5)}}\abs{u_0}_{\rho,\sigma}^2+
 	 \frac{C\big[(m-6)!\big]^{2\sigma}}{\rho^{2(m-5)}} \inner{  \int_0^t   \inner{ \abs{u(s)}_{ \rho,\sigma}^2+\abs{u(s)}_{ \rho,\sigma}^4 }   \,ds+    \int_0^t  \frac{  \abs{u(s)}_{ \tilde\rho,\sigma}^2}{\tilde\rho-\rho}\,ds}.
\end{eqnarray*}
 \end{lemma}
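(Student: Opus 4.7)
The plan is to reduce the quantity in the lemma to exactly the setting of the key estimate \eqref{eskes} (borrowed from \cite{GM}) via one integration by parts in $y$, and then to close the resulting bound using Corollary~\ref{cordiff}, a Young inequality, and \eqref{etan}.

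First I would integrate by parts in $y$. Since $\omega=\partial_y u$ and the divergence-free condition gives $\partial_y v=-\partial_x u$, while $\chi_2$ depends only on $y$ with compact support vanishing at the boundary,
\[
\inner{\chi_2\partial_x^m v,\,\chi_2\partial_x^m\omega}_{L^2}
=\inner{\chi_2^2\,\partial_x^{m+1}u,\,\partial_x^m u}_{L^2}
-2\inner{\chi_2'\,\partial_x^m v,\,\chi_2\,\partial_x^m u}_{L^2}.
\]
The first term on the right equals $\tfrac12\iint\chi_2^2(y)\,\partial_x[(\partial_x^m u)^2]\,dx\,dy=0$, since $\chi_2$ is $x$-independent. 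Thus the problem reduces to controlling $|\inner{\chi_2'\partial_x^m v,\,\chi_2\partial_x^m u}_{L^2}|$, which is precisely the quantity handled by \eqref{eskes}; integrating in time,
\[
\int_0^t\bigl|\inner{\chi_2\partial_x^m v,\,\chi_2\partial_x^m\omega}_{L^2}\bigr|\,ds
\leq C\int_0^t\norm{\tilde g_m}_{L^2}\norm{\partial_x^{m+1}\omega}_{L^2}\,ds
+C\int_0^t\norm{\partial_x^m\omega}_{L^2}\,ds.
\]
The subleading $\norm{\partial_x^m\omega}$ term is handled directly by \eqref{etan} and fits comfortably inside the desired bound.

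For the principal term, I would apply Young's inequality with a weight $\lambda_m$ of order $m^2$, writing
\[
\norm{\tilde g_m}\norm{\partial_x^{m+1}\omega}\leq \tfrac{\lambda_m}{2}\norm{\tilde g_m}^2+\tfrac{1}{2\lambda_m}\norm{\partial_x^{m+1}\omega}^2.
\]
For the first piece, split $\norm{\tilde g_m}^2\leq 2\norm{g_m}^2+2\norm{g_m-\tilde g_m}^2$: Corollary~\ref{cordiff} provides the pointwise bound $m^2\norm{g_m(t)}^2\leq \frac{C[(m-6)!]^{2\sigma}}{\rho^{2(m-5)}}(\cdots)$ and the time-integrated bound $m^{2\sigma-1}\int_0^t\norm{g_m-\tilde g_m}^2\leq \frac{C[(m-6)!]^{2\sigma}}{\rho^{2(m-5)}}(\cdots)$; because $2\sigma-1\geq 2$ when $\sigma\geq 3/2$, both pieces fall into the target Gevrey scale. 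For the second piece, \eqref{etan} at radius $\tilde\rho$ yields $\int_0^t\norm{\partial_x^{m+1}\omega}^2\leq \frac{[(m-5)!]^{2\sigma}}{\tilde\rho^{2(m-4)}}\int_0^t\abs{u}_{\tilde\rho,\sigma}^2\,ds$, which rearranges through the shift inequality \eqref{factor} and the uniform bound on $(m-5)^{\sigma-1}/m$ (valid for $\sigma\leq 2$) into $\frac{C}{\tilde\rho-\rho}\cdot\frac{[(m-6)!]^{2\sigma}}{\rho^{2(m-5)}}\int_0^t\abs{u}_{\tilde\rho,\sigma}^2\,ds$. The $\eps$-correction terms from Corollary~\ref{cordiff} appearing along the way will be absorbed in the subsequent proof of Proposition~\ref{prpnear}, where $\eps\int_0^t\norm{\chi_2\partial_x^{m+1}\omega}^2$ sits on the left-hand side.

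The principal obstacle is this final bookkeeping. The Gevrey factorials, the $m$-powers from Corollary~\ref{cordiff}, the Young weight $\lambda_m$, and the radius shift from \eqref{factor} must all balance so that only a single power of $(\tilde\rho-\rho)^{-1}$ survives. The range $\sigma\in[3/2,2]$ plays a crucial role on both endpoints: the lower bound $\sigma\geq 3/2$ makes $2\sigma-1\geq 2$, which allows the second part of Corollary~\ref{cordiff} to absorb $\norm{g_m-\tilde g_m}$ at Young weight $\lambda_m=m^2$; the upper bound $\sigma\leq 2$ keeps the compensating prefactor $(m-5)^{\sigma-1}/m$ bounded, so that only one invocation of \eqref{factor} is needed. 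Any weakening of either endpoint would cost an additional $(\tilde\rho-\rho)^{-1}$ and break the Gevrey iteration in the later sections.
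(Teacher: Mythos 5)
Your reduction steps are correct and match the paper's: the integration by parts in $y$ yields $\inner{\chi_2\partial_x^m v,\chi_2\partial_x^m\omega}_{L^2}=-2\inner{\chi_2'\partial_x^m v,\chi_2\partial_x^m u}_{L^2}$ (the $\inner{\chi_2^2\partial_x^{m+1}u,\partial_x^m u}$ piece vanishes), and then \eqref{eskes} and the decomposition $\tilde g_m=g_m+(\tilde g_m-g_m)$ are exactly the paper's next moves. However, your handling of the resulting product $\norm{\tilde g_m}\norm{\partial_x^{m+1}\omega}$ diverges from the paper in two places, and both create genuine gaps.

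First, the Young weight. You apply Young with $\lambda_m=m^2$ to the \emph{entire} product $\norm{\tilde g_m}\norm{\partial_x^{m+1}\omega}$. The partner term $m^{-2}\norm{\partial_x^{m+1}\omega}^2$ then carries the factor $(m-5)^{2\sigma}/m^2\sim m^{2\sigma-2}$, which is $m$ for $\sigma=3/2$ and $m^2$ for $\sigma=2$. The shift inequality \eqref{factor} absorbs exactly one power of $m$ into a single $(\tilde\rho-\rho)^{-1}$; a second power would force $(\tilde\rho-\rho)^{-2}$, which is not allowed by the target. The paper instead applies Young only to the difference term, with the $\sigma$-dependent weight $m^{2\sigma-1}$ that matches the weight in the second inequality of Corollary~\ref{cordiff}; the partner term $m^{-(2\sigma-1)}\norm{\partial_x^{m+1}\omega}^2$ then has the controllable factor $(m-5)^{2\sigma}/m^{2\sigma-1}\sim m$, which one application of \eqref{factor} handles. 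Your phrase ``the uniform bound on $(m-5)^{\sigma-1}/m$'' does not correspond to the factor your weight actually produces.

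Second, the $\norm{g_m}$ piece. The paper does \emph{not} pass through Corollary~\ref{cordiff} here at all: it uses the extra factor $m$ built into the term $m\norm{g_{m}}_{L^2}$ in Definition~\ref{gevspace}, which via \eqref{chi2est} gives the pointwise bound $\norm{g_m(s)}_{L^2}\leq m^{-1}[(m-6)!]^\sigma\rho^{-(m-5)}\abs{u(s)}_{\rho,\sigma}$. Multiplying by $\norm{\partial_x^{m+1}\omega}$ (via \eqref{etan} at radius $\tilde\rho$) and invoking \eqref{factor} once then yields $\frac{C[(m-6)!]^{2\sigma}}{\rho^{2(m-5)}}\int_0^t\frac{\abs{u}_{\tilde\rho,\sigma}^2}{\tilde\rho-\rho}\,ds$ directly, with no Young inequality and no $\eps$-terms. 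By instead invoking the first inequality of Corollary~\ref{cordiff} you import $C\int_0^s(\eps\norm{\partial_x^{m+1}u}^2+\eps\norm{\partial_x^{m+1}\omega}^2)\,dr$ into a bound that the stated lemma must exclude, and since that inequality is pointwise in $t$, controlling $\int_0^t m^2\norm{g_m}^2\,ds$ forces an iterated time integral. Your proposed remedy---absorption inside Proposition~\ref{prpnear}---does not work: the dissipative term on that proposition's left-hand side is $\eps\int_0^t\norm{\chi_2\partial_x^{m+1}\omega}^2$, localized to $\mathrm{supp}\,\chi_2$, whereas the residual from Corollary~\ref{cordiff} is global in $y$ and has no counterpart in $u$. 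The global $\eps$-dissipation only appears on the left in Proposition~\ref{provel+}, which sits downstream of the whole chain, so the circularity cannot be closed in the way you describe.

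In short, the decomposition and the use of \eqref{eskes} are right, but to land on the stated bound you must (i) bound $\norm{g_m}\norm{\partial_x^{m+1}\omega}$ directly from the norm definition (exploiting the built-in factor $m$) rather than via Corollary~\ref{cordiff}, and (ii) apply Young only to the difference term $\norm{g_m-\tilde g_m}\norm{\partial_x^{m+1}\omega}$ with weight $m^{2\sigma-1}$, matching the second inequality of Corollary~\ref{cordiff}.
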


\begin{proof}
Integrating by parts gives
\begin{eqnarray*}
		   \inner{\chi_2  \partial_x^m v,~ \chi_2\partial_x^m\omega}_{L^2}
 =-2\inner{\chi_2 \partial_x^m v,~ \chi_2'\partial_x^mu}_{L^2}+\inner{\chi_2   \partial_x^{m+1} u,~ \chi_2\partial_x^mu}_{L^2}
		 =  -2\inner{\chi_2  \partial_x^{m} v,~ \chi_2'\partial_x^{m}u}_{L^2}.  \end{eqnarray*}
Moreover, in view of  \eqref{eskes},  we have
  \begin{eqnarray*}
	\abs{ \inner{\chi_2  \partial_x^{m} v,~ \chi_2'\partial_x^{m}u}_{L^2}}&\leq& C\norm{\tilde g_m}_{L^2 }\norm{\partial_x^{m+1}\omega}_{L^2}+ C  \norm{\partial_x^{m}\omega}_{L^2}^2\\
	&\leq& C\norm{ g_m}_{L^2 }\norm{\partial_x^{m+1}\omega}_{L^2}+C\norm{ g_m-\tilde g_m}_{L^2 }\norm{\partial_x^{m+1}\omega}_{L^2}+ C  \norm{\partial_x^{m}\omega}_{L^2}^2.
\end{eqnarray*}
Thus
\begin{equation}\label{cret}
\begin{aligned}
 & \abs{\int_0^t\inner{\chi_2  \partial_x^m v,~ \chi_2\partial_x^m\omega}_{L^2}\,ds}\\
   \leq &  C \int_0^t \norm{g_m}_{L^2 }\norm{\partial_x^{m+1}\omega}_{L^2}\,ds+ C\int_0^t  \norm{\partial_x^{m}\omega}_{L^2}^2\,ds +C \int_0^t \norm{g_m-\tilde g_m}_{L^2 }\norm{\partial_x^{m+1}\omega}_{L^2}\,ds.
   	\end{aligned}
	\end{equation} 
	On the other hand, since $\sigma\leq 2$,
 we can use \eqref{etan} and \eqref{chi2est} as well as  the statements $(ii)$-$(iii)$ in  Lemma \ref{lemequa} to get    
	\begin{eqnarray*}
		\int_0^t  \norm{ g_m}_{L^2}\norm{\partial_x^{m+1}\omega}_{L^2} \,ds &\leq &   \frac{ C\big[(m-6)!\big]^{2 \sigma}}{\rho^{ 2( m-5) }} \int_0^t  \abs{u}_{\rho,\sigma} \abs{u}_{\tilde\rho,\sigma}  \frac{m^{\sigma-1} \rho^{m-5}}{\tilde\rho^{m-4}}\,ds \\
	&\leq&    \frac{ C\big[(m-6)!\big]^{2 \sigma}}{\rho^{ 2( m-5) }} \int_0^t  \frac{  \abs{u(s)}_{\tilde\rho,\sigma}^2 }{\tilde\rho -\rho}\,ds,
	\end{eqnarray*} 
	and
	\begin{eqnarray*}
		\int_0^t  \norm{\partial_x^{m}\omega}_{L^2}^2\leq  \frac{  \big[(m-6)!\big]^{2 \sigma}}{\rho^{ 2( m-5) }} \int_0^t  \abs{u}_{\rho,\sigma} ^2  \,ds.
	\end{eqnarray*}
	Moreover, by  the second inequality in Corollary  \ref{cordiff}  we have
	 \begin{eqnarray*}
	&& \int_0^t \norm{ \tilde g_m- g_m}_{L^2}\norm{\partial_x^{m+1}\omega}_{L^2}\,ds\\
	&\leq & m^{2\sigma-1} \int_0^t \norm{ \tilde g_m- g_m}_{L^2}^2 \,ds+ m^{-2\sigma+1}\int_0^t \norm{\partial_x^{m+1}\omega}_{L^2}^2\,ds\\
	&\leq &\frac{C\big[(m-6)!\big]^{2\sigma}}{\rho^{2(m-5)}}\abs{u_0}_{\rho,\sigma}^2+
 	 \frac{C\big[(m-6)!\big]^{2\sigma}}{\rho^{2(m-5)}} \inner{  \int_0^t   \inner{ \abs{u(s)}_{ \rho,\sigma}^2+\abs{u(s)}_{ \rho,\sigma}^4 }   \,ds+    \int_0^t  \frac{  \abs{u(s)}_{ \tilde\rho,\sigma}^2}{\tilde\rho-\rho}\,ds},
	\end{eqnarray*}
where in the last inequality we have
used  the statement $(iii)$ in Lemma \ref{lemequa}.  Inserting these inequalities into \eqref{cret} gives the desired estimate. 
		Thus the proof of Lemma \ref{lem09} is completed. 
\end{proof}

\begin{proof}
[Completion of the proof of Proposition \ref{prpnear}]
In view of \eqref{lines}, we combine \eqref{lases} and  the estimates  in Lemmas \ref{lemlowbound}-\ref{lem09} to conclude   \begin{eqnarray*}
	&&\norm{ \chi_2 \partial_x^m   \omega(t)}_{L^2}^2+ \int_0^t \norm{\chi_2 \partial_y \partial_x^m   \omega}_{L^2}^2+\eps \int_0^t \norm{\chi_2   \partial_x^{m+1}\omega}_{L^2}^2 \\
	&\leq &\frac{C\big[(m-6)!\big]^{2\sigma}}{\rho^{2(m-5)}}\abs{u_0}_{\rho,\sigma}^2+
 	 \frac{C\big[(m-6)!\big]^{2\sigma}}{\rho^{2(m-5)}} \inner{  \int_0^t   \inner{ \abs{u(s)}_{ \rho,\sigma}^2+\abs{u(s)}_{ \rho,\sigma}^4 }   \,ds+    \int_0^t  \frac{  \abs{u(s)}_{ \tilde\rho,\sigma}^2}{\tilde\rho-\rho}\,ds}.
\end{eqnarray*}
This is just   the estimate in Proposition \ref{prpnear}. The proof is then completed.	
\end{proof}

\subsection{Proof of Proposition \ref{hm}. ( uniform estimate for $h_m$ and $\chi_2\partial_y\partial_x^m\omega$)} \label{sec52}
This part is devoted to proving Proposition \ref{hm}. 
We begin with the estimation on $h_m.$	Note that   $h_m$  solves the equation (see Lemma \ref{fme} in the Appendix  for its derivation): 
\begin{equation}\label{eqhme}
\begin{aligned}
	&\Big(\partial_t    +\inner{u^{s}+u}\partial_x +v\partial_y  -\partial_y^2  -\eps\partial_x^2 \Big) h_m\\
	=&P_{m}+2\inner{\partial_yb  }\partial_y (\chi_2\partial_x^m\omega)+2\eps\inner{\partial_xb  }\partial_x (\chi_2\partial_x^m\omega)\\
	&+ \chi_2 b   \sum_{k=1}^m {m\choose k}\inner{\partial_x^k u} \partial_x^{m-k+1} \omega+\chi_2b  \sum_{k=1}^{m-1} {m\choose k}\inner{\partial_x^kv} \partial_x^{m-k}\partial_y \omega \\ 
	&  -  \chi_2 ' b v\partial_x^m \omega+  b \chi_2''  \partial_x^m \omega+ 2b  \chi_2' \partial_x^m \partial_y \omega \\
	& -\chi_2\sum_{k=1}^m {m\choose k}\inner{\partial_x^k u} \partial_x^{m-k+1} \partial_y\omega-\chi_2\sum_{k=1}^{m-1} {m\choose k}\inner{\partial_x^kv} \partial_x^{m-k}\partial_y^2\omega\\
	& +  \chi_2 'v\partial_x^m \partial_y\omega-  \chi_2'' \partial_x^m \partial_y\omega- 2\chi_2' \partial_x^m \partial_y^2 \omega  -\chi_2g_{m+1}
,
\end{aligned}
\end{equation}	
where  $b =\frac{\partial_y^2\omega^{s}+\partial_y^2\omega}{\partial_y\omega^{s}+\partial_y\omega}$ and 
\begin{eqnarray*}
	P_{m}&=& \frac{2\Big(\inner{ \omega^s+ \omega}\partial_x \partial_y\omega-\inner{\partial_x u} \inner{\partial_y^2\omega^s+\partial_y^2\omega}\Big)\chi_2\partial_x^m\omega}{\partial_y\omega^s+\partial_y\omega}\\
	&&-\frac{  \Big(\omega\partial_x \omega-\inner{\partial_x u}  \inner{\partial_y\omega^s+\partial_y\omega}\Big)\inner{\partial_y^2\omega^s+\partial_y^2\omega}\chi_2\partial_x^m\omega}{\inner{\partial_y\omega^s+\partial_y\omega}^2}\\
	&&-  \frac{ 2 \Big(  \inner{\partial_y^3\omega^s+\partial_y^3\omega}  \inner{\partial_y^2\omega^s+\partial_y^2\omega} + \eps\inner{\partial_x\partial_y^2\omega}\partial_{x} \partial_y\omega\Big)\chi_2\partial_x^m\omega}{\inner{\partial_y\omega^s+\partial_y\omega}^2}\\
	&&+\frac{  2\Big(\inner{\partial_y^2\omega^s+\partial_y^2\omega}^2+ \eps\inner{ \partial_x\partial_y\omega}^2\Big)\inner{\partial_y^2\omega^s+\partial_y^2\omega}\chi_2\partial_x^m\omega}{\inner{\partial_y\omega^s+\partial_y\omega}^3}.
	\end{eqnarray*}
	Clearly,
 \begin{equation}\label{eqhm}
 \begin{aligned}
	&  \frac{1}{2} \norm{h_m(t)}_{L^2}^2+   \int_0^t   \norm{ \partial_y h_m(s)}_{L^2}^2\,ds+ \eps\int_0^t  \norm{ \partial_x h_m(s)}_{L^2}^2 \,ds   \\
		 \leq  & ~ \frac{1}{2}  \norm{h_m(0)}_{L^2}^2+\int_0^t\Big(\big(\partial_t    +\inner{u^{s}+u}\partial_x +v\partial_y  -\partial_y^2  -\eps\partial_x^2 \big) h_m,~h_m\Big)_{L^2}\\
		 \leq  & ~ \frac{1}{2} \frac{ \big[(m-6)!\big]^{2\sigma}}{\rho^{2(m-5)}}\abs{u_0}_{\rho,\sigma}^2+\int_0^t\Big(\big(\partial_t    +\inner{u^{s}+u}\partial_x +v\partial_y  -\partial_y^2  -\eps\partial_x^2 \big) h_m,~h_m\Big)_{L^2}.
		\end{aligned}
	\end{equation}
	It remains to estimate the  terms on the right
hand side of the last inequality that
will be given  in the following two lemmas.
	
	\begin{lemma}
	\label{lemea}
	We have, for any small $\kappa>0,$
	\begin{eqnarray*}
		&&\Big|\int_0^t\Big(P_{m}+2\inner{\partial_yb  }\partial_y (\chi_2\partial_x^m\omega)+2\eps\inner{\partial_xb  }\partial_x (\chi_2\partial_x^m\omega), ~h_m\Big)_{L^2}ds\Big|\\
		&&\quad+\Big|\int_0^t\Big(\chi_2 b   \sum_{k=1}^m {m\choose k}\inner{\partial_x^k u} \partial_x^{m-k+1} \omega+\chi_2b  \sum_{k=1}^{m-1} {m\choose k}\inner{\partial_x^kv} \partial_x^{m-k}\partial_y \omega, ~h_m\Big)_{L^2}ds\Big|\\
		&&\quad+\Big|\int_0^t\Big(-\chi_2\sum_{k=1}^m {m\choose k}\inner{\partial_x^k u} \partial_x^{m-k+1} \partial_y\omega-\chi_2\sum_{k=1}^{m-1} {m\choose k}\inner{\partial_x^kv} \partial_x^{m-k}\partial_y^2\omega, ~h_m\Big)_{L^2}ds\Big|\\
		&\leq&   \kappa\int_{0}^t  \norm{\partial_y h_{m}}_{L^2}^2+ \kappa\eps \int_{0}^t  \norm{\partial_x h_{m}}_{L^2}^2+\frac{C\kappa^{-1}\big[(m-6)!\big]^{2\sigma}}{\rho^{2(m-5)}} \inner{\int_{0}^t \inner{ \abs{u}_{\rho,\sigma}^2+\abs{u}_{\rho,\sigma}^3}ds+\int_{0}^t  \frac{\abs{u}_{\tilde\rho,\sigma}^2}{\tilde\rho-\rho}ds
}.
	\end{eqnarray*}
	\end{lemma}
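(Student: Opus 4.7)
My plan is to estimate each of the three groups of terms on the left-hand side individually, exploiting the crucial fact that every contribution is supported in $\mathrm{supp}\,\chi_2\subset[y_0-\tfrac{7}{4}\delta,\,y_0+\tfrac{7}{4}\delta]$. On this compact interval, \eqref{condi} guarantees that $|\omega^s+\omega|$ and $|\partial_y\omega^s+\partial_y\omega|$ are uniformly bounded away from zero, that $\langle y\rangle$ is bounded above and below, and that $\partial_y\omega^s+\partial_y\omega$, $\partial_y^2\omega^s+\partial_y^2\omega$, $\partial_y^3\omega^s+\partial_y^3\omega$ together with the mixed derivatives $\partial_x^i\partial_y^j\omega$ ($1\le i,j\le 2$) are all uniformly bounded. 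Hence the coefficient $b=(\partial_y^2\omega^s+\partial_y^2\omega)/(\partial_y\omega^s+\partial_y\omega)$ and its derivatives $\partial_xb$, $\partial_yb$, as well as every prefactor that appears in $P_m$, are bounded on $\mathrm{supp}\,\chi_2$ by a constant depending only on the data in Assumption~\ref{maas}. Since $\chi_2$ vanishes at $y=0$, no boundary contributions arise from any integration by parts in $y$.

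For the first group, the idea is that $P_m$ is a sum of terms of the schematic form (bounded function)$\cdot\chi_2\partial_x^m\omega$, possibly multiplied by one extra factor of $\omega$, $\partial_xu$, $\partial_x\partial_y\omega$ etc.\ that is itself bounded by \eqref{condi}. Cauchy--Schwarz together with \eqref{etan}, \eqref{chi2est} immediately gives a bound of the form $\frac{[(m-6)!]^{2\sigma}}{\rho^{2(m-5)}}(|u|_{\rho,\sigma}^2+|u|_{\rho,\sigma}^3)$. For $2(\partial_yb)\partial_y(\chi_2\partial_x^m\omega)$ I would expand $\partial_y(\chi_2\partial_x^m\omega)=\chi_2'\partial_x^m\omega+\chi_2\partial_y\partial_x^m\omega$ and bound both pieces by $|u|_{\rho,\sigma}\cdot[(m-6)!]^\sigma/\rho^{m-5}$ via \eqref{etan} and \eqref{chi2est}. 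The $\varepsilon$-term $2\varepsilon(\partial_xb)\chi_2\partial_x^{m+1}\omega$ carries one more $x$-derivative than the norm controls, so I would integrate by parts in $x$, shifting one $\partial_x$ onto $h_m$; Young's inequality then produces the absorption $\kappa\varepsilon\|\partial_xh_m\|_{L^2}^2$ and a remainder $C\kappa^{-1}\varepsilon\|\chi_2\partial_x^m\omega\|_{L^2}^2$, again controlled by \eqref{etan}.

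The second group, involving the convolution sums weighted by $\chi_2 b$, is treated exactly in the spirit of Lemma~\ref{lemp3}: split each sum at $k=[m/2]$, put the factor of lower $x$-order in $L^\infty$ (via the Sobolev inequality of Lemma~\ref{sobine}, or directly via \eqref{condi} when the order is $\le 2$), keep the factor of higher $x$-order in $L^2$, and sum the combinatorial series using the statement (i) of Lemma~\ref{lemequa} together with $\sigma\ge 3/2$; pairing with $h_m$ via Cauchy--Schwarz and invoking \eqref{chi2est} produces the cubic contribution $|u|_{\rho,\sigma}^3$, while the $k=1,2$ top-order pieces generate the loss-term $|u|_{\tilde\rho,\sigma}^2/(\tilde\rho-\rho)$ via \eqref{factor}.

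The third group is the most delicate because it contains $\partial_x^{m-k}\partial_y^2\omega$, whose order $(m-k,2)$ falls outside the $j\le 4$ range of \eqref{emix} only through the second $y$-derivative, not through the $x$-count; however the $L^\infty$ estimates available for factors with $\partial_y^2\omega$ are also awkward. My plan is to integrate by parts once in $y$ in the inner product against $h_m$, using $\partial_yv=-\partial_xu$ and $\chi_2|_{y=0}=0$, so as to trade $\partial_x^{m-k}\partial_y^2\omega$ for $\partial_x^{m-k}\partial_y\omega$. One resulting piece lands on $\partial_yh_m$ and is absorbed as $\kappa\|\partial_yh_m\|_{L^2}^2$; the remaining pieces have either $\chi_2'(\partial_x^kv)\cdot h_m\cdot\partial_x^{m-k}\partial_y\omega$ or $\chi_2(\partial_x^{k+1}u)\cdot h_m\cdot\partial_x^{m-k}\partial_y\omega$, and these fall squarely within the range of \eqref{emix} ($j=1$) and \eqref{chi2est}, so the splitting at $k=[m/2]$ of the second group applies verbatim. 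The analogous $\partial_y$-sum with factor $\partial_x^{m-k+1}\partial_y\omega$ is handled directly by \eqref{emix} without needing integration by parts. The main obstacle is precisely this bookkeeping: one has to arrange every factor so that no $y$-derivative ever exceeds the admissible order four of the norm, which forces the one integration by parts in $y$ in the $\partial_y^2$-term; carrying this out carefully also explains why the final bound contains only $|u|_{\rho,\sigma}^2+|u|_{\rho,\sigma}^3$ and not a quartic contribution, since each convolution piece in Groups B and C is genuinely bilinear in the high-frequency part of $u$.
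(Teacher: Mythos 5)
The paper itself declines to prove Lemma \ref{lemea}, stating only that the proof is ``similar to the one for Lemma \ref{lemuppb}.'' Your attempt therefore fills a gap that the authors chose to leave implicit, and your overall strategy is sound and consistent with what the structure of the lemma's statement dictates: the presence of the absorption terms $\kappa\int\norm{\partial_y h_m}_{L^2}^2$ and $\kappa\eps\int\norm{\partial_x h_m}_{L^2}^2$ on the right-hand side signals that one must integrate by parts in $y$ (for the $\partial_y^2\omega$ terms) and in $x$ (for the $\eps\partial_x^{m+1}$ terms), and you identify both moves correctly. This is a genuine contribution, because the reference Lemma \ref{lemuppb} contains no $\partial_y^2\omega$ terms, so the $y$--integration by parts is \emph{not} simply a verbatim repetition of that proof.

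Two points in your reasoning are imprecise, though neither undermines the approach. First, your stated rationale for the $y$--integration by parts --- that ``the $L^\infty$ estimates available for factors with $\partial_y^2\omega$ are awkward'' --- is not the real obstruction: $\norm{\partial_x^i\partial_y^2\omega}_{L^\infty}$ \emph{is} available from \eqref{condi} for $i\le 2$ and from Sobolev for larger $i$. The genuine issue is a derivative-count mismatch: $\partial_x^{m-k}\partial_y^2\omega$ sits at level $i+j=m-k+2$, so for $k=1$ this is level $m+1$, and even for $k\ge 2$ the binomial $\binom{m}{k}$ compounds with the factorial growth in $\eqref{emix}$ to leave a residual positive power of $m$ that the single-factor gain $m(\rho/\tilde\rho)^{m-5}\le (\tilde\rho-\rho)^{-1}$ from \eqref{factor} cannot absorb. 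Integrating by parts shifts one $\partial_y$ onto $h_m$, lowers the level of the free factor to $m-k+1$, and makes the combinatorics close exactly as in $J_{4}$ of Lemma \ref{lemp3} (here the constraint $\sigma\ge 3/2$ enters). Second, your remark that the companion $\partial_y$-sum ``is handled directly by \eqref{emix}'' is incorrect for $k=1$: $\partial_x^m\partial_y\omega$ has $i+j=m+1$, and \eqref{emix} overshoots by a factor $m^\sigma$. The rescue there is not \eqref{emix} but the fact that $\chi_2\partial_y\partial_x^m\omega$ is \emph{itself} one of the quantities built into the norm $\abs{\cdot}_{\rho,\sigma}$, so one invokes \eqref{chi2est} instead; you do cite \eqref{chi2est} alongside \eqref{emix}, so this is more a mis-attribution than an error. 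With these two clarifications your outline correctly reproduces the intended argument.
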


\begin{proof}
Since the proof is  similar to the one for  Lemma \ref{lemuppb},    we omit it for  brevity.
\end{proof}

	\begin{lemma} \label{lemfinesre}  Let $\sigma\leq 2.$  We have,  for any small $\kappa>0,$
		\begin{eqnarray*}
		&&\Big|\int_0^t\Big(-  \chi_2 ' b v\partial_x^m \omega+  b \chi_2''  \partial_x^m \omega+ 2b  \chi_2' \partial_x^m \partial_y \omega, ~h_m\Big)_{L^2}ds\Big|\\
		&&\qquad +\Big|\int_0^t\Big( \chi_2 'v\partial_x^m \partial_y\omega-  \chi_2'' \partial_x^m \partial_y\omega- 2\chi_2' \partial_x^m \partial_y^2 \omega  -\chi_2g_{m+1}
, ~h_m\Big)_{L^2}ds\Big|\\
&\leq &    \kappa\int_{0}^t  \norm{\partial_y h_{m}}_{L^2}^2+C \kappa^{-1}\inner{\int_{0}^t  \norm{\partial_y f_{m}}_{L^2}^2+\frac{ \big[(m-6)!\big]^{2\sigma}}{\rho^{2(m-5)}} \int_{0}^t  \abs{u}_{\rho,\sigma}^2\,ds+ \int_0^t  \frac{  \abs{u(s)}_{\tilde\rho,\sigma}^2 }{\tilde\rho -\rho}\,ds}.
		\end{eqnarray*}
	\end{lemma}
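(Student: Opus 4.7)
My plan is to estimate the seven terms on the left-hand side individually, organizing them into the six terms carrying a derivative of $\chi_2$, and the single term $-\chi_2 g_{m+1}$. For the first group I will exploit that, by \eqref{chi1}--\eqref{cutofffu} and \eqref{suppch2}, $\mathrm{supp}\,\chi_2'\cup\mathrm{supp}\,\chi_2''\subset\{\chi_1=1\}$ while $\chi_1'\equiv 0$ on $\mathrm{supp}\,\chi_2'$ (the supports of $\chi_1'$ and $\chi_2'$ are disjoint annular regions by construction), so the defining relation \eqref{funoffs} for $f_m$ reduces, on these supports, to
\[
\chi_2'\partial_x^m\omega=\chi_2'(f_m+a\partial_x^mu),\qquad\chi_2'\partial_x^m\partial_y\omega=\chi_2'\bigl(\partial_yf_m+(\partial_ya)\partial_x^mu+a\partial_x^m\omega\bigr),
\]
with analogous identities for $\chi_2''$. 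After substitution, each of the terms $\chi_2'bv\partial_x^m\omega$, $b\chi_2''\partial_x^m\omega$, $2b\chi_2'\partial_x^m\partial_y\omega$, $\chi_2'v\partial_x^m\partial_y\omega$, $\chi_2''\partial_x^m\partial_y\omega$ paired with $h_m$ is bounded by Cauchy--Schwarz and Lemma~\ref{lemequa}. The troublesome $-2\chi_2'\partial_x^m\partial_y^2\omega$ term I handle by integrating once by parts in $y$ to shift a $\partial_y$ onto $h_m$; the resulting $\chi_2''\partial_x^m\partial_y\omega$ piece is handled as above, while the piece pairing $\chi_2'\partial_x^m\partial_y\omega$ with $\partial_y h_m$ yields, via Young's inequality with parameter $\kappa$, exactly the $\kappa\|\partial_y h_m\|^2_{L^2}+C\kappa^{-1}\|\partial_y f_m\|^2_{L^2}$ contribution required on the right-hand side.

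For the term $-\int_0^t(\chi_2 g_{m+1},h_m)\,ds$ I will use the product-rule identity
\[
g_1=(\omega^s+\omega)\partial_x\omega-(\partial_y\omega^s+\partial_y\omega)\partial_xu=\partial_y\bigl[(\omega^s+\omega)\partial_xu\bigr]-2(\partial_y\omega^s+\partial_y\omega)\partial_xu,
\]
so that after applying $\partial_x^m$ and integrating by parts in $y$ (the boundary contribution at $y=0$ vanishes because $\chi_2$ is compactly supported away from the boundary),
\[
-\int_0^t(\chi_2 g_{m+1},h_m)\,ds=\int_0^t\Bigl(\partial_x^m\bigl[(\omega^s+\omega)\partial_xu\bigr],\chi_2'h_m+\chi_2\partial_y h_m\Bigr)\,ds+2\int_0^t\Bigl(\chi_2\partial_x^m\bigl[(\partial_y\omega^s+\partial_y\omega)\partial_xu\bigr],h_m\Bigr)\,ds.
\]
The $\chi_2\partial_y h_m$ piece is absorbed into $\kappa\int_0^t\|\partial_y h_m\|^2_{L^2}\,ds$. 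The $\chi_2'h_m$ piece is handled via the first-group identities (since $\chi_2'=\chi_2'\chi_1$). The remaining pairings are then expanded by Leibniz and split at $k=\lfloor m/2\rfloor$, treating high- and low-frequency parts exactly as in Lemma~\ref{lemp3}.

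The main obstacle will be the top-order pieces $(\omega^s+\omega)\partial_x^{m+1}u$ and $(\partial_y\omega^s+\partial_y\omega)\partial_x^{m+1}u$ emerging from the Leibniz expansion of $\partial_x^m[(\omega^s+\omega)\partial_xu]$ and $\partial_x^m[(\partial_y\omega^s+\partial_y\omega)\partial_xu]$, since a naive bound on $\|\partial_x^{m+1}u\|_{L^2}$ via $|u|_{\rho,\sigma}$ loses a factor $m^\sigma/\rho$ compared to the target scaling $[(m-6)!]^\sigma/\rho^{m-5}$. I plan to handle these by estimating $\|\partial_x^{m+1}u\|_{L^2}$ in the larger space $X_{\tilde\rho,\sigma}$ and absorbing the spurious factor $(m-5)^\sigma(\rho/\tilde\rho)^{m-5}$ by means of the inequality $k(\rho/\tilde\rho)^k\leq 1/(\tilde\rho-\rho)$ from Lemma~\ref{lemequa}(iii); the assumption $\sigma\leq 2$ is precisely what keeps the residual power of $m$ absorbable into a single $1/(\tilde\rho-\rho)$, producing the $\int_0^t|u|_{\tilde\rho,\sigma}^2/(\tilde\rho-\rho)\,ds$ term on the right-hand side, with the lower-order pieces in the Leibniz sum contributing the $|u|_{\rho,\sigma}^2$ term.
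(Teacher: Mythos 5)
Your handling of the six $\chi_2'$/$\chi_2''$ terms is a valid alternative to the paper's: where the paper integrates by parts directly and uses \eqref{fm1} to convert $\|\partial_x^m\partial_y\omega\|$ into $\|\partial_y f_m\|$, you substitute $\chi_2'\partial_x^m\omega=\chi_2'(f_m+a\partial_x^m u)$ and $\chi_2'\partial_x^m\partial_y\omega=\chi_2'(\partial_y f_m+(\partial_y a)\partial_x^m u+a\partial_x^m\omega)$ up front, which is legitimate because $\chi_1\equiv 1$ and $\chi_1'\equiv 0$ in a neighbourhood of $\mathrm{supp}\,\chi_2'\cup\mathrm{supp}\,\chi_2''$ by \eqref{chi1}--\eqref{suppch2}. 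Both routes land on the same $\|\partial_y f_m\|^2$ and $|u|_{\rho,\sigma}^2$ contributions.

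The treatment of $-\int_0^t(\chi_2 g_{m+1},h_m)\,ds$, however, has a genuine gap, and it is exactly in the arithmetic you flag as "the main obstacle." The entire point of the extra factor $m$ in front of $\|g_m\|_{L^2}$ in Definition \ref{gevspace} is to make $\|g_{m+1}\|_{L^2}$ cost only $m^{\sigma-1}$ rather than $m^\sigma$ in the Gevrey bookkeeping: from \eqref{chi2est},
\[
\|g_{m+1}\|_{L^2}\leq \frac{1}{m+1}\cdot\frac{[(m-5)!]^\sigma}{\tilde\rho^{\,m-4}}\,\abs{u}_{\tilde\rho,\sigma}\;\sim\; m^{\sigma-1}\frac{[(m-6)!]^\sigma}{\tilde\rho^{\,m-4}}\,\abs{u}_{\tilde\rho,\sigma},
\]
and the paper then applies Cauchy--Schwarz \emph{without} Young to $\int\|g_{m+1}\|\|h_m\|$, so the residual $m^{\sigma-1}\cdot(\rho/\tilde\rho)^{m-5}/\tilde\rho$ is absorbed by a single application of Lemma~\ref{lemequa}(iii) precisely when $\sigma\leq 2$. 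By contrast, once you break $g_{m+1}$ apart via the product-rule identity, the pieces $(\omega^s+\omega)\partial_x^{m+1}u$ and $(\partial_y\omega^s+\partial_y\omega)\partial_x^{m+1}u$ carry the full factorial cost $\|\partial_x^{m+1}u\|\lesssim \frac{[(m-5)!]^\sigma}{\tilde\rho^{\,m-4}}|u|_{\tilde\rho,\sigma}\sim m^\sigma\frac{[(m-6)!]^\sigma}{\tilde\rho^{\,m-4}}|u|_{\tilde\rho,\sigma}$, with no compensating $1/(m+1)$. After one application of Lemma~\ref{lemequa}(iii) the spurious factor $(m-5)^\sigma(\rho/\tilde\rho)^{m-5}$ you identify reduces only to $(m-5)^{\sigma-1}/(\tilde\rho-\rho)$, which for $\sigma>1$ is still unbounded in $m$; and the piece you pair with $\chi_2\partial_y h_m$ is worse, since after Young with $\kappa$ the $\|\cdot\|^2$ brings $m^{2\sigma}$, leaving $m^{2\sigma-1}/(\tilde\rho-\rho)$ even after using (iii) with $k=2(m-5)$. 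So the claim that "$\sigma\leq 2$ is precisely what keeps the residual power of $m$ absorbable into a single $1/(\tilde\rho-\rho)$" does not hold for your decomposition. The fix is to not decompose $g_{m+1}$ at all: estimate $\int\|g_{m+1}\|\|h_m\|$ directly using the $m$-weighted norm, as the paper does.
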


\begin{proof}
It is clear that	
\begin{eqnarray*}
	\Big|\int_0^t\big(-  \chi_2 ' b v\partial_x^m \omega+  b \chi_2''  \partial_x^m \omega,~h_m\big)_{L^2}\Big| \leq \frac{C\big[(m-6)!\big]^{2\sigma}}{\rho^{2(m-5)}} \int_{0}^t  \abs{u}_{\rho,\sigma}^2\,ds.
\end{eqnarray*}
Moreover, integrating by parts yields, for any $\kappa>0,$ 
\begin{eqnarray*}
	\Big|\int_0^t\big(2b\chi_2' \partial_x^m \partial_y \omega,~h_m\big)_{L^2}\Big| &\leq&\Big|\int_0^t\big(\big[ \partial_y\inner{2b\chi_2' }\big]\partial_x^m  \omega,~h_m\big)_{L^2}\Big|+\Big|\int_0^t\big( 2b\chi_2' \partial_x^m  \omega,~\partial_y h_m\big)_{L^2}\Big|\\
	&\leq & \kappa\int_{0}^t  \norm{\partial_y h_{m}}_{L^2}^2\,ds+\frac{C\kappa^{-1}\big[(m-6)!\big]^{2\sigma}}{\rho^{2(m-5)}} \int_{0}^t  \abs{u}_{\rho,\sigma}^2\,ds.
\end{eqnarray*}
Similarly
\begin{eqnarray*}
	\Big|\int_0^t\big(\chi_2 'v\partial_x^m \partial_y\omega-  \chi_2'' \partial_x^m \partial_y\omega,~h_m\big)_{L^2}\Big| \leq  \kappa\int_{0}^t  \norm{\partial_y h_{m}}_{L^2}^2\,ds+\frac{C\kappa^{-1}\big[(m-6)!\big]^{2\sigma}}{\rho^{2(m-5)}} \int_{0}^t  \abs{u}_{\rho,\sigma}^2\,ds,
\end{eqnarray*}
and 
\begin{eqnarray*}
 	\Big|\int_0^t\big(2\chi_2'   \partial_x^m \partial_y^2 \omega,~h_m\big)_{L^2}\Big| 	&\leq &  \kappa\int_{0}^t  \norm{\partial_y h_{m}}_{L^2}^2\,ds +C\kappa^{-1}\inner{\int_{0}^t  \norm{\partial_y \partial_x^m\omega}_{L^2}^2\,ds +  \int_{0}^t  \norm{ h_{m}}_{L^2}^2\,ds} \\
 	&\leq& \kappa\int_{0}^t  \norm{\partial_y h_{m}}_{L^2}^2+C \kappa^{-1}\inner{\int_{0}^t  \norm{\partial_y f_{m}}_{L^2}^2+\frac{ \big[(m-6)!\big]^{2\sigma}}{\rho^{2(m-5)}} \int_{0}^t  \abs{u}_{\rho,\sigma}^2\,ds},
\end{eqnarray*}
where in the last inequality we have used
 the first estimate in \eqref{fm1}.  Finally,  we use  \eqref{chi2est} and the statements $(ii)$-$(iii)$ in  Lemma \ref{lemequa}, to get by noticing
 $\sigma\leq 2$  that
 \begin{eqnarray*}
	\Big|\int_0^t\big(\chi_2g_{m+1},~h_m\big)_{L^2}\Big|&\leq& \int_{0}^t \norm{g_{m+1}}_{L^2}\norm{h_{m}}_{L^2} 
 \leq   \frac{ C\big[(m-6)!\big]^{2 \sigma}}{\rho^{ 2( m-5) }} \int_0^t  \abs{u}_{\rho,\sigma} \abs{u}_{\tilde\rho,\sigma}  \frac{m^{\sigma-1} \rho^{m-5}}{\tilde\rho^{m-4}}\,ds \\
	&\leq&    \frac{ C\big[(m-6)!\big]^{2 \sigma}}{\rho^{ 2( m-5) }} \int_0^t  \frac{  \abs{u(s)}_{\tilde\rho,\sigma}^2 }{\tilde\rho -\rho}\,ds.
\end{eqnarray*}
 Combining these inequalities gives the estimate as desired. The proof is completed.
\end{proof}

\begin{proof}
	[Completion of the proof of Proposition \ref{hm}]  In view of \eqref{eqhm} and \eqref{eqhme}, we combine the estimates in  Lemma \ref{lemea}-\ref{lemfinesre} to obtain that by choosing $\kappa$ being sufficiently small,  
\begin{eqnarray*}
	&& \norm{h_m(t)}_{L^2}^2+   \int_0^t   \norm{ \partial_y h_m(s)}_{L^2}^2\,ds+ \eps\int_0^t  \norm{ \partial_x h_m(s)}_{L^2}^2 \,ds
	\\
	&\leq& C\int_{0}^t  \norm{\partial_y f_{m}}_{L^2}^2\,ds + \frac{C\big[(m-6)!\big]^{2\sigma}}{\rho^{2(m-5)}}\abs{u_0}_{\rho,\sigma}^2\\
	&&+
 	 \frac{C\big[(m-6)!\big]^{2\sigma}}{\rho^{2(m-5)}} \inner{  \int_0^t   \inner{ \abs{u(s)}_{ \rho,\sigma}^2+{u(s)}_{ \rho,\sigma}^3 }   \,ds+    \int_0^t  \frac{  \abs{u(s)}_{ \tilde\rho,\sigma}^2}{\tilde\rho-\rho}\,ds}\\
 	 &\leq&   \frac{C\big[(m-6)!\big]^{2\sigma}}{\rho^{2(m-5)}}\abs{u_0}_{\rho,\sigma}^2 +
 	 \frac{C\big[(m-6)!\big]^{2\sigma}}{\rho^{2(m-5)}} \inner{  \int_0^t   \inner{ \abs{u(s)}_{ \rho,\sigma}^2+\abs{u(s)}_{ \rho,\sigma}^4 }   \,ds+    \int_0^t  \frac{  \abs{u(s)}_{ \tilde\rho,\sigma}^2}{\tilde\rho-\rho}\,ds},
\end{eqnarray*}
where in the last inequality we have
used Proposition \ref{prpaway}.  This gives the upper bound  for $h_m.$
 Moreover,  in view of \eqref{repreofhm},  
\begin{eqnarray*}
	\norm{\chi_2 \partial_x^m\partial_y \omega(t)}_{L^2}^2 
	 \leq   \norm{h_m(t)}_{L^2}^2+C\norm{\chi_2 \partial_x^m \omega(t)}_{L^2}^2.
	\end{eqnarray*}
	Finally, we  use   Proposition \ref{prpnear} and the estimate on $h_m$ to get
	\begin{eqnarray*}
&&	\norm{\chi_2 \partial_x^m\partial_y \omega(t)}_{L^2}^2\\
& \leq &\frac{C\big[(m-6)!\big]^{2\sigma}}{\rho^{2(m-5)}}\abs{u_0}_{\rho,\sigma}^2+
 	 \frac{C\big[(m-6)!\big]^{2\sigma}}{\rho^{2(m-5)}} \inner{  \int_0^t   \inner{ \abs{u(s)}_{ \rho,\sigma}^2+\abs{u(s)}_{ \rho,\sigma}^4 }   \,ds+    \int_0^t  \frac{  \abs{u(s)}_{ \tilde\rho,\sigma}^2}{\tilde\rho-\rho}\,ds}.
\end{eqnarray*}
The upper bound for $\chi_2 \partial_x^m\partial_y \omega $ follows. 
Thus we complete the proof of Proposition \ref{hm}. 
\end{proof}

\section{Completeness of the proof of Theorem \ref{uniestgev}:  uniform estimates for $\norm{u}_{\rho,\sigma}$}  
\label{sec5}

To complete the proof of Theorem \ref{uniestgev}, it remains to estimate $\norm{u}_{\rho,\sigma}$ that  is given in Definition \ref{defgev}.    We will perform estimates on tangential derivatives and mixed derivatives of $u$ and $\omega$ respectively in the following two subsections.  In the last subsection we will give the proof of Theorem \ref{uniestgev} by combining all
the estimates obtained in the previous sections. 

\subsection{Estimate on tangential derivatives}  The main estimate
 in this subsection can be stated as follows.  

\begin{proposition}
\label{provel+}
Let $u\in L^\infty\inner{[0, T];~X_{\rho_0,\sigma}}$  be the solution to \eqref{regpran} under the assumptions in Theorem \ref{uniestgev}.  Then  for any $m\geq 6,$ for any $t\in[0,T],$ and   for any pair $\inner{\rho,\tilde\rho}$ with $0<\rho<\tilde\rho\leq \rho_0,$ we have
	\begin{eqnarray*}
		 &&\norm{ \comi y^{\ell-1} \partial_x^m u}_{L^2}^2+\norm{ \comi y^{\ell} \partial_x^m\omega}_{L^2}^2 +\eps \int_0^t \inner{\norm{\comi y^{\ell-1} \partial_x^{m+1}u}_{L^2}^2+\norm{ \comi y^{\ell}  \partial_x^{m+1}\omega}_{L^2}^2}\,ds \\
   &\leq & 	\frac{C\big[(m-6)!\big]^{2\sigma}}{\rho^{2(m-5)}}\abs{u_0}_{\rho,\sigma}^2+
 	 \frac{C\big[(m-6)!\big]^{2\sigma}}{\rho^{2(m-5)}} \inner{  \int_0^t   \inner{ \abs{u(s)}_{ \rho,\sigma}^2+\abs{u(s)}_{ \rho,\sigma}^4 }   \,ds+    \int_0^t  \frac{  \abs{u(s)}_{ \tilde\rho,\sigma}^2}{\tilde\rho-\rho}\,ds}.  
	\end{eqnarray*}
\end{proposition}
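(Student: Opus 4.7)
My plan is to bypass a direct energy estimate on $\partial_x^m u$ and $\partial_x^m \omega$, which would have to contend with the loss-of-derivative term $(\partial_x^m v)(\omega^s+\omega)$, by algebraically reconstructing these quantities from the auxiliary functions $f_m$ and $\chi_2\partial_x^m\omega$ already controlled in Propositions \ref{prpaway} and \ref{prpnear}. The support property \eqref{suppch2} gives $\chi_1\equiv 1$ on $\mathrm{supp}\,(1-\chi_2)$, so the pointwise identity
\begin{equation*}
\partial_x^m \omega = \chi_2\partial_x^m\omega + (1-\chi_2)\bigl(f_m + a\,\partial_x^m u\bigr),\qquad a=\frac{\partial_y\omega^s+\partial_y\omega}{\omega^s+\omega},
\end{equation*}
holds on $\mathbb{R}_+^2$. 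Since $|a|\leq C\comi{y}^{-1}$ on $\mathrm{supp}\,\chi_1$ by \eqref{condi}, the task reduces to an independent bound on $\partial_x^m u$ in terms of $f_m$ and $\chi_2$-localized quantities.

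Now I would exploit the second identity in \eqref{fungeps}, which on each connected component of $\{\chi_1=1\}$ yields the integrating-factor representation
\begin{equation*}
\partial_x^m u(y) = (\omega^s+\omega)(y)\Bigl[\frac{\partial_x^m u(y_*)}{(\omega^s+\omega)(y_*)} + \int_{y_*}^y \frac{f_m(y')}{(\omega^s+\omega)(y')}\,dy'\Bigr].
\end{equation*}
On $[0, y_0 - 3\delta/2]$ I take $y_*=0$: the compatibility $\partial_x^m u|_{y=0}=0$ kills the boundary term, $|\omega^s+\omega|$ is bounded above and below, and Cauchy--Schwarz gives the bound $\norm{\partial_x^m u}_{L^2([0,y_0-3\delta/2])}\leq C\norm{\comi{y}^\ell f_m}_{L^2}$. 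On $[y_0+3\delta/2,\infty)$ I take $y_*$ at the left endpoint and exploit the decay $|\omega^s+\omega|\asymp \comi{y}^{-\alpha}$ from \eqref{condi}; the condition $\ell < \alpha + 1/2$ of \eqref{ellalpha} ensures that both $\int_{y_*}^\infty\comi{y}^{2(\ell-1-\alpha)}dy$ and the Hardy-type convolution $\int_{y_*}^\infty\comi{y}^{2(\ell-1-\alpha)}\bigl(\int_{y_*}^y\comi{y'}^\alpha|f_m|\,dy'\bigr)^2dy$ converge, yielding $\norm{\comi{y}^{\ell-1}\partial_x^m u}_{L^2([y_*,\infty))}^2 \leq C\norm{\partial_x^m u(\cdot, y_*)}_{L^2_x}^2 + C\norm{\comi{y}^\ell f_m}_{L^2}^2$. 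The boundary trace $\norm{\partial_x^m u(\cdot, y_*)}_{L^2_x}$ will be controlled via a one-dimensional Sobolev trace in $y$ together with the $\chi_2$-localized bounds of Propositions \ref{prpnear} and \ref{hm}, while the interior piece $[y_0-3\delta/2, y_0+3\delta/2]\subset\mathrm{supp}\,\chi_2$ is handled through $\partial_x^m u(y) = \partial_x^m u(y_0-3\delta/2) + \int_{y_0-3\delta/2}^y \chi_2\partial_x^m\omega\,dy'$.

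Assembling these ingredients with Propositions \ref{prpaway}, \ref{prpnear}, and \ref{hm}, whose right-hand sides already have the desired form $C\big[(m-6)!\big]^{2\sigma}\rho^{-2(m-5)}(\cdots)$, produces the $L^\infty_t$ part of the conclusion. For the viscous terms $\eps\int_0^t\bigl(\norm{\comi{y}^{\ell-1}\partial_x^{m+1}u}_{L^2}^2+\norm{\comi{y}^\ell\partial_x^{m+1}\omega}_{L^2}^2\bigr)ds$, I would differentiate the decomposition once in $x$ and rerun the same argument, invoking the viscous contributions $\eps\int\norm{\comi{y}^\ell\partial_x f_m}_{L^2}^2$ from Proposition \ref{prpaway} and $\eps\int\norm{\chi_2\partial_x^{m+1}\omega}_{L^2}^2$ from Proposition \ref{prpnear}.

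The hard part will be the weighted Hardy analysis on $[y_0+3\delta/2,\infty)$: the condition $\alpha\leq \ell < \alpha + 1/2$ of \eqref{ellalpha} is forced precisely so that both the boundary-trace contribution and the $f_m$-convolution against the kernel $\comi{y}^\alpha/(\omega^s+\omega)$ yield finite $L^2(\comi{y}^{2(\ell-1)}dy)$ quantities. A secondary subtlety is that $y_*$ lies inside $\mathrm{supp}\,\chi_2$, so the trace value $\partial_x^m u(\cdot, y_*)$ must be controlled through the near-critical estimates of Propositions \ref{prpnear} and \ref{hm} rather than via a direct weighted Hardy bound in terms of $\norm{\comi{y}^{\ell-1}\partial_x^m u}_{L^2}$, which would produce a circular dependency with the $(1-\chi_2)\,a\,\partial_x^m u$ term in the decomposition above.
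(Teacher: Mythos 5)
Your reconstruction strategy — recover $\partial_x^m u$ and $\partial_x^m\omega$ from $f_m$, $\tilde f_m$ and $\chi_2\partial_x^m\omega$, then feed Propositions \ref{prpaway}, \ref{prpnear}, \ref{hm} into the result — is the same basic idea as the paper's, but your execution of the Hardy step is genuinely different. The paper (Lemma \ref{eqnorm}) encodes the weighted Hardy inequality in a single integration-by-parts identity, writing $\comi y^{2\ell-2-2\alpha}\sim \partial_y \comi y^{2\ell-1-2\alpha}$ and integrating by parts directly on $\bigl(\chi_1\partial_x^m u/(\omega^s+\omega)\bigr)^2$; the Dirichlet condition at $y=0$ and the decay $\ell<\alpha+1/2$ kill both endpoint contributions, so no explicit trace ever appears. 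The region $\mathrm{supp}\,\chi_2$ is then absorbed by a simple Poincar\'e inequality in $y$ rather than by chaining traces across the three intervals. You instead integrate the ODE $\partial_y(\partial_x^m u/(\omega^s+\omega))=f_m/(\omega^s+\omega)$ explicitly, producing boundary traces at the edges of $\mathrm{supp}\,\chi_2$, which you correctly propagate left-to-right and eventually estimate by $f_m$ on $[0,y_0-\tfrac{3}{2}\delta]$ and $\chi_2\partial_x^m\omega$ on the interior. This works, and you identify the right danger (circularity from the $(1-\chi_2)a\,\partial_x^m u$ term), but the bookkeeping of traces is more involved than the paper's trick.

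One technical point you flag as ``the hard part'' really does need attention: the weighted estimate
\begin{equation*}
\int_{y_*}^\infty \comi y^{2(\ell-1-\alpha)}\Bigl(\int_{y_*}^y \comi {y'}^\alpha |f_m|\,dy'\Bigr)^2 dy \;\leq\; C\,\bigl\|\comi y^{\ell}f_m\bigr\|_{L^2}^2
\end{equation*}
does \emph{not} follow from a naive Cauchy--Schwarz on the inner integral. Splitting $\comi{y'}^{\alpha}=\comi{y'}^{\alpha-\ell}\comi{y'}^{\ell}$ and applying Cauchy--Schwarz gives a factor $\bigl(\int_{y_*}^y\comi{y'}^{2(\alpha-\ell)}dy'\bigr)\sim y^{1+2(\alpha-\ell)}$ (since $\alpha\leq\ell<\alpha+\tfrac12$ makes $2(\alpha-\ell)\in(-1,0]$), and the outer integral then becomes $\int\comi y^{-1}dy=\infty$. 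You need the sharp weighted Hardy inequality — equivalently, Muckenhoupt's two-weight characterization — whose constant here is finite precisely because $\sup_{r}\bigl(\int_r^\infty\comi y^{2\ell-2-2\alpha}\bigr)\bigl(\int_{y_*}^r\comi y^{2\alpha-2\ell}\bigr)\sim r^{2\ell-2\alpha-1}\cdot r^{1-2\ell+2\alpha}=O(1)$. The paper's integration-by-parts identity delivers this sharp inequality for free; if you carry out your more explicit program, you must invoke the sharp form rather than Cauchy--Schwarz, and say so.

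With that caveat, the argument closes: the $L^\infty_t$ part follows from assembling your three-region reconstruction with Propositions \ref{prpaway}, \ref{prpnear}, \ref{hm}, and the $\eps$-dissipation terms come from differentiating the same identities once in $x$ (mirroring the paper's \eqref{higes}, where the commutator $\partial_x f_m \ne f_{m+1}$ produces an extra lower-order $\norm{\comi y^{\ell-1}\partial_x^m u}_{L^2}$ term you should not forget).
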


As a preliminary to prove the above proposition,  we need the following
\begin{lemma} \label{eqnorm}
Let $\chi_1,\chi_2$ be given in  \eqref{chi1} and \eqref{cutofffu}, and  let $u$ satisfy the condition \eqref{condi}. Then
	\begin{equation}\label{++ell1}
	  \norm{\chi_1\comi y^{\ell-1} \partial_x^m u}_{L^2}  +\norm{\chi_1\comi y^{\ell}\partial_x^m \omega}_{L^2} 
	 \leq   C \norm{\comi y^\ell f_m}_{L^2}+C\norm{\tilde f_m}_{L^2}+ C\norm{\chi_2\partial_x^m\omega}_{L^2},
\end{equation}
and 
\begin{equation}\label{higes}
\begin{aligned}
	 & \norm{\chi_1\comi y^{\ell-1} \partial_x^{m+1} u}_{L^2}  +\norm{\chi_1\comi y^{\ell}\partial_x^{m+1} \omega}_{L^2} \\
	 \leq &  C \norm{\comi y^\ell \partial_x f_m}_{L^2}+C\norm{ \partial_x \tilde f_m}_{L^2}+ C\norm{\chi_2\partial_x^{m+1}\omega}_{L^2}+C \norm{\comi y^{\ell-1}\partial_x^m u}_{L^2},
	 \end{aligned}
\end{equation}
with $f_m$ and $\tilde f_m$ defined in \eqref{funoffs} and \eqref{+fungeps}. 
\end{lemma}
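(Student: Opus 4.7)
The plan is to reconstruct the $\chi_1$-localized tangential derivatives of $u$ and $\omega$ from $f_m$ and $\tilde f_m$ by solving a first-order ODE in $y$; the key analytic ingredient is a weighted one-dimensional Hardy inequality whose applicability is precisely the reason the condition $\ell<\alpha+1/2$ in \eqref{ellalpha} is imposed. Set $\phi_m:=\partial_x^m u/(\omega^s+\omega)$. Combining the product-rule expressions in \eqref{funoffs}--\eqref{+fungeps} gives the crucial identity
$$
\partial_y(\chi_1\phi_m)=\chi_1'\phi_m+\chi_1\partial_y\phi_m=\frac{\tilde f_m+f_m}{\omega^s+\omega},
$$
and since $u|_{y=0}=0$ together with $(\omega^s+\omega)(t,x,0)\neq 0$ (by \eqref{condi}) gives $\phi_m|_{y=0}=0$, integration from $0$ produces the pointwise reconstruction
$$
\chi_1(y)\,\partial_x^m u(y) = (\omega^s+\omega)(y)\int_0^y \frac{(f_m+\tilde f_m)(y')}{(\omega^s+\omega)(y')}\,dy'.
$$

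I would then estimate the weighted $L^2$-norm of this identity by splitting ${\rm supp}\,\chi_1$ into its two connected pieces $[0,y_0-5\delta/4]$ and $[y_0+5\delta/4,\infty)$. On the near-origin piece both $\comi y$ and $1/(\omega^s+\omega)$ are uniformly bounded, so direct Cauchy--Schwarz on a bounded $y'$-interval, followed by integration in $x$, provides a bound by $C(\norm{f_m}_{L^2}+\norm{\tilde f_m}_{L^2})$. On the far piece I would split $\int_0^y$ at $y_0+3\delta/2$: the part on $[0,y_0+3\delta/2]$ is $L^\infty_y$-bounded in $y$, and after being multiplied by $(\omega^s+\omega)(y)\le C\comi y^{-\alpha}$ it contributes $C\comi y^{-\alpha}(\norm{f_m}+\norm{\tilde f_m})$, which is $L^2$ against $\comi y^{\ell-1}$ precisely because $\ell<\alpha+1/2$ forces $2(\ell-1-\alpha)<-1$. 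The remaining piece (where $\tilde f_m\equiv 0$ since $\chi_1'$ vanishes there) is bounded by $C\comi y^{-\alpha}\int_{y_0+3\delta/2}^y\comi{y'}^{\alpha}|f_m|\,dy'$, and calling this antiderivative $G(y)$, the desired control reduces to
$$
\bigl\|\comi y^{-(1+\alpha-\ell)}G\bigr\|_{L^2}\le C\bigl\|\comi y^{\ell-\alpha}G'\bigr\|_{L^2}=C\norm{\comi y^{\ell}f_m}_{L^2},
$$
which is exactly the weighted Hardy inequality with Muckenhoupt exponent $s=1+\alpha-\ell>\tfrac12$. This Hardy step is the main obstacle of the lemma, and the tailoring $\ell<\alpha+1/2$ in \eqref{ellalpha} is introduced precisely to make it applicable.

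The $\omega$-side of \eqref{++ell1} then follows from the algebraic identity $\chi_1\partial_x^m\omega=f_m+\chi_1 a\,\partial_x^m u$, where $a=(\partial_y\omega^s+\partial_y\omega)/(\omega^s+\omega)$ satisfies $|a|\le C\comi y^{-1}$ on ${\rm supp}\,\chi_1$ by \eqref{condi}; the cross term is absorbed into the already-estimated $\norm{\chi_1\comi y^{\ell-1}\partial_x^m u}_{L^2}$. The $\norm{\chi_2\partial_x^m\omega}_{L^2}$ summand on the right enters from the transition region $\{\chi_1'\neq 0\}\subset\{\chi_2\equiv 1\}$ (by \eqref{suppch2}), on which $\tilde f_m$ is supported and can in turn be re-expressed through $\chi_2\partial_x^m\omega$. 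Finally, the higher-order bound \eqref{higes} is obtained by applying the same scheme at order $m+1$; since $\chi_1$ is $x$-independent,
$$
\partial_x f_m=\chi_1\partial_x^{m+1}\omega-\chi_1 a\,\partial_x^{m+1}u-\chi_1(\partial_x a)\,\partial_x^m u,
$$
so the effective source in the reconstruction becomes $\partial_x f_m+\chi_1(\partial_x a)\,\partial_x^m u$, and the lower-order commutator $\chi_1(\partial_x a)\,\partial_x^m u$ -- which carries only $m$ tangential derivatives -- is exactly absorbed by the additional $C\norm{\comi y^{\ell-1}\partial_x^m u}_{L^2}$ on the right-hand side of \eqref{higes}.
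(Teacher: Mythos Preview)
Your ``crucial identity'' is false. From \eqref{+fungeps} one has $\tilde f_m=\chi_1'(\omega^s+\omega)\partial_y\phi_m$, so $\tilde f_m/(\omega^s+\omega)=\chi_1'\partial_y\phi_m$, \emph{not} $\chi_1'\phi_m$. The correct product rule reads
\[
\partial_y(\chi_1\phi_m)=\chi_1'\phi_m+\chi_1\partial_y\phi_m=\frac{\chi_1'\,\partial_x^m u}{\omega^s+\omega}+\frac{f_m}{\omega^s+\omega},
\]
so the source in your reconstruction is $f_m+\chi_1'\partial_x^m u$, not $f_m+\tilde f_m$. The extra piece $\chi_1'\partial_x^m u$ is part of the very quantity you are trying to estimate, and its control in terms of $\tilde f_m$ and $\chi_2\partial_x^m\omega$ is precisely the step your proposal glosses over.

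The paper supplies this missing step as follows. On $\mathrm{supp}\,\chi_1'\subset[y_0-\tfrac{3}{2}\delta,\,y_0-\tfrac{5}{4}\delta]\cup[y_0+\tfrac{5}{4}\delta,\,y_0+\tfrac{3}{2}\delta]$, condition \eqref{condi} forces both $|\partial_y\omega^s+\partial_y\omega|\ge c_0/4$ and $|\omega^s+\omega|\le 4c_1^{-1}\comi y^{-\alpha}$, hence $|a|\ge\tilde c>0$ there. Consequently
\[
\norm{\chi_1'\partial_x^m u}_{L^2}\le C\norm{\chi_1' a\,\partial_x^m u}_{L^2}\le C\norm{\tilde f_m}_{L^2}+C\norm{\chi_1'\partial_x^m\omega}_{L^2},
\]
and \eqref{suppch2} passes the last term to $\norm{\chi_2\partial_x^m\omega}_{L^2}$. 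This is the actual entry point of $\tilde f_m$ into the estimate; your remark that $\tilde f_m$ ``can in turn be re-expressed through $\chi_2\partial_x^m\omega$'' inverts the logic. With $\norm{\chi_1'\partial_x^m u}_{L^2}$ thus controlled, your integrated Hardy scheme would go through and is equivalent to the paper's direct integration by parts against the weight $(1+y)^{2\ell-1-2\alpha}$; your handling of \eqref{higes} via $f_{m+1}=\partial_x f_m+\chi_1(\partial_x a)\,\partial_x^m u$ also matches the paper's.
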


\begin{proof} by using the fact that 
\begin{eqnarray*}
 \chi_1 \abs{\omega^s+\omega}\sim \chi_1 (1+y)^{-\alpha}
\end{eqnarray*}
due to \eqref{condi}, 
 integration by parts gives 
\begin{eqnarray*}
	&&\norm{\inner{1+ y}^{\ell-1}\chi_1\partial_x^m u}_{L^2}^2\\ 
	&\leq& C\int_{\mathbb R_x} \int_0^{+\infty}  \inner{1+ y}^{2\ell-2-2\alpha}\inner{\frac{\chi_1\partial_x^m u}{\omega^s+\omega}}^2dy dx\\
	&=&  \frac{C}{2\ell-1-2\alpha}\int_{\mathbb R_x} \int_0^{+\infty} \com{\partial_y \inner{1+ y}^{2\ell-1-2\alpha}}\inner{\frac{\chi_1\partial_x^m u}{\omega^s+\omega}}^2dy dx\\
	&=& - \frac{2C}{2\ell-1-2\alpha}\int_{\mathbb R_x} \int_0^{+\infty}  \inner{1+ y}^{2\ell-1-2\alpha} \inner{\frac{\chi_1\partial_x^m u}{\omega^s+\omega}} \com{\partial_y\inner{\frac{\chi_1\partial_x^m u}{\omega^s+\omega}}}dy dx\\
	&\leq&  C\int_{\mathbb R_x} \int_0^{+\infty}  \Big|\inner{1+ y}^{\ell-1}\chi_1\partial_x^m u\Big| \times\Big|\inner{1+ y}^\ell\inner{\omega^s+\omega}\partial_y\inner{\frac{\chi_1\partial_x^m u}{\omega^s+\omega}}\Big|dy dx\\
	&\leq&  C\norm{\inner{1+ y}^{\ell-1}\chi_1\partial_x^m u}_{L^2}\Big\|\comi y^\ell\inner{\omega^s+\omega}\partial_y\inner{\frac{\chi_1\partial_x^m u}{\omega^s+\omega}}\Big\|_{L^2}. 
\end{eqnarray*}	
This implies
\begin{eqnarray*}
	 \norm{\inner{1+ y}^{\ell-1}\chi_1\partial_x^m u}_{L^2}	 \leq   C \Big\|\comi y^\ell\inner{\omega^s+\omega}\partial_y\inner{\frac{\chi_1\partial_x^m u}{\omega^s+\omega}}\Big\|_{L^2}. 
\end{eqnarray*}
Moreover, for the term on the right side of the
above inequality, we have in view of  the definition of $f_m$ given in \eqref{funoffs} that
\begin{eqnarray*}
	\Big\|\comi y^\ell\inner{\omega^s+\omega}\partial_y\inner{\frac{\chi_1\partial_x^m u}{\omega^s+\omega}}\Big\|_{L^2} \leq  \norm{\comi y^\ell f_m}_{L^2}+\norm{ \chi_1' \comi y^\ell\partial_x^mu}_{L^2} \leq  \norm{\comi y^\ell f_m}_{L^2}+C\norm{ \chi_1' \partial_x^mu}_{L^2}.
\end{eqnarray*}
Thus   
\begin{eqnarray}\label{ellminus1}
	 \norm{\inner{1+ y}^{\ell-1}\chi_1\partial_x^m u}_{L^2}
	 \leq   C \norm{\comi y^\ell f_m}_{L^2}+C\norm{ \chi_1'\partial_x^mu}_{L^2}. 
\end{eqnarray}
Next we estimate the last term in \eqref{ellminus1}.  Observe that
 the condition \eqref{condi} implies
\begin{eqnarray*}
	\abs{	\frac{\partial_y\omega^s+\partial_y \omega}{\omega^s+\omega}}\geq \tilde c>0 \quad \textrm{on supp}\, \chi_1',
\end{eqnarray*}
for some constant $\tilde c$ depending only on the constants $c_0,c_1,\delta$ and $\alpha$ in Assumption \ref{maas}. 
Then 
\begin{eqnarray*}
	\norm{ \chi_1'\partial_x^mu}_{L^2}  \leq  C \Big\| \chi_1' \frac{\partial_y\omega^s+\partial_y \omega}{\omega^s+\omega} \partial_x^mu\Big\|_{L^2}  \leq  C \norm{\tilde f_m}_{L^2}+ C\norm{\chi_1'\partial_x^m\omega}_{L^2} \leq C \norm{\tilde f_m}_{L^2}+ C\norm{\chi_2\partial_x^m\omega}_{L^2},
\end{eqnarray*}
where for the second inequality we have used \eqref{+fungeps}, the definition of $\tilde f_m$,  and the last inequality follows from \eqref{suppch2}.  Now we combine the above estimates with \eqref{ellminus1} to obtain 
\begin{equation}\label{ell1}
	  \norm{\comi{y}^{\ell-1}\chi_1\partial_x^m u}_{L^2} 
	 \leq   C \norm{\comi y^\ell f_m}_{L^2}+C\norm{\tilde f_m}_{L^2}+ C\norm{\chi_2\partial_x^m\omega}_{L^2},
\end{equation}
that  yields the  upper bound  for the first term in \eqref{++ell1}.  On the other hand, note that
\begin{eqnarray*}
		\abs{	\frac{\partial_y\omega^s+\partial_y \omega}{\omega^s+\omega}}\leq C\comi y^{-1} \quad \textrm{on supp}\, \chi_1 
\end{eqnarray*}
 because of \eqref{condi},  and then
\begin{eqnarray*}
	\Big\| \comi y^\ell \chi_1 \frac{\partial_y\omega^s+\partial_y \omega}{\omega^s+\omega} \partial_x^mu\Big\|_{L^2} \leq C \norm{\comi{ y}^{\ell-1}\chi_1\partial_x^m u}_{L^2},
\end{eqnarray*}
that along with \eqref{funoffs} and \eqref{ell1} yield
\begin{eqnarray*}
	\norm{\chi_1 \comi y^\ell \partial_x^m\omega}_{L^2}
	 \leq     \norm{\comi y^\ell f_m}_{L^2}+C \norm{\comi{ y}^{\ell-1}\chi_1\partial_x^m u}_{L^2}\leq   C \norm{\comi y^\ell f_m}_{L^2}+C\norm{\tilde f_m}_{L^2}+ C\norm{\chi_2\partial_x^m\omega}_{L^2}.
\end{eqnarray*}
The  upper bound  for the second term in  \eqref{++ell1} follows. We have proven 
\eqref{++ell1}.

It remains to prove the second statement \eqref{higes}.  Note that
\begin{eqnarray*}
	\partial_xf_m=f_{m+1}-\chi_1 \Big[\partial_x\big( \frac{   \partial_y\omega^s+\partial_y \omega }{\omega^s+\omega }\big)\Big]\partial_x^m u,
\end{eqnarray*}
and moreover, direct calculation gives
\begin{eqnarray*}
	\norm{\chi_1 \comi y \partial_x\big( \frac{   \partial_y\omega^s+\partial_y \omega }{\omega^s+\omega }\big)}_{L^\infty}\leq C\norm{  \comi y^{ \alpha}   \partial_x \omega}_{L^\infty}+\norm{  \comi y^{1+\alpha}    \partial_x\partial_y \omega}_{L^\infty}\leq C 
\end{eqnarray*}
because of \eqref{condi} and the fact that $\alpha\leq\ell.$  Thus 
\begin{eqnarray*}
	\norm{\comi y^\ell f_{m+1}}_{L^2}\leq\norm{\comi y^\ell \partial_x f_{m}}_{L^2} +C  \norm{\comi y^{\ell-1}\partial_x^m u}_{L^2}.
	\end{eqnarray*}
Similar estimate holds for $ \norm{\tilde f_{m+1}}_{L^2}.$ 
This estimate and  \eqref{++ell1} give the second statement \eqref{higes} in Lemma \ref{eqnorm}. The proof is then completed. 
\end{proof}

\begin{proof}
[Proof of Proposition \ref{provel+}]

In view of Lemma \ref{eqnorm}  we have
\begin{eqnarray*}
  &&\norm{\chi_1\comi y^{\ell-1} \partial_x^m u}_{L^2}^2+\norm{\chi_1\comi y^{\ell} \partial_x^m\omega}_{L^2}^2 
 \\
 &&\qquad\qquad+\eps \int_0^t \inner{\norm{\chi_1   \comi y^{\ell-1}\partial_x^{m+1}u}_{L^2}^2+\norm{\chi_1   \comi y^{\ell }\partial_x^{m+1}\omega}_{L^2}^2}\,ds \\
 &\leq &\norm{\comi y^\ell f_m}_{L^2}^2+C\norm{\tilde f_m}_{L^2}^2+ C\norm{\chi_2\partial_x^m\omega}_{L^2}^2\\
 &&  + \eps C\int_0^t \inner{\norm{  \comi y^{\ell}\partial_xf_m}_{L^2}^2+\norm{ \partial_x\tilde f_m}_{L^2}^2 + \norm{\chi_2 \partial_x^{m+1}\omega}_{L^2}^2 + \norm{\comi y^{\ell-1} \partial_x^{m}u}_{L^2}^2 }\,ds.\end{eqnarray*}
 Moreover, the terms on the right side of the above
inequality  are bounded above by 
 \begin{eqnarray*}
  \frac{C\big[(m-6)!\big]^{2\sigma}}{\rho^{2(m-5)}}\abs{u_0}_{\rho,\sigma}^2+
 	 \frac{C\big[(m-6)!\big]^{2\sigma}}{\rho^{2(m-5)}} \inner{  \int_0^t   \inner{ \abs{u(s)}_{ \rho,\sigma}^2+\abs{u(s)}_{ \rho,\sigma}^4 }   \,ds+    \int_0^t  \frac{  \abs{u(s)}_{ \tilde\rho,\sigma}^2}{\tilde\rho-\rho}\,ds}
 	 \end{eqnarray*}
by using Proposition \ref{prpaway} and Proposition \ref{prpnear}.   Thus  
\begin{eqnarray*}  
&&\norm{\chi_1\comi y^{\ell-1} \partial_x^m u}_{L^2}^2+\norm{\chi_1\comi y^{\ell} \partial_x^m\omega}_{L^2}^2 
 \\
 &&\qquad\qquad+\eps \int_0^t \inner{\norm{\chi_1   \comi y^{\ell-1}\partial_x^{m+1}u}_{L^2}^2+\norm{\chi_1   \comi y^{\ell }\partial_x^{m+1}\omega}_{L^2}^2}\,ds \\
&\leq &	\frac{C\big[(m-6)!\big]^{2\sigma}}{\rho^{2(m-5)}}\abs{u_0}_{\rho,\sigma}^2+
 	 \frac{C\big[(m-6)!\big]^{2\sigma}}{\rho^{2(m-5)}} \inner{  \int_0^t   \inner{ \abs{u(s)}_{ \rho,\sigma}^2+\abs{u(s)}_{ \rho,\sigma}^4 }   \,ds+    \int_0^t  \frac{  \abs{u(s)}_{ \tilde\rho,\sigma}^2}{\tilde\rho-\rho}\,ds}.  	 	
\end{eqnarray*}
Note that  $1\leq\chi_1+\chi_2$ and $\comi y^{\ell} $ is equivalent to a constant  on  supp\,$\chi_2$. Then  combining the above inequality  and Proposition \ref{prpnear}, we obtain
\begin{equation}\label{chi1esers}
\begin{aligned}
&\norm{\chi_1\comi y^{\ell-1} \partial_x^m u}_{L^2}^2+\norm{ \comi y^{\ell} \partial_x^m\omega}_{L^2}^2 +\eps \int_0^t \inner{\norm{\chi_1   \comi y^{\ell-1}\partial_x^{m+1}u}_{L^2}^2+\norm{ \comi y^{\ell }\partial_x^{m+1}\omega}_{L^2}^2}\,ds \\
\leq &	\frac{C\big[(m-6)!\big]^{2\sigma}}{\rho^{2(m-5)}}\abs{u_0}_{\rho,\sigma}^2+
 	 \frac{C\big[(m-6)!\big]^{2\sigma}}{\rho^{2(m-5)}} \inner{  \int_0^t   \inner{ \abs{u(s)}_{ \rho,\sigma}^2+\abs{u(s)}_{ \rho,\sigma}^4 }   \,ds+    \int_0^t  \frac{  \abs{u(s)}_{ \tilde\rho,\sigma}^2}{\tilde\rho-\rho}\,ds}.  	 	
\end{aligned}
\end{equation}
Moreover,  by Poincar\'e inequality we have, for $ j=m$ and $j=m+1,$
\begin{eqnarray*}
	\norm{\chi_2\comi y^{\ell-1}\partial_x^j u}_{L^2}\leq C\norm{\chi_2\partial_x^j u}_{L^2}\leq C\norm{\partial_y\inner{\chi_2\partial_x^ju}}_{L^2} \leq   C\norm{  \chi_1 \partial_x^j u}_{L^2}+C\norm{   \partial_x^j \omega}_{L^2},  
\end{eqnarray*}
when in the last inequality we have used the fact
that $\chi_2'=\chi_1\chi_2'$ by \eqref{suppch2}. This and \eqref{chi1esers}      give
\begin{eqnarray*}
	&&\norm{\chi_2\comi y^{\ell-1} \partial_x^m u}_{L^2}^2 +\eps \int_0^t  \norm{\chi_2   \comi y^{\ell-1}\partial_x^{m+1}u}_{L^2}^2 \,ds \\
&\leq &	\frac{C\big[(m-6)!\big]^{2\sigma}}{\rho^{2(m-5)}}\abs{u_0}_{\rho,\sigma}^2+
 	 \frac{C\big[(m-6)!\big]^{2\sigma}}{\rho^{2(m-5)}} \inner{  \int_0^t   \inner{ \abs{u(s)}_{ \rho,\sigma}^2+\abs{u(s)}_{ \rho,\sigma}^4 }   \,ds+    \int_0^t  \frac{  \abs{u(s)}_{ \tilde\rho,\sigma}^2}{\tilde\rho-\rho}\,ds}.
\end{eqnarray*}
Consequently, we combine the above inequality and \eqref{chi1esers} to conclude, by using again the fact that  $1\leq\chi_1+\chi_2,$
\begin{eqnarray*}
  &&\norm{ \comi y^{\ell-1} \partial_x^m u}_{L^2}^2+\norm{ \comi y^{\ell} \partial_x^m\omega}_{L^2}^2 
 +\eps \int_0^t \inner{\norm{    \comi y^{\ell-1}\partial_x^{m+1}u}_{L^2}^2+\norm{   \comi y^{\ell }\partial_x^{m+1}\omega}_{L^2}^2}\,ds \\
   &\leq & 	\frac{C\big[(m-6)!\big]^{2\sigma}}{\rho^{2(m-5)}}\abs{u_0}_{\rho,\sigma}^2+
 	 \frac{C\big[(m-6)!\big]^{2\sigma}}{\rho^{2(m-5)}} \inner{  \int_0^t   \inner{ \abs{u(s)}_{ \rho,\sigma}^2+\abs{u(s)}_{ \rho,\sigma}^4 }   \,ds+    \int_0^t  \frac{  \abs{u(s)}_{ \tilde\rho,\sigma}^2}{\tilde\rho-\rho}\,ds}.  
\end{eqnarray*}
  Thus we get the desired estimate in Proposition \ref{provel+}
and this completes the proof. 
\end{proof}

 \subsection{Estimate on the mixed derivatives}   
For the mixed derivatives $\partial_x^i\partial_y^j\omega$ of vorticity, we have
  \begin{proposition}
 \label{norma}
 Let $u\in L^\infty\inner{[0, T];~X_{\rho_0,\sigma}}$  be a solution to \eqref{regpran} 
under the assumptions in Theorem \ref{uniestgev}. Then we have, for any pair $(i,j)$ with $1\leq j\leq 4$ and $ i+j\geq 6,$ for any $t\in[0,T],$ and   for any   $\rho>0,$
\begin{eqnarray*}
 \norm{ \comi{y}^{\ell+1} \partial_x^{i}\partial_y^{j}\omega(t)}_{L^2 }^2 
 \leq    \frac{C\big[(i+j-6)!\big]^{2\sigma}}{\rho^{2(i+j-5)}}\abs{u_0}_{\rho,\sigma}^2+
 	 \frac{C\big[(i+j-6)!\big]^{2\sigma}}{\rho^{2(i+j-5)}}  \int_0^t   \inner{ \abs{u(s)}_{ \rho,\sigma}^2+\abs{u(s)}_{ \rho,\sigma}^4 }   \,ds. 
\end{eqnarray*}
\end{proposition}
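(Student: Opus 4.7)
The plan is to prove Proposition \ref{norma} by weighted $L^2$ energy estimates on $W_{i,j} := \partial_x^i \partial_y^j \omega$, carried out inductively on $j \in \{1,2,3,4\}$. Applying $\partial_x^i \partial_y^j$ to the regularized vorticity equation
\begin{equation*}
\partial_t \omega + \inner{u^{s}+u}\partial_x \omega + v\partial_y\inner{\omega^s+\omega} - \partial_y^2 \omega - \eps \partial_x^2 \omega = 0,
\end{equation*}
multiplying by $\comi y^{2(\ell+1)} W_{i,j}$, and integrating over $\mathbb R_+^2$ leads, exactly as in Lemma \ref{lemlow}, to a differential inequality of the form
\begin{equation*}
\tfrac{1}{2}\tfrac{d}{dt}\norm{\comi y^{\ell+1} W_{i,j}}_{L^2}^2 + \tfrac{1}{2}\norm{\comi y^{\ell+1}\partial_y W_{i,j}}_{L^2}^2 + \eps\norm{\comi y^{\ell+1}\partial_x W_{i,j}}_{L^2}^2 \le \mathcal R_{i,j} + \mathcal B_{i,j},
\end{equation*}
where $\mathcal R_{i,j}$ collects interior Leibniz commutators and $\mathcal B_{i,j}$ collects the $y = 0$ boundary contributions.

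The top-order transport terms $\inner{u^s+u}\partial_x W_{i,j}$ and $v\partial_y W_{i,j}$, as well as the top Leibniz term $v\partial_x^i\partial_y^{j+1}\omega$, are eliminated by integration by parts combined with the divergence-free identity $\partial_y v = -\partial_x u$; they contribute only lower-order quantities controlled by $\norm{\partial_x u}_{L^\infty}\norm{\comi y^{\ell+1}W_{i,j}}_{L^2}^2$. The remaining commutator terms take the form $(\partial_x^a\partial_y^b\ast)(\partial_x^c \partial_y^d\omega)$ with $a+c \le i+1$, $b+d \le j+1$, $d \le j$, and $\ast \in \{u,v,u^s,\omega^s\}$. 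Since $j \le 4$ is bounded absolutely, the $y$-direction combinatorics contribute only an absolute constant, so the factorial growth $\big[(i+j-6)!\big]^\sigma/\rho^{i+j-5}$ arises solely from the $\partial_x$-distribution and is handled by the same low/high-frequency splitting and Sobolev embedding as in Proposition \ref{prpenmon} via Lemma \ref{lemequa}; any terms containing $\partial_y^{j+1}\omega$ are absorbed by the dissipation through Cauchy--Schwarz.

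Boundary terms $\mathcal B_{i,j}$ at $y = 0$ are addressed by first noting that $\partial_y\omega|_{y=0} = 0$ for all $t$, which follows from $u|_{y=0} = v|_{y=0} = 0$ and the equation evaluated at the wall (together with the compatibility \eqref{comcon}); the higher traces $\partial_y^k\omega|_{y=0}$ that appear when $j \ge 2$ are expressed via iterated differentiation of the equation at $y=0$ and then dominated by interior norms through the trace inequality $\norm{\varphi|_{y=0}}_{L^2(\mathbb R_x)}^2 \le C\norm{\varphi}_{L^2}\norm{\partial_y\varphi}_{L^2}$, so that the extra dissipation absorbs them. Integrating in time and applying Gronwall then yields the asserted bound: the base case $j = 1$ uses Proposition \ref{provel+} to control the purely tangential lower-order pieces, while the inductive step $j \to j+1$ uses the already available bound for $\partial_y^{j'}\omega$ with $j' \le j$ inside the commutators.

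The main obstacle I anticipate is the careful bookkeeping of the combinatorial $\big[(i+j-6)!\big]^{2\sigma}/\rho^{2(i+j-5)}$ factors through the Leibniz expansion, in particular verifying that the $\partial_x$-splitting produces precisely the right decay (as in Lemma \ref{lemp3}), together with the iterative wall analysis required to control the boundary traces $\partial_y^k\omega|_{y=0}$ for $k \ge 2$, since the compatibility \eqref{comcon} does not automatically guarantee their vanishing.
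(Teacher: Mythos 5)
Your proposal follows essentially the same route as the paper's own proof: a weighted energy estimate on $\comi{y}^{\ell+1}\partial_x^i\partial_y^j\omega$, integration by parts in $y$ to push the over-ranged $\partial_y^{j+1}\omega$ onto the dissipative half-norm, and an explicit computation of the high-order traces $\partial_y^{j+1}\omega|_{y=0}$ (through iterated differentiation of the vorticity equation at $y=0$) followed by a trace/Sobolev argument so they can be absorbed by the dissipation. The factorial bookkeeping through the $\partial_x$-Leibniz split is handled exactly as in Lemma~\ref{lemp3}, and the observation that $j\le 4$ is bounded keeps the $y$-combinatorics harmless.

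Two minor remarks. First, the induction on $j$ is unnecessary: the right-hand side of the claimed estimate is the full norm $\abs{u(s)}_{\rho,\sigma}$, which already controls $\comi{y}^{\ell+1}\partial_x^a\partial_y^b\omega$ for all $1\le b\le 4$ and $a+b\ge 6$, so each pair $(i,j)$ can be treated in one shot as the paper does; there is no circularity to break. Second, your phrase ``absorbed by the dissipation through Cauchy--Schwarz'' for the terms $(\partial_x^k v)\partial_x^{i-k}\partial_y^{j+1}\omega$ with $k\ge 1$ hides the fact that a direct Cauchy--Schwarz would produce $\norm{\comi{y}^{\ell+1}\partial_x^{i-k}\partial_y^{j+1}\omega}_{L^2}$, which is \emph{not} controlled by $\abs{u}_{\rho,\sigma}$ for $j+1=5$; one must first integrate by parts in $y$ (as in the paper's treatment of $A_{i,j,3}$) so that the $\partial_y$ lands on the test function and becomes the dissipation term $\norm{\comi{y}^{\ell+1}\partial_x^i\partial_y^{j+1}\omega}_{L^2}$. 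Since you invoke this integration by parts earlier for the $k=q=0$ transport term, the mechanism is clearly understood; just be sure it is applied uniformly to the whole $q=0$, $k\ge 1$ family.
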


The proof of the above proposition can be obtained
by  similar  argument used  in the previous sections,  and the main difference  arises  from the boundary values since higher derivatives in $y$ are involved when we perform integration by parts.   So we first calculate $\partial_y^j\omega\big|_{y=0}.$ Firstly, we have     
\begin{eqnarray*}
\partial_y\omega^s(t,0)=\partial_y\omega(t,x,0)=0.	
\end{eqnarray*}
 Then  by the equation of vorticity, we  obtain that
\begin{eqnarray*}
	\partial_y^3\omega\big |_{y=0}= \partial_y \big(\partial_t    \omega+\inner{u^s+u}\partial_x\omega+v\partial_y \inner{\omega^s+\omega} -\eps\partial_x^2\omega \big)  \big |_{y=0} 
	 = \inner{\omega^s+\omega}\partial_x\omega\big |_{y=0},
\end{eqnarray*}
and    direct computation yields 
\begin{equation}
\label{bouval}
 \partial_y^5\omega\big |_{y=0} 
	 =- \inner{\partial_y^2\omega^s+\partial_y^2\omega} \partial_x\omega \big |_{y=0}+4\inner{\omega^s+\omega} \partial_x\partial_y^2\omega \big |_{y=0}-2\eps  \inner{\partial_x\omega}\partial_x^2\omega \big |_{y=0}.
 \end{equation}
 We  apply $\comi{y}^{\ell+1} \partial_x^{i}\partial_y^{j}$ to the equation   for  vorticity $\omega$ to  have  \begin{eqnarray}\label{equforhi}
	&&	\Big(\partial_t    +\inner{u^s+u}\partial_x+v\partial_y  -\partial_y^2-\eps\partial_x^2 \Big) \comi{y}^{\ell+1}\partial_x^{i}\partial_y^{j} \omega=A_{i,j}
	\end{eqnarray}
	with
	\begin{eqnarray*}
	A_{i, j} &=& -  \comi{y}^{\ell+1}\partial_x^{i}\partial_y^{j}\big(v\partial_y\omega^s\big)-\comi{y}^{\ell+1} \sum_{\stackrel {k+q\geq 1}{k\leq i, ~q\leq j}} {i\choose k}{ j \choose q}\inner{\partial_x^k\partial_y^q (u^s +u)} \partial_{x}^{i-k+1}\partial_y^{j-q}   \omega\\
	&&-\comi{y}^{\ell+1}\sum_{\stackrel {k+q\geq 1}{k\leq i, ~q\leq j}} {i\choose k}{ j \choose q } \inner{\partial_x^k\partial_y^q v}\partial_x^{i-k}\partial_y^{j-q+1}  \omega\\ 
	&&+v\inner{\partial_y\comi{y}^{\ell+1}}\partial_x^i\partial_y^j\omega-\inner{\partial_y^2\comi{y}^{\ell+1}}\partial_x^i\partial_y^j\omega-2\inner{\partial_y\comi{y}^{\ell+1}}\partial_x^i\partial_y^{j+1}\omega\\
	&\stackrel{\rm def}{=} & A_{i,j,1}+ A_{i,j,2}+ A_{i,j,3}+ A_{i,j,4}+ A_{i,j,5}+ A_{i,j,6}. 
	\end{eqnarray*}
	We will estimate the terms on both sides of \eqref{equforhi}
 in the following lemmas.

 	\begin{lemma}\label{lemboud}
	 Let $1\leq j\leq 4$ and $ i+j\geq 6.$ Then we have
	\begin{eqnarray*}
		&&\int_0^t \Big( \big(\partial_t    +\inner{u^s+u}\partial_x+v\partial_y  -\partial_y^2-\eps\partial_x^2 \big) \comi{y}^{\ell+1}\partial_x^{i}\partial_y^{j} \omega,~\comi{y}^{\ell+1}\partial_x^{i}\partial_y^{j} \omega\Big)_{L^2}\,ds\\
		&\geq&\frac{1}{2} \norm{\comi{y}^{\ell+1}\partial_x^{i}\partial_y^{j} \omega}_{L^2}^2+\frac{1}{2} \int_0^t \norm{ \comi{y}^{\ell+1}\partial_x^{i}\partial_y^{j+1} \omega }_{L^2}^2-\frac{1}{2} \norm{\comi{y}^{\ell+1}\partial_x^{i}\partial_y^{j} \omega(0)}_{L^2}^2\\
		&&-\frac{ C \com{\big((i+j)-6\big)!}^{2\sigma}}{\rho^{2\big( (i+j)-5\big)}} \int_0^t  \inner{\abs{u(s)}_{\rho,\sigma}^2 + \abs{u(s)}_{\rho,\sigma}^4}\,ds.
	\end{eqnarray*}	
	\end{lemma}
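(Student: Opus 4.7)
The plan is a weighted $L^2$ energy estimate applied directly to \eqref{equforhi}. Setting $w=\comi{y}^{\ell+1}\partial_x^{i}\partial_y^{j}\omega$, I would expand the inner product on the left-hand side by distributing the operator over $w$. The four pieces give: the time derivative produces $\tfrac{1}{2}\tfrac{d}{dt}\|w\|_{L^2}^2$; the transport part $(u^s+u)\partial_x w+v\partial_y w$ integrated against $w$ vanishes after integration by parts, owing to $\partial_x u^s=0$, incompressibility $\partial_x u+\partial_y v=0$, and the boundary condition $v|_{y=0}=0$; the vertical diffusion piece $-\partial_y^2 w$ yields $\|\partial_y w\|_{L^2}^2$ together with a trace contribution $\int_{\mathbb{R}_x}w(x,0)\partial_y w(x,0)\,dx$ at $y=0$; and the regularization $-\eps\partial_x^2$ yields the nonnegative $\eps\|\partial_x w\|_{L^2}^2$. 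Integrating in $s\in[0,t]$ identifies the left-hand side of the lemma with
\begin{equation*}
\tfrac{1}{2}\|w(t)\|_{L^2}^2-\tfrac{1}{2}\|w(0)\|_{L^2}^2+\int_0^t\|\partial_y w\|_{L^2}^2\,ds+\eps\int_0^t\|\partial_x w\|_{L^2}^2\,ds+\text{(trace term)}.
\end{equation*}

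To extract $\tfrac{1}{2}\int_0^t\|\comi{y}^{\ell+1}\partial_x^{i}\partial_y^{j+1}\omega\|_{L^2}^2\,ds$ from $\int_0^t\|\partial_y w\|_{L^2}^2\,ds$, I would use the Leibniz decomposition $\partial_y w=\comi{y}^{\ell+1}\partial_x^{i}\partial_y^{j+1}\omega+(\partial_y\comi{y}^{\ell+1})\partial_x^{i}\partial_y^{j}\omega$, combined with $|\partial_y\comi{y}^{\ell+1}|\leq C\comi{y}^{\ell}$, giving
\begin{equation*}
\|\partial_y w\|_{L^2}^2\geq\tfrac{1}{2}\bigl\|\comi{y}^{\ell+1}\partial_x^{i}\partial_y^{j+1}\omega\bigr\|_{L^2}^2-C\bigl\|\comi{y}^{\ell}\partial_x^{i}\partial_y^{j}\omega\bigr\|_{L^2}^2.
\end{equation*}
Since $1\leq j\leq 4$ and $i+j\geq 6$, the weighted estimate \eqref{emix} controls the error by $[(i+j-6)!]^{2\sigma}\rho^{-2(i+j-5)}\abs{u}_{\rho,\sigma}^2$, which is of the form required by the last term in the lemma.

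The main obstacle will be the trace contribution $\int_0^t\int_{\mathbb{R}_x}\partial_x^{i}\partial_y^{j}\omega(s,x,0)\,\partial_x^{i}\partial_y^{j+1}\omega(s,x,0)\,dx\,ds$, which is what survives since $\partial_y\comi{y}^{\ell+1}|_{y=0}=0$. For $j=1$ it vanishes identically via the preserved relation $\partial_y\omega|_{y=0}=0$; for $j=2,3,4$ one would derive, by differentiating the vorticity equation in $y$ and evaluating at $y=0$ while using $u|_{y=0}=v|_{y=0}=0$, $\partial_y v|_{y=0}=0$, and $\partial_y\omega|_{y=0}=0$, explicit formulas of the type $\partial_y^3\omega|_{y=0}=(\omega^s+\omega)\partial_x\omega|_{y=0}$ and \eqref{bouval}, plus analogues for $\partial_y^2\omega|_{y=0}$ and $\partial_y^4\omega|_{y=0}$. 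In each case the trace then reduces to a polynomial in $\omega^s$, $\omega$, and their low-order $x$- and $y$-derivatives at $y=0$. Applying the one-sided trace inequality $\|f(\cdot,0)\|_{L^2_x}^2\leq C\|f\|_{L^2}\|\partial_y f\|_{L^2}+C\|f\|_{L^2}^2$, invoking the uniform bounds \eqref{condi}, and combining with the Leibniz/Sobolev product estimates already used in Lemmas~\ref{lemp3} and \ref{lemuppb}, each resulting contribution is dominated by $\kappa\int_0^t\|\comi{y}^{\ell+1}\partial_x^{i}\partial_y^{j+1}\omega\|_{L^2}^2\,ds$ plus $C\kappa^{-1}[(i+j-6)!]^{2\sigma}\rho^{-2(i+j-5)}\int_0^t(\abs{u}_{\rho,\sigma}^2+\abs{u}_{\rho,\sigma}^4)\,ds$. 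Choosing $\kappa$ small enough absorbs the former into the good term and yields the stated bound.
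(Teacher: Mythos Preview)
Your proposal is correct and follows essentially the same route as the paper: both perform the weighted energy identity on $w=\comi{y}^{\ell+1}\partial_x^{i}\partial_y^{j}\omega$, extract the good term $\tfrac{1}{2}\|\comi{y}^{\ell+1}\partial_x^{i}\partial_y^{j+1}\omega\|_{L^2}^2$ from $\|\partial_y w\|_{L^2}^2$ via Leibniz and \eqref{emix}, and then reduce the trace contribution $\int_0^t\int_{\mathbb R}\partial_x^{i}\partial_y^{j}\omega(0)\,\partial_x^{i}\partial_y^{j+1}\omega(0)\,dx\,ds$ case by case using the boundary identities $\partial_y\omega|_{y=0}=0$, $\partial_y^3\omega|_{y=0}=(\omega^s+\omega)\partial_x\omega|_{y=0}$ and \eqref{bouval}, together with the trace inequality and the product estimates from Lemmas~\ref{lemp3}--\ref{lemuppb}. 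One small imprecision: you do not need separate ``formulas'' for $\partial_y^2\omega|_{y=0}$ or $\partial_y^4\omega|_{y=0}$; what is actually used is that in each product one factor (namely $\partial_y^{j+1}\omega|_{y=0}$ for $j=2,4$, and $\partial_y^{j}\omega|_{y=0}$ for $j=3$) admits an explicit expression in lower-order quantities, while the remaining factor is handled by the trace inequality and absorbed with a small $\kappa$.
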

 
 \begin{proof} We only need to discuss the boundary terms when
we us integration by parts:
 \begin{eqnarray*}
 	&&\int_0^t \Big(-\partial_y^2 \inner{\comi{y}^{\ell+1}\partial_x^{i}\partial_y^{j} \omega},~\comi{y}^{\ell+1}\partial_x^{i}\partial_y^{j} \omega\Big)_{L^2}\,ds\\
 	&=&\int_0^t \norm{\partial_y  \big(\comi{y}^{\ell+1}\partial_x^{i}\partial_y^{j} \omega\big)}_{L^2}^2 \,ds+\int_0^t	\int_{\mathbb R}  \partial_y  \big(\comi{y}^{\ell+1}\partial_x^{i}\partial_y^{j} \omega\big) \Big|_{y=0}\partial_x^{i}\partial_y^{j} \omega(t,x,0)\,dxdt\\ 
 	&\geq&\int_0^t \norm{ \comi{y}^{\ell+1}\partial_x^{i}\partial_y^{j+1} \omega }_{L^2}^2 \,ds+\int_0^t	\int_{\mathbb R}  \Big(\partial_x^{i}\partial_y^{j} \omega(t,x,0)\Big)\partial_x^{i}\partial_y^{j+1} \omega(t,x,0)\,dxds\\
 	&&-\frac{ C \com{\big((i+j)-6\big)!}^{2\sigma}}{\rho^{2\big( (i+j)-5\big)}} \int_0^t  \abs{u(s)}_{\rho,\sigma}^2 \,ds,
 \end{eqnarray*} 
where we have used  \eqref{emix} in the last inequality. Note
that the boundary value is well-defined   in view of \eqref{bouval}.  
 Thus  the  estimate in Lemma \ref{lemboud}  follows by standard energy method if we can show that, for any $\kappa>0$,  $1\leq j\leq 4$ and $i\geq 0$ with $i+j\geq 6$  that
   \begin{equation}\label{inesv}
   \begin{aligned}
  &\Big| \int_0^t	\int_{\mathbb R} \Big(\partial_x^{i}\partial_y^{j} \omega(s,x,0)\Big) \partial_x^{i}\partial_y^{j+1} \omega(s,x,0)\,dxds \Big|\\
  \leq & \kappa   \int_0^t \norm{ \partial_x^i\partial_y^{j+1}\omega}_{L^2}^2\,ds 
	 + \frac{ C\kappa^{-1} \com{\big((i+j)-6\big)!}^{2\sigma}}{\rho^{2\big( (i+j)-5\big)}} \int_0^t  \abs{u(s)}_{\rho,\sigma}^4  \,ds.
	 \end{aligned}
   \end{equation}
  Note that \eqref{inesv} holds obviously  for   $j=1$ because $\partial_y\omega(t,x,0)=0.$ It remains to consider the cases when $j=2, 3, 4.$
  
  {\it \underline{The case when $j=4$}}: Recall, in view of \eqref{bouval},   
\begin{eqnarray*}
	\partial_y^5\omega\big |_{y=0}- \inner{\partial_y^2\omega^s+\partial_y^2\omega} \partial_x\omega \big |_{y=0}+4\inner{\omega^s+\omega} \partial_x\partial_y^2\omega \big |_{y=0}-2\eps  \inner{\partial_x\omega}\partial_x^2\omega \big |_{y=0}.  
\end{eqnarray*}
Then direct computation gives, using the argument in Lemma \ref{lemp3} as well as the Sobolev inequality (see Lemma \ref{sobine} in the Appendix),
\begin{eqnarray*}
	\norm{\partial_x^{i}\partial_y^{5} \omega(t,x,0)}_{ L_x^2}\leq  \frac{ C \com{\big((i+4)-6\big)!}^{\sigma}}{\rho^{ (i+4)-5}} \abs{u}_{\rho,\sigma}.
\end{eqnarray*}
Hence,  for any small $\kappa>0,$
\begin{eqnarray*}
	&& \Big| \int_0^t	\int_{\mathbb R} \Big(\partial_x^{i}\partial_y^{4} \omega(s,x,0) \Big)\partial_x^{i}\partial_y^{5} \omega(s,x,0)\,dxds \Big|\\
	&\leq& \kappa \int_0^t \norm{\partial_x^i\partial_y^4\omega(s,x,0)}_{ L_x^2}^2\,ds 
	 +\kappa^{-1}C \int_0^t\norm{\partial_x^{i}\partial_y^{5} \omega(s,x,0)}_{ L_x^2}^2 \,ds\\
	&\leq& \kappa C  \int_0^t \norm{ \partial_x^i\partial_y^5\omega}_{L^2}^2\,ds 
	 + \frac{ C\kappa^{-1} \com{\big((i+4)-6\big)!}^{2\sigma}}{\rho^{2\big( (i+4)-5\big)}} \int_0^t  \abs{u(s)}_{\rho,\sigma}^4  \,ds.
\end{eqnarray*}
  Thus we obtain  \eqref{inesv} for $j=4.$

  {\it \underline{The case when $2\leq j\leq 3$}}:
The estimation on 
\begin{eqnarray*}
	\Big| \int_0^t	\int_{\mathbb R} \Big(\partial_x^{i}\partial_y^{j} \omega(s,x,0) \Big)\partial_x^{i}\partial_y^{j+1} \omega(s,x,0)\,dxds \Big|
\end{eqnarray*}
for  $j=2$ and $j=3$ is simpler than the case $j=4,$ since only lower order derivatives are involved.  And thus for brevity we omit the details.   The proof is then completed. 
 \end{proof}

\begin{lemma}[Estimate on $A_{i,j,1}$]
	Under the same assumption as  in Proposition \ref{norma}, we have 	\begin{eqnarray*}
	 	\Big|\int_0^t \Big(-  \comi{y}^{\ell+1}\partial_x^{i}\partial_y^{j}\big(v\partial_y\omega^s\big),~\comi{y}^{\ell+1}\partial_x^{i}\partial_y^{j} \omega\Big)_{L^2}\,ds\Big|\leq 
	 \frac{  C \com{(i+j-6)!}^{2\sigma}}{\rho^{2(i+j-5)}} \int_0^t \inner{\abs{u(s)}_{\rho,\sigma}^2 +\abs{u(s)}_{\rho,\sigma}^3}\,ds.
	\end{eqnarray*}
\end{lemma}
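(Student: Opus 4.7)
The plan is to expand the $y$-derivatives via the Leibniz rule, then estimate each resulting term by Cauchy--Schwarz combined with the decay of the shear-flow derivatives from Proposition \ref{proshf}. Since $\omega^s=\omega^s(t,y)$ has no $x$-dependence, every $x$-derivative lands on $v$, so that
\begin{equation*}
  \partial_x^i\partial_y^j\bigl(v\,\partial_y\omega^s\bigr)=\sum_{q=0}^{j}\binom{j}{q}\bigl(\partial_x^i\partial_y^{j-q}v\bigr)\bigl(\partial_y^{q+1}\omega^s\bigr).
\end{equation*}
Proposition \ref{proshf} supplies the pointwise bound $|\partial_y^{q+1}\omega^s|\leq C\comi{y}^{-\alpha-1}$ for $0\leq q\leq 4$, which covers every index arising here since $j\leq 4$.

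For the summands with $0\leq q\leq j-1$, I will invoke the divergence-free relation $\partial_y v=-\partial_x u$ to convert $\partial_x^i\partial_y^{j-q}v$ into the local object $-\partial_x^{i+1}\partial_y^{j-q-2}\omega$ when $j-q-1\geq 1$, or $-\partial_x^{i+1}u$ when $j-q-1=0$. Cauchy--Schwarz against the test function $\comi{y}^{\ell+1}\partial_x^i\partial_y^j\omega$ splits the weight $\comi{y}^{2(\ell+1)}$ evenly, and since $\ell-\alpha\leq\ell-1$ (as $\alpha\geq 1$), each resulting factor is directly controlled by $|u|_{\rho,\sigma}$ through \eqref{etan}--\eqref{emix} of Lemma \ref{lemequa}. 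The combinatorial sum over $q$ is processed exactly as for $J_1$--$J_3$ in Lemma \ref{lemp3}: splitting at $q=[j/2]$, applying the Sobolev inequality (Lemma \ref{sobine}) to the factor of lower order, keeping $L^2$ on the other, and collecting factorials using $p!\,q!\leq(p+q)!$ from Lemma \ref{lemequa}. These contributions yield a bound of size $\frac{C[(i+j-6)!]^{2\sigma}}{\rho^{2(i+j-5)}}|u|_{\rho,\sigma}^2$.

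The principal difficulty is the boundary case $q=j$, where $(\partial_x^i v)(\partial_y^{j+1}\omega^s)$ still involves $v$ only through the nonlocal relation $v=-\int_0^y\partial_x u\,d\tilde y$. My plan is to integrate by parts in $y$ inside the inner product, moving the remaining $\partial_y$ from $\omega^s$ onto $\comi{y}^{2(\ell+1)}\partial_x^i v\cdot\partial_x^i\partial_y^j\omega$. The boundary contribution at $y=0$ is killed by $\partial_x^i v|_{y=0}=0$, and at $y=+\infty$ by the decay of $\partial_y^j\omega^s$ (still $\comi{y}^{-\alpha-1}$ for $j\geq 1$). The three resulting bulk terms contain respectively: (i) $\partial_y\comi{y}^{2(\ell+1)}$, whose extra $\comi{y}^{-1}$ combines with the Hardy inequality $\|\comi{y}^{-1}\partial_x^i v\|_{L^2}\leq C\|\partial_x^{i+1}u\|_{L^2}$ to close the estimate; (ii) $\partial_y\partial_x^i v=-\partial_x^{i+1}u$, which is now purely local and handled exactly as in the easy cases above, using $\ell-\alpha\leq\ell-1$; (iii) $\partial_x^i\partial_y^{j+1}\omega$ paired with the still nonlocal $\partial_x^i v$. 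For (iii), when $j\leq 3$ the bound \eqref{emix} applies directly, while the residual $\partial_x^i v$ is controlled through the uniform-in-$y$ estimate $\|\partial_x^i v\|_{L^\infty_y L^2_x}\leq C\|\comi{\tilde y}^{\,\ell-1}\partial_x^{i+1}u\|_{L^2}$, obtained from Cauchy--Schwarz in $\tilde y$ together with $\ell>3/2$; when $j=4$ the explicit boundary-value formula \eqref{bouval} is substituted in place of $\partial_y^{j+1}\omega^s$ to avoid the appearance of $\partial_y^5\omega$, exactly as in the treatment of the case $j=4$ in Lemma \ref{lemboud}.

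The cubic term $|u|_{\rho,\sigma}^3$ on the right-hand side arises from a single use of the Sobolev embedding in $x$ (Lemma \ref{sobine}) needed to pass one factor to $L^\infty_x$ when splitting $\partial_x^i v\cdot\partial_x^i\partial_y^j\omega$, which brings in one extra low-order occurrence of $|u|_{\rho,\sigma}$. Summing over the Leibniz indices, integrating in time, and noting that no $\kappa$-type absorption is required, produce the stated bound. The main technical obstacle will be making sure, in the $q=j$ contribution, that every residual appearance of the nonlocal factor $\partial_x^i v$ is paired either with a sufficiently decaying coefficient from $\omega^s$ or with the $\comi{y}^{-1}$ generated by integration by parts, so that no tangential derivative is lost.
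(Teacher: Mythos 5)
Your Leibniz expansion and the conversion $\partial_y^{j-q}v=-\partial_x\partial_y^{j-q-1}u$ for $q\leq j-1$ agree with the paper (which indexes the same identity by its own $q\geq 1$). Where you genuinely depart is in treating the single nonlocal summand involving $\partial_x^i v\cdot\partial_y^{j+1}\omega^s$, and that departure is where the proof breaks. The paper estimates this term directly: since $\omega^s$ is $x$-independent, Cauchy--Schwarz gives
\begin{equation*}
\Big|\Big(\comi{y}^{\ell+1}(\partial_x^i v)\partial_y^{j+1}\omega^s,~\comi{y}^{\ell+1}\partial_x^i\partial_y^j\omega\Big)_{L^2}\Big|
\leq\big\|\partial_x^i v\big\|_{L^\infty_y L^2_x}\,\big\|\comi{y}^{\ell+1}\partial_y^{j+1}\omega^s\big\|_{L^2(\mathbb R_+)}\,\big\|\comi{y}^{\ell+1}\partial_x^i\partial_y^j\omega\big\|_{L^2},
\end{equation*}
with $\big\|\partial_x^i v\big\|_{L^\infty_y L^2_x}\leq C\big\|\comi{y}^{\ell-1}\partial_x^{i+1}u\big\|_{L^2}$ (from $\partial_x^iv=-\int_0^y\partial_x^{i+1}u$ and $\ell>3/2$), so the factorial constants are $\frac{[(i+1-6)!]^\sigma}{\rho^{i+1-5}}\cdot\frac{[(i+j-6)!]^\sigma}{\rho^{i+j-5}}$, which close against the target via Lemma~\ref{lemequa}~(i) because $j\geq 1$. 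No integration by parts is required.

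Your plan instead integrates by parts, moving a $y$-derivative off $\omega^s$ onto the remaining factors. This produces the bulk term $\int(\partial_y^j\omega^s)\comi{y}^{2(\ell+1)}(\partial_x^iv)(\partial_x^i\partial_y^{j+1}\omega)$, which raises the order of the unknown from $i+j$ to $i+j+1$ without gaining anything on the $\partial_x^iv$ factor. Estimating $\partial_x^i\partial_y^{j+1}\omega$ by \eqref{emix} gives $\frac{[(i+j-5)!]^\sigma}{\rho^{i+j-4}}$ where the target requires $\frac{[(i+j-6)!]^\sigma}{\rho^{i+j-5}}$, so the resulting bound exceeds the desired one by a factor of order $(i+j-5)^\sigma/\rho$, unbounded in $i$ (for $j=1$ it diverges like $(i-4)^\sigma/\rho$). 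For $j=4$ the situation is worse still: $\partial_x^i\partial_y^5\omega$ is not among the mixed derivatives controlled by $\norm{\cdot}_{\rho,\sigma}$ (Definition~\ref{defgev} caps $j$ at $4$), so \eqref{emix} does not apply at all, and your suggested remedy of ``substituting \eqref{bouval} in place of $\partial_y^{j+1}\omega^s$'' does not make sense: \eqref{bouval} is a trace identity for $\partial_y^5\omega$ at $y=0$, used in Lemma~\ref{lemboud} solely to handle the boundary term created there by integration by parts, and says nothing about $\partial_x^i\partial_y^5\omega$ in the interior. The correct move is simply to drop the integration by parts and apply the $L^\infty_y L^2_x$ pairing (which you already write down under ``case (iii)'') to the original, unintegrated inner product; that is exactly the paper's argument.
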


\begin{proof}
	Using $\partial_y v=-\partial_xu,$ we have
	\begin{eqnarray*}
		&&\Big|\int_0^t \Big(-  \comi{y}^{\ell+1}\partial_x^{i}\partial_y^{j}\big(v\partial_y\omega^s\big),~\comi{y}^{\ell+1}\partial_x^{i}\partial_y^{j} \omega\Big)_{L^2}\,ds\Big|\\
		&\leq & \Big|\int_0^t \Big(   \comi{y}^{\ell+1}\big(\partial_x^{i}v\big) \partial_y^{j+1}\omega^s,~\comi{y}^{\ell+1}\partial_x^{i}\partial_y^{j} \omega\Big)_{L^2}\,ds\Big|\\
		&&+
		 \Big|\int_0^t \Big(   \sum_{q=1}^j\comi{y}^{\ell+1}\big(\partial_x^{i+1}\partial_y^{q-1}u\big) \partial_y^{j-q+1}\omega^s,~\comi{y}^{\ell+1}\partial_x^{i}\partial_y^{j} \omega\Big)_{L^2}\,ds\Big|.
	\end{eqnarray*}
	Moreover, note that for $1\leq j\leq 4$ we have $\comi y^{\ell+1}\comi y^{-\alpha-j-1}\in L^2(\mathbb R_+)$ because of $\ell<\alpha+1/2,$  and thus
	\begin{eqnarray*}
		 \norm{ \comi{y}^{\ell+1}\partial_y^{j+1}\omega^s}_{L^2(\mathbb R_+)}\leq C.
	\end{eqnarray*}
	Consequently, 
	\begin{eqnarray*}
		  \Big|\int_0^t \Big(   \comi{y}^{\ell+1}\big(\partial_x^{i}v\big) \partial_y^{j+1}\omega^s,~\comi{y}^{\ell+1}\partial_x^{i}\partial_y^{j} \omega\Big)_{L^2}\,ds\Big| 
		&\leq &\frac{  C \com{(i+1-6)!}^{\sigma}}{\rho^{i+1-5}}\frac{    \com{(i+j-6)!}^{\sigma}}{\rho^{i+j-5}} \int_0^t  \abs{u}_{\rho,\sigma}^2\,ds\\
		&\leq & \frac{C    \com{(i+j-6)!}^{2\sigma}}{\rho^{2(i+j-5)}} \int_0^t  \abs{u}_{\rho,\sigma}^2\,ds.
	\end{eqnarray*}
	Direct calculation also shows
	\begin{eqnarray*}
		\Big|\int_0^t \Big(   \sum_{q=1}^j\comi{y}^{\ell+1}\big(\partial_x^{i+1}\partial_y^{q-1}u\big) \partial_y^{j-q+1}\omega^s,~\comi{y}^{\ell+1}\partial_x^{i}\partial_y^{j} \omega\Big)_{L^2}\,ds\Big| \leq\frac{C    \com{(i+j-6)!}^{2\sigma}}{\rho^{2(i+j-5)}} \int_0^t  \abs{u}_{\rho,\sigma}^3\,ds.
	\end{eqnarray*}
	Then the desired estimate follows and we complete the proof.
\end{proof}

\begin{lemma}[Estimate on $A_{i,j,3}$ and $A_{i,j,6}$]
	Under the same assumption as Proposition \ref{norma}, we have, for any $\kappa>0,$
	\begin{eqnarray*}
	&&	\Big|\int_0^t \Big(A_{i,j,3},~\comi{y}^{\ell+1}\partial_x^{i}\partial_y^{j} \omega\Big)_{L^2}\,ds\Big|+\Big|\int_0^t\Big(A_{i,j,6},~\comi{y}^{\ell+1}\partial_x^{i}\partial_y^{j} \omega\Big)_{L^2}\,ds\Big|\\ 
	&\leq&  \kappa \int_0^t\norm{ \comi{y}^{\ell+1}\partial_x^{i}\partial_y^{j+1} \omega }_{L^2}^2\,ds+\frac{C\kappa^{-1}   \com{(i+j-6)!}^{2\sigma}}{\rho^{2(i+j-5)}} \int_0^t \inner{\abs{u(s)}_{\rho,\sigma}^2+\abs{u(s)}_{\rho,\sigma}^4}\,ds.
	\end{eqnarray*}
\end{lemma}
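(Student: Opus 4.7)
My plan is to estimate the two contributions separately.

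For $A_{i,j,6}=-2(\partial_y\comi{y}^{\ell+1})\partial_x^i\partial_y^{j+1}\omega$, the pointwise bound $\abs{\partial_y\comi{y}^{\ell+1}}\leq C\comi{y}^\ell\leq C\comi{y}^{\ell+1}$ combined with Cauchy--Schwarz and Young's inequality gives directly
$$\Big|\int_0^t \inner{A_{i,j,6},\,\comi{y}^{\ell+1}\partial_x^i\partial_y^j\omega}_{L^2} ds\Big|\leq \kappa\int_0^t\norm{\comi{y}^{\ell+1}\partial_x^i\partial_y^{j+1}\omega}_{L^2}^2 ds+C\kappa^{-1}\int_0^t\norm{\comi{y}^{\ell+1}\partial_x^i\partial_y^j\omega}_{L^2}^2 ds,$$
and the last integral is bounded by $C\kappa^{-1}\frac{[(i+j-6)!]^{2\sigma}}{\rho^{2(i+j-5)}}\abs{u}_{\rho,\sigma}^2$ via \eqref{emix}, whose hypotheses $1\leq j\leq 4$ and $i+j\geq 6$ are satisfied here.

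For $A_{i,j,3}$, I would follow the Leibniz-splitting template of Lemma~\ref{lemp3}. The double sum over $(k,q)$ with $k+q\geq 1$ is split into a low regime $k+q\leq [(i+j)/2]$ and a high regime. In the low regime, $\partial_x^k\partial_y^q v$ is placed in $L^\infty$ via the Sobolev inequality (Lemma~\ref{sobine}), after rewriting $\partial_x^k\partial_y^q v=-\partial_x^{k+1}\partial_y^{q-1}u$ for $q\geq 1$ and $\partial_x^k v=-\int_0^y\partial_x^{k+1}u\,d\tilde y$ for $q=0$; the conjugate factor $\comi{y}^{\ell+1}\partial_x^{i-k}\partial_y^{j-q+1}\omega$ is estimated in $L^2$ through \eqref{emix}. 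In the high regime the two factors swap roles. Summing the binomial coefficients as in Lemma~\ref{lemp3} (using statement~$(i)$ of Lemma~\ref{lemequa} and $\sigma\geq 3/2$) produces the prefactor $\frac{C[(i+j-6)!]^{2\sigma}}{\rho^{2(i+j-5)}}$ multiplied by $\abs{u}_{\rho,\sigma}^3$, which is converted into the admissible $\abs{u}_{\rho,\sigma}^2+\abs{u}_{\rho,\sigma}^4$ by Young's inequality.

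The main obstacle is the borderline case $j=4$, $q=0$ (forcing $k\geq 1$): the factor $\partial_x^{i-k}\partial_y^{j+1}\omega=\partial_x^{i-k}\partial_y^5\omega$ lies outside the range $1\leq j\leq 4$ covered by Definition~\ref{defgev}, so \eqref{emix} is not directly applicable. My plan is to integrate by parts in $y$ for those particular terms, exploiting $\partial_y\partial_x^k v=-\partial_x^{k+1}u$ and the Leibniz rule applied to $\partial_y(\comi{y}^{2(\ell+1)}\partial_x^i\partial_y^j\omega)$ in order to move the derivative $\partial_y^{j+1}$ away from $\partial_x^{i-k}\omega$. The resulting contributions split into two types: either the $\partial_y$ lands on the weight (or on $\partial_x^k v$), leaving an $L^2$ factor of the form $\partial_x^{i-k}\partial_y^j\omega$ controlled by \eqref{emix}, or it lands on $\partial_x^i\partial_y^j\omega$, producing precisely $\partial_x^i\partial_y^{j+1}\omega$, which is absorbed by the parabolic gain $\kappa\int_0^t\norm{\comi{y}^{\ell+1}\partial_x^i\partial_y^{j+1}\omega}_{L^2}^2 ds$. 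The boundary term at $y=0$ is harmless because $\partial_x^i\partial_y^5\omega$ at $y=0$ admits the explicit representation \eqref{bouval} and therefore satisfies a Gevrey bound of the required form, exactly as exploited in Lemma~\ref{lemboud}. No $\tilde\rho$--$\rho$ telescoping is required here because the highest $x$-derivative appearing on any factor is at most $i+1$, so the estimate closes within a single Gevrey radius $\rho$.
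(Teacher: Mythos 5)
Your treatment of $A_{i,j,6}$ is correct and is exactly what the paper does: bound $\abs{\partial_y\comi{y}^{\ell+1}}\leq C\comi{y}^{\ell+1}$, apply Cauchy--Schwarz with Young, and control the remainder via \eqref{emix}. Your overall strategy for $A_{i,j,3}$ also contains the right key idea (integration by parts in $y$ to remove the top-order $\partial_y^{j+1}\omega$), and you correctly note that the resulting $\partial_x^i\partial_y^{j+1}\omega$ factor is absorbed into the parabolic gain. However, the boundary term that you worry about actually vanishes: the integration by parts you describe produces $\big[\comi{y}^{2(\ell+1)}(\partial_x^k v)(\partial_x^{i-k}\partial_y^j\omega)(\partial_x^i\partial_y^j\omega)\big]\big|_{y=0}$, and this is zero because $v|_{y=0}=0$. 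The representation \eqref{bouval} is used in Lemma~\ref{lemboud} (where the integration by parts is of $\partial_y^2$ against itself), not here.

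The genuine gap is that you restrict the integration by parts to the single case $j=4,\,q=0$, claiming that all other $(j,q)$ close by direct low/high Leibniz splitting. That is not so. In your high regime ($k$ large, $v$-factor in $L^2$, $\omega$-factor in $L^\infty$), the $\omega$-factor for $q=0$ is $\partial_x^{i-k}\partial_y^{j+1}\omega$; its $L^\infty$ estimate via Lemma~\ref{sobine} requires $\comi{y}^{\ell+1}\partial_y^{j+2}\omega$ in $L^2$, i.e.\ $\partial_y^{j+2}\omega$ with $j+2\le 4$. This already fails for $j=3$ (where you would need $\partial_y^5\omega$), and an analogous issue appears for $q=1$, $j=4$. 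So the Sobolev step does not close for $j\ge 3$ in the high regime unless you also integrate by parts there. The paper sidesteps this entirely by first rewriting the Leibniz sum using $\partial_y v=-\partial_x u$, so that the $q\ge1$ part has an $\omega$-factor with at most $j\le 4$ $y$-derivatives and a $u$-factor $\partial_x^{k+1}\partial_y^{q-1}u$, and then performing the integration by parts unconditionally for \emph{all} $q=0$ terms, regardless of $j$; see \eqref{eqlina}. That is the clean way to avoid ever placing a $\partial_y^{j+1}\omega$ (or needing a $\partial_y^{j+2}\omega$) factor in $L^\infty$. Your proposal would work if you applied the integration by parts to every $q=0$ term, not only $j=4$.
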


\begin{proof}
	We decompose  $A_{i,j,3}$ as follows by using  $\partial_y v=-\partial_x u,$
	\begin{eqnarray*}
	A_{m,j,3} &=&  -\comi{y}^{\ell+1}\sum_{ 1\leq k\leq i} {i\choose k}  \inner{\partial_x^k v} \partial_x^{i-k}\partial_y^{j+1}  \omega \\
	&&+\comi{y}^{\ell+1}\sum_{\stackrel { q\geq 1}{k\leq i, ~q\leq j}} {i\choose k}{ j \choose q } \inner{\partial_x^{k+1}\partial_y^{q-1} u} \partial_x^{i-k}\partial_y^{j-q+1}  \omega. 
	\end{eqnarray*}
Following the similar argument as in Lemma \ref{lemp3}, we have
\begin{equation}\label{aij1}
\begin{aligned}
	&\Big|\int_0^t \Big(\comi{y}^{\ell+1}\sum_{\stackrel { q\geq 1}{k\leq i, ~q\leq j}} {i\choose k}{ j \choose q } \inner{\partial_x^{k+1}\partial_y^{q-1} u} \partial_x^{i-k}\partial_y^{j-q+1}  \omega,~\comi{y}^{\ell+1}\partial_x^{i}\partial_y^{j} \omega\Big)_{L^2}\,ds\Big|	\\
	\leq & \frac{ C \com{(i+j-6)!}^{2\sigma}}{\rho^{2(i+j-5)}} \int_0^t  \abs{u}_{\rho,\sigma}^3\,ds.
	\end{aligned}
\end{equation}
Next, we will prove that, for any small $\kappa>0,$
\begin{equation}\label{aij2}
\begin{aligned}
	&\Big|\int_0^t \Big( -\comi{y}^{\ell+1}\sum_{ 1\leq k\leq i} {i\choose k}  \inner{\partial_x^k v} \partial_x^{i-k}\partial_y^{j+1}  \omega,~\comi{y}^{\ell+1}\partial_x^{i}\partial_y^{j} \omega\Big)_{L^2}\,ds\Big|	\\
	\leq &  \kappa \int_0^t\norm{ \comi{y}^{\ell+1}\partial_x^{i}\partial_y^{j+1} \omega}_{L^2}^2\,ds+\frac{C\kappa^{-1}   \com{(i+j-6)!}^{2\sigma}}{\rho^{2(i+j-5)}} \int_0^t \inner{\abs{u}_{\rho,\sigma}^3+\abs{u}_{\rho,\sigma}^4}ds.
	\end{aligned}
\end{equation}
To do so,  integration by parts gives
	\begin{equation}\label{eqlina}
	\begin{aligned}
&\Big|\int_0^t \Big( -\comi{y}^{\ell+1}\sum_{ 1\leq k\leq i} {i\choose k}  \inner{\partial_x^k v} \partial_x^{i-k}\partial_y^{j+1} \omega,~\comi{y}^{\ell+1}\partial_x^{i}\partial_y^{j} \omega\Big)_{L^2}\,ds\Big|\\
\leq& \Big|\int_0^t \Big(\comi{y}^{\ell+1}\sum_{ 1\leq k\leq i} {i\choose k}  \inner{\partial_x^k v} \partial_x^{i-k}\partial_y^{j} \omega,~ \comi{y}^{\ell+1}\partial_x^{i}\partial_y^{j+1} \omega \Big)_{L^2}\, ds\Big|\\
& +\Big|\int_0^t\Big(\comi{y}^{\ell+1}\sum_{ 1\leq k\leq i} {i\choose k}  \inner{\partial_x^{k+1} u} \partial_x^{i-k}\partial_y^{j} \omega,~\comi{y}^{\ell+1}\partial_x^{i}\partial_y^{j} \omega\Big)_{L^2}ds\Big|\\
&+\Big|\int_0^t\Big(\big(\partial_y\comi{y}^{2\ell+2}\big)\sum_{ 1\leq k\leq i} {i\choose k}  \inner{\partial_x^k v} \partial_x^{i-k}\partial_y^{j } \omega,~ \partial_x^{i}\partial_y^{j} \omega\Big)_{L^2}ds\Big|.
\end{aligned}
\end{equation} 
Moreover, as   in Lemma \ref{lemp3} and Lemma \ref{lemuppb}, we can prove that the first term on the right side of \eqref{eqlina} is bounded  above by
\begin{eqnarray*}
	\kappa \int_0^t \norm{ \comi{y}^{\ell+1}\partial_x^{i}\partial_y^{j+1} \omega}_{L^2}^2\,ds+\frac{C\kappa^{-1}  \com{(i+j-6)!}^{2\sigma}}{\rho^{2(i+j-5)}} \int_0^t \abs{u}_{\rho,\sigma}^4\,ds,
\end{eqnarray*}
and the last two terms are  bounded above by
\begin{eqnarray*}
	\frac{  C \com{(i+j-6)!}^{2\sigma}}{\rho^{2(i+j-5)}} \int_0^t \abs{u}_{\rho,\sigma}^3\,ds.
	\end{eqnarray*}
Thus combining the above estimate, \eqref{aij2} follows. This and \eqref{aij1} give
\begin{eqnarray*}
	 &&	\Big|\int_0^t \Big(A_{i,j,3},~\comi{y}^{\ell+1}\partial_x^{i}\partial_y^{j} \omega\Big)_{L^2}\,ds\Big| 	\\
	 &\leq&  \kappa \int_0^t\norm{ \comi{y}^{\ell+1}\partial_x^{i}\partial_y^{j+1} \omega }_{L^2}^2\,ds+\frac{C\kappa^{-1}   \com{(i+j-6)!}^{2\sigma}}{\rho^{2(i+j-5)}} \int_0^t \inner{\abs{u}_{\rho,\sigma}^3+\abs{u}_{\rho,\sigma}^4}\,ds.
\end{eqnarray*}
Similarly,
\begin{eqnarray*}
	 \Big|\int_0^t \Big(A_{i,j,6},~\comi{y}^{\ell+1}\partial_x^{i}\partial_y^{j} \omega\Big)_{L^2}\,ds\Big| 	 
	  \leq  \kappa \int_0^t\norm{ \comi{y}^{\ell+1}\partial_x^{i}\partial_y^{j+1} \omega }_{L^2}^2\,ds+\frac{C\kappa^{-1}   \com{(i+j-6)!}^{2\sigma}}{\rho^{2(i+j-5)}} \int_0^t  \abs{u}_{\rho,\sigma}^2\,ds.
\end{eqnarray*}
Thus the proof is completed. 	
	\end{proof}
	 
\begin{lemma} 	\label{lem67}
 Let $1\leq j\leq 4$ and $ i+j\geq 6.$ Then we have
 	\begin{eqnarray*}
		\Big|\int_0^t\Big(A_{i,j,2}+A_{i,j,4}+A_{i,j,5},~\comi{y}^{\ell+1}\partial_x^{i}\partial_y^{j} \omega\Big)_{L^2}\,ds\Big| 
	\leq  \frac{ C \com{(i+j-6)!}^{2\sigma}}{\rho^{2(i+j-5)}}  \int_0^t\inner{\abs{u(s)}_{\rho,\sigma}^2 +\abs{u(s)}_{\rho,\sigma}^3}\,ds.
	\end{eqnarray*}
\end{lemma}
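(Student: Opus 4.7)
My plan for Lemma \ref{lem67} is to estimate each of the three remaining terms $A_{i,j,2}$, $A_{i,j,4}$, $A_{i,j,5}$ by the same weighted $L^2$ pairing with $\comi{y}^{\ell+1}\partial_x^i\partial_y^j\omega$ that was used in Lemma \ref{lemp3}, Lemma \ref{lemuppb} and the estimate on $A_{i,j,3}$. The structure in each case is: pull out the factor $\comi{y}^{\ell+1}\partial_x^i\partial_y^j\omega$ in $L^2$, use Cauchy--Schwarz, and bound the remaining factor by the norms $\norm{\comi{y}^{\ell-1}\partial_x^k u}$, $\norm{\comi{y}^\ell\partial_x^k\omega}$ or $\norm{\comi{y}^{\ell+1}\partial_x^i\partial_y^q\omega}$ through \eqref{etan} and \eqref{emix}. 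Because no boundary term enters these three pieces (the only boundary integrations were handled in Lemma \ref{lemboud}) the pairing reduces to a purely algebraic convolution sum of the type already appearing in Section \ref{sec3}.

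For $A_{i,j,2}$, since $u^s$ is independent of $x$, the sum splits into the genuine shear--flow contribution $-\comi{y}^{\ell+1}\sum_{q=1}^j\binom{j}{q}(\partial_y^q u^s)\partial_x^{i+1}\partial_y^{j-q}\omega$ plus a sum over $k\geq 1$ involving $\partial_x^k\partial_y^q u$. For the shear part I would use Proposition \ref{proshf}: $|\omega^s|\leq C\comi{y}^{-\alpha}$ and $|\partial_y^q u^s|\leq C\comi{y}^{-\alpha-1}$ for $q\geq 2$, together with $\ell\leq \alpha+1/2<\alpha+1$, so that $\comi{y}^{\ell+1}\cdot\comi{y}^{-\alpha-1}\leq \comi{y}^{\ell}$ and $\comi{y}^{\ell+1}\cdot\comi{y}^{-\alpha}\leq \comi{y}^{\ell+1-\alpha}\leq \comi{y}^{\ell}$; the resulting terms $\norm{\comi{y}^\ell\partial_x^{i+1}\partial_y^{j-q}\omega}_{L^2}$ are controlled directly by $\abs{u}_{\rho,\sigma}$ via \eqref{etan} or \eqref{emix}, and the binomial sum is dominated by $\frac{[(i+j-6)!]^\sigma}{\rho^{i+j-5}}$ in the now standard way. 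For the $u$--part I would split the sum at $k+q\leq [(i+j)/2]$ and $k+q>[(i+j)/2]$, placing the low-order factor in $L^\infty$ (using the last line of \eqref{condi} when $k+q\leq 2$, and the Sobolev inequality stated in the Appendix otherwise) and the high-order factor in $L^2$; the combinatorial estimate is identical to the $J_3$--$J_4$ block in Lemma \ref{lemp3}, with the exponent $\sigma\geq 3/2$ absorbing the factorial product.

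The commutator terms $A_{i,j,4}$ and $A_{i,j,5}$ are lower order. For $A_{i,j,4}$ note $|\partial_y\comi{y}^{\ell+1}|\leq C\comi{y}^{\ell}$, so pairing gives $|(v\partial_y\comi{y}^{\ell+1}\partial_x^i\partial_y^j\omega,\comi{y}^{\ell+1}\partial_x^i\partial_y^j\omega)_{L^2}|\leq C\norm{v}_{L^\infty}\norm{\comi{y}^{\ell+1}\partial_x^i\partial_y^j\omega}_{L^2}^2$, which by $\norm{v}_{L^\infty}\leq 1$ from \eqref{condi} and \eqref{emix} is at most $C\frac{[(i+j-6)!]^{2\sigma}}{\rho^{2(i+j-5)}}\abs{u}_{\rho,\sigma}^2$. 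For $A_{i,j,5}$ one has $|\partial_y^2\comi{y}^{\ell+1}|\leq C\comi{y}^{\ell-1}$, so the weight loses two powers and is controlled outright by $\norm{\comi{y}^{\ell+1}\partial_x^i\partial_y^j\omega}_{L^2}\cdot\norm{\comi{y}^{\ell-1}\partial_x^i\partial_y^j\omega}_{L^2}$, again bounded by the $\abs{u}_{\rho,\sigma}^2$ term. No Gevrey--level cancellation is needed for these two terms since the order of differentiation in the integrand does not exceed $i+j$.

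The one step that is not purely mechanical is the shear--flow sum inside $A_{i,j,2}$: one has to verify that the $\partial_y^q u^s$ factor together with the weight $\comi{y}^{\ell+1}$ still fits inside the allowable norms for $\partial_x^{i+1}\partial_y^{j-q}\omega$. This is precisely where the precise relation $\alpha\leq\ell<\alpha+1/2$ from \eqref{ellalpha} enters; if $\ell-\alpha$ were not strictly less than $1/2$ one would lose a power of $\comi{y}$ when $q=1$ and $j-q=0$, so this is the delicate bookkeeping step. Once that is cleared, summing the three estimates over all admissible $(k,q)$ yields the stated bound and completes the lemma.
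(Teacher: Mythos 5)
The paper's own "proof" of Lemma~\ref{lem67} is a single sentence declaring the estimate straightforward because no $(j+1)$-st $y$-derivative appears, so it omits all detail; your proposal is a full reconstruction of what the authors left implicit, and it is correct in structure and conclusion. The decomposition of $A_{i,j,2}$ into the $x$-independent shear contribution (the $k=0$, $q\geq 1$ terms) plus the perturbative $u$-sum, followed by the standard $L^\infty/L^2$ splitting at $[(i+j)/2]$ and the convolution-style combinatorics with $\sigma\geq 3/2$, is exactly the pattern used throughout Section~\ref{sec3}, and the treatment of $A_{i,j,4}$ and $A_{i,j,5}$ via $|\partial_y\comi{y}^{\ell+1}|\lesssim\comi{y}^{\ell}$ and $|\partial_y^2\comi{y}^{\ell+1}|\lesssim\comi{y}^{\ell-1}$ together with \eqref{emix} gives precisely the $\abs{u}_{\rho,\sigma}^2$ bound. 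This matches the spirit of the paper's other lemmas and produces the stated $\abs{u}_{\rho,\sigma}^2+\abs{u}_{\rho,\sigma}^3$ bound.

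One bookkeeping point deserves a correction. In the shear-flow subsum of $A_{i,j,2}$, the term $q=1,\ j=1,\ k=0$ gives the weight $\comi{y}^{\ell+1}\,|\omega^s|\,|\partial_x^{i+1}\omega|\lesssim\comi{y}^{\ell+1-\alpha}\,|\partial_x^{i+1}\omega|$, and what you need in order to fall back onto the norm $\norm{\comi{y}^{\ell}\partial_x^{i+1}\omega}_{L^2}$ from \eqref{etan} is $\ell+1-\alpha\leq\ell$, i.e.\ $\alpha\geq 1$, which is supplied directly by Assumption~\ref{maas} ($\alpha>1$). The constraint $\ell<\alpha+\tfrac12$ from \eqref{ellalpha} is not what makes this term close: it only ensures $\ell+1-\alpha<\tfrac32$, which is weaker than (and not needed on top of) $\ell+1-\alpha\leq\ell$. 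The role of $\ell<\alpha+\tfrac12$ is really in the $A_{i,j,1}$ estimate (the preceding lemma), where one needs $\comi{y}^{\ell+1}\partial_y^{j+1}\omega^s\in L^2(\mathbb R_+)$. So your "delicate bookkeeping step" is right that the shear term needs a weight inequality, but the operative inequality is $\alpha>1$ (and $\alpha\leq\ell$ to keep the weight in range), not the upper bound $\ell<\alpha+\tfrac12$.
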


	\begin{proof}
	   Since there is no $j+1$ order derivative in $y$  involved,
the proof is straightforward so that we omit the detail
for brevity. 
	\end{proof}
	
	\begin{proof}
	[Completion of the proof of Proposition \ref{norma}]
 Let $1\leq j\leq 4$ and $ i+j\geq 6.$  In view of \eqref{equforhi},
	we combine the estimates in Lemmas \ref{lemboud}-\ref{lem67} to conclude by  choosing $\kappa$  sufficiently small that
	\begin{eqnarray*}
		&&\frac{1}{2} \norm{\comi{y}^{\ell+1}\partial_x^{i}\partial_y^{j} \omega(t)}_{L^2}^2+\frac{1}{4} \int_0^t \norm{ \comi{y}^{\ell+1}\partial_x^{i}\partial_y^{j+1} \omega }_{L^2}^2\\
		&\leq &\frac{1}{2} \norm{\comi{y}^{\ell+1}\partial_x^{i}\partial_y^{j} \omega(0)}_{L^2}^2 +\frac{  C \com{(i+j-6)!}^{2\sigma}}{\rho^{2(i+j-5)}} \int_0^t \inner{\abs{u}_{\rho,\sigma}^2 +\abs{u}_{\rho,\sigma}^4}\,ds.
	\end{eqnarray*}	
	Note that
	\begin{eqnarray*}
		 \norm{\comi{y}^{\ell+1}\partial_x^{i}\partial_y^{j} \omega(0)}_{L^2}^2\leq \frac{  C \com{(i+j-6)!}^{2\sigma}}{\rho^{2(i+j-5)}} \abs{u_0}_{\rho,\sigma}^2.
	\end{eqnarray*}
	Then the desired estimate in Proposition \ref{norma} follows. The proof is then completed. 
	\end{proof}

\subsection{The proof of Theorem  \ref{uniestgev}}  
By Proposition \ref{provel+} and Proposition \ref{norma}, we have
\begin{eqnarray*}
	&&\bigg[\sup_{m\geq 6} \frac{\rho^{ m-5}} { \big[(m-6)!\big]^{ \sigma}}\inner{\norm{\comi y^{\ell-1} \partial_x^m u(t)}_{L^2}+\norm{\comi y^{\ell} \partial_x^m \omega(t)}_{L^2}}\bigg]^2\\
	&&\qquad\qquad+\bigg[\sup_{\stackrel{1\leq j\leq 4}{ i+j\geq 6}} \frac{\rho^{ i+j-5}} { \big[(i+j-6)!\big]^{ \sigma}} \norm{\comi y^{\ell+1} \partial_x^i\partial_y^j \omega(t)}_{L^2} \bigg]^2\\
	&\leq & C\abs{u_0}_{\rho,\sigma}^2+C
 	  \int_0^t   \inner{ \abs{u(s)}_{ \rho,\sigma}^2+\abs{u(s)}_{ \rho,\sigma}^4 }   \,ds+C
 	  \int_0^t   \frac{\abs{u(s)}_{\tilde \rho,\sigma}^2}{\tilde\rho-\rho}   \,ds.
\end{eqnarray*}
From Proposition \ref{prpaway} and  Proposition \ref{hm},
it follows that
\begin{eqnarray*}
	&&\bigg[\sup_{m\geq 6} \frac{\rho^{ m-5}} { \big[(m-6)!\big]^{ \sigma}}\inner{\norm{\comi y^{\ell} f_m(t)}_{L^2}+\norm{h_m(t)}_{L^2}+\norm{\chi_2\partial_y\partial_x^m\omega (t)}_{L^2}}\bigg]^2\\
	&\leq & C\abs{u_0}_{\rho,\sigma}^2+C
 	  \int_0^t   \inner{ \abs{u(s)}_{ \rho,\sigma}^2+\abs{u(s)}_{ \rho,\sigma}^4 }   \,ds+C
 	  \int_0^t   \frac{\abs{u(s)}_{\tilde \rho,\sigma}^2}{\tilde\rho-\rho}   \,ds.
\end{eqnarray*}
Moreover, we combine the first estimate in Corollary \ref{cordiff} and Proposition \ref{provel+} to have 
\begin{eqnarray*}
 \bigg[\sup_{m\geq 6} \frac{\rho^{ m-5} } { \big[(m-6)!\big]^{ \sigma}}\inner{m\norm{g_m(t)}_{L^2}}  \bigg]^2 \leq   C\abs{u_0}_{\rho,\sigma}^2+C
 	  \int_0^t   \inner{ \abs{u(s)}_{ \rho,\sigma}^2+\abs{u(s)}_{ \rho,\sigma}^4 }   \,ds+C
 	  \int_0^t   \frac{\abs{u(s)}_{\tilde \rho,\sigma}^2}{\tilde\rho-\rho}   \,ds.
\end{eqnarray*}
Finally, direct computation gives 
\begin{eqnarray*}
	&&\bigg[\sup_{m\leq 5}  \inner{\norm{\comi y^{\ell-1} \partial_x^m u(t)}_{L^2}+\norm{\comi y^{\ell} \partial_x^m \omega(t)}_{L^2}}\bigg]^2 +\bigg[\sup_{\stackrel{1\leq j\leq 4}{ i+j\leq 5}}  \norm{\comi y^{\ell+1} \partial_x^i\partial_y^j \omega(t)}_{L^2} \bigg]^2\\
	&&\qquad+\bigg[\sup_{1\leq m\leq 5}  \inner{m\norm{g_m(t)}_{L^2}+\norm{\comi y^{\ell} f_m(t)}_{L^2}+\norm{h_m(t)}_{L^2}+\norm{\chi_2\partial_y\partial_x^m\omega (t)}_{L^2}}\bigg]^2\\
	&\leq & C\abs{u_0}_{\rho,\sigma}^2+C
 	  \int_0^t   \inner{ \abs{u(s)}_{ \rho,\sigma}^2+\abs{u(s)}_{ \rho,\sigma}^4 }   \,ds+C
 	  \int_0^t   \frac{\abs{u(s)}_{\tilde \rho,\sigma}^2}{\tilde\rho-\rho}   \,ds.
\end{eqnarray*}
Combining these inequalities yields 
\begin{eqnarray*}
\abs{u(t)}_{\rho,\sigma}^2\leq 	C\abs{u_0}_{\rho,\sigma}^2+C
 	  \int_0^t   \inner{ \abs{u(s)}_{ \rho,\sigma}^2+\abs{u(s)}_{ \rho,\sigma}^4 }   \,ds+C
 	  \int_0^t   \frac{\abs{u(s)}_{\tilde \rho,\sigma}^2}{\tilde\rho-\rho}   \,ds.
\end{eqnarray*}
This completes the  proof of Theorem  \ref{uniestgev}.

\section{Existence  for the  regularized Prandtl equation}\label{sec6}

 In this section, we study the existence of the regularized Prandtl equation introduced in Section \ref{sec2}: 
 \begin{equation}\label{newreg}
 	\left\{
	\begin{aligned}
	&\partial_t     u_\eps +\inner{u^{s}+   u_\eps }\partial_x  u_\eps  +  v_\eps \partial_y \inner{  u^s+u _\eps}-\partial_y^2    u_\eps -\eps\partial_x^2 u_\eps =0,\\
	&   u_\eps \big|_{y=0}=0,\quad\lim_{y\rightarrow +\infty}    u_\eps =0,\\
	&   u_\eps\big|_{t=0}=  u_{0},
\end{aligned}	
\right.
 \end{equation}
 with $
  v_\eps=- \int_0^y\partial_x u_\eps(x,\tilde y)\,d\tilde y.$ This is a nonlinear parabolic equation.  The main result can be stated as follows. 
   
\begin{theorem}[Existence for the regularized Prandtl equation] \label{existhm}
Let   $\rho_0>0,\sigma\geq1$ be two given constants.  Suppose the initial datum $ u_0\in X_{2\rho_0,\sigma}$   satisfies the compatibility condition \eqref{comcon}.     Then there exists   $T_\eps^*>0$,  such that
the regularized Prandtl equation \eqref{newreg} admits a unique  solution $u_\eps\in L^\infty\big([0,T_\eps^*];X_{3\rho_0/2, \sigma}\big).$  
\end{theorem}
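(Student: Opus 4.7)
My strategy is a Picard iteration in the Gevrey space, which is available because for fixed $\eps>0$ the principal part of \eqref{newreg} is uniformly parabolic in both $x$ and $y$. I would set $u_\eps^{(0)}\equiv u_0$ and, given $u_\eps^{(n)}$, define $u_\eps^{(n+1)}$ as the solution of the linear parabolic problem
\begin{equation*}
\left\{
\begin{aligned}
&\partial_t u_\eps^{(n+1)}+(u^s+u_\eps^{(n)})\partial_x u_\eps^{(n+1)}+v_\eps^{(n)}\partial_y u_\eps^{(n+1)}+v_\eps^{(n)}\partial_y u^s-\partial_y^2 u_\eps^{(n+1)}-\eps\partial_x^2 u_\eps^{(n+1)}=0,\\
&u_\eps^{(n+1)}\big|_{y=0}=0,\qquad \lim_{y\to+\infty}u_\eps^{(n+1)}=0,\qquad u_\eps^{(n+1)}\big|_{t=0}=u_0,
\end{aligned}
\right.
\end{equation*}
with $v_\eps^{(n)}=-\int_0^y\partial_x u_\eps^{(n)}\,d\tilde y$. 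For fixed $\eps>0$, classical linear parabolic theory produces a smooth solution on any time interval where $u_\eps^{(n)}$ is smooth, and the compatibility conditions \eqref{comcon} are propagated because the equation preserves them.

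Next I would establish a uniform Gevrey bound along the iteration by verifying that the energy framework of Sections \ref{sec3}--\ref{sec5} still applies. The equations for $\omega_\eps^{(n+1)}=\partial_y u_\eps^{(n+1)}$ and for the auxiliary quantities $f_m^{(n+1)}, h_m^{(n+1)}, g_m^{(n+1)}$, defined by \eqref{fungeps}--\eqref{tildegm} with the top-order derivatives evaluated on $u_\eps^{(n+1)}$ and the coefficients (denominators, cutoffs, shear flow) evaluated on $u_\eps^{(n)}$, have the same algebraic structure as in the nonlinear case up to easily controlled remainders. Assuming inductively that $u_\eps^{(n)}$ satisfies \eqref{condi} on $[0,T_\eps^*]$ and that $\sup_{t\in[0,T_\eps^*]}\abs{u_\eps^{(n)}(t)}_{3\rho_0/2,\sigma}\leq 2\abs{u_0}_{2\rho_0,\sigma}$, the same energy computations yield the analogue of \eqref{weesun} for $u_\eps^{(n+1)}$. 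A Cauchy--Kowalevski absorption using the pair of radii $(3\rho_0/2, 2\rho_0)$ then closes the induction for $T_\eps^*$ chosen small enough (depending on $\eps$, $\abs{u_0}_{2\rho_0,\sigma}$, and $\rho_0$), and continuity together with \eqref{eqnordif} and Sobolev embedding ensures that the pointwise properties \eqref{condi} continue to hold for $u_\eps^{(n+1)}$ on the same interval.

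The differences $w^{(n)}=u_\eps^{(n+1)}-u_\eps^{(n)}$ satisfy a linear parabolic equation whose source is at most first order in $w^{(n-1)}$. Repeating the energy estimates at the strictly smaller radius $\rho_0<3\rho_0/2$ and using the uniform bound already obtained to control both the coefficients and the derivative loss, I expect an estimate of the form
\begin{equation*}
\sup_{t\in[0,T_\eps^*]}\abs{w^{(n)}(t)}_{\rho_0,\sigma}\leq C\sqrt{T_\eps^*}\,\sup_{t\in[0,T_\eps^*]}\abs{w^{(n-1)}(t)}_{\rho_0,\sigma},
\end{equation*}
which gives a contraction after possibly shrinking $T_\eps^*$. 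The limit $u_\eps$ then solves \eqref{newreg}, belongs to $L^\infty([0,T_\eps^*];X_{3\rho_0/2,\sigma})$ by lower semicontinuity of the Gevrey norm, and uniqueness follows from the same difference estimate applied to any two solutions.

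The main technical obstacle is that the non-degeneracy conditions \eqref{condi} must be preserved along the iteration, since they appear as denominators in $f_m^{(n+1)}$ and $h_m^{(n+1)}$ and ensure invertibility of the Rayleigh-type change of unknown used in Sections \ref{sec4}--\ref{sec5}. The compatibility \eqref{comcon} and the initial bound guarantee them at $t=0$ for every iterate, and the Gevrey bound from the second paragraph preserves them on a short time interval via Sobolev embedding; but the lower bounds on $\abs{\omega^s+\omega_\eps^{(n)}}$ and $\abs{\partial_y\omega^s+\partial_y\omega_\eps^{(n)}}$ must be tracked uniformly in $n$ so that $T_\eps^*$ does not degenerate with $n$. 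Once this is settled, the remainder of the argument is essentially a linearized transcription of Sections \ref{sec3}--\ref{sec5}.
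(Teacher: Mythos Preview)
Your approach is genuinely different from the paper's, and considerably more involved than what is actually needed here.

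The paper's iteration puts \emph{all} nonlinear terms on the right as a source and keeps only the pure heat part on the left:
\[
\partial_t u_j - \partial_y^2 u_j - \eps\partial_x^2 u_j = -\bigl(u^s+u_{j-1}\bigr)\partial_x u_{j-1} - v_{j-1}\,\partial_y\bigl(u^s+u_{j-1}\bigr),
\]
so each $u_j$ is given explicitly by the heat kernel. The single tangential derivative lost in the source is recovered by integrating by parts in $x$ and absorbing into the $\eps\|\partial_x^{m+1}\xi_j\|_{L^2}^2$ term coming from the dissipation; this produces the factor $C\eps^{-1}$ in the difference estimate and hence the $\eps$-dependent lifespan $T_\eps^*\sim\eps$. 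The whole argument uses only the plain Gevrey norm $\|\cdot\|_{\rho,\sigma}$: the auxiliary functions $f_m,h_m,g_m$, the cut-offs $\chi_1,\chi_2$, and the non-degeneracy condition \eqref{condi} play no role whatsoever in Section~\ref{sec6}. Those structures are reserved for the \emph{uniform-in-$\eps$} estimate of Theorem~\ref{uniestgev}.

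Your scheme instead linearizes the transport part around $u_\eps^{(n)}$ and attempts to rerun Sections~\ref{sec3}--\ref{sec5} at each step. Besides the obstacle you already flag (propagating \eqref{condi} uniformly in $n$), there is a second issue you dismiss too quickly: with top-order derivatives on $u_\eps^{(n+1)}$ but coefficients $a=\frac{\partial_y\omega^s+\partial_y\omega^{(n)}}{\omega^s+\omega^{(n)}}$ taken at level $n$, the key cancellation fails exactly. The leftover is of the form $(\partial_x^m v^{(n)})\cdot\big[(\partial_y\omega^s+\partial_y\omega^{(n+1)})-a^{(n)}(\omega^s+\omega^{(n+1)})\big]$, a product of a term with genuine $x$-derivative loss (on the \emph{previous} iterate) and a difference $\omega^{(n+1)}-\omega^{(n)}$; this couples the boundedness step with the contraction step and is not ``easily controlled'' without further input (in practice one would again fall back on the $\eps\partial_x^2$ smoothing to close it). Your route can likely be made to work, but the paper's direct parabolic argument is both shorter and avoids all of these complications.
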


\begin{proof}[Sketch of the proof of Theorem \ref{existhm}] We will use iteration to prove the existence.  Since \eqref{newreg} is a parabolic equation,  then we can apply the standard energy estimate in Gevrey norms.

{\it Step (i)}. We first choose $u_j, j\geq 0, $ as follows.  Let $u_0$ be the initial datum in \eqref{newreg} and let $u_j$ be the solution to the linear parabolic equation 
\begin{eqnarray*}
\left\{
\begin{aligned}
	&\partial_t     u_j -\partial_y^2    u_j -\eps\partial_x^2 u_j =-\inner{u^{s}+   u_{j-1} }\partial_x  u_{j-1}  -  v_{j-1} \partial_y \inner{  u^s+u _{j-1}},\\
	&   u_j\big|_{y=0}=0,\quad\lim_{y\rightarrow +\infty}    u_j =0,\\
	&   u_j\big|_{t=0}=  u_{0},
	\end{aligned}
	\right.
\end{eqnarray*}
where $  v_j=- \int_0^y\partial_x u_j(x,\tilde y)\,d\tilde y.$ Note that the existence of solutions to the above linear initial-boundary problem   is guaranteed  
by using the heat kernel 
\begin{eqnarray*}
E_0 (t,x,y)=\frac{1}{4\pi t\sqrt{\eps}}e^{-x^2/(4t\eps)}e^{-y^2/(4t)}.
\end{eqnarray*}
Indeed,  define  two heat operators   $M_1$ and $M_2$ by
\begin{eqnarray*}
M_1(t)  f &=&\int_{\mathbb R} \, d\tilde x \int_{0}^{+\infty} d\tilde y\Big[E_0(t,x-\tilde x, y-\tilde y )-E_0(t,x-\tilde x, y+\tilde y )\Big]f(\tilde x,\tilde y),
\\
M_2  f&=&\int_0^t M_1(t-s)f(s)ds.  
\end{eqnarray*}
then we have 
\begin{eqnarray*}
	u_j=M_1u_0-M_2 \Big(\inner{u^{s}+   u_{j-1}}\partial_x  u_{j-1}  +  v_{j-1} \partial_y \inner{  u^s+u_{j-1} }\Big).
	\end{eqnarray*}
 {\it Step (ii)}. Now we consider the difference
 \begin{eqnarray*}
 \xi_0=u_0,\quad 	\xi_j=u_{j}-u_{j-1},\quad \zeta_j=v_{j}-v_{j-1},\quad j\geq 1.
 \end{eqnarray*}
 Then 
 \begin{equation}\label{eqxi1}
 	\partial_t     \xi_1 -\partial_y^2    \xi_1 -\eps\partial_x^2 \xi_1 = \partial_y^2 u_0+\eps\partial_x^2u_0-\inner{u^{s}+   u_{0} }\partial_x  u_{0}  -  v_{0} \partial_y \inner{  u^s+u _{0}},
 \end{equation}
 and for $j\geq 2$ we have
 \begin{equation}\label{eqxi}
 	\partial_t     \xi_j -\partial_y^2    \xi_j -\eps\partial_x^2 \xi_j =  -\inner{u^{s}+   u_{j-1} }\partial_x  \xi_{j-1}- \xi_j   \partial_x  u_{j-2}   -  \zeta_{j-1} \partial_y \inner{  u^s+u _{j-1}}-v_{j-2}\partial_y \xi_{j-1}.
 \end{equation}
  In view of equation \eqref{eqxi1},  the estimation on
  $\xi_1$ follows from the classical Gevrey regularity theorem for parabolic equation.  And we conclude, for some $T>0$ independent of $\eps,$
  \begin{equation}\label{initialest}
 	\sup_{t\in[0,T]}\norm{\xi_{1}(t)}_{3\rho_0/2,\sigma} \leq C\norm{u_0}_{2\rho_0,\sigma}.
 \end{equation}
 Note that the higher order derivatives are involved in the initial datum $u_0$ on the right side of \eqref{eqxi1}. This can be overcome by reducing the initial Gevrey radius $2\rho_0$ to a smaller one, saying  $3\rho_0/2$ for instance.

  Now we consider the case $j\geq 2.$  
  Applying $\comi y^{\ell-1}\partial_x^m $ to the above equation \eqref{eqxi} we have
 \begin{equation}\label{eqxi+}
 \begin{aligned}
 	&\big( \partial_t     -\partial_y^2     -\eps\partial_x^2 \big) \comi y^{\ell-1}\partial_x^m \xi_j =-2\inner{\partial_y \comi y^{\ell-1}}\partial_y\partial_x^m\xi_j-\inner{\partial_y^2\comi y^{\ell-1}}\partial_x^m\xi_j\\
 &	\qquad \qquad	\qquad  -\comi y^{\ell-1}\partial_x^m  \Big(\inner{u^{s}+   u_{j-1} }\partial_x  \xi_{j-1}+ \xi_j   \partial_x  u_{j-2}   +  \zeta_{j-1} \partial_y \inner{  u^s+u _{j-1}}+v_{j-2}\partial_y \xi_{j-1}\Big).
 	\end{aligned}
 \end{equation}
 Moreover for the terms on right side,  direct computation yields
 \begin{equation*}
 \begin{aligned}
&\Big|\Big(\comi y^{\ell-1}\partial_x^m  \Big(\inner{u^{s}+   u_{j-1} }\partial_x  \xi_{j-1}+ \xi_j   \partial_x  u_{j-2}   +  \zeta_{j-1} \partial_y \inner{  u^s+u _{j-1}}+v_{j-2}\partial_y \xi_{j-1}\Big),~\comi y^{\ell-1}\partial_x^m \xi_j \Big)_{L^2}\Big|\\
 	&\qquad\qquad+\Big|\Big(-2\inner{\partial_y \comi y^{\ell-1}}\partial_y\partial_x^m\xi_j-\inner{\partial_y^2\comi y^{\ell-1}}\partial_x^m\xi_j,~\comi y^{\ell-1}\partial_x^m \xi_j \Big)_{L^2}\Big|\\
 	&\leq \frac{\eps}{2}\norm{\comi y^{\ell-1}\partial_x^{m+1} \xi_j}_{L^2}^2+\frac{1}{2}\norm{\comi y^{\ell-1}\partial_y\partial_x^{m} \xi_j}_{L^2}^2+C \norm{\comi y^{\ell-1} \partial_x^{m} \xi_j}_{L^2}^2\\
 	&\quad+C\eps^{-1}\frac{\big[(m-6)!\big]^{2\sigma}}{\rho^{2(m-5)}} \inner{1+\norm{u_{j-1}}_{\rho,\sigma}^2+\norm{u_{j-2}}_{\rho,\sigma}^2}\norm{\xi_{j-1}}_{\rho,\sigma}^2.
 	\end{aligned}
 \end{equation*}
 Thus for $m\geq 6,$, we can apply energy method and the Gronwall inequality to \eqref{eqxi+} to obtain by noting that $\xi_j\big|_{t=0}=0$, 
 \begin{eqnarray*}
 	&& \norm{\comi y^{\ell-1} \partial_x^{m} \xi_j(t)}_{L^2}^2+  \int_0^t \norm{\comi y^{\ell-1}\partial_y \partial_x^{m} \xi_j}_{L^2}^2+\eps \int_0^t \norm{\comi y^{\ell-1}\partial_x^{m+1} \xi_j}_{L^2}^2\\
 	& \leq & C\eps^{-1}\frac{\big[(m-6)!\big]^{2\sigma}}{\rho^{2(m-5)}}\int_0^t  \inner{1+\norm{u_{j-1}(s)}_{\rho,\sigma}^2+\norm{u_{j-2}(s)}_{\rho,\sigma}^2}\norm{\xi_{j-1}(s)}_{\rho,\sigma}^2\,ds.
 \end{eqnarray*}
 The upper bound estimate for $m\leq 5$ is straightforward, and we have
 \begin{eqnarray*}
 	  \sup_{m\leq 5}\norm{\comi y^{\ell-1} \partial_x^{m} \xi_j(t)}_{L^2}^2  \leq C\eps^{-1} \int_0^t  \inner{1+\norm{u_{j-1}(s)}_{\rho,\sigma}^2+\norm{u_{j-2}(s)}_{\rho,\sigma}^2}\norm{\xi_{j-1}(s)}_{\rho,\sigma}^2\,ds.
 \end{eqnarray*}
 Similarly, for $m\geq 6,$ 
  \begin{eqnarray*}
 	&& \norm{\comi y^{\ell} \partial_x^{m} \partial_y \xi_j(t)}_{L^2}^2+  \int_0^t \norm{\comi y^{\ell}  \partial_x^{m}\partial_y^2 \xi_j}_{L^2}^2+\eps \int_0^t \norm{\comi y^{\ell}\partial_x^{m+1} \partial_y\xi_j}_{L^2}^2\\
 	& \leq & C\eps^{-1}\frac{\big[(m-6)!\big]^{2\sigma}}{\rho^{2(m-5)}}\int_0^t  \inner{1+\norm{u_{j-1}(s)}_{\rho,\sigma}^2+\norm{u_{j-2}(s)}_{\rho,\sigma}^2}\norm{\xi_{j-1}(s)}_{\rho,\sigma}^2\,ds,
 \end{eqnarray*}
 and for $1\leq q\leq 4$ and $p+q\geq 6$, 
repeating the argument used in Proposition \ref{norma} yields  
 \begin{equation*} 
 \begin{aligned}
 	& \norm{\comi y^{\ell} \partial_x^{p} \partial_y^q \partial_y \xi_j(t)}_{L^2}^2+  \int_0^t \norm{\comi y^{\ell}  \partial_x^{p} \partial_y^{q+1} \partial_y\xi_j}_{L^2}^2+\eps \int_0^t \norm{\comi y^{\ell}\partial_x^{p+1} \partial_y^q\partial_y\xi_j}_{L^2}^2\\
 	 \leq & C\eps^{-1}\frac{\big[(p+q-6)!\big]^{2\sigma}}{\rho^{2(p+q-5)}}\int_0^t  \inner{1+\norm{u_{j-1}(s)}_{\rho,\sigma}^2+\norm{u_{j-2}(s)}_{\rho,\sigma}^2}\norm{\xi_{j-1}(s)}_{\rho,\sigma}^2\,ds.
 	\end{aligned}
 \end{equation*}
 And the above two estimates for $m\leq 5$ and $p+q\leq 5$ are also straightforward.
 Combining the above inequalities we conclude 
  \begin{equation}\label{upxie}
 \forall~j\geq 2,\quad 	\norm{\xi_{j}(t)}_{\rho,\sigma}^2 \leq C\eps^{-1} \int_0^t  \inner{1+\norm{u_{j-1}(s)}_{\rho,\sigma}^2+\norm{u_{j-2}(s)}_{\rho,\sigma}^2}\norm{\xi_{j-1}(s)}_{\rho,\sigma}^2\,ds.
 \end{equation}
The above estimate and \eqref{initialest} enable us to use induction on $j$ to conclude that there exists a constant $M,$ depending only on $\norm{u_0}_{2\rho_0,\sigma},$ such that
\begin{equation*}
\forall~j\geq 0,\quad \norm{u_{j}(s)}_{3\rho_0/2,\sigma}\leq M,\quad {\rm and}~~
	 \sup_{0\leq t\leq T_\eps^*}	\norm{\xi_{j}(t)}_{3\rho_0/2,\sigma}^2\leq      C  2^{-j+1}  \norm{u_0}_{2\rho_0,\sigma}^2,
\end{equation*}
 provided   $T_\eps^*\leq \frac{\eps}{2C(2M^2+1)}$ with $C$ the constant in \eqref{upxie}.   This implies $u_j, j\geq 0,$ is a Cauchy sequence in the Banach space $L^\infty\inner{[0,T_\eps^*]; X_{3\rho_0/2,\sigma}},$ with $T_\eps^*$ depending only on $\eps$ but independent of $j$.  Thus the limit $u_\eps$ of the Cauchy sequence  $u_j$  in $L^\infty\inner{[0,T_\eps^*]; X_{3\rho_0/2,\sigma}}$  solves the initial-boundary problem \eqref{newreg}.  The proof is thus complete.
  \end{proof}

\section{Proof of the main result Theorem \ref{mainthm}}\label{sec7}

In this section, we will prove the main result Theorem \ref{mainthm}.  

\subsection{ Proof of Theorem \ref{mainthm}:   existence} Here we will adopt the idea of abstract Cauchy-Kovalevskaya theorem to prove the existence of solution to equation \eqref{++repran}, by virtue of the uniform estimate established in Theorem \ref{uniestgev}.
Let the initial data $u_0\in  X_{2\rho_0,\sigma}$ satisfy the  assumptions listed in Theorem  \ref{mainthm}. Then by Theorem \ref{existhm}, we can find a solution  $u_\eps\in L^\infty\inner{[0,   T^*_\eps];~X_{3\rho_0/2,\sigma}}$  to the regularized equation \eqref{regpran}.   In the following discussions we will remove the $\eps$-dependence of the lifespan and  derive an uniform upper bound for $u_\eps$

{\it Step (i).}  
We begin with the  construction of  two constants $R$ and $\lambda$, which depend only on the initial datum $u_0$ and the constants $C_*, c_j$ given respectively in Theorem \ref{uniestgev} and Assumption  \ref{maas}, as well as the constants in the  Sobolev imbedding inequalities. 
First, in view of \eqref{eqnordif}, we can find a constant $\hat C\geq 1,$ depending only on $\rho_0, $ such that      
\begin{equation}\label{intiu}
	\abs{u_0}_{\rho_0, \sigma}\leq  \hat C \inner{\norm{u_0}_{2\rho_0, \sigma}+\norm{u_0}_{2\rho_0, \sigma}^2}.
\end{equation}
   And by Sobolev inequalities and the definition of $\abs{\cdot}_{\rho,\sigma}$ (see Definition \ref{gevspace}), we deduce that, for any $t\geq 0$ and for any $(x,y)\in\mathbb R_+^2,$
\begin{equation}\label{uppuniform}
\begin{aligned}
 &	  \sum_{1\leq j\leq 2}\Big(\norm{\comi y^{\ell-1}\partial_x^j u_\eps(t)}_{L^\infty}  +\norm{\partial_x^{j-1} v_\eps(t)}_{L^\infty}+  \norm{\comi y^{\ell}\partial_x^j \omega_\eps(t)}_{L^\infty}\Big)\\
 	&\quad  + \sum_{1\leq i, j \leq 2}\norm{\comi y^{\ell+1}\partial_x^i\partial_y^j\omega_\eps(t)}_{L^\infty} +\norm{\comi y^{\ell}\omega_\eps(t)}_{L^\infty} +\norm{\comi y^{\ell+1} \partial_y\omega_\eps(t)}_{L^\infty} 
 \leq \tilde C  \abs{u(t)}_{\rho_0/2,\sigma},
 \end{aligned}
\end{equation}
with $\tilde C $ being a constant depending only on the  Sobolev imbedding constants but independent of $\eps.$   Let $C_*\geq 1$ be the constant given in Theorem \ref{uniestgev} and let $\hat C, \tilde C$ be the constants given in \eqref{intiu}-\eqref{uppuniform}.    Now we take two positive constants $R>0, \lambda>0$ such that 
\begin{equation}\label{Rtobe}
	R\geq 4C_* \hat C\inner{\norm{u_0}_{2\rho_0, \sigma}+\norm{u_0}_{2\rho_0, \sigma}^2},  ~\, {\rm and}~\sqrt 2 R  \tilde C \leq \frac{1}{4}\min\Big\{c_0, c_1\Big\},
\end{equation}
and 
\begin{equation}\label{lambdach}
	\frac{  \sqrt{5C_*+C_* R^2}}{\sqrt{\lambda}}= \frac{1}{2},\end{equation}
recalling    $c_0,c_1$ are the constants given in Assumption \ref{maas}.   We remark that the above $R$ indeed exist,  provided  
\begin{eqnarray*}
	 \norm{u_0}_{2\rho_0, \sigma}+\norm{u_0}_{2\rho_0, \sigma}^2<\frac{1}{16 \sqrt 2 \tilde C \hat C C_*}\min\Big\{c_0, c_1\Big\}.
\end{eqnarray*} 
In the following discussion we will  let $R$ and $\lambda$ be fixed 
so that \eqref{Rtobe} and \eqref{lambdach} hold.  

 {\it Step (ii)}.  
 {We define a function $T\rightarrow \normm{u_\eps}_{(\lambda,T)} $ by setting}
 \begin{equation}\label{trinormdef}
\normm{u_\eps}_{(\lambda,T)}\stackrel{\rm def}{ =}\sup_{\rho,t } \inner{ \frac{ \rho_0-\rho- \lambda t}{\rho_0-\rho}  }^{1/2}\abs{u_\eps(t)}_{\rho,\sigma},
\end{equation}
 where the supremum is taken over all pairs $(\rho, t)$ such that $ \rho>0,\, 0\leq t \leq  T$ and  $\rho+ \lambda t <\rho_0.$  
   {Note that the above function is well-defined  over the interval $[0, P_\eps[$ with $P_\eps$  given by 
  \begin{eqnarray*}
  P_\eps=\sup\set{T\in [0,  \rho_0/\lambda[\,;~ \normm{u_\eps}_{(\lambda,T)}<+\infty}.
  \end{eqnarray*}
  Note that $P_\eps\geq T_\eps^*$ because of  \eqref{eqnordif} and by recalling $[0,T_\eps^*]$ is the interval of the existence for $u_\eps\in X_{3\rho_0/2,\sigma}.$}
It is clear
 that
 \begin{eqnarray*}
 	T\rightarrow \normm{u_\eps}_{(\lambda,T)}
 \end{eqnarray*}
 is a increasing function of $T.$  Moreover, we have
 \begin{equation}\label{up+a}
 	 \normm{u_\eps}_{(\lambda, 0)}=\sup_{\rho\in]0, \rho_0[}\abs{u_\eps(0)}_{\rho,\sigma}\leq \abs{u_0}_{\rho_0,\sigma}\leq  \hat C \inner{\norm{u_0}_{2\rho_0, \sigma}+\norm{u_0}_{2\rho_0, \sigma}^2}<R,
 \end{equation}
where in the second inequality we
have used \eqref{intiu} and the last one follows from \eqref{Rtobe}.

{\it Step (iii)}.  In this step,   { recalling $R$ is given in Step $(i)$ and  $P_\eps$ is defined in the previous step, we will show that  
 \begin{equation}\label{stmai}
 	\forall~0\leq T<\min \Big\{\rho_0/(4\lambda), P_\eps  \Big\},\quad  \normm{u_\eps}_{(\lambda, T)}\leq R.
 \end{equation}}To confirm this,  suppose on the contrary to \eqref{stmai} that
 {$\normm{u_\eps}_{(\lambda, t_\eps)}>R$ for some $t_\eps< \min\set{\rho_0/(4\lambda), P_\eps} .$}
 Then 
in view of  \eqref{up+a}, we can find  some  {$T_\eps\in]0, t_\eps[\subset[0, \rho_0/(4\lambda)]$}  such that
 \begin{equation}\label{teps}
 \normm{u_\eps}_{(\lambda,T_\eps)}= R,
 \end{equation}
 since   $
 	T\rightarrow \normm{u_\eps}_{(\lambda,T)}
$
 is a increasing function of $T.$ 
 Thus,  {observing $T_\eps<\rho_0/(4\lambda),$}
\begin{eqnarray*}
\forall~t\in[0,T_\eps],\quad \frac{\sqrt 2}{2}\abs{u(t)}_{\rho_0/2,\sigma} \leq   \inner{ \frac{ \rho_0-\rho_0/2- \lambda t}{\rho_0-\rho_0/2}  }^{1/2}\abs{u(t)}_{\rho_0/2,\sigma}\leq \normm{u}_{(\lambda,T_\eps)}= R.
\end{eqnarray*}
    As a result,  for any $t\in[0,T_\eps],$
\begin{eqnarray*}
	 	 &	&  \sum_{1\leq j\leq 2}\Big(\norm{\comi y^{\ell-1}\partial_x^j u_\eps}_{L^\infty}  +\norm{\partial_x^{j-1} v_\eps}_{L^\infty}+  \norm{\comi y^{\ell}\partial_x^j \omega_\eps}_{L^\infty}\Big)\\
 	&&\quad  + \sum_{1\leq i, j \leq 2}\norm{\comi y^{\ell+1}\partial_x^i\partial_y^j\omega_\eps}_{L^\infty} +\norm{\comi y^{\ell}\omega_\eps}_{L^\infty} +\norm{\comi y^{\ell+1} \partial_y\omega_\eps}_{L^\infty} 
 \leq \frac{1}{4}\min\big\{c_0, c_1\big\}
\end{eqnarray*}
because  \eqref{uppuniform} and \eqref{Rtobe}, so that
 the property \eqref{condi} holds by $u_\eps$ for all $t\in[0, T_\eps]$ due to the fact that $\alpha\leq\ell.$    

 In the following argument, we let 
   $(\rho, t)$ be an arbitrary pair which is fixed at moment and satisfies that  $ \rho>0,\,  t\in[0, T_\eps]$ and  $\rho+ \lambda t<\rho_0.$  Then we have, in view of   \eqref{trinormdef},
   \begin{equation}\label{fiesonssd}
  \forall~0\leq s\leq t, \quad 	 \abs{u_\eps(s)}_{\rho,\sigma}\leq   \normm{u_\eps}_{(\lambda, T_\eps) }\inner{ \frac{ \rho_0-\rho- \lambda s}{\rho_0-\rho}  }^{-1/2}.
   \end{equation}
Furthermore,
we take in particular  such a $\tilde\rho(s)$ that 
\begin{eqnarray*}
\tilde \rho(s)=\frac{\rho_0+  \rho- \lambda  s}{2}.
\end{eqnarray*}
Then direct calculation shows 
that 
\begin{eqnarray}\label{rhos}
 \forall~0\leq s\leq t ,\qquad \rho< \tilde \rho(s)  \quad{\rm and}\quad \tilde \rho(s)+ \lambda  s <\rho_0,
\end{eqnarray}  
and
 \begin{eqnarray}\label{rhom}
 \forall~0\leq s\leq t ,\qquad \tilde\rho(s)-\rho=\frac{\rho_0-\rho - \lambda  s}{2}=  \rho_0-\tilde \rho(s)- \lambda s,\quad \rho_0-\tilde\rho(s)\leq \rho_0-\rho.
\end{eqnarray}
The   inequalities in \eqref{rhos} imply
\begin{equation}\label{tilrho}
	\forall~0\leq s\leq t ,\quad  \abs{u_\eps(s)}_{\tilde\rho(s),\sigma}\leq \normm{u_\eps}_{(\lambda, T_\eps)}\inner{\frac{\rho_0-\tilde\rho(s)- \lambda s}{\rho_0-\tilde\rho(s)}}^{-1/2}\leq \normm{u_\eps}_{(\lambda, T_\eps)}\inner{\frac{2(\rho_0- \rho)}{\rho_0- \rho- \lambda s}}^{ 1/2},
\end{equation} 
where the last inequality follows from \eqref{rhom}.

Now we apply  Theorem \ref{uniestgev} to the  pair $(\rho,\tilde\rho(s))$ given above to have  for any $t\in[0, T_\eps],$      
 \begin{equation*}
	\abs{u_\eps (t)}_{\rho,\sigma}^2\leq C_* \abs{u_0}_{\rho, \sigma}^2+C_* \int_{0}^t \inner{\abs{u_\eps(s)}_{\rho,\sigma}^2+\abs{u_\eps(s)}_{\rho,\sigma}^4} \,ds+C_* \int_{0}^t\frac{ \abs{u_\eps(s)}_{\tilde\rho(s),\sigma}^2}{\tilde \rho(s)-\rho}\,ds.
\end{equation*}	 
 Moreover, we insert    \eqref{fiesonssd}  and \eqref{tilrho}  into   the above inequality     to obtain, using  \eqref{rhom}  as well, 
\begin{eqnarray*}
\abs{ u_\eps(t)}_{ \rho,\sigma}^2& \leq  &C_* \abs{u_0 }_{\rho,\sigma}^2 +C_* \normm{u_\eps}_{(\lambda, T_\eps)}^2\int_0^t   \frac{\rho_0-\rho  }{ \rho_0-\rho - \lambda  s} \,ds+C_* \normm{u_\eps}_{(\lambda, T_\eps)}^4\int_0^t   \frac{\inner{\rho_0-\rho}^2}{\inner{\rho_0-\rho - \lambda  s}^2} \,ds\\
&&\qquad+C_*\normm{u_\eps}_{(\lambda, T_\eps)}^2  \int_0^t   \frac{2^2 \inner{\rho_0-\rho}}{\inner{\rho_0-\rho - \lambda  s}^2} \,ds\\
   &\leq & C_* \abs{u_0 }_{\rho,\sigma}^2 +\frac{\inner{5C_*+C_* R^2}\normm{u_\eps}_{(\lambda, T_\eps)}^2}{\lambda} \inner{\frac{\rho_0-\rho-\lambda t}{\rho_0-\rho}}^{-1},
\end{eqnarray*}
where in the last inequality we have used
 \eqref{teps} and the fact that 
\begin{eqnarray*}
	  \frac{\rho_0-\rho  }{ \rho_0-\rho - \lambda  s} \leq  \frac{\inner{\rho_0-\rho}^2}{\inner{\rho_0-\rho - \lambda  s}^2}\leq \frac{  \inner{\rho_0-\rho}}{\inner{\rho_0-\rho - \lambda  s}^2}.
\end{eqnarray*}
Then multiplying both sides by the fact $\inner{\rho_0-\rho-\lambda t}/\inner{\rho_0-\rho}$ implies, observing  $(\rho, t)$ is an arbitrary pair with $\rho>0$, $t\in[0, T^\eps]$  and $\rho+\lambda t<\rho_0$,   
\begin{eqnarray*}
	\normm{u_\eps}_{(\lambda, T_\eps)}  \leq   \sqrt{C_*} \sup_{\rho,t} \inner{\frac{\rho_0-\rho-\lambda t}{\rho_0-\rho}}^{1/2}\abs{u_0 }_{\rho,\sigma} +\frac{ \sqrt{5C_*+C_* R^2}}{\sqrt{\lambda}}\normm{u_\eps}_{(\lambda, T_\eps)} 
	  \leq  C_*  \abs{u_0 }_{\rho_0,\sigma} +\frac{1}{2}\normm{u_\eps}_{(\lambda, T_\eps)}.
\end{eqnarray*}
Here the last inequality holds because of \eqref{lambdach}  and the fact that $C_*\geq1$.  Then we conclude 
\begin{eqnarray*}
	\normm{u_\eps}_{(\lambda, T_\eps)}\leq  2C_* \abs{u_0 }_{\rho_0,\sigma}\leq 2C_* \hat C\inner{\norm{u_0}_{2\rho_0, \sigma}+\norm{u_0}_{2\rho_0, \sigma}^2}\leq R/2,
	 \end{eqnarray*}
where the second inequality follows from \eqref{intiu} and  in the last inequality we have used  \eqref{Rtobe}.       This contradicts  \eqref{teps} so that \eqref{stmai} holds.

{\it Step (iv)}.   {We conclude  that $P_\eps> \rho_0/(4\lambda)$,  otherwise, it follows from \eqref{stmai}  that for any $T\in [0, P_\eps[$ we have $\normm{u_\eps}_{(\lambda, T)}\leq R,$ which contradicts to the definition of $P_\eps.$} 
 {Consequently, we can rewrite \eqref{stmai} as  
\begin{eqnarray*}
\forall~0\leq T\leq  \rho_0/(4\lambda), \quad  \normm{u_\eps}_{(\lambda, T)}\leq R.	
\end{eqnarray*}
 Thus}
\begin{eqnarray*}
	\forall~t\in[0, \rho_0/(4\lambda)],\quad   \frac{\sqrt 2}{2}\abs{u_\eps(t)}_{\rho_0/2,\sigma} \leq   \inner{ \frac{ \rho_0-\rho_0/2- \lambda t}{\rho_0-\rho_0/2}  }^{1/2}\abs{u(t)}_{\rho_0/2,\sigma}\leq \normm{u_\eps}_{\inner{\lambda,\rho_0/(4\lambda)}}\leq R.
\end{eqnarray*}
This gives
\begin{eqnarray*}
\forall~\eps>0,\quad	 u_\eps\in L^\infty\inner{[0, \rho_0/(4\lambda)];~X_{\rho_0/2,\sigma}} ~ {\rm and }~\norm{u_\eps(t)}_{\rho_0/2,\sigma}\leq \abs{u_\eps(t)}_{\rho_0/2,\sigma} \leq \sqrt 2R.
\end{eqnarray*}
Now let  $\eps\rightarrow 0$ and we have, by compactness arguments,  the limit $u$ of $u_\eps$ solves the equation \eqref{++repran}.  We complete the existence part of Theorem \ref{mainthm}.

\subsection{Proof of Theorem \ref{mainthm}: uniqueness}  Let $u_1, u_2\in  L^\infty\inner{[0,\rho_0/(4\lambda)];~X_{\rho_0/2,\sigma}}$ be two solutions to the Prandtl equation \eqref{++repran}, and let $  v_j=- \int_0^y\partial_x u_j(x,\tilde y)\,d\tilde y.$  Then the differences  
	\begin{eqnarray*}
		  u\stackrel{\rm def}{=}u_1-u_2, \quad v\stackrel{\rm def}{=}v_1-v_2, 	\end{eqnarray*}
	satisfy the following initial boundary problem, using the notation $\omega=\partial_y u$ and $\omega_j=\partial_y u_j$ as before,
		\begin{equation}\label{eqdif}
\left\{
	\begin{aligned}
	&\partial_t     u+\inner{u^{s}+   u_1}\partial_x  u +  v_1 \partial_y u + u \partial_x u_2+  v \inner{  \omega^s+\omega_2}-\partial_y^2    u=0,\\
	&   u \big|_{y=0}=0,\quad\lim_{y\rightarrow +\infty}    u=0,\\
	&   u\big|_{t=0}=0.
\end{aligned}	
\right.
\end{equation}
Moreover, we have the equations for $\omega$ and $\partial_y\omega$:
\begin{equation}\label{eqvoti}
\partial_t     \omega+\inner{u^{s}+   u_1}\partial_x  \omega+  v_1 \partial_y \omega-\partial_y^2    \omega+ u \partial_x \omega_2+  v \inner{  \partial_y\omega^s+\partial_y\omega_2} =0,
\end{equation}
and
\begin{equation}\label{eqofpyo}
\begin{aligned}
	&\partial_t   (  \partial_y \omega)+\inner{u^{s}+   u_1}\partial_x(\partial_y  \omega)+  v_1 \partial_y (\partial_y  \omega)-\partial_y^2    (\partial_y  \omega)+  v \inner{  \partial_y^2\omega^s+\partial_y^2\omega_2} 
	\\
	=&-\Big[(\omega^s+\omega_1)\partial_x\omega- \inner{  \partial_y\omega^s+\partial_y\omega_2}\partial_xu\Big]+(\partial_x u_1)\partial_y\omega- \omega \partial_x \omega_2- u \partial_x \partial_y\omega_2.
	\end{aligned}
\end{equation}
Now we apply $\partial_x^m$ to the three equations above, and then we have, as in the previous sections,  several  terms have loss of $x$ derivative. Precisely,  
 $
	(\partial_x^m v) (\omega^s+\omega_2)
$
is involved in the equation for $\partial_x^m u,$   and $ (\partial_x^m v) (\partial_y\omega^s+\partial_y \omega_2)$
  in the equation for $\partial_x^m \omega,$  and meanwhile two terms  $ (\partial_x^m v) (\partial_y^2\omega^s+\partial_y^2 \omega_2)$ and $\partial_x^m\big[(\omega^s+\omega_1)\partial_x\omega- \inner{  \partial_y\omega^s+\partial_y\omega_2}\partial_xu\big]$
  in the equation for $\partial_x^m\partial_y\omega.$ To overcome the degeneracy, we just follow the same strategy as in Sections \ref{sec3}-\ref{sec5},  with $f_m, h_m$ and $g_m$ therein replaced respectively by  
\begin{eqnarray*}
f_m^*&=&\chi_1\partial_x^m \omega-\chi_1\frac{  \partial_y\omega^s+\partial_y \omega_2 }{\omega^s+\omega_2 }\partial_x^m u,\\
h_m^*&=&\chi_2\partial_x^m \partial_y\omega-\chi_2\frac{  \partial_y^2\omega^s+\partial_y ^2\omega_2 }{\partial_y\omega^s+\partial_y \omega_2 }\partial_x^m \omega,\\
g_m^*&=& \partial_x^{m-1}\Big[(\omega^s+\omega_1)\partial_x\omega- \inner{  \partial_y\omega^s+\partial_y\omega_2}\partial_xu\Big].
\end{eqnarray*}
 Then just repeating the argument in the  Sections \ref{sec3}-\ref{sec5}, with slight modification,  we can obtain, observing $u|_{t=0}=0,$ 
 \begin{equation}\label{laest}
 		\normm{u  (t)}_{\rho,\sigma}^2\leq   C \int_{0}^t\frac{\inner{1+\abs{u_1(s)}_{\rho/2,\sigma}^2+\abs{u_2(s)}_{\rho/2,\sigma}^2+\abs{u_1(s)}_{\rho/2,\sigma}^4+\abs{u_2(s)}_{\rho/2,\sigma}^4}  \normm{u(s) }_{\tilde\rho(s),\sigma}^2}{\tilde \rho(s)-\rho}\,ds,
 \end{equation}
 where  the definition of $\normm{u}_{\rho,\sigma}$ is  similar as $\abs{u}_{\rho,\sigma}$ by just replacing respectively  the summations \begin{eqnarray*}
 	\sup_{\stackrel{1\leq j\leq 4}{i+j\geq 6}}  \quad {\rm and}~~\sup_{\stackrel{1\leq j\leq 4}{i+j\leq 5}} 
 \end{eqnarray*}
 in Definition \ref{defgev}     by  
 \begin{eqnarray*}
 		\sup_{\stackrel{1\leq j\leq 2}{i+j\geq 6}}  \quad {\rm and}~~\sup_{\stackrel{1\leq j\leq 2}{i+j\leq 5}}.
 \end{eqnarray*}
 Now we emphasize the difference between \eqref{laest} and \eqref{weesun}.
Note that we work on $\normm{u}_{\rho,\sigma}$ instead of $\abs{u}_{\rho,\sigma}$ because 
  we lose $y$-derivative for the term  $ v \inner{  \partial_y^2\omega^s+\partial_y^2\omega_2} $ in \eqref{eqofpyo}.  So we have to reduce the order of $y$ derivatives from $4$ to $2.$  Moreover, observe that we also lose $x$ derivatives for $u_1$ and $u_2$ in equations \eqref{eqdif}-\eqref{eqofpyo} and this can be overcome by reducing the Gevrey radius $\rho$ to $\rho/2.$    Then by virtue of \eqref{laest}, we can follow the argument  used in  the existence part to conclude 
  \begin{eqnarray*}
  	\sup_{\rho,t} \inner{\frac{\rho_0/2-\rho-\lambda t}{\rho_0/2-\rho}}^{1/2}\normm{u(s)}_{\rho,\sigma}=0,
  \end{eqnarray*}
  where the supremum is taken over all pairs $(\rho, t)$ such that $ \rho>0$ and  $\rho+ \lambda t <\rho_0/2.$  
   And thus $u\equiv 0$ and the uniqueness  follows.

\section{  {
 General initial data}}\label{sectiongeneral}

  In this section, we will clarify why the above
 result holds for  general initial data without requiring the small perturbations around a shear flow.  Precisely, we consider the Prandtl equation in $\Omega\times \mathbb R_+$ with $\Omega$ the whole space $\mathbb R$ or the torus $\mathbb T,$ that is, 
  \begin{eqnarray}\label{orig9}
  	\left\{\begin{array}{l}\partial_t u^P + u^P \partial_x u^P + v^P\partial_yu -\partial_y^2u^P + \partial_x p
=0,\quad t>0,\quad x\in\Omega,\quad y>0, \\
\partial_xu^P +\partial_yv^P =0, \\
u^P|_{y=0} = v^P|_{y=0} =0 , \quad  \lim_{y\to+\infty} u =U(t,x), \\
u^P|_{t=0} =u^P_0 (x,y)\,.
\end{array}\right.
  \end{eqnarray} 
Without loss generality, we  suppose that $U\equiv 0$, and thus $\partial_xp\equiv 0$ by Bernoulli law.  

To investigate   the well-posedness in Gevrey class for the above Prandtl equation, there are  two main ingredients, one  is about the existence of approximate solution for the regularized Prandtl equation
\begin{eqnarray}\label{sys9}
		\left\{\begin{array}{l}\partial_t u^P + u^P \partial_x u^P + v^P\partial_yu -\partial_y^2u^P-\eps \partial_x^2u^P  
=0,\quad t>0,\quad x\in\Omega,\quad y>0, \\
\partial_xu^P +\partial_yv^P =0, \\
u^P|_{y=0} = v^P|_{y=0} =0 , \quad  \lim_{y\to+\infty} u =0, \\
u^P|_{t=0} =u^P_0 (x,y),\end{array}\right.
\end{eqnarray}
where and in the following discussion,  we  will use  $u^P$ and $v^P$ instead of $u^P_\eps$ and $v^P_\eps,$  by omitting  $\eps$ for simpler presentation.   And  another ingredient is  the uniform estimate for the approximate solution, which is the main concern of this paper, cf.  Sections  \ref{sec2}-\ref{sec5}.  We will explain why we do not need  the small perturbation in
obtaining the uniform estimate, and the requirement on the initial data is  only for  the construction of approximate solution.  

Suppose that the  initial-boundary problem \eqref{sys9} admits  a solution  $(u^P, v^P)$  in the interval $[0,T]$ satisfying  the properties listed 
below.  That is, given $y_0>0$, there are three large constants $C_1, C_2, C_3$ and a positive number $\delta_0\in [0,y_0/2]$ and two positive numbers $\ell, \alpha$  with $\ell>3/2$ and $\alpha+{1\over 2}>\ell,$  such that  for any      $t\in[0, T]$  and  any $x\in \Omega,$   we have,  using the notation $\omega=\partial_y u^P,$
 \begin{eqnarray}\label{ass9}
\left\{
\begin{aligned}
  &\abs{\partial_y\omega(t,x,y)}\geq \frac{1}{4 C_1}, ~\,{\rm if}~~\,y\in\big[y_0-\frac{7}{4}\delta_0, y_0+\frac{7}{4}\delta_0\big], \\
  &4^{-1}C_2^{-1}  \comi y^{-\alpha}\leq \abs{  \omega(t,x,y)} \leq  4 C_2\comi y^{-\alpha},~~\,{\rm if}~~\,y\in\big[0,  y_0-\frac{5 }{4}\delta_0 \big  ]\cup  \big [ y_0+ \frac{5 }{4}\delta_0 ,~+\infty \big[ ,\\
  & \abs{\partial_y\omega(t,x,y)}\leq 4C_2 \comi y^{-\alpha-1} ~~\, \textrm {for  } ~y\geq 0, \\
 &\sum_{1\leq j\leq 2}\Big(\norm{\comi y^{\ell-1}\partial_x^j u^P}_{L^\infty}  +\norm{\partial_x^{j-1} v^P}_{L^\infty}+  \norm{\comi y^{\ell}\partial_x^j \omega}_{L^\infty}\Big)+ \sum_{1\leq i, j \leq 2}\norm{\comi y^{\ell+1}\partial_x^i\partial_y^j\omega}_{L^\infty}\leq C_3. 
 \end{aligned}
 \right.
\end{eqnarray} 
Let $\inner{X_{\rho,\sigma},~\norm{\cdot} _{\rho,\sigma}}$ be the Gevrey space in 
the tangential variable $x\in\Omega$  introduced in Definition \ref{defgev},   with the $L^2$ norm therein  taken over $\Omega\times\mathbb R_+.$  Similarly, as in Definition \ref{gevspace}, we can define $\abs{u^P}_{\rho,\sigma}$ with the auxilliary functions therein replaced respectively  by the following new ones:
\begin{eqnarray*}
	f_m^P&=&\chi_1\partial_x^m\omega- \chi_1\frac{\partial_y\omega}{\omega}\partial_x^m u^P,\\
	h_m^P&=&\chi_2\partial_x^m\partial_y\omega-\chi_2 \frac{\partial_y^2\omega}{\partial_y\omega}\partial_x^m \omega,\\
	g_m^P&=&\partial_x^{m-1}\Big(\omega\partial_x\omega-(\partial_y\omega)\partial_xu^P\Big).
\end{eqnarray*}
Here $\chi_i, i=1,2,$ are given in \eqref{chi1} and \eqref{cutofffu}.

\begin{theorem}[uniform estimates in Gevrey space]\label{uniform9}
Let $3/2\leq\sigma\leq 2.$  Let the initial datum $u_0^P\in X_{2\rho_0,\sigma}$ and let  $u^P \in L^\infty\inner{[0, T];~X_{\rho_0,\sigma}}$ be a solution to \eqref{sys9}    such that  the properties listed in  \eqref{ass9} hold. 
 Then  there exists   a constant   $\tilde C_*>1,$  independent of $\eps,$    such that    the estimate 
 \begin{equation*}
	\abs{u^P (t)}_{\rho,\sigma}^2\leq \tilde C_* \abs{u^P_0}_{\rho, \sigma}^2+\tilde C_* \int_{0}^t \inner{\abs{u^P(s)}_{\rho,\sigma}^2+\abs{u^P(s)}_{\rho,\sigma}^4} \,ds+\tilde C_* \int_{0}^t\frac{ \abs{u^P(s)}_{\tilde\rho,\sigma}^2}{\tilde \rho-\rho}\,ds 
\end{equation*}	 
 holds for any pair $(\rho,\tilde\rho)$ with   $0<\rho<\tilde \rho<\rho_0$, and for any $t\in[0,\tilde T]$, where $[0,\tilde T]$ is the maximal interval of existence for $\abs{u^P(t)}_{\tilde \rho,\sigma}<+\infty.$  
\end{theorem}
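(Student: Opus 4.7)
The plan is to mirror the proof of Theorem \ref{uniestgev} developed in Sections \ref{sec3}--\ref{sec5}, with the triplet $(f_m, h_m, g_m)$ replaced by $(f_m^P, h_m^P, g_m^P)$. The conceptual point is that the analysis of Sections \ref{sec3}--\ref{sec5} never really exploits the decomposition $u^P=u^s+u_\eps$: the shear flow appeared only as bookkeeping, and every energy estimate depends only on the quantitative properties \eqref{condi} of $\omega^s+\omega_\eps$ and $\partial_y\omega^s+\partial_y\omega_\eps$. The hypothesis \eqref{ass9} supplies the exact analog of \eqref{condi} for $\omega=\partial_y u^P$ and $\partial_y\omega$, so the entire argument transfers almost verbatim.

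First I would derive the evolution equations for the new auxiliaries. Applying $\partial_x$ to the equations for $u^P$ and $\omega$, multiplying respectively by $\partial_y\omega$ and $\omega$ and subtracting, one obtains
\begin{equation*}
(\partial_t+u^P\partial_x+v^P\partial_y-\partial_y^2-\eps\partial_x^2)g_1^P=2(\partial_y^2\omega)\partial_x\omega-2(\partial_y\omega)\partial_x\partial_y\omega,
\end{equation*}
so that $g_m^P=\partial_x^{m-1}g_1^P$ satisfies an equation structurally identical to the one in Lemma \ref{lemgm} but with every occurrence of $\omega^s+\omega_\eps$ or $\partial_y\omega^s+\partial_y\omega_\eps$ replaced by $\omega$ or $\partial_y\omega$. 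Likewise, $f_m^P$ solves the analog of \eqref{eqgmeps} with coefficient $a=\partial_y\omega/\omega$, and $h_m^P$ solves the analog of \eqref{eqhme} with coefficient $b=\partial_y^2\omega/\partial_y\omega$. The key identity $\chi_1\omega\,\partial_y(\partial_x^m u^P/\omega)=f_m^P$ continues to hold on supp\,$\chi_1$, and the boundary relations $\partial_y f_m^P|_{y=0}=\partial_y g_m^P|_{y=0}=0$ persist by direct computation from the vorticity equation.

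Next I would reproduce the proofs of Propositions \ref{prpenmon}, \ref{prpaway}, \ref{prpnear}, \ref{hm}, \ref{provel+} and \ref{norma} line by line, using the bounds in \eqref{ass9} whenever \eqref{condi} was invoked. Each proof is a combination of energy identities, the Sobolev and Hardy-type inequalities, and the combinatorial estimates of Lemma \ref{lemequa}; none of these ingredients is affected by removing the shear flow, and in fact several error terms involving $u^s$ (e.g.\ $\partial_y\omega^s$ in the term $v\partial_y\omega^s$) are simply absent here, making the estimates cleaner. The single delicate moment is the bound \eqref{eskes} on $(\chi_2'\partial_x^m v,\chi_2\partial_x^m u^P)_{L^2}$, which rests on the representation of $\partial_x^m u^P$ in terms of an auxiliary function built as in \eqref{+funof+g}. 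Under \eqref{ass9} the condition $|\partial_y\omega|\geq 1/(4C_1)$ on $[y_0-\tfrac{7}{4}\delta_0,y_0+\tfrac{7}{4}\delta_0]$ allows the implicit function theorem to produce a smooth graph $y=\gamma(t,x)$ for $\{\omega(t,x,\cdot)=0\}$, so the formula of \cite[Lemma 3]{GM} applies verbatim.

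Assembling these propositions as in the closing subsection of Section \ref{sec5} yields the desired inequality for $|u^P(t)|_{\rho,\sigma}^2$, with $\tilde C_*$ depending only on $C_1,C_2,C_3,\alpha,\ell,\rho_0,\delta_0,y_0$ and on Sobolev embedding constants, and in particular independent of $\eps$ and of the solution. The main obstacle is organizational rather than substantive: one must audit every estimate in Sections \ref{sec3}--\ref{sec5} to verify that only those properties of $\omega^s+\omega_\eps$ preserved by the replacement $\omega^s+\omega_\eps\mapsto\omega$ are invoked, and adjust the constants accordingly. No new cancellation mechanism, no new function space, and no new structural input from the Prandtl system is needed; the shear flow reduction of Sections \ref{sec2}--\ref{sec7} was a convenience for the existence argument in Section \ref{sec6}, not for the uniform Gevrey estimate itself.
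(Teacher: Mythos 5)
Your proposal is correct and takes exactly the approach the paper itself takes: the paper's ``proof'' of Theorem~\ref{uniform9} is a one-line remark that the argument of Sections~\ref{sec3}--\ref{sec5} goes through with slight modification, and the substitutions you describe (replacing $(f_m,h_m,g_m)$ by $(f_m^P,h_m^P,g_m^P)$, invoking \eqref{ass9} wherever \eqref{condi} was used, and checking that the implicit-function-theorem step behind \eqref{eskes} still applies) are precisely what that slight modification amounts to. You in fact supply more detail than the paper does, correctly isolating the one genuinely nontrivial checkpoint and correctly identifying the parameters on which $\tilde C_*$ depends.
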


\begin{proof}
The proof is similar to that of  Theorem \ref{uniestgev} by following the argument in   Sections  \ref{sec3}-\ref{sec5} 	 with slight modification.   So we omit it.  
\end{proof}
\begin{remark}
To prove the above theorem  we   only require that   the initial datum $u_0^P\in X_{2\rho_0,\sigma}$ and satisfies the conditions in \eqref{ass9}. Hence,
 we do not need the additional assumption that the initial datum is the small perturbation of a shear flow. 
\end{remark}

The remaining ingredient in the proof
 is to construct solution to \eqref{sys9} satisfying
 the properties listed in \eqref{ass9}. In fact,  this together with the uniform estimate given in   Theorem  \ref{uniform9}  enables us to repeat the
 argument in      Section \ref{sec7} to conclude the well-posedness in Gevrey space to the original Prandtl equation \eqref{orig9}.   For this,
it is not difficult to construct solution to \eqref{sys9} because
 it is a  parabolic initial-boundary problem. The key point 
 is to prove the properties listed in \eqref{ass9} are preserved in time
by supposing that they hold initially.   It is clear that these properties are indeed preserved with time for the shear flows since they satisfy the heat equation with initial-boundary conditions (see Proposition \ref{proshf}),   and thus so are for the solutions to Prandtl equation  by small perturbation.  
 For the general initial data rather than the small perturbation around a shear flow,   the existence of such approximate solutions that satisfy  \eqref{ass9} is proven by  G\'{e}rard-Varet and Masmoudi \cite[Section 4]{GM}  where they use  the maximum principle so that the assumptions in Theorem \ref{uniform9} hold.  This enables us to conclude that   the  result obtained by  G\'{e}rard-Varet and Masmoudi \cite{GM} still holds when the  Gevrey index $7/4$ therein is   replaced by any $\sigma\in [3/2,2],$  and there is 
 no additional assumption required.  

\appendix

 \section{Sobolev  inequality}\label{ineqa}
 
\begin{lemma}
\label{sobine}
For any $h\in H^2\inner{\mathbb R_+^2}\cap C^2\inner{\mathbb R_+^2},$ we have
\begin{eqnarray*}
 	\norm{h}_{L^\infty(\mathbb R_+^2)}\leq \sqrt{2}\inner{\norm{h}_{L^2}+ \norm{\partial_x h}_{L^2}+ \norm{\partial_yh}_{L^2}+ \norm{\partial_x\partial_y h}_{L^2}}.
 	 \end{eqnarray*} 	
\end{lemma}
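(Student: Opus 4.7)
My plan is to establish the lemma by iterating the one-dimensional Sobolev embedding $L^\infty\hookrightarrow H^1$, once in the tangential variable $x$ and once in the normal variable $y$. The target is a two-dimensional pointwise bound controlled by a mixed $L^2$-norm; this explains why the crossed derivative $\partial_x\partial_y h$ appears (and is enough, together with $h$, $\partial_x h$, $\partial_y h$) without needing $\partial_x^2 h$ or $\partial_y^2 h$.

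By a standard density argument, I may assume $h$ is smooth and decays at infinity in both variables. Fix $(x_0,y_0)\in\mathbb R_+^2$. The fundamental theorem of calculus in $x$, combined with the elementary inequality $2ab\le a^2+b^2$, gives
\[
\abs{h(x_0,y_0)}^2 \;=\; -2\int_{x_0}^{+\infty} h(x,y_0)\,\partial_x h(x,y_0)\,dx \;\le\; \norm{h(\cdot,y_0)}_{L^2_x}^2+\norm{\partial_x h(\cdot,y_0)}_{L^2_x}^2.
\]
Next I repeat the same trick in $y$ to bound each slice-norm on $\{y=y_0\}$: for any smooth $\phi$ decaying at $+\infty$,
\[
\abs{\phi(x,y_0)}^2 \;=\; -2\int_{y_0}^{+\infty}\phi(x,y)\,\partial_y\phi(x,y)\,dy \;\le\; 2\int_{0}^{+\infty}\abs{\phi\,\partial_y\phi}(x,y)\,dy.
\]
Integrating over $x\in\mathbb R$ and applying Cauchy--Schwarz in the $(x,y)$-plane yields $\norm{\phi(\cdot,y_0)}_{L^2_x}^2\le 2\norm{\phi}_{L^2}\norm{\partial_y\phi}_{L^2}\le\norm{\phi}_{L^2}^2+\norm{\partial_y\phi}_{L^2}^2$. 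Taking $\phi=h$ and $\phi=\partial_x h$, and inserting into the first display, produces
\[
\abs{h(x_0,y_0)}^2 \;\le\; \norm{h}_{L^2}^2+\norm{\partial_x h}_{L^2}^2+\norm{\partial_y h}_{L^2}^2+\norm{\partial_x\partial_y h}_{L^2}^2.
\]
Taking square roots and using $\sqrt{A^2+B^2+C^2+D^2}\le A+B+C+D\le\sqrt{2}(A+B+C+D)$ for nonnegative $A,B,C,D$ yields the stated bound.

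There is no real obstacle: the argument is entirely elementary, and no boundary contribution at $y=0$ arises because the $y$-integration always runs from $y_0$ outward to $+\infty$. The only technical care needed is the justification of the pointwise decay at infinity used in the fundamental theorem of calculus, which for $H^2\cap C^2$ functions on $\mathbb R_+^2$ follows by mollification and approximation by compactly supported smooth functions.
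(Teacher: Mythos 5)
Your proof is correct and follows essentially the same strategy as the paper's: iterate a one--dimensional $L^\infty\hookrightarrow H^1$ embedding, once in each variable, which is exactly why only the mixed derivative $\partial_x\partial_y h$ (and not $\partial_x^2 h$ or $\partial_y^2 h$) is needed. The only difference is in the elementary 1D step --- the paper uses a mean--value--theorem trick that needs no decay hypothesis, whereas you integrate $\partial(h^2)$ out to $+\infty$ and so invoke a density argument --- and in fact your bookkeeping yields the sharper constant $1$ before you discard the improvement to match the stated $\sqrt{2}$.
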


\begin{proof}
We begin with the 1D Sobolev inequality:
\begin{equation}
	\label{Sobe}
	\norm{f}_{L^\infty(\Omega)}\leq \norm{f}_{L^2(\Omega)}+\norm{f'}_{L^2(\Omega)},\quad \Omega=\mathbb R_+ ~{\rm or}~~\mathbb R.
\end{equation}
To see this,  let $\omega=\mathbb R_+$ and let $r\in\mathbb R_+.$  By mean value Theorem, we can find a $\xi\in[r,r+1]$ such that 
 \begin{eqnarray*}
    \int_{r}^{r+1}  f(\tilde r) d\tilde r =f(\xi). 
 \end{eqnarray*}
 Moreover 
 \begin{eqnarray*}
 	\abs{f(r)-f(\xi)}=\abs{\int_\xi^r f'(\tilde r) \,d\tilde r} \leq \norm{f'}_{L^2(\mathbb R_+)}.
 \end{eqnarray*}
 Thus 
 \begin{eqnarray*}
 	\abs{f(r)}\leq \abs{f(\xi)}+\abs{f(r)-f(\xi)}\leq \norm{f}_{L^2(\mathbb R_+)}+\norm{f'}_{L^2(\mathbb R_+)},
 \end{eqnarray*}
 which implies,  taking the supremum over $r\in \mathbb R_+,$   
 \begin{eqnarray*}
 	\norm{f}_{L^\infty(\mathbb R_+)}\leq \norm{f}_{L^2(\mathbb R_+)}+\norm{f'}_{L^2(\mathbb R_+)}.
 \end{eqnarray*}
 Similarly, the above estimate also holds with $\mathbb R_+$ replaced by $\mathbb R.$  Then \eqref{Sobe} follows. 
 
Now we use \eqref{Sobe} to prove Lemma \ref{sobine}. For any $(x,y)\in\mathbb R_+^2,$ we have
 \begin{eqnarray*}
 	\abs{h(x,y)}&\leq& \inner{\int_{\mathbb R_+} \abs{h(x,\tilde y)}^2\,d\tilde y}^{1/2}+\inner{\int_{\mathbb R_+} \abs{(\partial_yh)(x,\tilde y)}^2\,d\tilde y}^{1/2}\\
 	&\leq& \inner{2\int_{\mathbb R_+} \inner{\int_{\mathbb R}\abs{h(\tilde x,\tilde y)}^2 \,d\tilde x+\int_{\mathbb R}\abs{(\partial_x h)(\tilde x,\tilde y)}^2 \,d\tilde x} d\tilde y}^{1/2}\\
 	&&+\inner{2\int_{\mathbb R_+} \inner{\int_{\mathbb R}\abs{(\partial_yh)(\tilde x,\tilde y)}^2 \,d\tilde x+\int_{\mathbb R}\abs{(\partial_x\partial_y h)(\tilde x,\tilde y)}^2 \,d\tilde x} d\tilde y}^{1/2}.
 	 \end{eqnarray*}
Taking the supremum over $(x,y)\in\mathbb R_+^2$, we obtain  the desired estimate in Lemma \ref{sobine}. 
 \end{proof}

\section{auxilliary functions}

\begin{lemma}[Equation for $f_m$]\label{applem} Let $f_m$ be given in \eqref{funoffs}. Then
we have 
\begin{eqnarray*}
&&	\Big(\partial_t    +\inner{u^{s}+u}\partial_x+v\partial_y  -\partial_y^2  -\eps\partial_x^2 \Big)  f_m \\
&=& -\chi_1\sum_{k=1}^m {m\choose k}\inner{\partial_x^k u} \partial_x^{m-k+1}\omega-\chi_1 \sum_{k=1}^{m-1} {m\choose k}\inner{\partial_x^kv}  \partial_x^{m-k}\partial_y \omega  \\
&&+\chi_1 a \sum_{k=1}^m {m\choose k}\inner{\partial_x^k u} \partial_x^{m-k+1}u+\chi_1 a\sum_{k=1}^{m-1} {m\choose k}\inner{\partial_x^kv}   \partial_x^{m-k} \omega  \\
&&+\chi_1'v\partial_x^m\omega- 2\chi_1'\partial_x^m\partial_y\omega -\chi_1''\partial_x^m\omega-a \inner{\chi_1'v\partial_x^mu- 2\chi_1'\partial_x^m\omega -\chi_1''\partial_x^mu}   \\
&&+\Big[ \partial_x\omega-\inner{\partial_x u}a    
-2a  \partial_ya  -2\eps\frac{\partial_x \omega}{\omega^s+\omega}\partial_x a \Big]\chi_1 \partial_x^m u   \\
&&+ 2 \chi_1 \inner{\partial_ya }    \partial_x^m \omega+ 2 \chi_1' \inner{\partial_ya }    \partial_x^m u +2\eps \chi_1 \inner{\partial_xa }  \partial_x^{m+1} u.
\end{eqnarray*}
where  $	a = \frac{ \partial_y\omega^s+ \partial_y \omega }{ \omega^s+\omega }$.
\end{lemma}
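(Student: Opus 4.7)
\medskip

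\noindent\textbf{Proof proposal for Lemma \ref{applem}.} The plan is to compute $Lf_m$, where $L=\partial_t+(u^s+u)\partial_x+v\partial_y-\partial_y^2-\eps\partial_x^2$, by combining the equations for $\partial_x^mu$ and $\partial_x^m\omega$ in a way that exhibits the cancellation hidden in the choice of $a$. First, applying $\partial_x^m$ to the regularized velocity equation \eqref{regpran} and using $\partial_y(u^s+u)=\omega^s+\omega$ together with Leibniz gives
\[
L\partial_x^m u=-(\partial_x^m v)(\omega^s+\omega)-\sum_{k=1}^{m}{m\choose k}(\partial_x^k u)\partial_x^{m-k+1}u-\sum_{k=1}^{m-1}{m\choose k}(\partial_x^k v)\partial_x^{m-k}\omega.
\]
Applying $\partial_x^m$ to the vorticity form of \eqref{regpran} similarly yields
\[
L\partial_x^m \omega=-(\partial_x^m v)(\partial_y\omega^s+\partial_y\omega)-\sum_{k=1}^{m}{m\choose k}(\partial_x^k u)\partial_x^{m-k+1}\omega-\sum_{k=1}^{m-1}{m\choose k}(\partial_x^k v)\partial_x^{m-k}\partial_y\omega.
\]
The key point is that when we multiply the first identity by $-\chi_1 a$ and the second by $\chi_1$ and add, the two terms involving $\partial_x^m v$ exactly cancel because of the definition $a(\omega^s+\omega)=\partial_y\omega^s+\partial_y\omega$. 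The remaining pieces give precisely the sums in \eqref{jm1} and \eqref{jm2}.

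Next I would pass from the combination $\chi_1L\partial_x^m\omega-\chi_1aL\partial_x^m u$ to $Lf_m=L(\chi_1\partial_x^m\omega)-L(\chi_1 a\partial_x^m u)$ using the two commutator identities
\[
L(\chi_1\psi)=\chi_1 L\psi+(v\chi_1'-\chi_1'')\psi-2\chi_1'\partial_y\psi,
\qquad L(a\phi)=aL\phi+(La)\phi-2(\partial_y a)\partial_y\phi-2\eps(\partial_x a)\partial_x\phi,
\]
valid because $\chi_1$ depends only on $y$. Expanding $L(\chi_1 a\partial_x^m u)$ by these two rules, and recalling $\partial_y\partial_x^m u=\partial_x^m\omega$, generates exactly the three groups of terms in \eqref{jm3}--\eqref{jm5}, modulo one crucial factor: the coefficient $-\chi_1(La)$ that multiplies $\partial_x^m u$.

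The main technical step, and the expected obstacle, is therefore the computation of $La$. The strategy is to note first that $L(\omega^s+\omega)=0$; indeed, $L\omega^s=v\partial_y\omega^s$ from the heat equation for $\omega^s$ while $L\omega=-v\partial_y\omega^s$ from the vorticity equation, so the sum vanishes. Commuting $\partial_y$ with $L$ through $[L,\partial_y]=-(\omega^s+\omega)\partial_x+(\partial_x u)\partial_y$ then gives
\[
L(\partial_y\omega^s+\partial_y\omega)=-(\omega^s+\omega)\partial_x\omega+(\partial_x u)(\partial_y\omega^s+\partial_y\omega).
\]
Writing $a=N/D$ with $N=\partial_y\omega^s+\partial_y\omega$ and $D=\omega^s+\omega$, the generic chain rule $LF(\phi)=F'(\phi)L\phi-F''(\phi)[(\partial_y\phi)^2+\eps(\partial_x\phi)^2]$ applied to $F(\phi)=1/\phi$ gives $L(1/D)=-2(N^2+\eps(\partial_x\omega)^2)/D^3$. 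Substituting into the Leibniz-type formula for $L(N\cdot D^{-1})$ and replacing $\partial_y N/D$ and $\partial_x N/D$ by $\partial_y a+a^2$ and $\partial_x a+a\partial_x\omega/D$ respectively, the pure $a^3$ and $a(\partial_x\omega/D)^2$ contributions telescope, leaving the clean identity
\[
La=-\partial_x\omega+(\partial_x u)a+2a\partial_y a+2\eps\frac{\partial_x\omega}{\omega^s+\omega}\partial_x a.
\]
Its negative, multiplied by $\chi_1\partial_x^m u$, is precisely \eqref{jm4}. Collecting all contributions produces the formula stated in the lemma; the proof of \eqref{eqgmeps+++} for $\tilde f_m$ is the same argument with $\chi_1,\chi_1',\chi_1''$ replaced by $\chi_1',\chi_1'',\chi_1'''$. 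The boundary relation \eqref{boundaycond} follows from $u|_{y=0}=0$, $\partial_y\omega^s|_{y=0}=\partial_y\omega|_{y=0}=0$ (hence $a|_{y=0}=0$) combined with $\chi_1'=0$ near $y=0$, which together force $\partial_y f_m|_{y=0}=0$.
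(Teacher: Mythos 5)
Your proof is correct and follows essentially the same strategy as the paper's: differentiate the velocity and vorticity equations $m$ times in $x$, multiply by $\chi_1 a$ and $\chi_1$ respectively, and exploit $a(\omega^s+\omega)=\partial_y\omega^s+\partial_y\omega$ to cancel the $\partial_x^m v$ terms, then collect the commutator contributions. The only place you go beyond the paper is the derivation of $La$: the paper simply states the identity, while you supply a clean verification via $L(\omega^s+\omega)=0$, the commutator $[L,\partial_y]=-(\omega^s+\omega)\partial_x+(\partial_x u)\partial_y$, and the second-order chain/Leibniz rules for $L$ — a worthwhile piece of bookkeeping, but not a different route.
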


\begin{proof}
	
Observe that $ \partial_x  u+ \partial_yv=0$ and then it follows from the equation 
\begin{equation}\label{uequ}
	\partial_t   u+\inner{u^{s}+u}\partial_xu+v\inner{\omega^s+\omega}-\partial_y^2 u-\eps\partial_x^2 u=0, 
\end{equation}
that $\omega=\partial_y u$ satisfies 
\begin{equation} \label{eqomegaeps}
	\partial_t   \omega+\inner{u^{s}+u}\partial_x\omega+v \inner{\partial_y\omega^s+\partial_y  \omega}-\partial_y^2 \omega-\eps\partial_x^2 \omega=0.\end{equation}
We apply the operator $\partial_x^m$ to the two  equations above and then multiply the resulting equations by $\chi_1(y)$;  this gives
\begin{eqnarray}\label{+eq1}
	\begin{aligned}
&	\Big(\partial_t    +\inner{u^{s}+u}\partial_x    +v\partial_y-\partial_y^2  -\eps\partial_x^2 \Big) \chi_1\partial_x^m u+\chi_1\inner{\partial_x^m v} \inner{\omega^s+\omega}  \\
	=&-\chi_1\sum_{k=1}^m {m\choose k} \inner{\partial_x^k u } \partial_x^{m-k+1} u-\chi_1\sum_{k=1}^{m-1} {m\choose k}\inner{\partial_x^kv} \partial_x^{m-k}\omega\\
	&+\chi_1'v\partial_x^mu- 2\chi_1'\partial_x^m\omega -\chi_1''\partial_x^mu,
	\end{aligned}
\end{eqnarray}
and
\begin{eqnarray}\lab{+eq2}
	\begin{aligned}
&	\Big(\partial_t    +\inner{u^{s}+u}\partial_x+v\partial_y  -\partial_y^2  -\eps\partial_x^2 \Big)\chi_1 \partial_x^m \omega+\chi_1\inner{\partial_x^m v}  \inner{\partial_y\omega^s+ \partial_y \omega} \\
	=& -\chi_1\sum_{k=1}^m {m\choose k}\inner{\partial_x^k u} \partial_x^{m-k+1} \omega-\chi_1\sum_{k=1}^{m-1} {m\choose k}\inner{\partial_x^kv} \partial_x^{m-k} \partial_y\omega\\
	&+\chi_1'v\partial_x^m\omega- 2\chi_1'\partial_x^m\partial_y\omega -\chi_1''\partial_x^m\omega.
	\end{aligned}
\end{eqnarray}
Observe $\abs{\omega^s+\omega}>0$ on supp$\chi_1$  and then we can   multiply  both sides of  \eqref{+eq1}  by 
the factor 
\begin{eqnarray*}
	 a= \frac{  \partial_y\omega^s+ \partial_y\omega}{ \omega^s+\omega }, 
\end{eqnarray*} 
and then  subtract  the resulting equation by \eqref{+eq2}. Then 
the   function $f_m$, defined in \eqref{funoffs}   solves 
\begin{eqnarray*}
&&	\Big(\partial_t    +\inner{u^{s}+u}\partial_x+v\partial_y  -\partial_y^2  -\eps\partial_x^2 \Big)  f_m \\
&=& -\chi_1\sum_{k=1}^m {m\choose k}\inner{\partial_x^k u} \partial_x^{m-k+1}\omega-\chi_1 \sum_{k=1}^{m-1} {m\choose k}\inner{\partial_x^kv}  \partial_x^{m-k}\partial_y \omega \\
&&+\chi_1 a \sum_{k=1}^m {m\choose k}\inner{\partial_x^k u} \partial_x^{m-k+1}u+\chi_1 a \sum_{k=1}^{m-1} {m\choose k}\inner{\partial_x^kv}   \partial_x^{m-k} \omega\\
&&+\chi_1'v\partial_x^m\omega- 2\chi_1'\partial_x^m\partial_y\omega -\chi_1''\partial_x^m\omega-a \inner{\chi_1'v\partial_x^mu- 2\chi_1'\partial_x^m\omega -\chi_1''\partial_x^mu}\\
&&-\Big[\partial_t    a   +\inner{u^{s}+u}\partial_x  a  +v\partial_y  a -\partial_y^2 a_\eps  -\eps\partial_x^2 a \Big]\chi_1 \partial_x^m u\\
&&+ 2 \chi_1 \inner{\partial_ya}    \partial_x^m \omega+ 2 \chi_1' \inner{\partial_ya }    \partial_x^m u +2\eps \chi_1 \inner{\partial_xa }  \partial_x^{m+1} u .
\end{eqnarray*}
On the other hand we notice that, for any $y\in$ supp $\chi_1$,
\begin{eqnarray*}
\partial_t    a  +\inner{u^{s}+u}\partial_x  a  +v\partial_y  a -\partial_y^2 a   -\eps\partial_x^2 a  
 =  -\partial_x\omega+\inner{\partial_x u}a    
+2a  \partial_ya  +2\eps\frac{\partial_x \omega}{\omega^s+\omega}\partial_x a.
\end{eqnarray*}
Then combining the above equations  completes the proof.
\end{proof}

\begin{lemma}[Equation for $h_m$]\label{fme}
Let  $h_m$ be given in \eqref{repreofhm}. Then
we have 
\begin{eqnarray*}
	&&\Big(\partial_t    +\inner{u^{s}+u}\partial_x +v\partial_y  -\partial_y^2  -\eps\partial_x^2 \Big) h_m\\
	&=&P_{m}+2\inner{\partial_yb  }\partial_y (\chi_2\partial_x^m\omega)+2\eps\inner{\partial_xb }\partial_x (\chi_2\partial_x^m\omega)\\
	&&+ \chi_2 b   \sum_{k=1}^m {m\choose k}\inner{\partial_x^k u} \partial_x^{m-k+1} \omega+\chi_2b \sum_{k=1}^{m-1} {m\choose k}\inner{\partial_x^kv} \partial_x^{m-k}\partial_y \omega \\ 
	&&  -b \chi_2 '  v\partial_x^m \omega+  b \chi_2''  \partial_x^m \omega+ 2b  \chi_2' \partial_x^m \partial_y \omega \\
	&& -\chi_2\sum_{k=1}^m {m\choose k}\inner{\partial_x^k u} \partial_x^{m-k+1} \partial_y\omega-\chi_2\sum_{k=1}^{m-1} {m\choose k}\inner{\partial_x^kv} \partial_x^{m-k}\partial_y^2\omega\\
	&& +  \chi_2 'v\partial_x^m \partial_y\omega-  \chi_2'' \partial_x^m \partial_y\omega- 2\chi_2' \partial_x^m \partial_y^2 \omega  -\chi_2g_{m+1}
,
\end{eqnarray*}	
where $b=\frac{\partial_y^2\omega^{s}+\partial_y^2\omega}{\partial_y\omega^{s}+\partial_y\omega}$ and 
\begin{eqnarray*}
	P_{m}&=& \frac{2\Big(\inner{ \omega^s+ \omega}\partial_x \partial_y\omega-\inner{\partial_x u} \inner{\partial_y^2\omega^s+\partial_y^2\omega}\Big)\chi_2\partial_x^m\omega}{\partial_y\omega^s+\partial_y\omega}\\
	&&-\frac{  \Big(\omega\partial_x \omega-\inner{\partial_x u}  \inner{\partial_y\omega^s+\partial_y\omega}\Big)\inner{\partial_y^2\omega^s+\partial_y^2\omega}\chi_2\partial_x^m\omega}{\inner{\partial_y\omega^s+\partial_y\omega}^2}\\
	&&-  \frac{ 2 \Big(  \inner{\partial_y^3\omega^s+\partial_y^3\omega}  \inner{\partial_y^2\omega^s+\partial_y^2\omega} + \eps\inner{\partial_x\partial_y^2\omega}\partial_{x} \partial_y\omega\Big)\chi_2\partial_x^m\omega}{\inner{\partial_y\omega^s+\partial_y\omega}^2}\\
	&&+\frac{  2\Big(\inner{\partial_y^2\omega^s+\partial_y^2\omega}^2+ \eps\inner{ \partial_x\partial_y\omega}^2\Big)\inner{\partial_y^2\omega^s+\partial_y^2\omega}\chi_2\partial_x^m\omega}{\inner{\partial_y\omega^s+\partial_y\omega}^3}.
	\end{eqnarray*}
\end{lemma}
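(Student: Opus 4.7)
The plan is a direct algebraic computation, modeled on the derivation of the equation for $f_m$ in Lemma \ref{applem}. Set $L := \partial_t + (u^s+u)\partial_x + v\partial_y - \partial_y^2 - \eps\partial_x^2$ and abbreviate $A := \partial_y^2\omega^s + \partial_y^2\omega$, $B := \partial_y\omega^s + \partial_y\omega$, so $b = A/B$. The first step is to obtain compact expressions for $L$ acting on the building blocks. From \eqref{eqomegaeps}, $L\omega = -v\,\partial_y\omega^s$. Differentiating that equation in $y$ and using $\partial_yv = -\partial_x u$ gives
\[
L(\partial_y\omega) = -(\omega^s+\omega)\partial_x\omega + (\partial_x u)B - v\,\partial_y^2\omega^s,
\]
and one more application of $\partial_y$, together with the commutator $[\partial_y,L]f = (\omega^s+\omega)\partial_x f - (\partial_x u)\partial_y f$, yields $L(\partial_y^2\omega) = -2(\omega^s+\omega)\partial_x\partial_y\omega + 2(\partial_x u)A - v\,\partial_y^3\omega^s$. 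Combined with $L(\partial_y^j\omega^s) = v\,\partial_y^{j+1}\omega^s$, which follows from the heat equation for $u^s$, these collapse into the clean identities
\[
L(A) = -2(\omega^s+\omega)\partial_x\partial_y\omega + 2(\partial_x u)A,\qquad L(B) = -(\omega^s+\omega)\partial_x\omega + (\partial_x u)B.
\]

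Next I would compute $L(\chi_2\partial_x^m\partial_y\omega)$ and $L(\chi_2 b\,\partial_x^m\omega)$ separately via the Leibniz identity
\[
L(Gf) = G\,L(f) + f\,L(G) - 2(\partial_y G)(\partial_y f) - 2\eps(\partial_x G)(\partial_x f),
\]
together with the commutator formula
\[
L(\partial_x^m f) = \partial_x^m L(f) - \sum_{k=1}^m \binom{m}{k}\bigl[(\partial_x^k u)\partial_x^{m-k+1}f + (\partial_x^k v)\partial_x^{m-k}\partial_y f\bigr],
\]
valid because $u^s$ is $x$-independent and $\partial_y^2 + \eps\partial_x^2$ commutes with $\partial_x^m$. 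For $\chi_2 b$ the quotient-rule computation
\[
L(A/B) = \frac{L(A)}{B} - \frac{A\,L(B)}{B^2} + \frac{2\bigl[(\partial_y A)(\partial_y B) + \eps(\partial_x A)(\partial_x B)\bigr]}{B^2} - \frac{2A\bigl[(\partial_y B)^2 + \eps(\partial_x B)^2\bigr]}{B^3},
\]
inserted into the contribution $-\chi_2\,L(b)\,\partial_x^m\omega$, produces via the closed-form expressions for $L(A)$ and $L(B)$ above exactly the four lines defining $P_m$. The symmetric cross terms $2(\partial_y b)\partial_y(\chi_2\partial_x^m\omega) + 2\eps(\partial_x b)\partial_x(\chi_2\partial_x^m\omega)$ arise after combining the $-2(\partial_y(\chi_2 b))\partial_y\partial_x^m\omega - 2\eps(\partial_x(\chi_2 b))\partial_x\partial_x^m\omega$ terms from the Leibniz identity applied to $\chi_2b\cdot\partial_x^m\omega$ with the $+2\chi_2'(\partial_y b)\partial_x^m\omega$ piece that appears inside $-L(\chi_2 b)\partial_x^m\omega$.

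The crucial cancellation eliminating the loss of $x$-derivative happens at order $\partial_x^m v$. From $\chi_2\partial_x^m L(\partial_y\omega)$, the shear-flow term contributes $-\chi_2(\partial_x^m v)\partial_y^2\omega^s$ and the $k=m$ term of the $v$-commutator on $\partial_y\omega$ gives $-\chi_2(\partial_x^m v)\partial_y^2\omega$; together they total $-\chi_2(\partial_x^m v)A$. On the other side, $-\chi_2 b\,\partial_x^m L(\omega)$ contributes $+\chi_2 b(\partial_x^m v)\partial_y\omega^s$ and the $k=m$ commutator on $\omega$ contributes $+\chi_2 b(\partial_x^m v)\partial_y\omega$, so together $+\chi_2 b(\partial_x^m v)B = +\chi_2(\partial_x^m v)A$ by the defining identity $bB=A$, exactly cancelling the previous contribution; this is the whole purpose of introducing $h_m$. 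The remaining right-hand side of $L(\partial_y\omega)$, applied under $\chi_2\partial_x^m$, supplies the source $-\chi_2\,g_{m+1}$, while the lower-order commutator terms ($k\leq m$ for $u$, $k\leq m-1$ for $v$) produce the two explicit sums in the statement. The $\chi_2'$ and $\chi_2''$ contributions come from $L(\chi_2) = v\chi_2' - \chi_2''$, from the normal-direction cross term $-2\chi_2'\partial_x^m\partial_y^2\omega$ in $L(\chi_2\cdot\partial_x^m\partial_y\omega)$, and from their counterparts on the $\chi_2 b$ side ($-b\chi_2' v\partial_x^m\omega$, $+b\chi_2''\partial_x^m\omega$, $+2b\chi_2'\partial_x^m\partial_y\omega$). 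The main obstacle is pure bookkeeping: tracking which pieces sit where, and verifying that the quotient-rule computation of $L(b)$ reassembles into the stated four-line expression for $P_m$. No new analytic input beyond that already used for $f_m$ in Lemma \ref{applem} is required.
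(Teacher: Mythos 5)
Your plan is correct and essentially reproduces the paper's own derivation: apply $\chi_2\partial_x^m$ to the transport-diffusion equations for $\omega$ and $\partial_y\omega$, multiply the former by $b$, subtract, and identify $P_m$ with $-L(b)\,\chi_2\partial_x^m\omega$; the cancellation of the $\partial_x^m v$ terms via $bB=A$ is exactly the point of introducing $h_m$. Deriving the closed-form identities $L(A)=-2(\omega^s+\omega)\partial_x\partial_y\omega+2(\partial_x u)A$ and $L(B)=-(\omega^s+\omega)\partial_x\omega+(\partial_x u)B$ and then applying the quotient rule is a tidy way to organize the otherwise-unstated final computation of $-L(b)$ in the paper's proof, and it incidentally shows that the second line of $P_m$ should read $(\omega^s+\omega)\partial_x\omega$ rather than $\omega\,\partial_x\omega$ --- a small typo in the published formula.
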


\begin{proof}
Observe $\omega=\partial_y u$ and $ \partial_y\omega$ solve the following equations:
\begin{equation}
\label{omegeq}
	\partial_t   \omega+\inner{u^{s}+u}\partial_x\omega+v\inner{\partial_y\omega^s+\partial_y\omega}-\partial_y^2 \omega-\eps\partial_x^2 \omega=0,
\end{equation}
and
\begin{equation}
\label{zetaeq}
\begin{aligned}
 \partial_t ( \partial_y\omega)+\inner{u^{s}+u}\partial_x( \partial_y\omega)+v\inner{\partial_y^2 \omega^s+\partial_y^2 \omega}-\partial_y^2 ( \partial_y\omega)-\eps\partial_x^2 ( \partial_y\omega)=-g_1,
\end{aligned}
\end{equation}
by recalling  $g_1= \inner{\omega^{s}+\omega}\partial_x\omega-\inner{\partial_y\omega^s+\partial_y\omega}\partial_xu.$
Now we perform $\chi_2\partial_x^m, m\geq 1,$ on both sides of \eqref{omegeq}-\eqref{zetaeq}, to obtain that
\begin{eqnarray*}
&&	\Big(\partial_t    +\inner{u^{s}+u}\partial_x +v\partial_y  -\partial_y^2  -\eps\partial_x^2 \Big)\chi_2 \partial_x^m \omega+ \chi_2\inner{\partial_y\omega^s+\partial_y \omega} \partial_x^m v\\
	&=& -\chi_2\sum_{k=1}^m {m\choose k}\inner{\partial_x^k u} \partial_x^{m-k+1} \omega-\chi_2\sum_{k=1}^{m-1} {m\choose k}\inner{\partial_x^kv} \partial_x^{m-k}\partial_y \omega\\
	&&+  \chi_2 'v\partial_x^m \omega-  \chi_2'' \partial_x^m \omega- 2\chi_2' \partial_x^m \partial_y \omega ,
\end{eqnarray*}
and
\begin{eqnarray*}
&&	\Big(\partial_t    +\inner{u^{s}+u}\partial_x +v\partial_y  -\partial_y^2  -\eps\partial_x^2 \Big) \chi_2 \partial_x^m\partial_y \omega+\chi_2\inner{\partial_y^2 \omega^s+\partial_y^2\omega} \partial_x^m v\\
	&=& -\chi_2\sum_{k=1}^m {m\choose k}\inner{\partial_x^k u} \partial_x^{m-k+1} \partial_y\omega-\chi_2\sum_{k=1}^{m-1} {m\choose k}\inner{\partial_x^kv} \partial_x^{m-k}\partial_y^2\omega\\
	&& +  \chi_2 'v\partial_x^m \partial_y\omega-  \chi_2'' \partial_x^m \partial_y\omega- 2\chi_2' \partial_x^m \partial_y^2 \omega  -\chi_2g_{m+1}.
\end{eqnarray*}
Now we multiply  the first equation by    $(\partial_y^2 \omega^s+\partial_y^2\omega)/ (\partial_y \omega^s+\partial_y\omega)$, and then subtract the obtained
 equation by  the second equation. This gives the equation for $h_m $: 
\begin{eqnarray*}
	&&\Big(\partial_t    +\inner{u^{s}+u}\partial_x +v\partial_y  -\partial_y^2  -\eps\partial_x^2 \Big) h_m\\
	&=&  \inner{-\partial_tb -\inner{u^s+u}\partial_xb -v\partial_yb+\partial_y^2b +\eps\partial_x^2b }\chi_2\partial_x^m\omega\\
	&&+2\inner{\partial_yb }\partial_y (\chi_2\partial_x^m\omega)+2\eps\inner{\partial_xb  }\partial_x (\chi_2\partial_x^m\omega)\\
	&&+ \chi_2 b   \sum_{k=1}^m {m\choose k}\inner{\partial_x^k u} \partial_x^{m-k+1} \omega+\chi_2b  \sum_{k=1}^{m-1} {m\choose k}\inner{\partial_x^kv} \partial_x^{m-k}\partial_y \omega \\ 
	&&  -b \chi_2 '  v\partial_x^m \omega+  b \chi_2''  \partial_x^m \omega+ 2b \chi_2' \partial_x^m \partial_y \omega \\
	&& -\chi_2\sum_{k=1}^m {m\choose k}\inner{\partial_x^k u} \partial_x^{m-k+1} \partial_y\omega-\chi_2\sum_{k=1}^{m-1} {m\choose k}\inner{\partial_x^kv} \partial_x^{m-k}\partial_y^2\omega\\
	&& +  \chi_2 'v\partial_x^m \partial_y\omega-  \chi_2'' \partial_x^m \partial_y\omega- 2\chi_2' \partial_x^m \partial_y^2 \omega  -\chi_2g_{m+1}.
\end{eqnarray*} 
Finally,  we use the equation \eqref{zetaeq} to compute 
\begin{eqnarray*}
	&&\com{-\partial_tb -\inner{u^s+u}\partial_xb -v\partial_yb +\partial_y^2b +\eps\partial_x^2b }\chi_2\partial_x^m\omega \\
	&=& \frac{2\Big(\inner{ \omega^s+ \omega}\partial_x \partial_y\omega-\inner{\partial_x u} \inner{\partial_y^2\omega^s+\partial_y^2\omega}\Big)\chi_2\partial_x^m\omega}{\partial_y\omega^s+\partial_y\omega}\\
	&&-\frac{  \Big(\omega\partial_x \omega-\inner{\partial_x u}  \inner{\partial_y\omega^s+\partial_y\omega}\Big)\inner{\partial_y^2\omega^s+\partial_y^2\omega}\chi_2\partial_x^m\omega}{\inner{\partial_y\omega^s+\partial_y\omega}^2}\\
	&&-  \frac{ 2 \Big(  \inner{\partial_y^3\omega^s+\partial_y^3\omega}  \inner{\partial_y^2\omega^s+\partial_y^2\omega} + \eps\inner{\partial_x\partial_y^2\omega}\partial_{x} \partial_y\omega\Big)\chi_2\partial_x^m\omega}{\inner{\partial_y\omega^s+\partial_y\omega}^2}\\
	&&+\frac{  2\Big(\inner{\partial_y^2\omega^s+\partial_y^2\omega}^2+ \eps\inner{ \partial_x\partial_y\omega}^2\Big)\inner{\partial_y^2\omega^s+\partial_y^2\omega}\chi_2\partial_x^m\omega}{\inner{\partial_y\omega^s+\partial_y\omega}^3}.
\end{eqnarray*}
Then combining the three equations above we obtain the desired equation of $h_m$. 
\end{proof}

\begin{lemma}[Equation for $ g_m$]
\label{lemgm}
Let $g_m$ be given in  \eqref{gm}. Then we have 
\begin{eqnarray*}
	&&\Big(\partial_t    + \inner{u^{s}+u} \partial_x +v\partial_y  -\partial_y^2 -\eps\partial_x^2   \Big) g_m\\
	&=&-\sum_{j=1}^{m-1}{{m-1}\choose j} \inner{\partial_x^j u}\  g_{m-j+1} -\sum_{j=1}^{m-1}{{m-1}\choose j} \inner{\partial_x^j v}\partial_y  g_{m-j} \\
 	&&+2\sum_{j=0}^{m-1}{{m-1}\choose j} \inner{\partial_x^j \partial_y^2\omega^s+\partial_x^j \partial_y^2\omega}\partial_x^{m-j} \omega+2\eps \sum_{j=0}^{m-1}{{m-1}\choose j} \inner{\partial_x^{j+1} \partial_y\omega}\partial_x^{m-j+1} u\\
	&&-2\sum_{j=0}^{m-1}{{m-1}\choose j} \inner{\partial_x^j \partial_y\omega^s+\partial_x^j \partial_y\omega}\partial_x^{m-j} \partial_y\omega-2\eps \sum_{j=0}^{m-1}{{m-1}\choose j} \inner{\partial_x^{j+1} \omega}\partial_x^{m-j+1} \omega.
	 \end{eqnarray*}
\end{lemma}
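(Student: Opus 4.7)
\medskip

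\noindent\textbf{Proof proposal for Lemma \ref{lemgm}.} The plan is to derive the equation first at the level $m=1$ using the cancellation trick described in part (ii) of \emph{The methodologies}, and then to propagate it to general $m$ by applying $\partial_x^{m-1}$ and carefully tracking the Leibniz/commutator terms.

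For the base case, abbreviate the transport-parabolic operator
$\mathcal{L} \stackrel{\rm def}{=} \partial_t + (u^s+u)\partial_x + v\partial_y - \partial_y^2 - \eps\partial_x^2$.
From \eqref{regpran} and the fact that $u^s$ depends only on $(t,y)$ one gets $\mathcal{L} u = -v\omega^s$ and, differentiating in $y$, $\mathcal{L}\omega = -v\partial_y\omega^s$. I will apply $\partial_x$ to the velocity equation and to the vorticity equation, then multiply the first by $-(\partial_y\omega^s+\partial_y\omega)$ and the second by $(\omega^s+\omega)$, and subtract. The key point is to collect the second-order commutators: the identity
$\mathcal{L}(\varphi\psi) = \varphi\,\mathcal{L}\psi + \psi\,\mathcal{L}\varphi - 2(\partial_y\varphi)(\partial_y\psi) - 2\eps(\partial_x\varphi)(\partial_x\psi)$
turns the combination $(\omega^s+\omega)\partial_x\omega - (\partial_y\omega^s+\partial_y\omega)\partial_x u$ into $\mathcal{L} g_1$ plus precisely the four terms
$2(\partial_y^2\omega^s+\partial_y^2\omega)\partial_x\omega - 2(\partial_y\omega^s+\partial_y\omega)\partial_x\partial_y\omega + 2\eps(\partial_x\partial_y\omega)\partial_x^2 u - 2\eps(\partial_x\omega)\partial_x^2\omega$. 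The nonlinear top-order factor $(\partial_x^2 v)(\omega^s+\omega)$ that would otherwise appear is exactly what is cancelled out by the subtraction, which is the whole point of the construction.

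For general $m$, I will apply $\partial_x^{m-1}$ to the equation for $g_1$ just obtained, noting that $g_m = \partial_x^{m-1} g_1$. The right-hand side terms are handled by the standard Leibniz formula, producing the two sums with $\partial_x^{m-j}\omega$, $\partial_x^{m-j}\partial_y\omega$, $\partial_x^{m-j+1}u$ and $\partial_x^{m-j+1}\omega$ in the statement. The only subtle piece is the commutator
\[
[\mathcal{L},\partial_x^{m-1}] g_1 = -\sum_{j=1}^{m-1}\binom{m-1}{j}(\partial_x^j u)\,\partial_x^{m-j} g_1 - \sum_{j=1}^{m-1}\binom{m-1}{j}(\partial_x^j v)\,\partial_x^{m-j-1}\partial_y g_1,
\]
where I have used $\partial_x u^s = \partial_x v^s \equiv 0$ and $v=-\int_0^y \partial_x u$. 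Recognising $\partial_x^{m-j} g_1 = g_{m-j+1}$ and $\partial_x^{m-j-1}\partial_y g_1 = \partial_y g_{m-j}$, these commutator terms give the first line of the formula in Lemma \ref{lemgm}.

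The main bookkeeping obstacle is to verify that all the index shifts match: in the first commutator sum the index $j$ runs from $1$ to $m-1$ with $g_{m-j+1}$, while in the Leibniz expansion of the RHS the index starts at $j=0$ with the highest-order factor $\partial_x^{m-j}\omega$ or $\partial_x^{m-j}\partial_y\omega$, reflecting that the four ``nonlinear cancellation'' terms involve an extra derivative compared to $g_1$. Once this bookkeeping is done and the two $\eps$-terms are tracked through the commutator with $\eps\partial_x^2$, the stated equation follows. No further analytic estimate is required; the lemma is purely algebraic/combinatorial.
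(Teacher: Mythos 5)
Your strategy matches the paper's proof: derive the $g_1$ equation by the cancellation trick together with the product rule for the transport-parabolic operator $\mathcal{L}$ you introduced, then propagate by applying $\partial_x^{m-1}$; and your commutator formula
\begin{equation*}
[\mathcal{L},\partial_x^{m-1}]g_1 = -\sum_{j=1}^{m-1}\binom{m-1}{j}(\partial_x^j u)\,g_{m-j+1} - \sum_{j=1}^{m-1}\binom{m-1}{j}(\partial_x^j v)\,\partial_y g_{m-j}
\end{equation*}
is correct, using $\partial_x u^s\equiv 0$ and $g_k=\partial_x^{k-1}g_1$. Two small points, however, need to be tightened before you can claim the four-term identity appears ``precisely''. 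First, the loss-of-derivative term killed by the subtraction at the $m=1$ level is $(\partial_x v)(\omega^s+\omega)$, not $(\partial_x^2 v)(\omega^s+\omega)$. Second, once you substitute the differentiated velocity and vorticity equations and apply your product-rule identity, what you actually obtain is
\begin{equation*}
\mathcal{L}g_1 = -(\partial_x u)\,g_1 + (\partial_x\omega)\,\mathcal{L}(\omega^s+\omega) - (\partial_x u)\,\mathcal{L}(\partial_y\omega^s+\partial_y\omega) + \big[\text{the four commutator terms}\big],
\end{equation*}
and the first three summands cancel only because $\mathcal{L}(\omega^s+\omega)=0$ (the vorticity equation combined with the fact that $\omega^s$ solves the heat equation) and $\mathcal{L}(\partial_y\omega^s+\partial_y\omega)=-g_1$ (the equation satisfied by $\partial_y\omega$). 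This is precisely the observation the paper makes by recognising the bracket as the $\partial_y\omega$-equation; it is where the second, genuinely nonlinear cancellation occurs, and your present write-up elides it. Once that step is made explicit, the Leibniz bookkeeping for general $m$ goes through exactly as you describe.
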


\begin{proof}

It follows from the equations for velocity and vorticity that   
\begin{eqnarray*}
	\Big(\partial_t    +\inner{u^{s}+u}\partial_x +v\partial_y  -\partial_y^2-\eps\partial_x^2   \Big) \partial_x u+\inner{\omega^s+\omega } \partial_x v
	=- \inner{\partial_x u } \partial_x u, \end{eqnarray*}
and
\begin{eqnarray*}
	\Big(\partial_t    +\inner{u^{s}+u}\partial_x +v\partial_y  -\partial_y^2-\eps\partial_x^2  \Big) \partial_x \omega+ \inner{\partial_y\omega^s+\partial_y \omega} \partial_x v
	=- \inner{\partial_x u} \partial_x \omega.
\end{eqnarray*}
Now we multiply the first equation above by $\partial_y\omega^s+\partial_y\omega$ and the second one by $\omega^s+\omega$, and then subtract one from the other
to have  
\begin{eqnarray*}
		&& \Big(\partial_t    + \inner{u^{s}+u} \partial_x +v\partial_y  -\partial_y^2-\eps\partial_x^2  \Big)
		 g_1\\
		&
		=&  -\inner{\partial_x u}   g_{1} -\Big[\partial_t   \partial_y\omega+ \inner{u^{s}+u}\partial_x \partial_y \omega +v\partial_y  \inner{\partial_y \omega^s+\partial_y \omega }-\partial_y^3 \omega-\eps\partial_x^2 \partial_y\omega\Big]\partial_x u\\
		&&+ 2\inner{\partial_y^2\omega^s+\partial_y^2\omega} \partial_x \omega +2\eps\inner{\partial_x \partial_y\omega} \partial_x^2u-2\inner{\partial_y\omega^s+\partial_y\omega} \partial_x \partial_y\omega-2\eps\inner{ \partial_x\omega} \partial_x^2 \omega
  \\
  &
		=&    2\inner{\partial_y^2\omega^s+\partial_y^2\omega} \partial_x \omega +2\eps\inner{\partial_x \partial_y\omega} \partial_x^2u-2\inner{\partial_y\omega^s+\partial_y\omega} \partial_x \partial_y\omega-2\eps\inner{ \partial_x\omega} \partial_x^2 \omega,
\end{eqnarray*} 
where in the last equality we have used the fact that 
\begin{eqnarray*}
	\partial_t   \partial_y\omega+ \inner{u^{s}+u}\partial_x \partial_y \omega +v\partial_y  \inner{\partial_y \omega^s+\partial_y \omega }-\partial_y^3 \omega-\eps\partial_x^2 \partial_y\omega =-  g_1.
\end{eqnarray*}
Then applying $\partial_x^{m-1}$ to the equation  yields the equation for $ g_m.$
\end{proof}

\bigskip
\noindent {\bf Acknowledgements.} Some part of the work was done when the first
author    visited  the City   University of Hong Kong, and he would like to thank their hospitality. 
The research of the first author was supported by NSF of China(11422106) and Fok Ying Tung Education Foundation (151001). And the research of the second
author was supported by the General Research Fund of Hong Kong, CityU
 No.11320016.

\end{document}